\DeclareMathAlphabet{\mathpzc}{OT1}{pzc}{m}{it}
\theoremstyle{plain}
\newtheorem*{theo }{Theorem}
\newtheorem{prop}{Proposition}[section]
\newtheorem{coro}[prop]{Corollary}
\newtheorem{lemm}[prop]{Lemma}
\newtheorem{theo}[prop]{Theorem}
\newtheorem{nota}[prop]{Notation} %moje
\theoremstyle{definition}
\newtheorem{defi}[prop]{Definition}
\theoremstyle{remark}
\newtheorem{rema}[prop]{Remark}
\newtheorem{exem}[prop]{Example}
\newtheoremstyle{citing}% name
  {3pt}%      Space above, empty = `usual value'
  {3pt}%      Space below
  {\itshape}% Body font
  {}%         Indent amount (empty = no indent, \parindent = para indent)
  {\bfseries}% Thm head font
  {.}%        Punctuation after thm head
  {.5em}%     Space after thm head: " " = normal interword space;
\theoremstyle{citing}
\newtheorem*{generic}{}% all text supplied in the note
\newcommand{\partn}[1]{{\smallskip \noindent \textbf{#1.}}}
\numberwithin{equation}{section}       % Number formulas within sections
\newcommand{\C}{\mathbb{C}}
\newcommand{\R}{\mathbb{R}}
\newcommand{\cA}{\mathcal{A}}
\newcommand{\cC}{\mathcal{C}}
\newcommand{\cL}{\mathcal{L}}
\newcommand{\fD}{\mathfrak{D}}
\newcommand{\fL}{\mathfrak{L}}
\newcommand{\sA}{\mathscr{A}}
\newcommand{\sB}{\mathscr{B}}
\newcommand{\sE}{\mathscr{E}}
\newcommand{\sK}{\mathscr{K}}
\newcommand{\sM}{\mathscr{M}}
\newcommand{\sW}{\mathscr{W}}
\newcommand{\hD}{\widehat{D}}
\newcommand{\hF}{\widehat{F}}
\newcommand{\hI}{\widehat{I}}
\newcommand{\hK}{\widehat{K}}
\newcommand{\hV}{\widehat{V}}
\newcommand{\hW}{\widehat{W}}
\newcommand{\hf}{\widehat{f}}
\newcommand{\hmu}{\widehat{\mu}}
\newcommand{\tF}{\widetilde{F}}
\newcommand{\tK}{\widetilde{K}}
\newcommand{\tX}{\widetilde{X}}
\newcommand{\tf}{\widetilde{f}}
\newcommand{\teta}{\widetilde{\teta}}
\newcommand{\tmu}{\widetilde{\mu}}
\newcommand{\tnu}{\widetilde{\nu}}
\newcommand{\tsigma}{\widetilde{\tsigma}}
\newcommand{\tvarsigma}{\widetilde{\tvarsigma}}
\newcommand{\e}{\varepsilon}
\newcommand{\la}{\lambda}
\renewcommand{\=}{ : = }
\DeclareMathOperator{\Const}{Const}
\DeclareMathOperator{\diam}{diam}
\DeclareMathOperator{\Jac}{Jac}
\DeclareMathOperator{\Lip}{Lip} % to be used in best Lipschitz constant norm notation
\DeclareMathOperator{\dist}{dist}
\DeclareMathOperator{\interior}{interior}
\DeclareMathOperator{\HD}{HD}% Hausdorff dimension
\DeclareMathOperator{\BD}{BD}%Bounded Distortion
\DeclareMathOperator{\Crit}{Crit}% Set of critical points
\DeclareMathOperator{\PC} %postcritical set
\DeclareMathOperator{\CE2}{CE2}
\DeclareMathOperator{\Lyap}{Lyap}
\DeclareMathOperator{\varhyp}{varhyp}
\DeclareMathOperator{\var}{var}
\DeclareMathOperator{\Indiff}{Indiff}
\DeclareMathOperator{\reg}{r}%regular
\DeclareMathOperator{\sing}{s}%singular
\newcommand{\HDhyp}{\HD_{\operatorname{hyp}}}
\DeclareMathOperator{\ttop}{top}
\DeclareMathOperator{\conf}{conf}
\DeclareMathOperator{\Bconf}{Bconf}
\DeclareMathOperator{\Per}{Per}
\DeclareMathOperator{\Comp}{Comp}
\DeclareMathOperator{\hyp}{hyp}
\DeclareMathOperator{\Exp}{Exp}
\DeclareMathOperator{\con}{con}
\DeclareMathOperator{\NO}{NO}
\DeclareMathOperator{\tree}{tree}
\newcommand{\tpos}{t_+} % Possible phase transition in the positive spectrum
\newcommand{\tneg}{t_-} % Possible phase transition in the negative spectrum
\newcommand{\chiinf}{\chi_{\inf}}% minimal Lyapunov exponent at periodic points/invariant measures
\newcommand{\chisup}{\chi_{\sup}} % maximal Lyapunov exponent at periodic points/invariant measures
\newcommand{\pressure}{\mathscr{P}}
\newcommand{\badp}{Y}
\newcommand{\uW}{\underline{W}}
\begin{document}

%\hfill {\it PRELIMINARY VERSION, of May 2014, interval27.tex}

\

\

\title[Geometric pressure for multimodal maps of the interval]{Geometric pressure for multimodal maps of the interval}
\author[F. Przytycki]{Feliks Przytycki$^\dag$}
\author[J. Rivera-Letelier]{Juan Rivera-Letelier$^\ddag$}

\date{\today}

\thanks{$\dag$ Partially supported by Polish MNiSW Grant N N201 607640.}
\address{$\dag$ Feliks Przytycki, Institute of Mathematics, Polish Academy of Sciences, ul. \'Sniadeckich 8, 00956 Warszawa, Poland.}
\email{feliksp@impan.gov.pl}
\thanks{$\ddag$ Partially supported by Chilean FONDECYT grants 1100922 and 1141091.}
\address{$\ddag$ Juan Rivera-Letelier, University of Rochester, UR Mathematics, 811 Hylan Building, University of Rochester, RC Box 270138, Rochester, NY 14627, U.S.A.}
\email{riveraletelier@gmail.com}
%Facultad de Matem{\'a}ticas, Campus San Joaqu{\'\i}n, P. Universidad Cat{\'o}lica de Chile, Avenida Vicu{\~n}a Mackenna~4860, Santiago, Chile.}
%\email{riveraletelier@mat.puc.cl}

%\subjclass[2000]{Primary: %
%37E05 %maps of interval
%37D25, %Nonuniformly hyperbolic systems (Lyapunov exponents, Pesin theory, etc.)
%37D35, % Thermodynamic formalism, variational principles, equilibrium states
%28D20, % Entropy and other invariants
%37C45, % Dimension theory of dynamical systems
%28D99 % Measure-theoretic ergodic theory
%37F10 % Rational maps}

\subjclass[2000]{37E05, 37D25, 37D35.}

\begin{abstract} This paper is an interval dynamics counterpart of three theories founded earlier by the authors, S. Smirnov and others in the setting of the iteration of rational maps on the Riemann sphere: the equivalence of several notions of non-uniform hyperbolicity, Geometric Pressure, and Nice Inducing Schemes methods leading to results in thermodynamical formalism. We work in a setting of generalized multimodal maps, that is smooth maps $f$ of a finite union of compact intervals $\hI$ in $\R$ into $\R$ with non-flat critical points,
such that on its maximal forward invariant set $K$ the map $f$ is topologically transitive and has positive topological entropy. We prove that several notions of non-uniform hyperbolicity of $f|_K$ are equivalent (including uniform hyperbolicity on periodic orbits, TCE \& all periodic orbits in $K$ hyperbolic repelling, Lyapunov hyperbolicity, and exponential shrinking of pull-backs). We prove that several definitions of geometric pressure $P(t)$, that is pressure
for the map $f|_K$ and the potential $-t\log |f'|$, give the same value (including pressure on periodic orbits, "tree" pressure, variational pressures and conformal pressure).
Finally we prove that, provided all periodic orbits in $K$ are hyperbolic repelling, the function $P(t)$ is real analytic for $t$ between the "condensation" and "freezing" parameters and that for each such $t$ there exists unique equilibrium (and conformal) measure satisfying strong statistical properties.
\end{abstract}

\maketitle
\tableofcontents

\

\section{Introduction. The main results}

This paper is devoted to extending to interval maps results for iteration of rational functions on the Riemann sphere concerning statistical and `thermodynamical' properties obtained mainly in  \cite{PR-LS2}, \cite{PR-L1} and \cite{PR-L2}. We work with a class of
generalized multimodal maps, that is smooth maps $f$ of  a finite union of compact intervals $\hI$ in $\R$ into $\R$ with non-flat critical points, and investigate statistical and `thermodynamical' properties of $f$ restricted to the compact set $K\subset \hI$, maximal forward invariant in $\hI$, such that $f|_K$ is topologically transitive and has positive topological entropy.

This includes all sufficiently regular multimodal maps, and other maps that are naturally defined on a union of intervals, like fixed points of generalized renormalization operators, see e.g. \cite{LS} and references therein, Examples \ref{Julia}, \ref{basic set}, \ref{notwi}, and the ones mentioned in Subsection \ref{complementary}.

Several strategies are similar to the complex case, but they often need to be adapted to the interval case in a non-trivial way.

\smallskip

The paper concerns three topics, closely related to each other.

\medskip

1. Extending the results for unimodal maps in \cite{NS}, and multimodal maps in \cite{NP} and \cite{R-L} to generalized multimodal maps considered here, we prove the equivalence of several notions of non-uniform hyperbolicity,  including uniform hyperbolicity on periodic orbits, Topological Collet-Eckmann condition (abbr. TCE) \& all periodic orbits hyperbolic repelling, Lyapunov hyperbolicity, and exponential shrinking of  pull-backs. For the complex setting see \cite{PR-LS1} and the references therein.

\medskip

2. We prove that several definitions of geometric pressure $P(t)$, that is pressure
for the map $f|_K$ and the potential $-t\log |f'|$, give the same value (including pressure on periodic orbits, "tree" pressure and variational pressures). For the complex setting, see \cite{PR-LS2} and the references therein.

\medskip

3. We prove that, provided all periodic orbits in $K$ are hyperbolic repelling, the geometric pressure function $t\mapsto P(t)$ is real analytic for $t$ (inverse of temperature) between the "condensation" and "freezing" parameters, $t_- $ and $t_+ $, and for each such $t$ there exists unique equilibrium (and conformal) measure satisfying strong statistical properties. All this is contained in Theorem A, the main result of the paper. For the complex setting, see \cite{PR-L1} and \cite{PR-L2}.

\medskip

 Our results extend \cite{BT1}, which proved existence and uniqueness of equilibria and the analyticity of pressure only on  a small interval of parameters $t$
 and
assumed additionally a  growth of absolute values of the derivatives of the iterates at critical values.

Our  results extend also \cite[Theorem 8.2]{PS}, where the real analyticity of the geometric
pressure function on a neighborhood of $[0, 1]$ was proved, also
assumed some growth of absolute values of the
derivatives of the iterates at critical values and additionally assumed a slow
recurrence condition.

Our main results are stronger
in that growth assumptions are not used and the domain of real analyticity of $t$ is the whole $(t_-,t_+)$, i.e. the maximal possible domain. Precisely in this domain it holds $P(t)>\sup \int\phi\, d\mu$, where supremum is taken over all $f$-invariant ergodic probability measures on $K$, and $\phi=-t\log|f'|$ (it is therefore clear that analyticity
cannot hold at neither  $t = t_-$ nor $t = t_+$, as $P(t)$ is affine to the left of $t_-$ and to the right of $t_+$).

 Let us mention also the paper \cite{IT} where, under the restriction that $f$ has no preperiodic critical points, the existence of equilibria was proved  for all $-\infty=t_-<t<t_+$.
The authors proved that $P(t)$ is of class $C^1$ and that their method does not allow them to obtain
statistical properties of the equilibria.

 Our results are related to papers on thermodynamical formalism for $\phi$ being H\"older continuous, satisfying the assumption
$P(f|_K,\phi)>\sup \phi$ or related ones, see \cite{BT2} and \cite{LR-L} and references therein.

 In this paper, topic 3 above, as well as in \cite{PR-L1}, \cite{PR-L2}, we use some `inducing schemes', that is dynamics of return maps to `nice' domains.

\

 \subsection{Generalized multimodal maps, maximal invariant sets and related notions}

\

We shall assume throughout the paper that intervals are non-degenerate (i.e. containing more than one point).
For an integer $r \ge 1$ and a finite union of intervals $U$, a function $f : U \to \R$ is of \emph{class}
$C^r$, if it is the restriction of a function of class $C^r$ defined on an open neighborhood of $U$.

\begin{defi}\label{def:critical}
Let $U$ be a finite union of intervals, and $f : U \to \R$ a map of
class $C^2$. A \emph{critical point} of $f$ is a point $c$ such that $f'(c) = 0$. An
isolated critical point $c$ of $f$ is \emph{inflection} (resp. \emph{turning})
if $f$ is locally (resp. is not locally) injective at $c$. Furthermore, $c$ is \emph{non-flat} if
$f(x)=\pm|\phi(x)|^\ell +f(c)$ for some real $\ell\ge 2$ and $\phi$ a $C^2$ diffeomorphism in a neighbourhood of $c$ with $\phi(c)=0$, see \cite[Chapter IV]{dMvS}.
The set of critical points will be denoted by $\Crit(f)$ turning critical points by $\Crit^T(f)$ and inflection critical points by $\Crit^I(f)$.
\end{defi}

\begin{defi}\label{multimodal}
Let $\hI$ be the union of a finite family of pairwise disjoint compact  intervals
$I^j, j=1,...,m$
in the real line $\R$.
Moreover, consider a map $f : \hI \to \R$ of class $C^2$ with only non-flat critical points.
Let $K=K(f)$ be the maximal forward invariant set for $f$, more precisely the set of all points in $\hI$ whose forward trajectories stay in $\hI$. We call $K$ the \emph{maximal repeller} of $f$.
The map $f$ is said to be a \emph{generalized multimodal map} if $K$ is infinite and if $f|_K$ is topologically transitive (that is
for all $U,V\subset K$ open in it, non-empty, there exists $n>0$ such that $f^n(U)\cap V\not=\emptyset$).

\smallskip

A generalized multimodal map $f : \hI \to \R$ is said to be \emph{reduced} if $\partial \hI \subset K$ and if \begin{equation}\label{no Crit}
(\hI\setminus K) \cap \Crit(f)=\emptyset .
\end{equation}

%every critical point of $f$ in $\hI$ is in $K$.
\end{defi}

\bigskip

We will show that
$f(K)=K$ and
$K$ is either a finite union of compact intervals or a Cantor set, see Lemma~\ref{infinite}.

Note that we do not assume neither $f(\hI)\subset \hI$ nor $f(\partial \hI)\subset \partial \hI$.

We shall usually assume that $K$ is dynamically sufficiently rich, namely that the topological entropy $h_{\rm{top}}(f|_K)$
is positive.

In examples of generalized multimodal maps,  %$(f,K)$,
topological transitivity and positive entropy of $f|_K$ must be verified, which is sometimes not easy.

\smallskip

Every generalized multimodal map $f : \hI \to \R$ determines in a canonical way a subset $\hI_K$ of
$\hI$ such that the restriction $f : \hI_K \to \R$ is a reduced generalized multimodal map with
$K(f|_{\hI_K}) = K(f)$ as follows:

Remove from $\hI\setminus K$ a finite number of open intervals $V_s, s=1,...,k$, each containing a critical point of $f$. Next, using for $\hI\setminus \bigcup_s V_s$ the same notation $\hI=\bigcup_{j=1,...,m} I^j$,
 consider for each $j=1,...,m$ the set $\hI^j$ being the convex hull of $I^j\cap K$. Finally define   $\hI_K:=\bigcup_{j=1,...,m}\hI^j$.
Then $f:\hI_K\to \R$ is a reduced generalized multimodal map.
In fact, the maximality of $K$ in $\hI_K$ obviously follows, since $\hI_K\subset \hI$. Since $K$ has no
isolated points,
it also follows
that $\hI^j $ is
non-degenerate.

\smallskip

Thus we shall consider only reduced generalized multimodal maps.
We shall usually skip the word `reduced'.

 \begin{rema}\label{starting from K}
 All assertions of our theorems concern the action of $f$ on $K$. So it is natural to organize definitions in the opposite order as follows.
\smallskip

Let $K\subset\R$ be an infinite compact subset of the real line.
Let $f:K\to K$ be a continuous topologically transitive mapping.
Assume there exists a covering of $K$ by a finite family of
pairwise disjoint closed intervals $\hI^j$ with end points in $K$ such that $f$ extends to a generalized multimodal map on their union $\hI_K$ and $K$ is the maximal forward invariant set $K(f)$ in it.
Reducing $\hI_K$ if necessary, assume all the critical
points of $f$ in $\hI_K$ are in $K$. Then $f : \hI_K \to \R$ is a
reduced generalized multimodal map.
\end{rema}

\smallskip

\begin{defi}\label{extension-1}
Let $f : \hI \to \R$ be a generalized multimodal map.
Consider a $C^2$ extension of $f$ to an open neighbourhood ${\bf{U}}^j$ of each $\hI^j$.
We consider ${\bf{U}}^j$'s small enough so that they are pairwise disjoint. Moreover we assume that all critical points in ${\bf{U}}^j$ are in $\hI^j$. Thus together with (\ref{no Crit}) we assume that the union
$\bf{U}$ of ${\bf{U}}^1,\ldots, {\bf{U}}^m$ satisfies
\begin{equation}\label{no Crit2}
({\bf{U}}\setminus K) \cap \Crit(f)=\emptyset.
\end{equation}
We call the quadruple $(f,K, \hI_K, {\bf{U}})$
a \emph{(reduced) generalized multimodal quadruple}.
In fact it is always sufficient to start with a triple $(f,K,{\bf{U}})$, because this already uniquely defines
$\hI_K$.

As we do not assume $f({\bf{U}})\subset {\bf{U}}$, when we iterate $f$, i.e. consider $f^k$ for positive integers $k$,  we consider them on their domains, which can be smaller than ${\bf{U}}$ for $k\ge 2$,

Note that we do not assume $K$ to be maximal forward invariant in ${\bf{U}}$. We assume maximality only in $\hI_K$.

See Proposition~\ref{reduce} for an alternative approach, replacing maximality of $K$ in $\hI_K$ by so-called Darboux property.
\end{defi}

\begin{nota}\label{sets-A}
1. Let us stress that the properties of the extension of $f$ beyond $K$ are used only to specify assumptions imposed on the action of $f$ on $K$, in particular the way it is embedded in $\R$. So we will sometimes talk about
 generalized multimodal pairs $(f,K)$, understanding that this notion involves
 $\hI_K$ and ${\bf{U}}$ as above, sometimes about the triples $(f,K,\hI_K)$, sometimes about the triples $(f,K,{\bf{U}})$.

 %Sometimes we just say $(f,K)$ is a multimodal system, or an $\sA$-multimodal system (for $\sA$ as below).

2. Denote the space of all (reduced) generalized multimodal quadruples (pairs, triples) discussed above by $\sA$. For $f$ and $\phi$ (change of coordinates as in the definition of non-flatness) of
class $C^r$ we write $\sA^r$, so $\sA=\sA^2$. If $h_{\rm{top}}(f|_K)>0$ is assumed we write $\sA_+$ or $\sA_+^r$.
\end{nota}

\

\subsection{Periodic orbits and basins of attraction. Bounded distortion property}

\begin{defi}\label{periodic}
As usual we call a point $p\in {\bf{U}}$ \emph{periodic} if there exists $m\ge 1$ such that $f^m(p)=p$.
We denote by $O(p)$ its periodic orbit.

Define the \emph{basin of attraction} of the periodic orbit $O(p)\subset {\bf{U}}$ by
$$
B(O(p)):=\interior \{x\in {\bf{U}}: f^n(x) \to O(p), \; \text{as}\; n\to\infty\}.
$$

The orbit $O(p)$ is called \emph{attracting} if $O(p)\subset B(O(p))$.
Notice that this happens if
$|(f^m)'(p)|<1$, when we call the orbit \emph{hyperbolic attracting}, and it can happen also if $|(f^m)'(p)|=1$.
The orbit is called \emph{repelling} if
%$|(f^m)'(p)|\ge 1$ and
for $g:=f|_W^{-m}$, where $W$ is a small neighbourhood of $O(p)$ in $\R$, we have
$g(W)\subset W$ and $g^n(W)\to O(p)$ as $n\to\infty$.
If $|(f^m)'(p)|> 1$ then $O(p)$ is repelling and we call the orbit \emph{hyperbolic repelling}.

When $O(p)$ is neither attracting nor repelling,
we call $O(p)$ \emph{indifferent}.
The union of the set of
%attracting periodic orbits will be denoted by $A(f)$ and the union of the set of
indifferent periodic orbits will be denoted by $\Indiff(f)$.
If $O(p)$ is indifferent,
then it is said to be
\emph{one-sided attracting} if its basin of attraction contains a one-sided neighbourhood of each point of the orbit. Finally $O(p)$ is said to be \emph{one-sided repelling} if
 it is not repelling and if for a local inverse, the orbit of every point in a one-sided neighborhood of  $O(p)$  converges to  $O(p)$.

We say also that a periodic point $p$ is \emph{(hyperbolic) attracting, (hyperbolic) repelling, one-sided attracting, one-sided repelling} or \emph{indifferent} if $O(p)$ is (hyperbolic)
attracting, (hyperbolic) repelling, one-sided attracting, one-sided repelling, or indifferent, respectively.

When we discuss a specific (left or right) one-side neighbourhood of a point in $O(p)$  in the above definitions,  we sometimes  say  \emph{ attracting on one side} or \emph{repelling on one side}.

\smallskip

 If an indifferent periodic point $p$ of period $m$ is neither one-sided attracting nor one-sided repelling on the same side, then obviously it must be an accumulation point of periodic points of period $m$ from that side.
 Notice that indifferent one-sided attracting (repelling) implies  $f^m$ preserves the orientation at $p$, equivalently: $(f^m)'(p)=1$; otherwise, if $(f^m)'(p)=-1$, the point $p$ would be attracting or repelling.

\end{defi}

%\begin{rema}\label{rema:accumulation}
 %For $(f, K) \in \sA$, every indifferent periodic point in $K$ must be one-sided repelling.
%In fact,

% otherwise we replace period $m$ by $2m$). This is not compatible with the transitivity of $f$ on $K$.

%\end{rema}

 \begin{rema}\label{finite indifferent}
For $(f,K,\hI_K,{\bf{U}})\in \sA$
 it follows by the maximality of $K$ that there are no periodic orbits in $\hI_K\setminus K$.
  By the topological transitivity of $f$ on $K$,  there are no attracting periodic orbits in $K$.
Moreover each point in an indifferent periodic orbit in $K$ is one-sided repelling on the side from which it is accumulated by $K$; then it is not accumulated by $K$ from the other side.
Also by the topological transitivity and by smoothness of $f$ the number of
periodic orbits in $K$ that are not hyperbolic repelling is finite.
The proof uses \cite[Ch. IV, Theorem B]{dMvS}.
Therefore, the number of periodic orbits in $\hI$ that are not hyperbolic repelling is finite.

For further details see
Corollary~\ref{no periodic} in Appendix A and remarks following it.

By changing $f$ on ${\bf{U}}\setminus \hI_K$ if necessary, one can assume that the only periodic orbits in ${\bf{U}}\setminus K$ are hyperbolic repelling. See Appendix A, Lemma~\ref{good extension}, for  details.

\end{rema}

\begin{defi}\label{basins}
For an attracting or a one-sided attracting periodic point $p$,
the \emph{immediate basin} $B_0(O(p))$ is the union of the components of $B(O(p))$ whose closures intersect $O(p)$. The unique component of $B_0(O(p))$ whose closure contains $p$ will be denoted by $B_0(p)$.
\end{defi}

Notice that if $O(p)$ is attracting then $O(p)\subset B_0(O(p))$.
If $O(p)$ is one-sided attracting then $p$ is a boundary point of $B_0(p)$ and \emph{vice versa}.
Finally notice that for each component $T$ of
$B(O(p))$ there exists $n\ge 0$ such that $f^n(T)\subset B_0(f)\cup O(p)$. (See an argument in Proof of Proposition~\ref{reduce} in Appendix A). We need to add $O(p)$ above in case $O(p)$ is indifferent and some $f^j(T)$ contains a turning critical point whose forward trajectory hits $p$.
(Compare also Example~\ref{Julia} below, where Julia set need not be backward invariant.)

\begin{exem}\label{Julia}
 If for a generalized multimodal map $f$ its domain $\hI$ is just one interval
 $I$ and $f(I)\subset I$,
then we have a classical case of an interval multimodal map. However the set of non-escaping points $K(f)$ is the whole $I$ in this case, usually too big (not satisfying topological transitivity, and not even mapped by $f$ onto itself). So
one considers smaller $f$-invariant sets, see below.

Write
$$
B(f)=\bigcup_p B(O(p)),
\; B_0(f)=\bigcup_p B_0(O(p))\ \hbox{and}\;
I^+:=\bigcap_{n=0}^\infty f^n(I).
$$
Define \emph{Julia set} by
$$
J(f):=I\setminus B(f),
$$
compare \cite[Ch.4]{dMvS}, and its {\it core Julia set} by
$$
J^+(f):=(I\setminus B(f))\cap I^+ .
$$

Notice that the sets $J(f)$ and $J^+(f)$ are compact and forward invariant. They need not be backward invariant, even $J^+(f)$
for $f|_{I^+}$, namely a critical preimage of a indifferent point $p\in\Indiff(f)$ can be in $B(O(p))$), whereas $p\notin B(O(p))$.

The definition of $J(f)$ is compatible with the definition as a complement of the domain of normality of all the forward iterates of $f$ as in the complex case.%, provided there are no wandering intervals.

Notice however that without assuming that $f$ is topologically transitive on $J^+(f)$ the comparison to Julia set is not justified. For example for $f$ mapping $I=[0,1]$
into itself defined by $f(x)=f_\lambda(x)=\lambda x(1-x)$ where $3<\lambda<4$ (to exclude an attracting or indifferent fixed point or an escape from $[0,1]$),
 we have $I^+=I$ where $f$ is not topologically transitive.
 However, if $f$ is not renormalizable, if we restrict $f$ to $I_\e=[\e,1]$ then $f$ is
topologically transitive on $I_\e^+$ for $I_\e^+ = [c_2,c_1]$ (independent of $\e$), where  $c_2=f^2(c), c_1=f(c)$, and $c:=1/2$ is the critical point. Notice that by $\lambda>3$ and the non-renormalizability there can be no basin of attraction. Our $\hI$ and $K$ are both equal to $[c_2,c_1]$

%\textcolor{red}{if all periodic orbits are hyperbolic repelling, or $[c_2,c_1] \setminus B_0(f)$ otherwise}.

Notice that since $f(c_2)$ belongs to the interior of $I^+$ the set $K$ is not maximal invariant in $U$ being
a neighbourhood of $I^+$ in $\R$ whatever small $U$ we take.
\end{exem}

\begin{exem}\label{basic set}
Multimodal maps $f$  considered in the previous example, restricted to $J^+(f)$ still need not be
topologically transitive. Then, instead,
examples of generalized multimodal pairs in $\sA_+$ are provided by $(f,K)$, where $K$ is
an arbitrary maximal topologically transitive set in $\Omega_n$'s in the spectral decomposition of the set of non-wandering points for $f$, as  in \cite[Theorem III.4.2, item 4.]{dMvS}, so-called
{\it basic set}, for which $h_{\ttop}(f|_K)>0$. It is easy to verify that basic sets are weakly isolated.
\end{exem}

\begin{rema}
In regularity lower than $C^2$ there can exist a wandering interval, namely an open
interval $T$ such that
all intervals $T,f(T), f^2(T),...$ are pairwise disjoint and not in $B(f)$. In the $C^2$ multimodal case wandering intervals cannot exist, see \cite[Ch.IV, theorem A]{dMvS}. We shall use this fact many times in this paper.
\end{rema}

\begin{defi}\label{distortion}
Following \cite{dMvS}
we say that for $\varepsilon>0$ and an interval $I\subset \R$, an interval $I'\supset I$ is
an $\e$-scaled
neighbourhood of $I$ if $I'\setminus I$ has two components, call them left and right, $L$ and $R$, such that $|L|/|I|, |R|/|I| = \e$.

%Let $f : U \to \R$ be a map of class $C^2$ with only non-flat critical points, and let K be a compact subset of I that is forward invariant by f.

We say that a $C^2$ (or $C^1$) map $f:U\to \R$ for $U$ an open subset of $\R$ %, with a compact invariant set $X\subset U$
 (in particular a generalized multimodal triple $(f,K,{\bf{U}})\in \sA$ for $U={\bf{U}}$) satisfies
{\it bounded distortion}, abbr. BD, condition
if
there exists $\delta>0$ such that for every $\e>0$ there exists
$C=C(\e )>0$ such that the following holds:
For every  pair of intervals $I_1 \subset U$, $I_2\subset \R$ such that
%such that $I_1$ intersects $X$  and
$|I_2|\le \delta$  and for every $n>0$,
if $f^n$
maps diffeomorphically an interval $I'_1$ containing $I_1$ onto an interval $I_2'$ being an $\e $-scaled neighbourhood of $I_2$ and $f^n(I_1)=I_2$, then for every $x,y\in I_2$ we have
for $g=(f^n|_{I_1})^{-1}$
$$
|g'(x)/g'(y)|\le C(\e ).
$$
(Equivalently, for every $x,y\in I_1$, \; $|(f^n)'(x)/(f^n)'(y)|\le C(\e )$.)

 Notice that BD easily implies that for every $\e >0$ there is $\e '>0$ such that if in the above notation $I'_2$ is an $\e $-scaled neighbourhood of $I_2$ then
$I'_1$ contains an $\e '$-scaled neighbourhood of $I_1$.

For related definitions of distortion to be used in the paper and further discussion see
Section~\ref{preliminaries}:
Definition~\ref{LBD} and Remark~\ref{LBD2}.

We denote the space of $(f,K,{\bf{U}})\in\sA$ or $(f,K, {\bf{U}})\in\sA_+$ satisfying BD, by
$\sA^{{\BD}}$ or $\sA_+^{{\BD}}$, respectively. Sometimes we omit $\bf U$ and use notation $\sA^{{\BD}}$ or $\sA_+^{{\BD}}$ for generalized multimodal pairs.

\end{defi}

\

\begin{rema}\label{more periodic}
Notice that for $(f,K, {\bf{U}})\in \cA^{\BD}$ all repelling periodic orbits in ${\bf{U}}$ are hyperbolic repelling.

Furthermore, since every indifferent periodic orbit in K is
one-sided repelling by Remark~\ref{finite indifferent}, it follows from BD that every
indifferent periodic orbit in K is also one-sided attracting.

The finiteness of the set of indifferent periodic orbits in $K$ was in fact noted already in Remark~\ref{finite indifferent} without assuming BD.

 The finiteness of the set of non-repelling periodic orbits in $\bf{U}$ (where we treat an interval of periodic points as one point) follows from the standard fact that by BD for every (one-sided) attracting  $O(p)$,\, the immediate basin $B_0(O(p))$ must contain a critical point or the boundary of the basin must contain a point belonging to $\partial\bf{U}$ and that there is only a finite number of such points (in fact critical points cannot be in $B_0(O(p))$ since we have assumed $\Crit(f)\subset K$). This implies also that the only indifferent periodic points that are points of accumulation of periodic points of the same or doubled period are those belonging to intervals of periodic points.
See Proposition~\ref{finiteness} in Appendix A for more details.
In particular, by shrinking $\bf{U}$, one can assume
that every periodic orbit
in ${\bf{U}} \setminus K$ is hyperbolic repelling.
\end{rema}

\

\subsection
{Statement of Theorem A: Analytic dependence of geometric
pressure on temperature, equilibria}

\

Fix  $(f,K)\in \sA$.  Let $\sM(f,K)$ be the space of all probability measures supported on~$K$ that are invariant by~$f$.
For each $\mu \in \mathscr{M}(f,K)$, denote by~$h_\mu(f)$ the \textit{measure theoretic entropy of}~$\mu$, and by $\chi_\mu(f) \= \int \log |f'| d \mu$ the \textit{Lyapunov exponent of}~$\mu$.

If $\mu$ is supported on a periodic orbit $O(p)$ we use the notation $\chi(p)$.

Given a real number~$t$ we define the \textit{pressure of $f|_K$ for the potential $- t \log |f'|$} by,
\begin{equation}\label{e:variational pressure}
P(t) \=
\sup \left\{ h_\mu(f) - t \chi_\mu(f): \mu \in \mathscr{M}(f,K) \right\}.
\end{equation}
For each $t \in \R$ we have $P(t) < + \infty$ since $\chi_\mu(f)\ge 0$ for each $\mu \in \mathscr{M}(f,K)$, see \cite{P-Lyap} or \cite[Appendix A]{R-L} for a simpler proof.
Sometimes we call $P(t)$ \textit{variational pressure} and denote it by $P_{\var}(t)$.

A measure $\mu$ is called an \textit{equilibrium state of~$f$ for the potential $-t\log|f'|$},
if the supremum in \eqref{e:variational pressure} is attained for this measure.

As in \cite{PR-L2} define the numbers,
$$
\chiinf(f) \= \inf \left\{ \chi_\mu(f): \mu \in \mathscr{M}(f,K) \right\},
$$
$$
\chisup(f) \= \sup \left\{ \chi_\mu(f): \mu \in \mathscr{M}(f,K) \right\}.
$$
Later on we write sometimes $\chiinf(f,K)$ and $\chisup(f,K)$ or just $\chiinf, \chisup$.
By Ergodic Decomposition Theorem we can assume that all $\mu$ in the definition of $\chiinf(f)$ and
$\chisup(f)$ above are ergodic.
In Section~\ref{s:Key} we use an equivalent definition, see Proposition~\ref{asymptotes} in Section~\ref{pressure}.
 %  compare \cite{PR-L2} in the complex case. See also \cite{R-L}.

\smallskip

Define
$$
\tneg \= \inf \{ t \in \R: P(t) + t \chisup(f) > 0 \},
$$
$$
\tpos \= \sup \{ t \in \R: P(t) + t \chiinf(f) > 0 \}.
$$
the \textit{condensation point} and the \textit{freezing point} of~$f$, respectively.
 As in the complex case the condensation (resp. freezing) point can take the value $-\infty$ (resp. $+ \infty$).

Similarly to \cite{PR-L2} we have the following properties:
\begin{itemize}
\item
  $\tneg < 0 < \tpos$;
\item
for all $t \in \R \setminus (\tneg, \tpos)$ we have $P(t) = \max \{ - t \chisup(f), - t \chiinf(f) \} $;
\item
for all $t \in (\tneg, \tpos)$ we have $P(t) > \max \{ - t \chiinf(f), - t \chisup(f) \}$.
\end{itemize}

\begin{defi}\label{defi:Jacobian}
Let $f : U \to \R$ be a map of class $C^2$ with only non-flat critical points, and let $K$
be a compact subset of $U$ that is forward invariant by $f$. Moreover, let
$\phi : K \to \R$ be a Borel function.
We call a finite Borel measure $\mu$ on $K$ for $(f,K)\in\sA$
a $\phi$-\emph{conformal measure} if it is forward quasi-invariant, i.e.
$\mu\circ f\prec \mu$, compare \cite[Section 5.2]{PU}, and for every Borel set $A\subset K$ on which $f$ is
injective
\begin{equation}\label{Jacobian}
\mu (f(A))=\int_A \phi\; d\mu.
\end{equation}

\end{defi}

\begin{defi}\label{K-diffeomorphism}
 Let $(f, K)$ be in $\sA$.
 We say that for intervals $W,B$ in $\R$ intersecting $K$, the map $f^n:W\to B$ is a {\it $K$-diffeomorphism} if it is a diffeomorphism, in particular it is well defined, i.e.
 for each $j=0,...,n-1$ \; $f^j(W)\subset {\bf{U}}$,
 and $f^n(W\cap K)=f^n(W)\cap K$, compare Definition ~\ref{Darboux} and Lemma~\ref{NO}.

 Since $f(K)\subset K$, \ $f^n:W\to B$ is $K$-diffeomorphism if and only if  $f:f^j(W)\to f^{j+1}(W)$ is
 $K$-diffeomorphism for all $j=0,...,n-1$.
 \end{defi}

\begin{defi}\label{conical limit} Let $(f, K)$ be in $\sA$. Denote by~$K_{\con}(f)$ the ``conical limit part of $K$'' for $f$ and $K$,  defined
 as the set of all those points~$x\in K$ for which there exists~$\rho(x) > 0$ and an arbitrarily large positive integer~$n$, such that $f^n$ on $W\ni x$, the component of the $f^{-n}$-preimage of the interval $B:=B(f^n(x), \rho(x))$ containing $x$, is a $K$-diffeomorphism onto $B$.

 Notice that $f(K_{\con}(f))\subset K_{\con}(f)$.
\end{defi}

The main theorem in this paper corresponding to \cite[Main Theorem]{PR-L2} is

\begin{generic}[Theorem A]
Let
$(f,K)\in\sA_+^3$, in particular let $f$ be topologically transitive on $K$ and have positive entropy,
and assume all $f$-periodic orbits in $K$ are hyperbolic repelling.
Then
%\footnote{In previous versions of this paper we assumed that there exist nice couples having components of arbitrarily small diameters, see Definition~\ref{nice couple}. This assumption seems substantial in the complex case, \cite{PR-L2}. In our real setting this condition holds automatically; nice couples exist containing critical points and other points where $f|_K$ is not open, see Definition~\ref{exceptional}. In fact nice couples containing an arbitrary finite set not containing any periodic points exist, see Lemma ~\ref{existence of nice couples}.}
\begin{description}
\item[1. Analyticity of the pressure function]
The pressure function $P(t)$ is real analytic on the open interval $(\tneg, \tpos)$, and linear with slope $- \chisup(f)$ (resp. $-\chiinf(f)$) on $(- \infty, \tneg]$ (resp. $[\tpos, + \infty)$).

\item[2. Conformal measure]
For each $t \in (\tneg, \tpos)$, the least value $p$
for which there exists an $(\exp p)|f'|^t$\nobreakdash-conformal probability measure $\mu_t$ on $K$ is $p=P(t)$. The measure $\mu_t$ is unique among all $(\exp p)|f'|^t$\nobreakdash-confo\-rmal probability measures.
Moreover $\mu_t$ is non\nobreakdash-atomic, positive on all open sets in $K$, ergodic, and it is supported on~$K_{\con}(f)$.

\item[3. Equilibrium states]
For each $t \in (\tneg, \tpos)$, for the potential~$\phi=-t \log|f'|$, there is a unique equilibrium measure of~$f$. It is ergodic and absolutely continuous with respect to $\mu_t$ with the density bounded from below by a positive constant almost everywhere.
If furthermore $f$ is topologically exact on $K$, then this measure is mixing, has exponential decay of correlations and it satisfies the Central Limit Theorem for Lipschitz gauge functions.

\end{description}
\end{generic}

\

It is easy to see that the assertion about the analyticity of $P(t)$ can be false without the topological transitivity assumption. See also \cite{Dobbs} for an example where analyticity fails
at an infinite set of values of $t$.

For a generalized multimodal map $(f,K)$ having only hyperbolic repelling periodic points in $K$, as in Theorem A,   the assumption $(f,K)\in \sA_+^3$ can be replaced by a bounded distortion hypothesis that is more
%$(f,K)\in \sA_+^{\BD}$ (more precisely $\sA_+^{\HBD}$, see Definition ~\ref{LBD}), which is what we in fact use in our proof.
restrictive than BD, see Definition ~\ref{LBD}, which is in fact what we use in our proof. See
%The smoothness $C^3$ is sufficient for LBD, see Definition~\ref{LBD},  if we allow to modify $f$ outside $\hI_K$,
Remark~\ref{LBD2} and Lemma~\ref{good extension}.

Let us now comment on the properties of topological transitivity and positive entropy assumed in Theorem A, and also notions of exactness.

\begin{defi}\label{wexact}
Let $h:X\to X$ be a continuous mapping of a compact metric space $X$.

 We call $h$ {\it topologically exact} if  for every non-empty open $V\subset X$ there is $m>0$ such that $h^m(V)=K$ (sometimes the name \emph{locally eventually onto} is used).

We call  $h$
{\it weakly topologically exact} (or just \emph{weakly exact}) if there exists $N>0$ such that for every non-empty open $V\subset X$, \,
there exists $n(V)\ge 0$ such that
\begin{equation}\label{eq1}
\bigcup_{j=0}^N h^{n(V)+j}(V)=X.
\end{equation}

This property clearly implies that $h$ maps $X$ onto $X$, compare Lemma~\ref{infinite}.

Notice that the equality~(\ref{eq1})
implies
automatically the similar one with $n(V)$ replaced by any $n\ge n(V)$. To see this apply
$h^{n-n(V)}$ to both sides of the equation and use $h(X)=X$.

In the sequel we shall usually use these definitions for $(f,K)\in \sA$ setting $X=K$, $h=f|_K$.
For some immediate technical consequences of the property of weak exactness see  Remark~\ref{wexact2} below.

\end{defi}

\begin{rema}\label{wexact1}
Clearly topological exactness implies weak topologically exactness which in turn implies topological transitivity. In Appendix A,

\noindent Lemma~\ref{trans implies exact},  we provide a proof of the  converse fact for $f|_K$, saying that topological transitivity and positive topological entropy of $f|_K$ imply weak topological exactness.
\underbar{This allows in our theorems to assume only \emph{topological}}

\noindent\underbar{\emph{transitivity} and to use in proofs the formally stronger \emph{weak exactness}}.

This fact seems to be folklore. Most of the proof in Appendix A was told to us by Micha{\l} Misiurewicz. We are also grateful to Peter Raith for explaining us how this fact follows from \cite{Hof}. See also \cite[Appendix B]{Buzzi}.

In fact for $(f,K)\in \sA$ the properties: weak topological exactness and positive topological entropy are equivalent for $f|_K:K\to K$ , see Proposition \ref{exact implies htop}.
\end{rema}

Now we provide the notion of {\it exceptional} and related ones, substantial in the paper, though
not explicitly present in the statements of the main theorems. They are important in the study of various variants of conformal measures and pressures, see Appendix C, and in explaining the meaning if $t_-$, and substantial in the study of e.g. Lyapunov spectrum, see \cite{MS}, \cite{GPRR} and \cite{GPR2}.

\begin{defi}\label{exceptional}
1. \underline{End points.} Let $(f,K,\hI_K)\in\sA$. We say that $x\in K$ is an {\it end point} if $x\in \partial \hI_K$.
We shall use also  the notion of the
{\it singular set} of $f$ in $K$, defined by  $S(f,K):=\Crit(f)\cup \partial(\hI_K)$.

 Of more importance will be the notion of the {\it restricted singular set}
 $$
 S'(f,K):=\Crit(f) \cup {\rm{NO}}(f,K),$$
 where ${\rm{NO}}(f,K)$ is the set of points where $f|_K$ is not an open map, i.e. points $x\in K$ such that
there is an arbitrarily small neighbourhood $V$ of $x$ in $K$ whose $f$-image is not open in $K$.
%, see Definition~\ref{K-diffeomorphism}.
Notice that  ${\rm{NO}}(f,K)\subset \Crit^T(f)\cup \partial(\hI_K)$, see Lemma~\ref{NO}.

In the proof of Theorem A we shall use inducing, that is return maps to nice domains, as commented already at the very beginning, as in \cite{PR-L2}, but we cover by the components (open intervals) of the nice sets the set $S'(f,K)$ rather than only $\Crit(f)$.

\medskip

2. \underline{Exceptional sets.} (Compare \cite{MS} and \cite{GPRR}.) We say that a nonempty  forward invariant set
$E\subset K$  is $S$-{\it exceptional} for $f$, if it is not dense in $K$ and
$$
(f|_K)^{-1}(E) \setminus S(f,K) \subset E.
$$
Analogously, replacing $S(f,K)$ by $S'(f,K)$, we define $S'$-{\it exceptional} subsets of $K$.

Another useful variant of this definition is {\it weakly-exceptional}  where we do not assume $E$ is forward invariant. For example each unimodal map $f$ of interval, i.e. with just one turning critical point $c$, has the one-point set $\{f(c)\}$ being a weakly $S$-exceptional set.

\smallskip

Notice that it easily follows from this definition for $S$ or $S'$ that if $x\in K$ belongs to a weakly exceptional set, then the set
$$
O^-_{\rm{reg}}(x):=
	\{y\in(f|_K)^{-n}(x)\colon n=0,1,..., f^k(y)\notin S', \, \hbox{for every}\, k=0,...,n-1\}
$$
 is weakly exceptional. So it makes sense to say that the point $x$ is weakly exceptional.

\

It is not hard to see that if weak topological exactness of $f$ on $K$ is assumed, then each
$S$-exceptional set is finite, moreover with number of elements bounded by a constant, see Proposition~\ref{except-finite}. Therefore the union of $S$-exceptional sets is $S$-exceptional and  there exists a maximal $S$-exceptional set $E_{\rm{max}}$ which is finite. It can be empty. If it is non-empty we say that $(f,K)$ is $S$-{\it exceptional}, or that $f$ is $S$-exceptional. Analogous terminology is used and facts hold for weakly exceptional sets and for $S'$ in place of $S$.

More generally the above facts hold for an arbitrary finite $\Sigma\subset K$ in place of $S$ or $S'$, where $E\subset K$ is called then $\Sigma$-exceptional or weakly $\Sigma$-exceptional, depending as we assume it is forward invariant or not, if $(f|_K)^{-1}(E) \setminus \Sigma \subset E$, see Proposition~\ref{except-finite}.

\

3. \underline{No singular connection condition.} To simplify notation we shall sometimes assume that no critical point is in the forward orbit of a critical point.
This is a convention similar to the complex case. Moreover we shall sometimes assume that no point belonging to $\NO (f,K)\cup \Crit(f)$ is in the forward orbit of a point in $\NO (f,K)\cup \Crit(f)$, calling it {\it no singular connection} condition.

These assumptions are justified since no critical point, neither a point belonging to $NO(f,K)$, can be periodic, see Lemma~\ref{NO}, hence each trajectory in $K$ can intersect $S'(f,K)$ in at most $\# S'(f,K)$ number of times, hence with difference of the moments between the first and last intersection bounded by a constant. In consequence several proofs hold in fact without these assumptions.

\end{defi}

\

\

\subsection{Characterizations of geometric pressure}\label{charact}

\

\

For
$(f,K)\in \sA_+^{\BD}$,
all the definitions of pressure introduced in the rational functions case, see [PR-LS2], make sense for $f|_K$.
In particular

\begin{defi}[Hyperbolic pressure]\label{hyperbolic pressure}
$$
P_{\hyp}(t):= \sup_X P(f|_X,-t\log|f'|) ,
$$
supremum taken over all compact $f$-invariant (that is $f(X)\subset X$) isolated
hyperbolic subsets of $K$.

{\it Isolated} (or \emph{forward locally maximal} or \emph{repeller}), means that there is a neighbourhood $U$ of $X$ in ${\bf{U}}$ such that
$f^n(x)\in {\bf{U}}$ for all $n\ge 0$ implies $x\in X$.
{\it Hyperbolic} or {\it expanding} means that there is a constant $\la_X>1$
such that for all $n$ large enough and all $x\in X$ we have
$|(f^n)'(x)|\ge\la_X^n$. We call such isolated expanding sets {\it expanding repellers} following
Ruelle.

We shall prove that the space of such sets $X$ is non-empty.

Notice that by our definitions $X$ is a maximal invariant set in $U$ its neighbourhood in $\R$, whereas the whole $K$ need not be maximal invariant in any of its neighbourhoods in $\R$, see Definition~\ref{extension-1}  and Example~\ref{Julia}.

$P(f|_X,-t\log|f'|)$ (we shall use also notation $P(X,t)$)
denotes the standard topological pressure for
the continuous mapping $f|_X$ and the continuous real-valued potential
function $-t\log|f'|$ on $X$ , \emph{via}, say, $(n, \e )$-separated sets, see for example \cite{Walters} or \cite{PU}.

\end{defi}

From this definition it immediately follows (compare

\noindent \cite[Corollary 12.5.12]{PU}
in the complex case) the following
\begin{prop}\label{hypdim}{\rm (Generalized Bowen's formula)}
The first zero of $t\mapsto P_{\hyp}(K,t)$ is equal
to the hyperbolic dimension $\HDhyp (K)$ of $K$, defined by
$$
\HDhyp (K):=\sup_{X\subset K} \HD(X),
$$
supremum taken over all compact forward $f$-invariant isolated
hyperbolic subsets of $K$.
\end{prop}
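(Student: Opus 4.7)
The plan is to deduce the statement from the classical Bowen formula applied separately to each expanding repeller, then take the supremum; this is a direct transcription of the complex-case argument in \cite[Ch.~12]{PU}.

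For every compact $f$-invariant isolated hyperbolic set $X\subset K$, $f|_X$ is an expanding repeller of a $C^2$ map, so the classical Bowen formula (see, e.g., \cite[Ch.~9]{PU}) gives that $t\mapsto P(X,t):=P(f|_X,-t\log|f'|)$ has a unique zero, equal to $\HD(X)$. Moreover $P(X,\cdot)$ is finite, convex, and strictly decreasing: its slope at every point is at most $-\log\lambda_X<0$, because the Lyapunov exponent of every $\mu\in\sM(f|_X)$ is at least $\log\lambda_X$. In particular $P(X,t)>0$ for $t<\HD(X)$ and $P(X,t)<0$ for $t>\HD(X)$.

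I would then set $\Phi(t):=P_{\hyp}(K,t)=\sup_X P(X,t)$, the supremum being over all such $X$. As a supremum of convex and nonincreasing functions, $\Phi$ is convex and nonincreasing; from the uniform bound $P(X,t)\le h_{\ttop}(f|_K)+|t|\sup_K|\log|f'||$, it is finite on $\R$, hence continuous. If $t<\HDhyp(K)$, choose $X$ with $\HD(X)>t$; then $\Phi(t)\ge P(X,t)>0$. If $t>\HDhyp(K)$, every $X$ satisfies $\HD(X)\le\HDhyp(K)<t$, so $P(X,t)<0$ and $\Phi(t)\le 0$. Continuity then forces $\Phi(\HDhyp(K))=0$, and combined with monotonicity this identifies $\HDhyp(K)$ as the first zero of $\Phi$.

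No serious obstacle arises. The only point requiring attention is that $\HDhyp(K)$ need not be realized by any single expanding repeller $X$; this is harmless, since the value $\Phi(\HDhyp(K))=0$ is obtained purely by continuity from the left. Non-emptiness of the family of such $X$ — needed to make the supremum meaningful — is a separate matter that the authors announce just before the proposition and that follows from the standing assumption $(f,K)\in\sA_+$.
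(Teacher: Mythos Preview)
Your argument is correct and is precisely the standard route the paper alludes to when it says the result ``immediately follows'' from the definition, citing \cite[Corollary~12.5.12]{PU}; you have simply written out that argument in full.

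One minor technical slip: the uniform bound $P(X,t)\le h_{\ttop}(f|_K)+|t|\sup_K|\log|f'||$ is not usable as written, since $\Crit(f)\subset K$ forces $\sup_K|\log|f'||=+\infty$. Replace it by $P(X,t)\le h_{\ttop}(f|_K)+|t|\,\chisup(f)$ for $t<0$ (using $\chi_\mu\le\chisup$) and $P(X,t)\le h_{\ttop}(f|_K)$ for $t\ge 0$ (using $\chi_\mu\ge 0$); this gives the required finiteness of $\Phi$ on $\R$, and the rest of your argument goes through unchanged.
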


Sometimes we shall assume the following property
\begin{defi}\label{wisolated} We call an $f$-invariant compact set $K\subset \R$ {\it weakly isolated}
if there exists
$U$ an open neighbourhood of $K$ in the domain of $f$
such that for every  $f$-periodic orbit $O(p)\subset U$, if it is in $U$,
then it is in $K$.
We abbreviate this property by (wi).
\end{defi}

In the case of a reduced generalized multimodal quadruple $(f,K, \hI_K,{\bf{U}})$ it is sufficient to consider in this property $U={\bf{U}}$ a neighbourhood of $\hI_K$. Indeed, by the maximality property if
$O(p)$ is not contained in $K$ it is not contained in $\hI_K$. For an example of a topologically exact generalized multimodal pair which does is not satisfy (wi)  see Example \ref{notwi}.

\begin{defi}[Tree pressure]\label{treep}
For every $z\in K$ and $t\in \R$ define
$$
P_{\tree}(z,t)=\limsup_{n\to\infty}{1\over n}\log\sum_{f^n(x)=z, x\in K} |(f^n)'(x)|^{-t}.
$$
Under suitable conditions for $z$ (safe and expanding, as defined below) limsup can be replaced by liminf, i.e. limit exists in in this definition, see
Proof of Theorem B, more precisely Lemma~\ref{Phyp} and the Remark following it.

\end{defi}

To discuss the (in)dependence of tree pressure on $z$ we need the following notions.

\begin{defi}[safe]\label{safe} See \cite[Definition 12.5.7]{PU}. We call $z\in K$
\textit{safe} if $z\notin \bigcup_{j=1}^\infty(f^j(S(f,K)))$ and for every $\delta>0$ and all $n$ large enough
$B(z, \exp (-\delta n))\cap \bigcup_{j=1}^n(f^j(S(f,K)))=\emptyset$.

Notice that by this definition all points except at most a set of Hausdorff dimension 0, are safe.

Sometimes it is sufficient in applications to replace here $S(f,K)$ by $S'(f,K)$ or $\Crit(f)$, and write appropriately $S'$-safe and $\Crit(f)$-safe.
\end{defi}

\begin{defi}[expanding or hyperbolic]\label{expanding}  See \cite[Definition 12.5.9]{PU}.
 We call
$z\in K$ \textit{expanding} or \textit{hyperbolic} if there exist $\Delta>0$ and $\lambda=\lambda_z>1$ such that
for all $n$ large enough $f^n$ maps 1-to-1 the interval

\noindent $\Comp_z(f^{-n}(B(f^n(z), \Delta)))$
to $B(f^n(z), \Delta)$ and $|(f^n)'(z)|\ge \Const\lambda^n$. Here and further on $\Comp_z$ means the component containing $z$.
\end{defi}

 Sometimes we shall use also the following technical condition
\begin{defi}[safe forward]\label{safe forward}
A point $z\in K$ is called \textit{safe forward} if there exists $\Delta>0$ such that $\dist(f^j(z), \partial \hI_K)\ge \Delta$ for all $j=0,1,...$.
\end{defi}

\begin{prop}\label{tree} For every $(f,K)\in \sA_+^{\BD}$,
there exists $z\in K$ which is safe, safe forward and expanding. The pressure $P_{\tree}(z,t)$ does not depend on such $z$.

\end{prop}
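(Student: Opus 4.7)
The proposition has two claims. The plan for the first is to produce an expanding (hyperbolic) Cantor repeller $X\subset K$ of positive Hausdorff dimension and then argue that within $X$ the safe condition holds generically. To build $X$: since $f|_K$ is weakly topologically exact (by Remark~\ref{wexact1}), we pick a short interval $V\subset K$ disjoint from a neighbourhood of $S(f,K)$, and weak exactness combined with bounded distortion produces two disjoint univalent branches of some iterate $f^{-n}$ contracting a subinterval of $V$ strictly into $V$; the attractor of the resulting iterated function system is the desired expanding Cantor repeller. The countable set $\bigcup_{j\ge 1}f^j(S(f,K))$ together with its ``$\delta n$-thickenings'' cover at scale $e^{-\delta k}$ by $O(k)$ intervals, so the non-safe points have Hausdorff dimension zero; hence $X$ contains safe points, and any such point is both safe and expanding, the latter property coming from membership in $X$.

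For the second claim, let $z,z'\in K$ both be safe and expanding; by symmetry it suffices to show $P_{\tree}(z',t)\ge P_{\tree}(z,t)$. Fix a common hyperbolicity radius $\Delta>0$. Applying weak topological exactness to $V=B(z,\Delta/2)\cap K$ yields a bounded integer $m$ and a point $w\in V$ with $f^m(w)=z'$. For each $n$ and each preimage $y\in K$ of $z$ with $f^n(y)=z$, safeness of $z$ ensures that the inverse branch $\phi_y$ of $f^{-n}$ sending $z$ to $y$ is univalent on an $\varepsilon$-scaled neighbourhood of $B(z,\Delta)$, so bounded distortion applies. Setting $y'\=\phi_y(w)$, one obtains a preimage of $z'$ under $f^{n+m}$ with
\begin{equation*}
|(f^{n+m})'(y')|=|(f^n)'(y')|\cdot|(f^m)'(w)|\le C\cdot|(f^n)'(y)|,
\end{equation*}
where $C$ depends only on the BD constant and on the bounded quantity $(\sup|f'|)^m$. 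Distinct $y$ yield distinct $y'$ by injectivity of $\phi_y$, so
\begin{equation*}
\sum_{x\in K:\,f^{n+m}(x)=z'}|(f^{n+m})'(x)|^{-t}\ge C^{-|t|}\sum_{y\in K:\,f^n(y)=z}|(f^n)'(y)|^{-t}.
\end{equation*}
Taking $\tfrac{1}{n+m}\log$ and letting $n\to\infty$ with $m$ bounded gives $P_{\tree}(z',t)\ge P_{\tree}(z,t)$.

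The main obstacle will be carrying out the bounded distortion step rigorously: we must show that the univalent pullback of $B(z,\Delta)$ under $f^n$ extends to an $\varepsilon$-scaled neighbourhood with $\varepsilon$ independent of $n$. This is precisely what safeness of $z$ (in its quantitative form $B(z,e^{-\delta n})\cap\bigcup_{j=1}^n f^j(S(f,K))=\emptyset$) delivers once $\Delta$ is small enough and $n$ is large, but one must verify that the shrinking of the pullback and the distance of its image from $S(f,K)$ are consistent so the BD constant does not blow up. A related subtlety in the existence part is ensuring the IFS produced from weak exactness has genuinely contracting branches, which again uses absence of critical points in $V$ together with BD.
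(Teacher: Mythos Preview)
Your existence argument is essentially the paper's: build a hyperbolic Cantor repeller $X\subset K$ (the paper does this via Lemma~\ref{hyptop}, which rests on Katok's Theorem~\ref{Katok}; your IFS construction is a viable hands-on substitute), observe that $\HD(X)>0$ while the non-safe set has Hausdorff dimension~$0$, and conclude that some $z\in X$ is safe; membership in~$X$ gives expanding. That part is fine.

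The independence argument, however, has a genuine gap, and it is exactly the ``obstacle'' you flag but do not resolve. Safeness of~$z$ guarantees only that for each~$\delta>0$ the pull-backs of the \emph{shrinking} ball $B(z,e^{-\delta n})$ for $f^j$, $j\le n$, avoid $S(f,K)$; it does \emph{not} give univalence of the inverse branch $\phi_y$ on the \emph{fixed} ball $B(z,\Delta)$. For a generic preimage~$y$ of~$z$, the branch $\phi_y$ may hit critical values well inside $B(z,\Delta)$ (think of~$y$ close to $\Crit(f)$), and then neither $\phi_y(w)$ nor the distortion estimate $|(f^n)'(y')|\le C\,|(f^n)'(y)|$ is available. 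Your fixed connecting point~$w$ sits at distance up to $\Delta/2$ from~$z$, so for large~$n$ it lies far outside $B(z,e^{-\delta n})$ and cannot be pulled back with controlled distortion.

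This is not a technicality: the paper explicitly notes (Introduction, discussion after Theorem~B) that in the interval case one cannot use the ``short chains of discs joining two points, omitting critical values'' trick that makes the direct comparison work for complex rational maps. The paper's route is therefore indirect: it proves, for every safe expanding~$z$, the two inequalities $P_{\tree}(z,t)\le P_{\hyp}(K,t)$ and $P_{\tree}(z,t)\ge P_{\hyp}(K,t)$ (Lemma~\ref{Phyp}), so independence follows. The first inequality uses safeness on the shrinking ball $B(z,e^{-\alpha n})$, then uses the \emph{expanding} property of~$z$ to re-inflate: one iterates forward by $n_1\approx \alpha n/\log\lambda_z$ steps so that $f^{n_1}(B(z,e^{-\alpha n}))$ has macroscopic size~$\Delta$, and only then ``closes the loop'' via strong transitivity in a bounded number of steps. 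This produces a hyperbolic IFS whose pressure dominates the tree sum. The second inequality goes by approaching an arbitrary hyperbolic set~$X$ along a backward trajectory of~$z$. If you want a direct comparison between two safe expanding points $z,z'$, you would have to insert this same forward-inflation step, letting the connecting time grow like $\alpha n$; with~$\alpha$ arbitrarily small this is harmless for the $\limsup$, but it is a different argument from the one you wrote.
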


For such $z$ we shall just use the notation $P_{\tree}(t)$ or $P_{\tree}(K,t)$ and use the
name \textit{tree pressure}.

For $f$ rational function it is enough to assume $z$ is safe, i.e. $P_{\tree}(z,t)$ does not depend on $z$,
except $z$ in a thin set (of Hausdorff dimension 0). In the interval case we do not know how to get rid of the assumption $z$ is expanding.

  One defines periodic orbits pressure hyperbolic and variational pressure for $(f,K)\in \sA_+^{\BD}$, analogously to [PR-LS2], by

\begin{defi}[Periodic orbits pressure]\label{periodic pressure} Let $\Per_n$ be the set
of all
$f$-periodic points in ${\bf{U}}$ of period $n$ (not necessarily minimal period). Define
$$
P_{\Per}(K,t)=\limsup_{n\to\infty}{1\over n}\log\sum_{z\in\Per_n(f)\cap K}
|(f^n)'(z)|^{-t}.
$$
\end{defi}

\begin{defi}[Hyperbolic variational pressure]\label{variational principle}
\begin{equation}
P_{\varhyp}(K,t) \=
\sup \left\{ h_\mu(f) - t \chi_\mu(f): \mu \in \mathscr{M}^+(f,K), \text{ergodic}, \right\},
\end{equation}
where $\mathscr{M}^+(f,K):=\{\mu\in \mathscr{M}(f,K): \chi_\mu(f)>0 \}$.
\end{defi}

Notice that compared to variational pressure in \eqref{e:variational pressure} we restrict here
to hyperbolic measures, i.e. measures with positive Lyapunov exponent.
The space of hyperbolic measures is non-empty, since $h_{\ttop}(f)>0$. Indeed, then there exists $\mu$,
an $f$-invariant measure on $K$ of entropy $h_\mu(f)$ arbitrarily close to $h_{\ttop}(f)$
by Variational Principle, hence positive. Hence, by Ruelle's inequality, $\chi_\mu(f)\ge h_\mu(f)>0$.

\

\begin{generic}[Theorem B]\label{Theorem B} For every $(f,K)\in \sA_+^{\BD}$
weakly isolated,
all pressures defined above coincide for all $t\in \R$.
Namely
$$
P_{\Per}(K, t) =P_{\tree}(K, t) = P_{\hyp}(K, t)
=P_{\varhyp}(K, t) = P_{\var}(K, t).
$$
For $t<t_+$ the assumption (wi) can be skipped.

\end{generic}

\

Denote any of these pressures by
$P(K,t)$ or just $P(t)$ and call {\it geometric pressure}.

\

The first equality holds for complex rational maps,
under an additional assumption H, see \cite{PR-LS2},
and we do not know whether this assumption can be omitted there.
In the interval case, we prove that this assumption (and even H*, a stronger one, see Section~\ref{pressure-per}) holds automatically.

The proof of $P_{\hyp}(K, t) = P_{\varhyp}(K, t)$ uses Katok-Pesin's theory, similarly to the complex case,
but in the $C^{1+\e}$ setting, allowing flat critical points, see Theorem~\ref{Katok}.

The proofs of the equalities
$P_{\Per}(K, t)=P_{\hyp}(K, t) = P_{\tree}(K, t)$ in the interval case must be slightly modified since we cannot use the tool of short chains of discs joining two points, omitting critical values and their images, see e.g.
\cite[Geometric Lemma]{PR-LS1}. Instead, we just use strong transitivity property, Definition~\ref{backtrans}.
This difficulty is also the reason that we assume $z$ is expanding when proving that
$P_{\tree}(z, t)$ is independent of $z$.

Finally the proof of  $P_{\varhyp}(K, t) = P_{\var}(K, t)$ here
is based on the equivalence of several notions of
non-uniform hyperbolicity, see Theorem C in Subsection~\ref{ss:TCE}
and Section~\ref{TCE}.

%use the notion of Topological Collet-Eckmann maps, TCE, and equivalent notions, see \cite{P-Holder} and \cite{PR-LS2} in the complex rational case and \cite{NP} and \cite{R-L} in the interval case, see Section \ref{TCE}.

\

The definition of conformal pressure is also the same as in the complex case:
\begin{defi}[conformal pressure]\label{conformal pressure}
$$P_{\conf}(K,t):=\log\lambda(t),
$$ where
\begin{equation}\label{lambda}
\lambda(t)=\inf\{\lambda>0: \exists \mu \; {\rm{on}} \, K \; {\rm{which}}\; {\rm{ is}}\;    \lambda|f'|^t-{\rm{conformal}} \},
\end{equation}
see Definition~\ref{defi:Jacobian}
\end{defi}

The proof of Theorem A yields the extension of Theorem B to the conformal pressure for $t_-<t<t_+$,
for maps having only hyperbolic repelling periodic orbits.
%provided we impose stronger assumptions than in Theorem B, namely the assumptions from Theorem A.
We obtain
\begin{coro}\label{ThC-old}
For every $(f,K)\in \sA_+^3$ (or $\sA_+^{\BD}$), all whose periodic orbits in $K$ are hyperbolic repelling
for every $t_-<t<t_+$
$$
P_{\conf}(K,t)=P(K,t).
$$
\end{coro}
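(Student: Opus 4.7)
My approach is to deduce the corollary directly from part~2 of Theorem~A, combined with the elementary observation that nonzero conformal measures can always be normalized to probability measures.

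Under the hypotheses of the corollary (topological transitivity, positive entropy, all periodic orbits hyperbolic repelling, either $\sA_+^3$ or $\sA_+^{\BD}$ smoothness), Theorem~A applies. Its part~2 asserts, for every $t\in(\tneg,\tpos)$, both the existence of an $(\exp P(t))|f'|^t$-conformal probability measure $\mu_t$ on $K$ and the fact that $P(t)$ is the \emph{least} value of $p$ for which such a probability measure exists. Since $\mu_t$ is admissible in the family over which the infimum in Definition~\ref{conformal pressure} is taken, we obtain $\lambda(t)\le \exp P(t)$, i.e.\ $P_{\conf}(K,t)\le P(K,t)$.

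For the reverse inequality, I would observe that if $\mu$ is any nonzero finite Borel measure on $K$ which is $\lambda|f'|^t$-conformal, then the normalization $\mu/\mu(K)$ is again $\lambda|f'|^t$-conformal, since rescaling by a positive constant does not affect the Jacobian identity~\eqref{Jacobian}. Hence every $\lambda$ admissible in the infimum defining $\lambda(t)$ also admits a $\lambda|f'|^t$-conformal \emph{probability} measure. The minimality clause in Theorem~A.2 then forces $\log\lambda\ge P(t)$ for every such $\lambda$, so $P_{\conf}(K,t)=\log\lambda(t)\ge P(K,t)$.

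Combining the two inequalities gives $P_{\conf}(K,t)=P(K,t)$ for every $t\in(\tneg,\tpos)$. Note that since the claim concerns the range $\tneg<t<\tpos$, the weak isolation hypothesis in Theorem~B is not needed here (the ``$t<\tpos$'' clause applies), so the common value also coincides with $P_{\Per}$, $P_{\tree}$, $P_{\hyp}$, $P_{\varhyp}$ and $P_{\var}$. Thus the corollary is a formal consequence of Theorems~A and~B; all the substantive work, namely the existence and uniqueness of $\mu_t$ together with the minimality of $P(t)$ in part~2 of Theorem~A, has already been carried out, and the main (only) point to check here is the elementary normalization remark. No additional inducing or pressure estimate is required.
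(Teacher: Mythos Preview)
Your proposal is correct and coincides with the paper's own approach: the paper explicitly states that the proof of this corollary ``will accompany Proof of Theorem~A'' in Subsection~7.2, where the existence of the $(\exp P(t))|f'|^t$-conformal measure and the minimality of $P(t)$ among admissible exponents are established. You have simply made explicit the formal deduction from Theorem~A.2, together with the trivial normalization remark needed to pass from finite conformal measures (Definition~\ref{conformal pressure}) to conformal probability measures (Theorem~A.2); this is exactly what the paper intends.
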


Our proof will use inducing and will accompany Proof of Theorem A, see
Subsection~\ref{From the induced map to the original map. Conformal measure}.

\begin{rema}\label{conf}
In Appendix C we provide Patterson-Sullivan's construction.
However it results only with {\it conformal*} measures, the property weaker than conformal, see \eqref{subconformal1} and \eqref{subconformal2}. Their use will yield a different
conformal pressure $P^*_{\conf}(t)$. In Appendix C
we shall discuss its  relation to $P(K,t)$ for all real $t$.

 \end{rema}

\

\subsection{Non-uniformly hyperbolic interval maps}\label{ss:TCE}

\

The main result of Section~\ref{TCE} is the equivalence of several notions of non-uniform hyperbolicity conditions, which are closely related to the \emph{Collet-Eckmann condition}: For every critical point c in K,
whose forward trajectory does not contain any other critical point,
$$
\liminf_{n \to \infty} \frac{1}{n} \log |(f^n)'(f(c))| > 0.
$$

Namely the following is an
extension to generalized multimodal maps of the results for multimodal
maps in \cite{NP} and \cite[Corollaries~A and~C]{R-L}, see also \cite{NS} for the case of unimodal maps and
\cite{PR-LS1} for the case of complex rational maps. See Section~\ref{TCE}  for definitions.

\begin{generic}[Theorem C]\label{Theorem C} For every $(f,K)\in \sA^3_+$ or $\sA_+^{\BD}$,
weakly isolated,
the following properties are equivalent.

$\bullet$ TCE (Topological Collet-Eckmann) and all periodic orbits in $K$ hyperbolic repelling,

$\bullet$ ExpShrink (exponential shrinking of components),

$\bullet$ CE2*$(z)$ (backward Collet-Eckmann condition at some $z\in K$ for preimages close to $K$),

$\bullet$ \ UHP. (Uniform Hyperbolicity on periodic orbits in $K$.)

$\bullet$ \ UHPR. (Uniform Hyperbolicity on repelling periodic orbits in $K$.)

$\bullet$ Lyapunov hyperbolicity (Lyapunov exponents of invariant measures are bounded away from 0),

$\bullet$ Negative Pressure ($P(t)<0$ for $t$ large enough).
\end{generic}

%Here CE2*$(z)$ means the uniform exponential decay of $|(f^n)'(z_n)|^{-1}$ for all $n\ge 0$ and $z_n\in f^{-n}(z)$ being in pull-backs for $f^n$ of a given neighbourhood of $z$, which intersect $K$, rather than only $z_n$ in $K$, that we  denote CE2$(z)$.

All the definitions will be recalled or specified in Section~\ref{TCE}. See Definition~\ref{pull-back} for the "pull-back".

The only place we substantially use (wi) is that UHP on periodic orbits in $K$ is the same as on periodic orbits in a sufficiently small neighbourhood of $K$, since both sets of periodic orbits are the same by the definition of (wi). We will use (wi) also to reduce the proof to the $(f,K)\in \sA^{\BD}_+$ case.

 A novelty compared to the complex rational case is the proof of the implication
 CE2*$(z_0)$ $\Rightarrow$ ExpShrink, done by the second author for multimodal maps in \cite{R-L}. Here we  present a slight generalization, following the same strategy, with a part of the  proof being different.

\smallskip

\begin{rema}

\

1. For complex rational maps, the TCE condition is invariant under topological conjugacy. However, since a topological conjugacy between interval maps need not preserve inflection critical points, the TCE condition is not, by itself, invariant under topological conjugacy.

2. As opposed to the case of complex rational maps, for interval maps the TCE condition
%is compatible with
does not exclude existence of periodic orbits that are not hyperbolic repelling. So, the TCE condition is not, by itself, equivalent to the other notions of non-uniform hyperbolicity considered above.
\end{rema}

\

\subsection{Complementary Remarks}\label{complementary}

\

\

1. The following has been proven recently in \cite{GPR2} (and before in \cite{GPR} in the complex case). Consider the level sets of Lyapunov exponents of $f$
\[
    \cL(\alpha):=
    \big\{x\in K\colon \chi(x):=\lim_{n\to\infty} \frac1n \log\,\lvert (f^n)'(x)\rvert=\alpha\big\}
\]
Then the function $\alpha\mapsto \HD (\cL(\alpha))$ for $\alpha\in (\chi_{\inf},\chi_{\sup})$, called dimension spectrum for Lyapunov exponents, is equal to $F(\alpha)$, where
\begin{equation}
F(\alpha) := \frac{1}{\lvert \alpha\rvert} \inf_{t\in\R}
\left(P(t)+\alpha t \right).
\end{equation}
(a Legendre-like transform of geometric pressure $P(t)$), provided $(f,K)\in \sA_+^{\BD}$ is non-exceptional and satisfies the weak isolation condition, see Definition \ref{wisolated}.

So Theorem A item 1 yields the real analyticity of $\alpha\mapsto \HD (\cL(\alpha))$ for $\alpha\in (\chi^*_{\inf},\chi^*_{\sup})$,
where $\chi^*_{\inf}:=\inf \{\frac{dP}{dt}(t):t\in (t_-,t_+)\},\  \chi^*_{\sup} :=\sup \{-\frac{dP}{dt}(t):t\in (t_-,t_+)\}$.

The same in the complex case was noted in \cite[Appendix B]{PR-L2}.

\smallskip

2. As mentioned at the beginning, if $f$ has no preperiodic critical point in $K$, or is not exceptional, then $t_- = -\infty$ (under additional mild assumptions), see \cite{IT} or \cite[Theorem B]{Zhang}. It may happen that $t_+<\infty$ even if $(f,K)$ is Collet-Eckmann, in particular $1<t_+<\infty$ for $K=I$ for a real quadratic polynomial
with a non-recurrent critical point, see \cite{CR-L}.

The property:
TCE and all periodic orbits in $K$ hyperbolic repelling, is equivalent to $t_+>t_0:=\HD_{\hyp}(K)$ which is hyperbolic dimension of $K$ and the first zero of $P(t)$, see Subsection 1.4.

In addition, if a generalized multimodal map $(f,K)$ is TCE with all periodic orbits in $K$ hyperbolic repelling and
if $K=\hI$ (a union of closed intervals) then $t_0=1$ and Theorem A yields immediately the existence of an absolutely continuous invariant probability measure (acip); this is the equilibrium measure whose existence is asserted there. For multimodal maps
this was first shown in \cite[Corollary 2.19]{R-LS}. %my paper with Weixiao in Annales ENS (I used [R-LS]

If $(f,K)$ is not TCE or if $f$ has a periodic point that is not hyperbolic repelling, then $t_0=t_+$ and the existence (or non-existence) of acip is an object of an extensive theory, see e.g. \cite{BRSS}.

\smallskip

3. There are several interesting examples of generalized multimodal maps in the literature. In \cite{R-L2} generalized  multimodal maps were considered each
being the restriction to its domain of a degree 4 polynomial, with $K$ being a Cantor set with one turning critical point in $K$. In these examples $f|_K$ is Collet-Eckmann, but the expansion rates in UHP and ExpShrink occur different ($\lambda_{\Per}>\lambda_{\Exp}$), what cannot happen in the complex case.

A bit similar example also belonging to $\sA_+^{\BD}$ was considered in \cite{P-span}, for which the periodic specification property, uniform perfectness of $K$ (in the plane) and some other standard properties, fail.

Both examples satisfy the assumptions of Theorems A,B,C.

\smallskip

4. It has been proven recently in \cite{P-span} that $P_{\tree}(z,t)$ does not depend on $z$ safe, in particular  it is constant except $z$
in a thin set, as in the rational case, thus answering the question asked after Proposition \ref{tree} (for $t>0$, assumed all periodic orbits are hyperbolic repelling and $K$ is weakly isolated).

\smallskip

5.
Given a generalized multimodal map $(f, K, \hI)\in\cA$,  one can consider its factor $g$  by contracting components
of $\R\setminus \hI$, and their iterated preimages to points, see Appendix A: Remark~\ref{factor} and Step 2 in the proof of
Lemma~\ref{trans implies exact}.
This $g$ is piecewise monotone, piecewise continuous map of an interval $I$.

Notice that it is continuous iff
for each bounded component $Q$ of $\R\setminus \hI$,
$f(\partial Q)$ is one point or
there is a component $P$ of $\R\setminus K$ such that $f(\partial Q)\subset \partial P$.

%Clearly $g$ is continuous if  $f$ is a classical multimodal map of an interval $I$ into itself and $K=I^+ \setminus B(f)$ is its core Julia set (provided topological transitivity). Here we can set $\hI=I^+\setminus B_0(f)$, see Example~\ref{Julia} and notation therein.

Clearly  if  $f$ is a classical multimodal map of an interval $I$ into itself and, in the notation in Example~\ref{Julia},
$K=I^+ \setminus B(f)$ is its core Julia set, if $f|_K$ is topologically transitive and $\hI:=I^+\setminus B_0(f)$, then
the factor $g$ defined above is continuous.

\smallskip

\subsection{Acknowledgements}
We thank Weixiao Shen for pointing out to us subtleties in the construction of conformal measures for interval maps. We thank also Micha{\l} Misiurewicz and Peter Raith, see Remark~\ref{wexact1}.

\bigskip

%%%%%%%%%%%%%%%%%%%%%%%%%%%%%%%%%%%%%%%%%%%%%%%%%%%%%%%%%
%%%%%%%%%%%%%%%%%%%%%%%%%%%%%%%%%%%%%%%%%%%%%%%%%%%%%%%%%%

\section{Preliminaries}\label{preliminaries}

\subsection{Basic properties of generalized multimodal pairs}

\

We shall start with Lemma~\ref{infinite} (its first paragraph), which explains in particular why in Definition~\ref{multimodal} the intervals $\hI^j$ are non-degenerate.

\begin{lemm}\label{infinite} If $f:X\to X$ is a continuous map for a compact metric space $X$ and it is topologically transitive, then $f$ maps $X$ onto $X$ and if $X$ is infinite, it has no isolated points and is uncountable.

If we assume additionally that $X\subset \R$ and moreover that for $\bf{U}$ being an open neighbourhood of $X$,\, $(f,X,{\bf{U}})\in \sA$, then $X$ is either the union of a finite
 collection of compact intervals or  a Cantor set.
\end{lemm}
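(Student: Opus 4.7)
The plan is to handle the three claims in sequence.

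\textbf{Surjectivity.} Since $f$ is continuous on the compact $X$, $f(X)$ is compact and hence closed in $X$. If $f(X) \neq X$, then $V \= X \setminus f(X)$ is a nonempty open subset of $X$; for every nonempty open $U \subset X$ and every $n \geq 1$ we have $f^n(U) \subset f(X)$, so $f^n(U) \cap V = \emptyset$, contradicting topological transitivity. Hence $f(X) = X$.

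\textbf{No isolated points and uncountability.} Assume $X$ is infinite and suppose $x \in X$ is isolated, so $\{x\}$ is open in $X$. Transitivity applied with $U = V = \{x\}$ yields some $n \geq 1$ with $f^n(x) = x$, while transitivity with $U = \{x\}$ and $V$ ranging over all nonempty open subsets of $X$ yields that the forward orbit of $x$ is dense in $X$. Thus this dense forward orbit is finite, and since $X$ is closed, $X$ itself is finite, contradicting the hypothesis. So $X$ is perfect, and a nonempty perfect compact metric space is uncountable by Baire category (singletons are closed and nowhere dense in a perfect space, and a countable union of nowhere dense sets cannot exhaust a nonempty complete metric space).

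\textbf{Dichotomy in $\R$.} Let $Y \= \interior_{\R}(X)$. If $Y = \emptyset$, then every connected component of $X$ is a singleton -- for a nondegenerate interval $[a,b] \subset X$ would force $(a,b) \subset Y$ -- so $X$ is compact, perfect and totally disconnected in $\R$, hence a Cantor set. Suppose instead that $Y \neq \emptyset$, and fix $(a,b) \subset Y$ with $a < b$. Since $f$ is $C^2$ with finitely many non-flat critical points on $\hI_K$, each iterate $f^n$ has finitely many critical points, so $f^n|_{[a,b]}$ is piecewise monotone with finitely many laps and is nonconstant on each lap (non-flatness); therefore $f^n([a,b])$ is a nondegenerate closed interval $[c_n,d_n] \subset X$ by the forward invariance of $X$. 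Applying topological transitivity to the open set $(a,b) \subset X$ and an arbitrary nonempty open $W \subset X$ produces some $n$ with $f^n((a,b)) \cap W \neq \emptyset$, so the open set $\bigcup_n (c_n,d_n) \subset Y$ is dense in $X$ and $\overline{Y} = X$. The proof is completed once one shows that $Y$ has only finitely many connected components, for then $X = \overline{Y}$ is the finite union of the closures of these components, each a compact interval.

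This last finiteness step is the main obstacle. I would deduce it from the no-wandering-interval theorem for $C^2$ non-flat multimodal maps (Chapter IV, Theorem A of \cite{dMvS}, recalled in the paper above Definition~\ref{distortion}) combined with Remark~\ref{finite indifferent}: topological transitivity excludes attracting periodic orbits in $K$, so no component of $Y$ can be eventually absorbed into a basin of attraction. If $Y$ had infinitely many components, compactness would yield an infinite family clustering at a point of $X$; a pigeonhole argument over the finitely many monotone laps of each iterate then produces an infinite disjoint family of forward iterates of a single component of $Y$, i.e.\ a wandering interval, contradicting the theorem. Alternatively, one may invoke the spectral-decomposition theorem for continuous piecewise monotone interval maps, which directly provides the asserted dichotomy on each transitive piece of the nonwandering set.
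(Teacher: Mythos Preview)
Your proofs of surjectivity, absence of isolated points, and uncountability are correct and essentially the same as the paper's (your phrasing ``dense finite forward orbit forces $X$ finite'' is equivalent to the paper's ``pick $y\notin O(x)$ and a neighbourhood of $y$ missing $O(x)$'').

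The dichotomy argument, however, has a genuine gap at the finiteness step. Your pigeonhole sketch does not produce a wandering interval: having infinitely many components of $Y$ accumulating at a point, together with the fact that each $f^n$ has finitely many laps, does not by itself force the forward images of any \emph{single} component to be pairwise disjoint. The clustering and the lap count are red herrings; what you actually need is the statement that for \emph{one} component $V$ of $Y$ the iterates $f^n(V)$ cannot all be pairwise disjoint, and this is exactly the no-wandering-interval theorem applied directly to $V$, with the alternative (attraction to a periodic orbit) excluded by transitivity.

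The paper's route is both shorter and complete: start from any nondegenerate interval $T\subset X$, extend $f$ to a $C^2$ multimodal $g:I\to I$, and use no-wandering-intervals to find $n\ge 0$, $m\ge 1$ with $f^m(L)\cap L\neq\emptyset$ for $L:=f^n(T)$. Let $M$ be the \emph{maximal} closed interval in $X$ containing $L$; then $f^m(M)\cup M$ is an interval in $X$ containing $M$, so by maximality $f^m(M)\subset M$. Transitivity makes $\bigcup_{j\ge 0}f^j(M^\circ)$ dense in $X$, so the closed set $\bigcup_{j=0}^{m-1}f^j(M)$ equals $X$, giving the finite union of intervals directly --- no need to count components of $Y$. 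Your alternative appeal to spectral decomposition is valid in spirit but is a much heavier tool and would itself require adaptation to the generalized multimodal setting used here.
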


\begin{proof}
 Recall one of the equivalent definitions of topological transitivity which says that for every non-empty
 $U,V\subset X$ open in $X$, there exists $n\ge 1$ such that $f^n(U)\cap V\not=\emptyset$. If $x\in X$ were isolated, then
for $U=V=\{x\}$ there would exist $n(x)\ge 1$ such that $f^{n(x)}(x)=x$. Since $X$ is infinite there is $y\in X\setminus O(x)$. Let $V$ be a neighbourhood of $y$ in $X$ disjoint from $O(x)$. Then $f^n(\{x\})\cap V=\emptyset$ for all $n\ge 1$ what contradicts the topological transitivity since $\{x\}$ is open. $f$ is onto $X$ by the topological transitivity
since otherwise for the non-empty open sets $U=X$, $V=X\setminus f(X)$ for all $n\ge 1$ \; $f^n(U)\cap V=\emptyset$.
$K$ is uncountable by Baire property.

To prove the last assertion suppose that $X$ contains a non-degenerate closed interval $T$.
Extend $f|_X$ to a $C^2$ multimodal map $g:I\to I$ of a closed interval $I$ containing ${\bf{U}}$ with all critical points non-flat. Then by the absence of wandering intervals, see
\cite[Ch.IV, Theorem A]{dMvS},  there exist
$n\ge 0,m\ge 1$ such that for $L:=f^n(T)$ the interval $f^m(L)$ intersects $L$ (the other possibility, that $f^n(T)\to O(p)$ for a periodic orbit in $X$, is excluded by the topological transitivity of $f$ on $X$).  Let $M$ be the maximal closed interval in $X$ containing $L$.
 Then $f^m(M)\subset M$.
 By the topological transitivity of $f$ on $X$, for $M^\circ$ denoting the interior of $M$ in $X$,  the union
 $\bigcup_{j=0}^\infty f^j(M^\circ)$
 is a dense subset of $X$. Hence the family of intervals $M,f(M),...,f^{m-1}(M)$ covers a dense subset of $X$. But $\bigcup_{j=0}^{m-1}f^j(M)$ is closed, as the finite union of closed sets, hence equal to $X$. We conclude that $X$ is the union of a finite collection
of compact intervals.

If $X$ does not contain any closed interval, then since all its points are accumulation points,
$X$ is a (topological) Cantor set. \end{proof}

Let us provide now some explanation concerning the set $S'(f,K)$, in particular $NO(f,K)$, see Definition~\ref{exceptional}, item 1.

\begin{lemm}\label{NO} Consider $(f,K)\in \sA$.
Then $\NO(f,K)\subset \partial(\hI_K)\cup\Crit(f)$ is finite. None of points in $S'(f,K)=\Crit(f)\cup NO(f,K)$ is periodic. The set $\Crit(f)$ cuts $\hI_K=\hI^1\cup ...\cup \hI^{m(K)}$ into smaller intervals ${I'}^1,..., {I'}^{m'(K)}$
such that for each $j$ the restriction of $f$ to the interior of ${I'}^j$ is a diffeomorphism onto its  image and $f({I'}^j\cap K)=f({I'}^j)\cap K$, i.e. a $K$-diffeomorphism according to Definition~\ref{K-diffeomorphism}.

Moreover for each $T$ intersecting $K$, short enough, disjoint from $S'(f,K)$  \ $f:T\to f(T)$ is a $K$-diffeomorphism, i.e.
\begin{equation}\label{K-homeo}
f(T\cap K)=f(T)\cap K.
\end{equation}
The converse also holds for open $T$, namely $K$-diffeomorphism property implies disjointness of $T$ from $S'(f,K)$
\end{lemm}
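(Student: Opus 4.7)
The plan is to establish the four clauses using a single principle throughout: whenever $V$ is an interval inside $\hI_K$ on which $f$ is injective (in particular, when $V \cap \Crit(f) = \emptyset$), the maximality of $K$ in $\hI_K$ combined with $f(K) \subseteq K$ yields $f(V \cap K) = K \cap f(V)$; indeed, for any $y \in K \cap f(V)$, the unique preimage $x' \in V \subseteq \hI_K$ has forward orbit $x', y, f(y), \ldots \subseteq \hI_K$ and hence lies in $K$ by maximality. As an immediate consequence, a point $x \in K$ in the interior of some $\hI^j$ with $f'(x) \neq 0$ satisfies $f(V \cap K) = K \cap f(V)$, which is open in $K$ for small open $V \ni x$, so $x \notin \NO(f,K)$. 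This proves $\NO(f,K) \subseteq \partial \hI_K \cup \Crit(f)$, and finiteness follows from the finiteness of both $\Crit(f)$ and $\partial \hI_K$.

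For the non-periodicity of $S'(f,K)$, first a periodic critical point would be super-attracting by non-flatness, contradicting the absence of attracting orbits in $\hI_K$ from Remark~\ref{finite indifferent}. Now suppose $x \in \NO(f,K) \setminus \Crit(f)$ is periodic of period $p$; then $x \in \partial \hI_K$ by the first clause, say $x$ is the left endpoint of $\hI^j$, so $K$ near $x$ lies in $[x, x+\epsilon)$ and no orbit point is critical. If $f^p$ reversed orientation at $x$, it would send a nearby point $y \in K \cap (x, x+\delta)$---existing since $K$ has no isolated points (Lemma~\ref{infinite})---just to the left of $x$, outside $\hI_K$, violating $f^p(K) \subseteq \hI_K$; hence $f^p$ preserves orientation at $x$, so the orientations of $f$ at $x$ and of $f^{p-1}$ at $f(x)$ have the same sign. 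Applying the same escape-exclusion argument to the local diffeomorphism $f^{p-1}$ from a small neighborhood of $f(x)$ to a small neighborhood of $x$ shows that $K$ is one-sided at $f(x)$ on the side prescribed by $f^{p-1}$'s orientation, which by the sign match coincides with the side where $f$ at $x$ places the one-sided image $f([x, x+\epsilon))$. Hence $f(V \cap K) = K \cap f([x, x+\epsilon))$ is a relative neighborhood of $f(x)$ in $K$, so $f|_K$ is open at $x$, contradicting $x \in \NO(f,K)$.

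For the remaining clauses: $\Crit(f) \cup \partial \hI_K$ partitions $\hI_K$ into finitely many closed subintervals ${I'}^j$; on the interior of each, $f$ is a monotone local diffeomorphism and hence a diffeomorphism onto its image, while the identity $f({I'}^j \cap K) = f({I'}^j) \cap K$ is another application of the principle above (extended to the two endpoints, which lie in $K$ by construction). For a short interval $T$ intersecting $K$ and disjoint from $S'(f,K)$: $f|_T$ is a diffeomorphism by the absence of critical points, and the only obstruction to $f(T \cap K) = f(T) \cap K$ is a preimage $x' \in T \setminus \hI_K$ of some $y \in f(T) \cap K$; this requires $T$ to cross an endpoint $p \in \partial \hI_K$, but then $p \notin S'(f,K)$ forces $p \notin \NO(f,K)$, and the openness of $f|_K$ at $p$ (via the same sign-matching analysis) shows that $K$ at $f(p)$ lies only on the side to which $f$ sends the $\hI_K$-part of $T$. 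Hence any $y \in K \cap f(T)$ comes from that side, its preimage lies in $\hI_K$, and hence in $K$ by maximality. The converse for open $T$ is immediate: a $K$-diffeomorphism $f|_T$ must have $T \cap \Crit(f) = \emptyset$, and the identity $f(U \cap K) = K \cap f(U)$ holds for every open subinterval $U \subseteq T$ (by the principle applied to $U$), so $f|_K$ is open at every $x \in T \cap K$, giving $T \cap \NO(f,K) = \emptyset$.

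I expect the main technical obstacle to be the sign-and-side bookkeeping in the second paragraph that rules out periodic $\NO$-points; every other step reduces to the single maximality principle and injectivity of $f$ on non-critical intervals.
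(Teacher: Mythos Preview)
Your proof is correct and follows the same core idea as the paper's---maximality of $K$ in $\hI_K$ is the engine behind every clause. Two small remarks. First, for the non-periodicity of a point $x\in\NO(f,K)\setminus\Crit(f)$, the paper's route is shorter: since $x\in\partial\hI_K$, $K$ is one-sided at $x$; if $f|_K$ were not open at $x$ then $K$ would have to be two-sided at $f(x)$, and since $f^{m-1}$ is a local diffeomorphism with $f^{m-1}(K)\subset K$, this forces $x=f^{m-1}(f(x))$ to be two-sided as well---contradiction. This avoids your orientation bookkeeping entirely (your sign-matching is correct, just unnecessary). Second, in your converse argument the phrase ``by the principle applied to $U$'' is slightly off: the maximality principle needs $U\subset\hI_K$, which may fail. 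The right justification is that the assumed identity $f(T\cap K)=f(T)\cap K$ together with injectivity of $f|_T$ immediately gives $f(U\cap K)=f(U)\cap K$ for every open $U\subset T$.
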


\begin{proof}
The equalities $f({I'}^j\cap K)=f({I'}^j)\cap K$ hold by the maximality of $K$. Indeed, if the strict inclusion $\subset$ held for some $j$ we
could add to ${I'}^j\cap K$ the missing preimage of a point in $f({I'}^j)\cap K$.
(Equivalently this equality holds by Darboux property, see Proposition~\ref{reduce}).
So the only points where $f|_K$ is not open can be some end points of
${I'}^j$.
Hence $\NO(f,K)\subset \partial(\hI_K)\cup
\Crit(f)$ (in fact one can write $\Crit^T(f)$ here) and the set is finite.

No critical point is periodic. Otherwise, if $c\in\Crit(f)$ were periodic, then its periodic orbit $O(c)$ would be attracting in $K$, what would contradict the topological transitivity of $f|_K$, see Remark~\ref{finite indifferent}. (We recall an argument: a small neighbourhood od $O(c)$ could not leave it under $f^n$, therefore could not intersect nonempty open set in $K$ disjoint from $O(c)$.)

Suppose now that $x\in \NO(f,K)\setminus \Crit(f)$ is periodic. Then $x\in\partial(\hI_K)$ is a limit of $K$ only from one side; denote by $L$ a short interval adjacent to $x$ from this side. Then $f(x)$ is the limit of $K$ from both sides since otherwise $x\notin \NO(f,K)$ since $f$ is a homeomorphism from $L\cap K$ to $f(L)\cap K$. We have already proved that the periodic orbit $O(x)$ is disjoint from $\Crit(f)$. Hence $f^m(x)=x$ is a limit of $K$ from both sides, as $f^{m-1}$ image of $f(x)$. We arrive at a contradiction.

\smallskip

Let us prove the last assertion of the theorem.
Fix $\delta$ less than $g$ being the minimal length of all components of $\R\setminus K$ and all components of $\hat I_K$.
 For each point $f(x)$ for $x\in\partial \hat I_K$ denote by $g_x$ the length of the component of
 $\R\setminus K$ adjacent to $f(x)$ if $f(x)$ is not an accumulation point of $K$ from one side (there cannot be such components on both sides since $K$ has no isolated points, also we do not define $g_x$ if $f(x)$ is accumulated by $K$ from both sides). Let $U'$ be a neighbourhood of $K$ with closure in ${\bf {U}}$.
  Assume $\delta<{\rm dist} (\partial U_K, \partial U')$.
  Let $\Lip$ be Lipschitz constant for $f|_{U'}$.
  Assume finally that $\Lip\delta$ is smaller than all $g_x$.

Consider now an arbitrary, say open, interval $T$ shorter than $\delta$, intersecting $K$, disjoint from $S'$.
$K$ is forward invariant, so the only possibility \eqref{K-homeo} does not hold is
$f(T)\cap K$ strictly larger than $f(T\cap K)$. Then $T$ must contain a point $x\in\partial \hat I_K$.
Indeed, $T$ intersects $\hat I_K$ because it intersects $K$, and if $T\subset \hat I_K$ then
\eqref{K-homeo} holds but the previous part of the lemma.
Let $T'$ be the component of $T\setminus \{x\}$ disjoint from $\hat I_K$. Then if $f(x)$ is accumulated by $K$ from $f(T')$, then $x\in NO(f,K)$, so $x\in S'$, a contradiction. In the remaining case $f(T')\cap K=\emptyset$. Since $T''=T\setminus T'\subset \hat I_K$, we have again the property
\eqref{K-homeo} for $T''$ by the previous part of the lemma. Hence \eqref{K-homeo} holds for $T$.

\smallskip

The converse, i.e. that existence of $x\in S'(f,K)\cap T$ for an open $T$ implies lack of the $K$-diffeomorphism property follows immediately from definitions.
\end{proof}

\

The following property, stronger than topological transitivity and weaker than weak exactness, is useful:

\begin{defi}\label{backtrans} For $f:X\to X$ a  continuous map for a compact metric space $X$
and $x\in X$ denote
$A_\infty(x):=\bigcup_{j\ge 0} f^{-j}(x)$. Then $f$ is said to satisfy
\emph{strong transitivity} property\footnote{In previous versions of this paper we called this property: \emph{density of preimages  property}.},  abbr. (st),
if for every $x\in X$ the set $A_\infty(x)$ is dense in $X$.
\end{defi}
See  e.g. \cite{Kameyama} for this notion and references therein.
It is clear that weak exactness implies (st).
In fact for generalized multimodal pairs,  (st) corresponds to topological transitivity. Namely

\begin{prop}\label{dp}
For every $(f,K)\in\sA$ the map $f|_K$ satisfies strong transitivity property.
\end{prop}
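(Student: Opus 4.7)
The plan is to bootstrap topological transitivity to a local-eventual-onto property using the no-wandering-intervals theorem, after which strong transitivity follows at once. Fix $x \in K$ and a nonempty open $V \subset K$; we must exhibit $y \in V$ with $f^n(y) = x$ for some $n \geq 0$. Choose an open interval $T \subset \hI_K$ with $\emptyset \neq T \cap K \subset V$.

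First I would apply the no-wandering-intervals theorem \cite[Ch.~IV, Thm.~A]{dMvS} to $T$, after extending $f$ if necessary to a $C^2$ non-flat multimodal map on a compact interval containing $\hI_K$. Topological transitivity of $f|_K$ makes the forward orbit of $T \cap K$ dense in $K$, and Remark~\ref{finite indifferent} excludes any attracting periodic orbit in $K$, so the iterates $\{f^n(T)\}_{n \geq 0}$ cannot converge to a periodic attractor. Thus there exist $n_1 < n_2$ with $f^{n_1}(T) \cap f^{n_2}(T) \neq \emptyset$; setting $L := f^{n_1}(T)$ and $k := n_2 - n_1$, the overlap propagates through iteration so that consecutive intervals $f^{ik}(L)$, $f^{(i+1)k}(L)$ always meet, making $M := \overline{\bigcup_{i \geq 0} f^{ik}(L)}$ a closed non-degenerate interval with $f^k(M) \subset M$. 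The finite union $M^\ast := \bigcup_{i=0}^{k-1} f^i(M)$ is then a closed $f$-invariant subset of $\R$ whose intersection with $K$ has nonempty relative interior, since $L \cap K$ is infinite by the finite-to-one character of $f^{n_1}$ together with $K$ having no isolated points (Lemma~\ref{infinite}). Any closed forward-invariant subset of $K$ with nonempty relative interior must equal $K$ under topological transitivity, as otherwise its open complement would be disjoint from all its forward iterates. Hence $K \subset M^\ast$, and every $x \in K$ decomposes as $x = f^{i_0}(y)$ with $y \in M \cap K$ and $0 \leq i_0 < k$.

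The remaining task is to pull $y$ back inside $T \cap K$. The restriction $f^k \colon M \to M$ is a $C^2$ non-flat multimodal map of the interval $M$ whose maximal forward-invariant subset is $M \cap K$, and on which $f^k$ is topologically transitive, via the cyclic decomposition of $K$ inherited from transitivity of $f|_K$. A Blokh-type local-eventual-onto theorem for transitive multimodal interval maps (see \cite[\S III.4]{dMvS}) then supplies $N$ with $f^{kN}(U \cap K) \supset M \cap K$ for every open $U \subset M$ meeting $K$. Applied to $U = \mathrm{int}(L)$, which meets $K$ by the $K$-diffeomorphism property of $f^{n_1}|_T$ from Lemma~\ref{NO} (after shrinking $T$ if needed to avoid the finitely many critical points of $f^{n_1}$), this produces $z \in T \cap K \subset V$ with $f^{kN + n_1 + i_0}(z) = x$, completing the argument. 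The main obstacle will be the local-eventual-onto step in the Cantor case (case~2 of Lemma~\ref{infinite}), where Blokh's classical interval-map argument must be adapted to the restriction $f^k|_{M \cap K}$; this adaptation rests on no wandering intervals together with the absence of attracting or indifferent periodic orbits in $K$.
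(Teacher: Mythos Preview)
Your outline has a genuine gap at the crucial ``pull back'' step. You reduce strong transitivity of $f|_K$ to a \emph{local-eventual-onto} statement for $f^k|_{M\cap K}$, then appeal to a ``Blokh-type'' theorem for transitive multimodal maps. But this invocation is circular: the local-eventual-onto property for a transitive map on a Cantor set is at least as hard as strong transitivity itself, and you acknowledge the Cantor case needs a separate adaptation. Worse, the adaptation you sketch ``rests on \ldots\ the absence of attracting or indifferent periodic orbits in $K$'', yet $(f,K)\in\sA$ \emph{permits} finitely many indifferent periodic orbits (Remark~\ref{finite indifferent}). So the hypothesis you need is simply not available here. There are also subsidiary issues you gloss over: that $M\cap K$ is the maximal forward-invariant set in $M$ for $f^k$ (you only know maximality of $K$ in $\hI_K$, and $M$ is built from iterates of the \emph{extended} map), and that $f^k|_{M\cap K}$ is itself topologically transitive (the $f^i(M)\cap K$ may overlap, so the cyclic-decomposition argument does not apply cleanly).

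The paper's argument avoids all of this. It sets $W:=\bigcup_{n\ge 0}\hat f^n(T)$ in $\hI_K$ and shows, by a direct ``boundary-component'' counting argument exploiting no wandering intervals and the finiteness of $\partial\hI_K$, that $W$ has only \emph{finitely many} connected components. Since $W\cap K$ is dense in $K$ by transitivity, $W$ already covers $K$ up to a finite set $\sE$; the remaining points of $\sE\cap K$ are then handled by an ad hoc argument showing they would have to be periodic with no preimages outside their orbit, contradicting transitivity. No local-eventual-onto theorem, no reduction to a return map, and no assumption on indifferent orbits is needed. If you want to salvage your approach, you would have to replace the black-box in Step~4 by an explicit finiteness argument of this kind.
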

The proof is not immediate. We provide it in Appendix A, in Proof of \ref{trans implies exact}, Step 1.
This fact is related to the fact that each piecewise continuous piecewise monotone mapping of the interval which is topologically transitive is strongly transitive, see \cite{Kameyama}.

In the general continuous mappings setting, it is easy to provide examples of topologically transitive maps which do not satisfy (st).

\

Similarly to Julia sets in the complex case, the following  holds in the interval case:

\begin{prop}\label{density periodic}
For every $(f,K)\in \sA_+$ the set of repelling periodic orbits for $f|_K$ is dense in $K$.
\end{prop}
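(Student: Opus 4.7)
The plan combines two ingredients: positive entropy produces a hyperbolic horseshoe, and topological transitivity (via Proposition~\ref{dp}) gives dense preimages of every point of $K$. Together these will yield a hyperbolic repelling periodic point in every prescribed ball $B(x,\varepsilon)\cap K$.

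\emph{Horseshoe from positive entropy.} First I would use the Misiurewicz horseshoe construction for $C^2$ multimodal maps with non-flat critical points: from $h_\ttop(f|_K)>0$ one obtains $N\ge 1$ and disjoint closed intervals $J_1,J_2\subset\hI_K$, disjoint from $\Crit(f)$ and meeting $K$, with $f^N|_{J_i}$ monotone and $f^N(J_i)\supset J_1\cup J_2$. The invariant set $\Lambda:=\bigcap_{k\ge 0}f^{-kN}(J_1\cup J_2)$ sits inside $K$ by maximality, is conjugate under $f^N$ to the full two-shift, and (after shrinking $J_1,J_2$ so as to avoid the finitely many indifferent periodic orbits of Remark~\ref{finite indifferent}) is uniformly expanding by Ma\~n\'e's theorem: $|(f^{k_0N})'|\ge\lambda>1$ on $\Lambda$ for some $k_0\ge 1$. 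I would then fix a hyperbolic repelling periodic point $q\in\Lambda$ of period $\ell=k_0N$.

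\emph{Transfer to $B(x,\varepsilon)$.} Given $x\in K$ and $\varepsilon>0$, Proposition~\ref{dp} provides preimages of $q$ dense in $K$, and removing the countable set of preimages whose forward orbit meets $\Crit(f)$ still leaves a dense set. Hence I would choose $n\ge 0$ and $z\in B(x,\varepsilon/2)\cap K$ with $f^n(z)=q$ and orbit $\{z,\ldots,f^{n-1}(z)\}$ disjoint from $\Crit(f)$, producing a monotone inverse branch $\phi=(f^n)^{-1}$ mapping a small neighborhood $U\ni q$ diffeomorphically onto a neighborhood $W=\phi(U)\ni z$ with $W\subset B(x,\varepsilon)$. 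By the same argument applied to $x$, one finds $z_0\in U\cap K$ with $f^{n'}(z_0)=x$ and a monotone branch of $f^{n'}$ from a neighborhood $V_0\subset U$ of $z_0$ onto a neighborhood $Z_0\ni x$; shrinking $U$ while keeping $V_0\subset U$ fixed, one arranges $W\subset Z_0\subset B(x,\varepsilon)$. By the hyperbolic expansion at $q$ and Koebe-type distortion bounds valid uniformly on $\Lambda$, for each $k\ge 1$ there is a monotone pull-back $U_k\subset U$ of $V_0$, i.e.\ $f^{k\ell}(U_k)=V_0$, with $|U_k|\le C\lambda^{-k}|V_0|$.

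\emph{Conclusion.} Setting $T_k:=\phi(U_k)\subset W$ and $F_k:=f^{n+k\ell+n'}$, the restriction $F_k|_{T_k}$ is a monotone diffeomorphism with $F_k(T_k)=Z_0\supset W\supset T_k$. The intermediate value theorem applied to $F_k-\mathrm{id}$ then produces a fixed point $p_k\in T_k\subset B(x,\varepsilon)$; by maximality of $K$ in $\hI_K$, $p_k\in K$. The chain rule gives $|F_k'(p_k)|\ge c\lambda^k$ for a constant $c>0$ depending only on $n,n'$ and the distortion bounds, so $p_k$ is hyperbolic repelling for $k$ sufficiently large, which yields the density. The hard part will be the Koebe-type bounded distortion estimate for the pull-back $f^{k\ell}|_{U_k}$ uniformly in $k$: this is where the uniform hyperbolicity of $\Lambda$ together with $C^2$ regularity and non-flatness of critical points come in, via the classical real Koebe lemma for interval maps.
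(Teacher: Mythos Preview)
Your approach parallels the paper's first proof (build a small hyperbolic set in $K$ near a well-chosen point, then use strong transitivity to make such points dense), but there is a genuine gap at ``by maximality of $K$ in $\hI_K$, $p_k\in K$''. Maximality gives $p_k\in K$ only once the \emph{entire} periodic orbit $\{f^j(p_k):0\le j<n+k\ell+n'\}$ lies in $\hI_K$. Each $f^j(p_k)$ sits in a short interval containing a point of $K$ (a forward iterate of $z$, of $q$, or of $z_0$), but that interval can protrude from $\hI_K$ whenever the anchoring point lies in $\partial\hI_K$. Since the proposition does not assume weak isolation, ``close to $K$'' does not suffice. The fix is to avoid the full singular set $S(f,K)=\Crit(f)\cup\partial\hI_K$, not just $\Crit(f)$, when selecting $q$, $z$, $z_0$ and their finite orbit segments; this is precisely what the paper's ``safe'' and ``safe forward'' conditions accomplish in the construction of Lemma~\ref{Phyp}. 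The same issue already bites in your horseshoe step: to get $\Lambda\subset K$ from maximality you need $\bigcup_{j<N}f^j(J_1\cup J_2)\subset\hI_K$, which again requires keeping away from $\partial\hI_K$.

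Two smaller points. Your appeal to Ma\~n\'e's theorem is fragile because $\sA_+$ permits indifferent periodic orbits in $K$, and merely shrinking $J_1,J_2$ does not obviously remove them from $\bigcup_j f^j(\Lambda)$; however, you actually only use local expansion at the single hyperbolic repelling point $q$, so this detour is unnecessary (such a $q$ exists since the horseshoe yields infinitely many periodic orbits while only finitely many periodic orbits in $K$ are non-hyperbolic, by Remark~\ref{finite indifferent}). And the distortion bound you call the ``hard part'' is the routine $C^2$ estimate for inverse branches near a hyperbolic repelling fixed point; neither the real Koebe lemma nor non-flatness of critical points is needed there.
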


Below are sketches of two proofs. Unfortunately the proofs are not immediate and refer to material discussed further on in this paper. On the other hand they are rather standard.

 \begin{proof}[Proof 1] Apply Theorem~\ref{Katok} or facts in Section~\ref{pressure} following it. To be concrete, consider an arbitrary safe, safe forward, expanding point $z_0$.
 In Proof of Lemma~\ref{Phyp},
 we find an invariant isolated hyperbolic set $X\subset K$ whose all trajectories (in particular a periodic one) pass arbitrarily close to $z_0$.

 Due to $h_{\ttop}(f|_K)>0$ we find infinitely many such periodic orbits and all except at most a finite set of them are hyperbolic repelling, see Remark~\ref{finite indifferent}. Finally notice that safe, safe forward, and expanding points form in $K$ a backward invariant set (i.e. its preimage is contained in it).
 Therefore having just one such point $z_0$ whose existence is asserted in Lemma~\ref{Phyp}, we have
 the set $A_\infty(z_0)$ of safe, safe forward, expanding points. Clearly this set is dense in $K$,
 see Definition~\ref{backtrans} and the discussion following it.
 \end{proof}

  \begin{proof}[Proof 2] It is easy to prove the density of periodic orbits  for piecewise continuous piecewise affine topologically transitive maps of interval with fixed slope $\beta>1$. It follows for $f|K$ due to the semiconjugacy as in Proof of Lemma~\ref{trans implies exact}, Step 2.
  \end{proof}

 \

Finally let us discuss some easy technical facts related to weak exactness property or to strong transitivity.

\begin{rema}\label{wexact2}
For
$f:X\to X$ a  continuous map for a compact metric space $X$, for $x\in X$ and for each positive integer $k$, denote similarly to $A_\infty(x)$,
$$
A_{k}(x):=\bigcup_{0\le j \le k} f^{-j}(x)
$$

Assume that $f$ is weakly exact.
Then it is easy to see (use compactness and Lebesgue number), that there exists
$N\ge 0$ depending only on $f$ such that for every $\e >0$ there exists $n(\e )\ge 0$
such that for every $n\ge n(\e )$ and every ball $B(x_0,\e )$ for $x_0\in X$, we have
$\bigcup_{j=0}^N f^{n+j}(B(x_0,\e ))= X$.

In particular $A_{n+N}(x)$
is
$\e $-dense in $X$ (that is, for each $y\in X$ there exists $y'\in A_{n+N}(x)$ such that $\dist(y,y') < \e $).
For this however it is sufficient to assume the strong transitivity property, see Definition~\ref{backtrans}.

Namely the following holds:
Assume that $f$ is strongly transitive. Then, for every $\e >0$ there exists $k(\e )$ such that for every $k\ge k(\e )$ and every $x,y\in X$ it holds $A_k(x)\cap B(y,\e )\not=\emptyset$.

\end{rema}

\

\subsection{On exceptional sets}

\

We shall  explain here why exceptional and weakly exceptional sets must be finite, see Definition~\ref{exceptional}, and provide some estimates.

\begin{prop}\label{except-finite}
For every $(f,K)\in\sA_+$ (or equivalently $(f,K)\in\sA$ such that $f|_K$ is weakly topologically exact), for every finite $\Sigma\subset K$ each weakly $\Sigma$-exceptional (in  particular weakly  $S$-exceptional) set is finite and its cardinality is bounded by a constant depending only on $\#\Sigma$ and $(f,K)$.
\end{prop}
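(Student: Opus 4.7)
The plan is to first show that $E$ sits inside a finite initial segment of the forward orbit of $\Sigma$ (yielding finiteness), and then to upgrade this to a cardinality bound uniform in $E$ by a preimage-counting argument.

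First I would iterate the defining inclusion $(f|_K)^{-1}(E) \subset E \cup \Sigma$: by induction on $n \geq 1$,
\[
(f|_K)^{-n}(E) \subset E \cup \bigcup_{i=0}^{n-1} (f|_K)^{-i}(\Sigma).
\]
Since $E$ is not dense in $K$, one can pick a relatively open ball $V = B(y_0,\epsilon) \cap K$ contained in $K \setminus \overline{E}$. By the refined form of weak topological exactness in Remark~\ref{wexact2}, there exist $N = N(f)$ and $n_0 = n(\epsilon)$ with $\bigcup_{j=0}^{N} f^{n_0+j}(V) = K$. For every $x \in E$ choose $y \in V$ and $j \in \{0,\ldots,N\}$ with $f^{n_0+j}(y) = x$; since $y \notin E$, the iterated inclusion forces $f^i(y) \in \Sigma$ for some $0 \leq i \leq n_0 + j - 1$, and hence $x = f^{n_0+j-i}(f^i(y)) \in f^{n_0+j-i}(\Sigma)$ with $n_0 + j - i \geq 1$. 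Consequently
\[
E \subset \bigcup_{k=1}^{n_0+N} f^k(\Sigma),
\]
a finite set, proving that $E$ is finite.

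For the bound uniform in $E$, I would count preimages in $K$. Maximality of $K$ in $\hI_K$ gives $f^{-1}(x) \cap K = f^{-1}(x) \cap \hI_K$ for each $x \in K$, so the preimage count $d_*(x) := |f^{-1}(x) \cap K|$ is piecewise constant on $K$ off a finite set of boundary and critical values. The positive entropy assumption forces the lap number of $f|_{\hI_K}$ to be at least $2$, and the overlap of images of adjacent laps at each turning critical value forces the \emph{bulk} value of $d_*$ on $K$ to be at least $2$. Hence there is a finite set $\Sigma_{\mathrm{def}} \subset K$ of cardinality $C_0 = C_0(f,K)$ with $d_*(x) \geq 2$ for $x \in K \setminus \Sigma_{\mathrm{def}}$. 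Combining the upper bound $|f^{-1}(E) \cap K| \leq |E| + |\Sigma|$ (from $f^{-1}(E) \cap K \subset E \cup \Sigma$) with the lower bound $|f^{-1}(E) \cap K| = \sum_{x \in E} d_*(x) \geq 2|E| - C_0$ then yields $|E| \leq |\Sigma| + C_0$.

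The delicate step is justifying the ``bulk $d_* \geq 2$'' claim in full generality; the overlap of adjacent lap images makes it plausible, but ruling out configurations where $K$ is a Cantor set sitting asymmetrically with respect to the laps of $f$ uses the transitivity and positive entropy of $f|_K$ (without which $f|_K$ could in principle be injective on $K$ off a finite set). Absent this refinement, the weak-exactness half of the argument alone gives finiteness of $E$ and the non-uniform estimate $|E| \leq (n(\epsilon)+N)|\Sigma|$ depending on the size of the largest ball in $K \setminus \overline{E}$, which is the main obstacle to a clean bound.
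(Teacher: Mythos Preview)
Your finiteness argument is correct and is a clean alternative to the paper's: pushing the ball $V\subset K\setminus\overline E$ forward by weak exactness and reading off the iterated inclusion $(f|_K)^{-n}(E)\subset E\cup\bigcup_{i<n}(f|_K)^{-i}(\Sigma)$ traps $E$ inside $\bigcup_{k=1}^{n_0+N}f^k(\Sigma)$. The paper instead argues by contradiction via a pigeonhole on pairwise disjoint backward sets $A_k(z_t)$; your forward approach is more direct. As you note, though, the resulting bound $(n_0+N)\,\#\Sigma$ depends on $E$ through the size of $V$.

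Your device for making the bound uniform does not work, because the premise is false: it is not true for $(f,K)\in\sA_+$ that $d_*(x)\ge 2$ off a finite set. Take $\hI_K=[0,1]\cup[2,3]$ with $f|_{[0,1]}$ an increasing diffeomorphism onto $[2,3]$ and $f|_{[2,3]}$ a full unimodal map onto $[0,1]$ (for instance $f(x)=x+2$ on $[0,1]$ and $f(x)=1-(2x-5)^2$ on $[2,3]$). Then $K=\hI_K$, the pair $(f,K)$ lies in $\sA_+$ with $h_{\ttop}(f|_K)=\tfrac12\log 2$, yet every point of the open interval $(2,3)\subset K$ has exactly one $f|_K$-preimage; so the inequality $2|E|-C_0\le|E|+\#\Sigma$ cannot be set up. The paper obtains the uniform bound by a different mechanism: it fixes once and for all a hyperbolic repelling periodic orbit $O(p)\subset K\cap\interior\hI_K$ and a small interval $T(p)\ni p$ on which the inverse branches $g_n$ along $O(p)$ are defined, contract into $\hI_K$, and avoid $\Sigma$ off $O(p)$. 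This furnishes a ball independent of $E$ on which to run the pigeonhole, giving $\#E'\le(\#\Sigma)(C(\epsilon)+k(\epsilon))$ for any weakly $\Sigma$-exceptional $E'$ disjoint from $T(p)$; for general $E$ one passes to $E'=E\setminus O(p)$, lands a point $w\in E'\cap T(p)$ if $\#E'$ exceeds this bound, and iterates $g_n$ at $w$ to produce infinitely many points of $E$, contradicting the finiteness already proved.
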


\begin{proof}
 Part 1. Let an interval $T$ centered in $K$  be disjoint from a weakly  $\Sigma$-exceptional set $E$.
 It exists since by definition $E$ is not dense in $K$. Denote  $\e =|T|/2$.

Let $k(\e )$ be an integer found for $\e $ as in Remark~\ref{wexact2}. Then for every $z\in E$ and
the set
$A_k(z)$ intersects $T$. Suppose $E$ is infinite. Hence, as, given $k$,  all $\#A_k(z)$ are uniformly bounded,  given $k$, with respect to $z$, there is an infinite sequence of points $z_t\in E$ such that all $A_k(z_t)$ are pairwise disjoint. Indeed, it is sufficient to define inductively an infinite sequence of points $z_t\in E$ such that for every $i<j$ and every $s:0\le s \le  k$ \, $f^s(z_i)\not= z_j$ and
$z_j\notin A_k(z_i)$. If there exists $x\in A_k(z_i)\cap A_k(z_j)$ then there exist $s_1,s_2$ between 0 and $k$ such that $f^{s_1}(x)=z_i$ and $f^{s_2}(x)=z_j$. Then $f^{s_2-s_1}(z_i)=z_j$ or $f^{s_1-s_2}(z_j)=z_i$ depending as $s_2>s_1$ or $s_1>s_2$, a contradiction.

 Since $\Sigma$ is finite, we obtain at least one $A_n(z_k)$ disjoint from $\Sigma$. Then $A_n(z_k)\subset E$ by the definition of an exceptional set, hence $E$ intersects $T$, a contradiction.

\

Part 2. Now we find a common upper bound for $\#E$ for all weakly $\Sigma$-exceptional $E$, depending on $\#\Sigma$ and $(f,K)$.
 For this we repeat the consideration in Part 1. with more care.

 First, consider $O(p)$ a repelling periodic orbit in $K\cap \interior \hI_K$. It exists because there are infinitely many such orbits in $K$ by $h_{\ttop}(f|_K)>0$, see e.g. Theorem~\ref{Katok}.
 Let $T=T(p)$ be now an open interval centered at $p$ with $|T|=2\e $ small enough that the backward branches $g_n$ inverse to $f^{n}$, along $O(p)$, exist on $T$,\; $g_n(T)\subset \hI_K$ and $g_n(T)\to O(p)$ as $n\to\infty$, and else $(g_n(T)\setminus O(p))\cap \Sigma=\emptyset$.

 Define $k=k(\e )$ as in Part 1 of the Proof.
 Denote  $\sup_{z\in K} \#A_k(z)$ by $C=C(\e )$. Find consecutive points $z_t\in E$ by induction as follows.
 Take an arbitrary $z_1\in E$. Next find $z_2\in E$ so that
 $A_k(z_1)\cap A_k(z_2)=\emptyset$. This is possible if $\#E>C+k$. The summand $C$ is put to avoid $z_2\in A_k(z_1)$, the summand $k$ is put to avoid $z_2=f^s(z_1)$ for $0<s\le k$.
 To be able to repeat this until finding $z_m$ for an arbitrary integer $m>0$, we need
 $\#E>(m-1)(C + k)$.

 Assume that $E$ is disjoint from $T(p)$. If $m>\#\Sigma$, then at least one $A_k(z_t)$ for $t=1,...,m$ is disjoint from $\Sigma$ and intersects $T(p)$. Hence $E$ intersects $T(p)$, a contradiction.
 Therefore
 \begin{equation}\label{Card E}
 \#E\le C(f,K,\e ):=(\#\Sigma )(C(\e )+k).
 \end{equation}

 Consider finally $E$ being an arbitrary non-dense weakly $\Sigma$-exceptional set (previous $E$ was  assumed to be disjoint from $T(p)$). Notice that if $\#E>\#O(p)$ then $E':=E\setminus O(p)$ is also a weakly $\Sigma$-exceptional (notice that removal of a forward invariant set from a weakly $\Sigma$-exceptional set leaves the rest weakly $\Sigma$-exceptional or empty).

 We conclude with
 \begin{equation}\label{Card E'}
 \#E \le \#O(p) + (\#\Sigma )(C(\e )+k),
  \end{equation}
  see (\ref{Card E}). Indeed, if this does not hold, then $\#E'> C(f,K,\e )$, hence there exists $w\in E'\cap T(p)$. Then the trajectory of $w$ under $g_n$ omits $\Sigma$ hence it is in $E\setminus O(p)$ (notice that it is in $\hI_K$ hence in $K$ by  maximality of $K$) and therefore $E$ is infinite which contradicts finiteness proved already in Part 1. of the Proof.
\end{proof}

\

\subsection{Backward stability}

\

The following notion is useful
\begin{defi}\label{pull-back} Let $(f,K,{\bf{U}})\in \sA$. For any interval $T\subset {\bf{U}}$ intersecting $K$, or a finite union of intervals each intersecting $K$,
and
for any positive integer $n$, we call an interval $T'$ its \emph{pull-back} for $f^{n}$, of order $n$, or just a \emph{pull-back}
if it is a component of $f^{-n}(T)$ intersecting $K$.

Notice that, unlike in the complex case, $f^n$ need not map $T'$ onto $T$. This can happen either
if $T'$ contains a turning point or if an end point of $T'$ coincides with a boundary point
of ${\bf{U}}$. We shall prevent the latter possibility by considering $T$ small enough (or having all components small enough), see Lemma~\ref{shrinking}.

\end{defi}

\

The absence of wandering intervals, \cite[Ch. IV, Th. A]{dMvS}, see also comments in Example\ref{Julia} and Proof of Proposition~\ref{reduce}, and Lemma~\ref{infinite}, is reflected in the following

\begin{lemm}\label{nonwandering}
 Let $f:U\to \R$ for open set $U\subset\R$ be a $C^2$ map with non-flat critical points
 and $X\subset U$ be a compact $f$-invariant set.

Let $T\subset U$ be an open interval such that one end $x$ of $T$ belongs to $X$ and
$f^k(T)\cap X=\emptyset$ for all $k\ge 0$ (we consider $f^k$ on its domain which can be smaller than $T$).

Then either $x$ is eventually periodic,
that is there exists $n,m>0$ such that
$f^n(x)=f^{n+m}(x)$,
or attracted to a (one-sided) attracting  periodic orbit in $X$.

If $(f,X,U)\in \sA$, then in the former case the periodic orbit $O(f^n(x))$ is disjoint from $\Crit(f)$ and contains a point belonging to $\partial(\hI_X)$.
 \end{lemm}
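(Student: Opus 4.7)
The plan is to apply the absence of wandering intervals for $C^{2}$ maps with non-flat critical points (\cite[Ch.~IV, Theorem~A]{dMvS}) to the interval $T$. This yields a dichotomy: either (a) $T$ lies in the basin of attraction of an attracting or indifferent periodic orbit $O(p)$, or (b) there exist integers $0 \le i < j$ with $f^{i}(T) \cap f^{j}(T) \neq \emptyset$. In case (a), the iterates $f^{n}(T)$ converge in Hausdorff distance to $O(p)$; combined with $f^{n}(x) \in X \cap \overline{f^{n}(T)}$ and closedness of $X$, this gives $O(p) \subset X$ and $f^{n}(x) \to O(p)$, yielding the attraction alternative of the lemma.

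In case (b), set $k = j - i$ and let $I$ denote the connected component of $\R \setminus X$ containing $f^{i}(T) \cup f^{j}(T)$ (both are disjoint from $X$ and overlap). I claim by induction that $f^{i+nk}(T) \subset I$ for every $n \ge 0$. For the inductive step, applying $f^{k}$ to the non-empty set $f^{i+(n-1)k}(T) \cap f^{i+nk}(T)$ (non-empty by the previous step, with base $n = 1$ being the hypothesis) produces a non-empty subset of $f^{i+nk}(T) \cap f^{i+(n+1)k}(T)$; since both intervals lie in $\R \setminus X$ and $f^{i+nk}(T) \subset I$, the latter is contained in $I$ as well. Consequently $f^{i+nk}(x) \in X \cap \overline{I} = \partial I$ for every $n$, and since $\partial I$ has at most two points, pigeonhole furnishes $n_{1} < n_{2}$ with $f^{i+n_{1}k}(x) = f^{i+n_{2}k}(x)$, so $f^{i+n_{1}k}(x)$ is $f$-periodic and $x$ is eventually periodic.

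For the final assertion under $(f,X,U) \in \sA$: disjointness of the periodic orbit from $\Crit(f)$ is immediate from Lemma~\ref{NO}, which forbids critical points of $f$ from being periodic. To produce a point of the periodic orbit in $\partial \hI_{X}$, let $N$ be such that $f^{N}(x)$ is periodic; I claim there exists $m \ge N$ with $f^{m}(T) \not\subset \hI_{X}$. Were this false, each $y \in T$ would satisfy $f^{N+\ell}(y) \in \hI_{X}$ for all $\ell \ge 0$, so maximality of $X = K$ in $\hI_{K}$ would give $f^{N}(y) \in X$, contradicting $f^{N}(T) \cap X = \emptyset$. For such an $m$, the interval $f^{m}(T)$ lies in a component $J$ of $\R \setminus X$ not contained in any single $\hI^{j}$; such an ``external'' $J$ has $\partial J \subset \partial \hI_{X}$, and since $f^{m}(x) \in X \cap \overline{J} = \partial J$, the periodic point $f^{m}(x)$ lies in $\partial \hI_{X}$.

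The principal obstacle is the inductive argument in case (b) that traps the entire arithmetic-progression orbit $\{f^{i+nk}(T)\}_{n \ge 0}$ inside the single gap $I$; once this is in place, finiteness of $\partial I$ closes eventual periodicity by pigeonhole. The boundary assertion is delicate only because one must push the escape from $\hI_{X}$ past the pre-period $N$, which is precisely where the maximality of $K$ in $\hI_{K}$ is used.
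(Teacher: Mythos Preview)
Your overall approach is natural, but there is a genuine gap in case~(b) that stems from the parenthetical in the hypothesis: ``$f^k(T)\cap X=\emptyset$ for all $k\ge 0$ (we consider $f^k$ on its domain which can be smaller than $T$)''. The map $f$ is only defined on $U$, so once an iterate of $T$ leaves $U$, further iteration by $f$ is undefined. To invoke \cite[Ch.~IV, Thm.~A]{dMvS} you must extend $f$ to some multimodal $g\colon I\to I$, and the dichotomy is then between (a) the $g^n(T)$ pairwise disjoint and (b) $g^i(T)\cap g^j(T)\neq\emptyset$ for some $i<j$. In case~(a) your argument does go through: pairwise disjointness forces $|g^n(T)|\to 0$, whence $g^n(x)=f^n(x)\to O(p)$ and $O(p)\subset X$ by closedness. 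But in case~(b), as soon as some earlier iterate of $T$ has left $U$, the sets $g^i(T),g^j(T)$ need no longer be disjoint from $X$ (the hypothesis controls only $f$ on its domain, not the extension $g$), so your choice of a single gap $I\subset\R\setminus X$ containing $g^i(T)\cup g^j(T)$ is unjustified. If instead you insist on working with $f$ itself, the inductive step ``apply $f^k$ to $f^{i+(n-1)k}(T)\cap f^{i+nk}(T)$'' fails for the same reason: that intersection sits in a gap of $X$ which need not lie in $U$, so $f^k$ may simply be undefined there.

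The paper's proof confronts exactly this obstruction. It fixes a thin neighbourhood $U_X$ of $X$ and at each step replaces the image by the component $T_n$ of $f(T_{n-1})\cap U_X$ adjacent to $f^n(x)$. If eventually $T_n=f(T_{n-1})$ (Case~1), the orbit stays in $U_X\subset U$ and the no-wandering/overlap argument runs as you intend. If instead the $T_n$ reach $\partial U_X$ for infinitely many $n_j$ (Case~2), then each $T_{n_j}$ is an interval in $U_X\setminus X$ with one end in $\partial U_X$; since $U_X$ has finitely many components, there are only finitely many such intervals, so the other endpoints $f^{n_j}(x)\in X$ must repeat, giving eventual periodicity. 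Your proposal has no analogue of this Case~2, and that is where it breaks down.
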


\begin{proof}
We shall only use $f$ restricted to a small neighbourhood $U_X$ of $X$ in $U$.
We consider $U_X=\{x:\dist(x,X)<\delta\}$ for a positive $\delta$. So $U_X$ has finite number of components. We assume that $\delta$ is small enough that $U_X\setminus X$ does not contain critical points.

Let us define by induction $T_0=T$ and $T_n$ the component of $f(T_{n-1})\cap U_X$ containing $f^{n}(x)$ in its closure.
We consider two cases:

\

Case 1.

There exists $n_0$ such that for all $n\ge n_0$,\,
$f(T_{n-1})\subset U_X$, that is $T_n=f(T_{n-1})$, in particular
$f(T_{n-1})\cap U_X$ is connected so there is no need to specify a component.

Suppose that all $T_n$ are pairwise disjoint for $n\ge n_0$. As in the previous proofs
extend $f|_{U_X}$ to a $C^2$ multimodal map $g:I\to I$ of a closed interval containing $U_X$ with all critical points non-flat.
Then $T_{n_0}$ is a wandering interval for $g$ since on its forward orbit $f$ and $g$ coincide (unless it is attracted to a periodic orbit) which is
not possible by \cite[Ch. IV, Th. A]{dMvS}.

So let $n'>n\ge n_0$ be such that $T_n\cap T_{n'}\not=\emptyset$. Then the end points
$f^n(x)$ and $f^{n'}(x)$ of $T_n$ and $T_{n'}$ respectively, belonging to $X$, do not belong to the open interval $T^1:=T_n\cup T_{n'}$. If these end points coincide, then $f^n(x)=f^{n'}(x)$ i.e. $x$ is eventually periodic.  If they do not coincide
consider $T^2:=f^{n'-n}(T^1)$. The intervals $T^1$ and $T^2$ have the common end $x^1:=f^{n'}(x)$.
Since $f^{n'-n}$ changes orientation on $f^n(T)$ (i.e has negative derivative) and there are no critical points in $T^1\subset U_X\setminus X$, it changes orientation on $T^1$. Hence
$T^2=T^1$ and $x^1$ is periodic of period $2(n'-n)$.

\

Case 2. There is a sequence $n_j\to \infty$ such that $f(T_{n_j-1})$ are not contained in $U_X$.
For each such $n_j$ one end of
$T_{n_j}$ is $f^{n_j}(x)\in X$ and the other end is in $\partial U_X$. Remember that $T_{n_j}$ is disjoint from $X$. Now notice that there is only a finite number of such intervals.
So again there are two different $n_j$ and $n_{j'}$ such that $f^{n_j}(x)=f^{n_{j'}}(x)$, hence
$x$ is eventually periodic.

\

Suppose now that $(f,X,U)\in \sA$. Suppose that $x$ is preperiodic and put $p=f^n(x)$ periodic. Then
 $p\notin\Crit(f)$ since otherwise $p$ would be an attracting periodic point so it could not be in $K$.

If $O(p)\cap\partial(\hI_X)=\emptyset$ then  the forward  orbit of $T':=f^n(T)$ would stay in $\hI_X$. It cannot leave this set because it would capture a point belonging to $\partial(\hI_X)$ hence belonging to $X$.
This is however not possible since by the maximality of $X$ in $\hI_X$ the forward orbit of $T'$ would be in $X$.

\end{proof}

Now we are in the position to prove a general lemma about shrinking of
pull-backs.

\begin{lemm}\label{shrinking} For every $(f,K, {\bf{U}})\in \sA_+$
and for every $\e >0$ there exists $\delta>0$ such that  if $T$ is an arbitrary open interval in $\R$
 intersecting $K$, disjoint from
$\Indiff(f)$,
and satisfying  $|T| \le \delta$,
then
for every $n\ge 0$ and every component $T'$ of $f^{-n}(T)$ intersecting $K$ (i.e pull-back, see Definition~\ref{pull-back})
we have $|T'|\le \e $. Moreover
the lengths of all components of $f^{-n}(T)$ intersecting $K$ converge to 0 uniformly as $n\to\infty$.

In particular, for $\delta$ small enough the closures in $\R$ of all $T'$ are contained in $U$.

\end{lemm}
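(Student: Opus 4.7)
The plan is to argue by contradiction. Suppose the first assertion fails: there exist $\epsilon_0>0$ and, for each $k\ge 1$, an open interval $T_k\subset\R$ with $|T_k|\le 1/k$, intersecting $K$, disjoint from $\Indiff(f)$, together with a positive integer $n_k$ and a pull-back $T'_k$ of $T_k$ under $f^{n_k}$ with $T'_k\cap K\ne\emptyset$ and $|T'_k|>\epsilon_0$. Passing to a subsequence, I would use a Blaschke-type selection to arrange $T'_k\to T'_\infty$ in the Hausdorff metric, with $T'_\infty$ a compact interval of length $\ge\epsilon_0$, and $T_k\to\{y\}$ for some $y\in K$. I then fix a non-degenerate compact subinterval $I\subset\mathrm{int}(T'_\infty)$ whose closure meets $K$; for $k$ large, $I\subset T'_k$, so $I$ lies in the domain of $f^{n_k}$ and $f^{n_k}(I)\subset T_k$, whence $|f^{n_k}(I)|\to 0$ with $f^{n_k}(I)\to\{y\}$ in the Hausdorff metric.

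First I would dispose of the case where $n_k$ has a bounded subsequence: after extracting so that $n_k=n_0$ is constant, $f^{n_0}(I)$ would be a fixed interval of diameter $\le 1/k$ for all $k$ large, hence a single point; but $f^{n_0}$ is $C^2$ with only non-flat critical points, and its restriction to each of the finitely many subintervals of $I$ between critical points is non-constant, so $f^{n_0}(I)$ is a non-degenerate interval, a contradiction. Therefore $n_k\to\infty$.

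For the unbounded case I would exploit the fact that the forward orbit of the fixed positive-length interval $I$ has a subsequence of iterates of vanishing diameter. Invoking the absence of wandering intervals (\cite[Ch.~IV, Theorem~A]{dMvS}) together with Lemma~\ref{nonwandering} applied to a small subinterval of $I$ with endpoint in $\overline{I}\cap K$, I would conclude that $I$ is contained in the immediate basin of some attracting or indifferent periodic orbit $O(p)\subset K$ whose orbit contains the point $y$. Topological transitivity of $f|_K$ (cf.\ Remark~\ref{finite indifferent}) forbids attracting periodic orbits in $K$, hence $O(p)\subset\Indiff(f)$ and in particular $y\in\Indiff(f)$.

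The step I expect to be the main obstacle is deriving the final contradiction from $y\in\Indiff(f)$ combined with $T_k\cap\Indiff(f)=\emptyset$ and $T_k\to\{y\}$: the intervals $T_k$ lie in a one-sided neighbourhood of the indifferent periodic point $y$ without containing it, and I would use the standard petal/flower description of the local dynamics near $y$ from \cite[Ch.~IV]{dMvS} together with the maximality of $K$ in $\hat I_K$ (which prevents attracting petals of $O(y)$ from contributing components of pull-backs meeting $K$) to show that every inverse branch of $f^{n_k}$ producing a pull-back $T'_k$ intersecting $K$ is a composition of local inverse branches confined to repelling petals of $O(y)$, and hence is uniformly contracting; this forces $|T'_k|\to 0$, contradicting $|T'_k|>\epsilon_0$. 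The uniform-in-$n$ shrinking statement follows by the same contradiction applied to a fixed $T$ with $n_k\to\infty$, and the concluding ``in particular'' is obtained by choosing $\epsilon<\dist(K,\R\setminus{\bf U})$.
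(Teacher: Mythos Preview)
Your outline has two genuine gaps before you even reach the step you flag as the ``main obstacle.''

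First, you assume you can choose a compact $I\subset\interior(T'_\infty)$ with $I\cap K\ne\emptyset$. This fails precisely when $K\cap T'_\infty$ reduces to an endpoint of $T'_\infty$: the points $a_k\in T'_k\cap K$ may all converge to one end of the limit interval. This is not a technicality --- the paper's proof splits into two cases at exactly this juncture. When the interior of the limit interval meets $K$, the contradiction is immediate via weak topological exactness (Definition~\ref{wexact}, which holds by Lemma~\ref{trans implies exact}): writing $L_n:=f^{j_n}(L')\subset T_n$ one has $|L_n|\to 0$, yet $\bigcup_{j=0}^N f^j(L_n)=K$ for all large $n$, which is impossible since $K$ is infinite. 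When only the endpoint $a$ of the limit interval lies in $K$, one replaces $L$ by a shorter one-sided $L'$ with $f^k(L')\cap K=\emptyset$ for all $k\ge 0$, and \emph{only then} does Lemma~\ref{nonwandering} apply, giving that $a$ is eventually periodic with the periodic orbit meeting $\partial\hI_K$.

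Second, your invocation of Lemma~\ref{nonwandering} is incorrect even in the case you treat: its hypothesis is that $f^k(T)\cap K=\emptyset$ for \emph{all} $k\ge 0$, whereas you only have $|f^{n_k}(I)|\to 0$ along a subsequence, with $I\cap K\ne\emptyset$. What you would actually need there is the Contraction Principle \cite[Ch.~IV, 5.1]{dMvS}; see Remark~\ref{back stab}. Third, your proposed petal argument is simply false: a pull-back $T'_k$ of $T_k$ intersecting $K$ is in no way ``confined to repelling petals of $O(y)$''; its forward orbit under $f$ may wander through all of $K$ before landing in $T_k$ at time $n_k$. The hypothesis $T\cap\Indiff(f)=\emptyset$ is used in the paper quite differently and more directly: once $a$ is shown eventually periodic with periodic point $a'\in\partial\hI_K$, if $a'$ were indifferent then the attracting side of $a'$ is the side of $f^s(L')$ (disjoint from $K$), so the points $f^s(a_n)\in K$ lie on the other side; hence $f^s(T'_n)$ contains $a'$, and since $a'$ is periodic, $T_n\supset f^{j_n-s}(\{a'\})$ contains a point of $O(a')\subset\Indiff(f)$, the desired contradiction.
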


\begin{proof}
 Since $(f,K, {\bf{U}})\in \sA_+$ then $f|_K$ is weakly topologically exact, Lemma~\ref{trans implies exact}. (We shall not use $h_{\rm{top}}(f|_K)>0$ anymore in Proof of Lemma~\ref{shrinking}.)

Suppose there is a sequence of intervals $T_n$ intersecting $K$,  disjoint from $\Indiff(f)$, with $|T_n|\to 0$, a sequence of integers $j_n\to \infty$ and a sequence of intervals $T'_n$  being some components of
$f^{-j_n}(T_n)$ respectively, intersecting $K$, with $|T'_n|$ bounded away from 0. Then, passing to a subsequence if necessary,
we find a non-trivial open interval $L\subset I$ such that $L=\lim_{n\to\infty} T'_n$ (in the sense of convergence of the end points).

Suppose that $L$ intersects $K$. Then there exists an open interval $L'\subset L$ with
closure contained in $L$ such that $L'$ intersects $K$.
Then $T'_n\supset L'$ for $n$ large enough, hence $L_n:=f^{j_n}(L')\subset T_n$.
By weak topological exactness, see Definition~\ref{wexact},
there exists $N>0$ such that for all $n$ large enough
$\bigcup_{j=0}^N f^j(L_n)\supset K$ which is not possible, since the lengths of $L_n\subset T_n$ tend to 0 and $K$ is infinite.

Suppose now that $L$ does not intersect $K$. Since all $T'_n$ intersect $K$ we can choose
 $a_n\in T'_n\cap K$. Let $a$ be the limit of a convergent subsequence $a_{n_t}$ (to simplify notation we shall omit the subscript $t$). Then
 $a\in K$ by the compactness of $K$, hence it is one of the end points of $L$.

All iterates $f^k$ are well defined on the whole $L$ since for every $n$\; the map $f^{j_n}$ is well defined on $T'_n$ hence all $f^k, k\le j_n$ are well defined on $T'_n$, \;  $j_n\to\infty$ and $T'_n\to L$.
If we replace $L$ by $L'$ having the same end $a$ but shorter at the other end, then as in the previous case $T'_n\supset L'$ for $n$ large enough (all $T'_n$ for $n$ large enough contain $a$ since the points $a_n$ lie on the other side of $a$ than $L$; otherwise $L$ would intersect $K$). Hence $f^{j_n}(L')\subset T_n$. (Though the $f^{j_n}$ are defined on $L$ we need to shorten it to $L'$ to have the latter inclusion. So we shall use $L'$ instead of $L$ in the sequel.)

If there is $k>0$ such that
$f^k(L')$ intersects $K$ then this contradicts weak topological exactness as in the previous case.

If no $f^k(L')$ intersects $K$ we can apply Lemma \ref{nonwandering} and conclude that $a$  is eventually periodic.
Moreover by Lemma \ref{nonwandering} we can find periodic $a'=f^s(a)$  such
that $a'\in\partial(\hI_K)\setminus\Crit(f)$.

We conclude that all
$|f^k(L')|$ for $k$ large enough are bounded away from 0 by a constant $D(a')$.  This is obvious
if  $a'$ (more precisely its orbit $O(a'))$) is repelling. We just define
$D(a'):=\dist (O(a'), \partial W)$ for $W$ as in Definition~\ref{periodic}.
Finally notice that $a'$ cannot be indifferent. Indeed, if $a'$ were one-sided attracting, the attracting side would be the side on which  $f^s(L')$ lied, disjoint from $K$. Hence all $f^s(a_n)$ would be on the other side. Hence $f^s(T'_n)$ and therefore $T_n$ would contain indifferent periodic points (belonging to  $O(a')$) for $n$ large enough, what would contradict the
assumptions.

 We again arrive at contradiction with $|T_n|\to 0$.

\

The proof that if $|T|$ is small enough then for any sequence  $T'_n$ being components of $f^{-j_n}(T)$ respectively, for varying $T$, intersecting $K$ (i.e. pull-backs, see Definition~\ref{pull-back})  we have uniformly $|T'_n|\to 0$, is virtually the same.

Let us be more precise.
Since the set $\partial(\hI_K)$ is finite,  $D=\min_{a'} D(a')$ is positive, where $a'$ are
 repelling periodic points belonging to $\partial(\hI_K)$.
Let $\hD>0$ be an arbitrary number less than $D$ and such that for no $T$ with $|T|\le \hD$ the inclusion $\bigcup_{j=0}^N f^j(T)\supset K$ is possible.

Fix an arbitrary integer $k>0$. Suppose there is a sequence of intervals $T(n)$ intersecting $K$ with $|T(n)|\le \hD$, disjoint from $\Indiff(f)$ and for each $T(n)$ there exist its pull-back $T'_n$ of order $j_n$ with $j_n\to\infty$ as $n\to \infty$, such that for all $n$,\, $|T_n'|\ge 1/k$. Then we find a limit $L$ and $L'\subset L$ and arrive at contradiction as before. Therefore there exists $j(k)$ such that for all $T$ with $|T|\le \hD$ every pull-back of $T$ of order at least $j(k)$ is shorter than $1/k$. This applied to every $k$   gives for $k\to\infty$ the asserted uniform convergence of $|T'_n|$ to 0.

\end{proof}

\begin{rema}\label{back stab}

The first conclusion of
Lemma~\ref{shrinking}, that $|T'| < \varepsilon$, is called \emph{backward
Lyapunov stability}. To obtain this conclusion it is sufficient
 to assume $(f,K)\in \sA$. In particular weak exactness
assumption is not needed. The proof of backward Lyapunov stability can be then proved as follows.

In the case where no $f^k(L')$ intersects $K$,
 only Lemma \ref{nonwandering} has been used in Proof of Lemma \ref{shrinking}, where weak exactness was not assumed.

In the remaining case we can use
\cite[Contraction Principle 5.1]{dMvS}. Namely,
extend $f$ to a $C^2$ multimodal map $g:I\to I$, where $I\supset {\bf{U}}$, as in Proof of Lemma~\ref{nonwandering} (keep for $g$ the notation $f$).
Then, for $L'$ as in  Proof of Lemma \ref{shrinking}, due to $\liminf_{n\to\infty} |f^n(L')|\le \liminf_{n\to\infty} |T_n|=0$, by Contraction Principle, either $L'$ is wandering or attracted to a periodic orbit $O$. The former, wandering case, is not possible by \cite[Ch.IV, Th. A]{dMvS}. In the latter case $O\subset K$, since all $f^n(L')$
intersect $K$ for $n$ large enough by forward invariance of $K$. This however contradicts  topological transitivity of $f|_K$, see Proof of Lemma~\ref{infinite}.

\

Backward Lyapunov stability can fail in the complex case, e.g. for $f$ on its Julia set, even for $f$ being a quadratic polynomials with only hyperbolic repelling periodic orbits. See \cite[Remark 2]{Levin}.

\end{rema}

\

%-----------------------------------

\subsection{Weak isolation}

\

Below is the example of a generalized multimodal pair not satisfying (wi), promised in Introduction
\begin{exem}\label{notwi}
First consider $f:[-1,1]\to [-1,1]$ defined by $f(x):=4x^3-3x$. This is degree 3 Chebyshev polynomial with fixed points $-1$ and 1 and critical points $-1/2, 1/2$ mapped to 1 and $-1$ respectively.
For $x: -1\le x \le 1, x\not=\mp 1/2$ let $i(x)=0$, 1 or 2, depending as $x<-1/2, -1/2<x<1/2$ or $x>1/2$.
Define the itinerary of each $x\in(-1,1)$  whose trajectory does not hit $\mp 1/2$ by the infinite sequence
$\underline i(x)=(i(x),i(f(x)),...,i(f^n(x)),...)$ and of each $x$ such that $f^n(x)=-1/2$ or $1/2$ by the finite sequence $(i(x),i(f(x)),...,i(f^{n-1}(x)))$, empty for $x=\mp 1/2$. Define the itinerary as empty set also for $x=\mp 1$. Later on we shall usually omit commas in the notation of these sequences. Let $T$ be the open interval of points in $[-1,1]$ for which the itinerary starts with the block $11$. Then the forward trajectory of $\partial T$ is disjoint from the closure of $ T$ (compare Definition \ref{nice couple}).
We define
$$
K:= [0,1]\setminus \bigcup_{n=0}^\infty f^{-n}(T).
$$
Notice that $K$ can be defined as the set of all points $[-1,1]$ for which the itinerary does not contain the forbidden block $11$.

Finally define $\hI_K:=[-1,1]\setminus T$, and consider $(f,K,\hI_K,{\bf{U}})\in \sA$,
where $\bf{U}$ is an arbitrary two components neighbourhood of $[-1,1]\setminus T$ and $f$ is smoothly extended to ${\bf{U}}\setminus [-1,1]$  keeping the previous $f$ on $[-1,1]$, in particular on $[-1,1]\cap{\bf{U}}$.

\

The map $f|_K$ is topologically transitive and even topologically exact.
 Here is the standard proof: $f|_K$ is the factor of the topological Markov chain $(\sigma,\Sigma_A)$, with the $3\times 3$ transition 0-1 matrix $A=(a_{ij})$ having all entries equal to 1, except $a_{11}=0$.
 $\Sigma_A$ is by definition the space of all infinite sequences $a_0a_1...$ where $a_i=0,1$ or 2 with no two consecutive 1's and $\sigma$ is the shift to the left, $\sigma((a_j))_k=a_{k+1}$.   The projection, i.e. the `coding' $\pi$, is defined in the standard\footnote{In our Chebyshev case  we can just write $\pi(a_0a_1...) = - \cos ( \pi  \sum_{n = 0}^\infty  a_n / 3^n)$,
  and use
  the identity $f(\cos(\theta)) = \cos(3\theta)$ to get $f\circ\pi = \pi\circ\sigma$.}
  way, as follows.
 Consider an arbitrary sequence $a_0a_1...$ of integers 0,1, or 2, not containing the forbidden block 11 and for each integer $n$ consider the associated cylinder $C_n(a_0...a_n)$,
  i.e. the set of all sequences in $\Sigma_A$ starting from the this block. Define $\pi'(C_n(a_0...a_n))$ as the set of all points in $[-1,1]$ whose itinerary starts from the block $a_0...a_n$. Then $\pi((a_0a_1...))$ is (uniquely) defined as $\bigcap_{n\to\infty}\overline{\pi'(C_n(a_0...a_n))}$.
 We obtain by construction $\pi\circ\sigma= f\circ\pi$. The continuity of $\pi$ also follows easily from the construction.

Consider an arbitrary $C_n=C_n(a_0...a_n)$.
Then $\sigma^{n+2}(C_n)=\Sigma_A$, i.e. the whole space. This is so because every sequence $(b_j)\in\Sigma_A$ is the $\sigma^{n+2}$-image of $(a_0...a_na_{n+1}b_0b_1...)$ belonging to $C_n$  for $a_{n+1}$ being 0 or 2, since the latter sequence does not contain forbidden 11. (Notice that in the case $a_n\not=1$ we do not need $a_{n+1}$ hence $\sigma^{n+1}$ would be sufficient.)
The topological exactness of $\sigma$ on $\Sigma_A$ immediately implies the topological exactness of the factor $f|_K$.

It is easy to calculate that $h_{\ttop}(\sigma)=\log (1+\sqrt 3)$, which is positive.
Since $\pi$ is a coding  {\it via} Markov partition,  $h_{\ttop}(\sigma)=h_{\ttop}(f|_K)$, compare e.g.
\cite[Theorem 4.5.8]{PU}; in fact this equality easily follows from the fact that our coding $\pi$ is at most 2-to-1 (use
the definition of the topological entropy {\it via} $(n,\e )$-nets, for any such net in $K$ consider its $\pi$-preimage in $\Sigma_A$).
In particular $h_{\ttop}(f|_K)$ is positive. Thus $(f,K)\in \sA_+$.
(In fact `positive entropy' follows already from the topological exactness, see Proposition~\ref{exact implies htop}.)

\

For each $n\ge 2$ there is a  periodic point in $[-1,1]\setminus K$
of period $n$ with the periodic orbit having the itinerary being the concatenation of the blocks
$1100...0$ of length $n$, hence intersecting $T$, therefore not in $\hI_K$. It is arbitrarily close to $K$ for $n$  large, since the interval encoded by the block $1100...0$ of length $n$ is adjacent to  the interval encoded by the block $1200...0$ of length $n$. Therefore the weak isolation condition (wi) is not satisfied.
\end{exem}

\

\subsection{Bounded Distortion and related notions}\label{ss-distortion}

\

\

In Section~\ref{s:nice} we shall use `bounded distortion' properties formally stronger than BD defined in Definition~\ref{distortion}.

\begin{defi}\label{LBD}
We say that $(f,K,{\bf{U}})$ satisfies \emph{H\"older bounded distortion} condition,
abbr. HBD, if there exist constants $\alpha:0<\alpha\le 1$ and  $\delta>0$ such that
for every $\tau>0$ there exists a constant $C(\tau)$ such that the following holds:

For every  pair of intervals $I_1 \subset {\bf{U}}$, $I_2\subset \R$ such that
$|I_2|\le \delta$,
if $f^n$
maps diffeomorphically  $I_1$ onto $I_2$ for a positive integer $n$, then
for every interval $T\subset I_2$ such that $I_2$ is a $\tau$-scaled
neighbourhood of $T$, for $g=(f^n|_{I_1})^{-1}$ and for all $x,y\in T$
$$
|g'(x)/g'(y)|\le C(\tau)(|x-y|/|T|)^\alpha.
$$

For $\alpha=1$ we call this condition: \emph{Lipschitz bounded distortion} condition and
abbreviate to LBD.

\end{defi}

\begin{rema}\label{LBD2}

The conditions BD and even LBD are true if $(f,K,{\bf{U}})$ is in $\sA^3$ and  Schwarzian derivative of $f$  is negative on ${\bf{U}}$.
%(without assuming $I_1$ intersects $K$, as $K$ is in this situation irrelevant), see \cite{dMvS}.
More precisely
$$
|g'(x)/g'(y)|< ({1+2\tau\over \tau^2}+1){|g(x)-g(y)|\over |g(T)|},
$$
see e.g.
\cite[Section 2.2]{BT1} or \cite[Ch.IV, Theorem 1.2]{dMvS}.
In consequence
$$
|g'(x)/g'(y)|< C(\tau) {|x-y|\over |T|}.
$$

The conditions BD and LBD are true for every multimodal map of interval $f:I\to I$ if $f$ is $C^3$ and all periodic orbits are hyperbolic
repelling, by an argument in
\cite[Chapter 3]{BRSS} using \cite{vSV}, decomposing $f^n$ into a negative Schwarzian block till the last occurrence of $f^j(I_1)$ close to $\Crit(f)$ and an expanding one, within a distance from $\Crit(f)$.

 Hence we can assume that BD and even LBD are true for $(f,K,{\bf{U}})\in\sA^3$ if all periodic orbits in $K$ are hyperbolic repelling, by an appropriate modification (if necessary) of $f$
outside $K$ (in fact only outside $\hI_K$) so that also outside $K$ all periodic orbits are hyperbolic repelling. See Appendix A, Lemma~\ref{good extension} for details, and \cite[Chapter 3]{BRSS} cited above.

\smallskip

In this paper if we need to use BD, similarly LBD or HBD, for $(f,K,{\bf{U}})\in\sA$ but do not need smoothness higher than $C^2$, we just assume them.

\smallskip

It is conceivable, see \cite{vSV}, BD and LBD hold for all $(f,K,{\bf{U}})\in \sA$ (i.e. $C^2$)  provided the set of
all periodic orbits in $\bf{U}$
except hyperbolic repelling orbits, is in a positive distance from $K$, i.e. there are no such orbits if we shrink $\bf{U}$.

\end{rema}

\

\section{Non-uniformly hyperbolic interval maps}\label{TCE}

In this Section we shall prove Theorem C.
Let us recall some definitions, see e.g. \cite{PR-LS1}, \cite{NP} and \cite{R-L}, adapted to  $C^2$ generalized multimodal quadruples, namely for $(f,K,\hI_K,{\bf{U}})\in \sA$, see Introduction.

\

\noindent
$\bullet$ \ TCE. \  {\it Topological Collet-Eckmann condition.}
        There exist  $M \ge 0, P \ge 1$ and $r>0$ such that for every
$x\in K$ there exists a strictly increasing sequence of positive integers
$n_j$, for $j=1,2,...$ such that  $n_j \le P \cdot j$ and for each $j$
$$
        \#\{i:0 \le i < n_j, \Comp_{f^i(x)}  f^{-(n_j-i)}B(f^{n_j}(x),r)
                \cap\Crit(f) \not= \emptyset \} \le M.
$$

\noindent
$\bullet$ \ ExpShrink. \ {\it Exponential shrinking of components.}
        There exist $\lambda_{\Exp}>1$ and $r>0$  such that for every $x \in K$,
every $n > 0$ and every connected component $W$ of $f^{-n}(B(x, r))$ intersecting $K$ (pull-back, see Definition~\ref{pull-back}), we have
$$
        |W| \le \lambda_{\Exp}^{-n}.
$$

\

\noindent
$\bullet$ \ Lyapunov hyperbolicity. {\it  Lyapunov exponents of invariant measures are
bounded away from zero}.
        There is a constant $\lambda_{\Lyap} > 1$ such that the Lyapunov exponent
of any
        invariant probability measure $\mu$ supported on $K$
        satisfies $\Lambda(\mu)\ge \log \la_{\Lyap}$.

\

\noindent
$\bullet$ \ Negative Pressure. {\it Pressure for large $t$ is negative}.
        For large values of $t$ the pressure function
        $P(t)$ is negative.

\

 \noindent
$\bullet$ \ UHP. \ {\it Uniform Hyperbolicity on periodic orbits.}
        There exists $\lambda_{\Per} > 1$ such that every periodic point $p
\in K$ of period $k \ge 1$ satisfies,
$$
        |(f^k)'(p)|\ge \lambda_{\Per}^k.
$$

\noindent
$\bullet$ \ UHPR. \ {\it Uniform Hyperbolicity on hyperbolic repelling periodic orbits.}
The same as UHP but only for hyperbolic repelling periodic points in $K$.

\

\noindent
$\bullet$ \ CE2*$(z_0)$. \ {\it Backward} or {\it the Second Collet-Eckmann condition
 at $z_0 \in K$.}
        There exist $\lambda_{\CE2}= \lambda_{\CE2}(z_0) >1$ and $C>0$ such that for every $n \ge 1$ and
every $w \in f^{-n}(z_0)$ close to $K$,
$$
|(f^n)'(w)|\ge C \lambda_{CE2}^n.
$$
\emph{close to $K$} means that for  a constant $R>0$ (not depending on $n$ and $w$) the pull-back of $B(z_0,R)$ for $f^{n}$ containing  $w$
intersects $K$.

%\begin{lemm}\label{ad} Let $(f,K)\in \sA$.
%Then
%\begin{equation}\label{average distance}
%{\sum_{j=0}^n}{}'  -\log |f^j(x)-c| \le Qn
%\end{equation}
%for a constant $Q>0$ an arbitrary $c\in\Crit(f)$, every $x\in K$ and every integer
%$n>0$. $\Sigma'$ means we omit in the sum at most one index $j$ of smallest distance  $|f^j(x)-c|$.
%\end{lemm}

%This Lemma first appeared in \cite{DPU} in the complex setting, named Rule II.
%In subsequent \cite{NP} where the lemma appeared in the real setting,the absence of hyperbolic attracting periodic orbits was assumed. Here the absence of such orbits in a small neighbourhood of $K$ also takes place by the assumption of its maximality of $K$ if we assume  Bounded Distortion, see Remark~\ref{more periodic}. In fact the absence of such orbits with large periods is sufficient, which follows from \cite[Ch. IV, Theorem B]{dMvS}.

In the proof of CE2*$(z_0)\Rightarrow$ ExpShrink (see the comment in Subsection~\ref{ss:TCE}),  we shall use the following fact:

\begin{lemm}\label{bg}
There exist $L\ge 1$ and $\kappa, a>0$ such that for every interval $T$ intersecting $K$ and its
pullback $T_j$ for $f^j$ (i.e. connected component $T_j$ of $f^{-j}(T)$ intersecting $K$) if $|T|\le a$,
then
\begin{equation}\label{backward growth} |T_j|\le L^j|T|^\kappa.
\end{equation}
\end{lemm}

  The proof is as in the complex setting in \cite[Lemma 3.4]{DPU} and is based on the "Rule II" of \cite{DPU} adapted to the real case in \cite{NP}. This rule is a general estimate on the average distance (in logarithmic scale) of any finite orbit to the set of critical points. It was considered in
\cite[Appendix]{NP} for multimodal maps of interval with no attracting periodic orbits. We can extend $f$ to such a map similarly to Lemma~\ref{good extension}, as due to bounded distortion the mapping $f$ has no attracting periodic orbits in a neighbourhood of $\hI_K$.

\smallskip

We shall need also the following standard
\begin{lemm}\label{critical distortion} Let $f:U\to \R$
be a map of class $C^2$ with only non-flat
critical points.
Then there exists $C_1>0$ such that for every interval $T\subset \R$, every component
$T_1$ of its pre-image by $f^{-1}$ and for every $x\in T_1$, we have
$$
|T_1|/|T|\le C_1 |f'(x)|^{-1}.
$$
\end{lemm}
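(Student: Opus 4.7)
The plan is to prove the distortion estimate $|T_1|\cdot|f'(x)|\le C_1|T|$ by a local case analysis based on the position of $T_1$ relative to $\Crit(f)$, exploiting the non-flat representation $f(y)=\pm|\phi(y)|^d+f(c)$ near each critical point $c$ of order $d$. Together with the $C^1$ bounds on $\phi$ and $\phi^{-1}$, this yields uniform constants so that $|f'(y)|\asymp|y-c|^{d-1}$ on a small neighbourhood $B(c,\delta_0)$ of each critical point, while on the compact set $A:=\overline{I}\setminus\bigcup_{c\in\Crit(f)}B(c,\delta_0/2)$ the continuous positive function $|f'|$ is trapped between two positive constants $m_0,M_0$. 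A routine compactness argument, relying on the fact that non-flatness forbids $f$ from being constant on any interval, shows that the diameters of components of $f^{-1}(T)$ tend to zero uniformly as $|T|\to 0$, so one may reduce to the case $|T_1|\le\delta_0$; the complementary regime is handled trivially, since both $|T|$ and $|f'(x)|\cdot|T_1|$ are then bounded between positive constants depending only on $f$.

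I then split into two cases. When $T_1$ contains no critical point in its interior, $f|_{T_1}\colon T_1\to T$ is a homeomorphism and $|T|=\int_{T_1}|f'(y)|\,dy$, so it suffices to prove $\max_{T_1}|f'|\le C\cdot|T|/|T_1|$. If $T_1\subset A$, this is immediate from $M_0/m_0$. Otherwise $T_1\subset B(c,\delta_0)$ for some critical $c$, and by monotonicity of $f|_{T_1}$ I may parametrize $T_1=\{c+s:s\in[a,b]\}$ (up to reflection) with $0\le a<b$. Then $|T|\asymp b^d-a^d$ and $\max_{T_1}|f'|\cdot|T_1|\asymp b^{d-1}(b-a)$, and the elementary identity
$$
b^d-a^d=(b-a)\sum_{j=0}^{d-1}b^{d-1-j}a^j\ge(b-a)\,b^{d-1}
$$
closes this case.

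In the second case, $T_1=[a,b]$ contains a critical point $c\in(a,b)$, unique by the choice of $\delta_0$. Since $f(a),f(b),f(c)\in\overline{T}$, one has
$$
|T|\ge\max\bigl(|f(a)-f(c)|,\,|f(b)-f(c)|\bigr)\asymp\max\bigl(|a-c|^d,|b-c|^d\bigr)\gtrsim|T_1|^d.
$$
For any $x\in T_1$ we have $|x-c|\le|T_1|$, hence $|f'(x)|\lesssim|T_1|^{d-1}$ and $|f'(x)|\cdot|T_1|\lesssim|T_1|^d\lesssim|T|$ follows.

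The main obstacle is bookkeeping rather than any conceptual difficulty: one must check that every implicit constant depends only on $f$ (through the maximum critical order, the minimum separation of critical points, the non-flat constants of the $\phi$'s, and $\|f'\|_\infty$) and not on $T$, $T_1$, or $x$. Once the local non-flat coordinates are set up, the entire argument reduces to the elementary inequality above together with continuity of $|f'|$ away from $\Crit(f)$.
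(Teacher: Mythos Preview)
The paper does not prove this lemma; it is introduced there as ``the following standard'' fact, with no argument given. Your proof is correct and is exactly the natural argument one would expect: the non-flat local form $f=\pm|\phi|^d+f(c)$ reduces the estimate near a critical point to the elementary inequality $b^d-a^d\ge b^{d-1}(b-a)$, while away from $\Crit(f)$ the bound follows from the positive upper and lower bounds on $|f'|$.

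Two cosmetic points worth tightening. First, in Case~1 the identity $|T|=\int_{T_1}|f'|$ need not be an equality when $T_1$ touches $\partial I$; in general one only has $|T|\ge|f(T_1)|=\int_{T_1}|f'|$, but this is precisely the direction you use. Second, your dichotomy ``$T_1\subset A$ or $T_1\subset B(c,\delta_0)$'' is guaranteed once $|T_1|\le\delta_0/2$ rather than $|T_1|\le\delta_0$, so carry out the initial reduction with $\delta_0/2$ in place of $\delta_0$. Neither point affects the substance of the argument.
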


\

\begin{proof}[Proof of Theorem C]

 Notice that each of the properties listed in the statement of Theorem C implies that all periodic trajectories in $K$ are hyperbolic repelling. For the non-trivial case: UHPR, see Remark~\ref{onUHP}.

Assume that $(f,K)\in \sA^3_+$. By the hyperbolicity of periodic orbits in $K$ mentioned above (or just due to the assumption of weak isolation), the assumption 2. in
Lemma~\ref{good extension} holds. Hence by Lemma~\ref{good extension} we can change $f$ to $g$ outside a neighbourhood of $\hI_K$ so that all periodic orbits for $g$ outside $K$ are hyperbolic repelling. If we assume that also all periodic orbits in $K$ are hyperbolic repelling then $(g,K)\in\sA_+^{\BD}$, see
Subsection~\ref{ss-distortion}.

\smallskip

Since all the properties listed in Theorem C
%are local, that is
depend only on the map on a small neighbourhood of $K$ (some only on $f$ on $K$ itself),
%then the satisfaction of each of them, or lack of satisfaction, is the same for $f$ and $g$
%Therefore
it is sufficient to prove Theorem C for $(f,K)\in\sA_+^{\BD}$.

\smallskip

The fact that ExpShrink implies TCE was proved in the interval case in \cite[Section 1]{NP}.
We used there the "Rule II" estimate mentioned above. Here the proof is similar. %(\ref{average distance}).

The opposite implication for maps having only hyperbolic repelling periodic points does not have a direct proof in \cite{NP} but one can adapt word by word
 the proof provided in \cite[Section 4]{P-Holder} for the complex case, namely the "telescope method". One applies Lemma \ref{shrinking}.

The proof of ExpShrink $\Rightarrow$ Lyapunov hyperbolicity, is the same as in the complex case, see
\cite[Proposition 4.1]{PR-LS1}. Also the proof of Lyapunov hyperbolicity $\Leftrightarrow$ Negative Pressure
is the same. Lyapunov hyperbolicity immediately implies UHP. The only difficulty is to prove that
UHP (or UHPR) implies ExpShrink. In \cite{NP} the paper \cite{NS} was referred to, but it concerned only the unimodal case. Below is a proof for the generalized multimodal case,
following the same strategy as for the
case of multimodal maps in \cite{R-L}, but with a part of the proof being
different. We use Corollary~\ref{bg}, not needed in \cite{R-L}.

\

First notice that UHPR implies EC2*$(z_0)$. For this it is sufficient to take an arbitrary $z_0\in K$ safe and expanding. Its existence follows from Lemma~\ref{Phyp}.

Fix
$\e >0$ such that $B(K,\e )\subset U$, the neighbourhood of $K$ in the definition of (wi), Definition~\ref{wisolated}. Let $R:=\delta>0$ be chosen for $\e >0$ as in Lemma~\ref{shrinking}.  Consider an arbitrary $z_n=w\in f^{-n}(z_0)$ as in the definition of CE2*$(z_0)$, i.e. such that $T_n$ being the pull-back  of $B(z_0,R)$ for $f^{n}$, containing $z_n$ intersects $K$. We find a repelling periodic orbit $O(p)$ which follows $f^j(z_n)$ for $j=0,1,...,n$, with period $n(p)$ not much bigger than $n$ and $\log |(f^n)'(z_n)|/n$ is at least (up to a factor close to 1)
$\log |(f^{n(p)})'(p)|/n(p)$ for $n$ large, hence positive bounded away from 0 by UHPR.
By (wi) $O(p)\subset K$.

We have followed here Proof of Lemma \ref{Phyp}, where more details are provided.
See also, say, \cite[Lemma 3.1]{PR-LS1} for a detailed proof.

\

Now assume that CE2*$(z_0)$ holds. We shall prove ExpShrink. Notice first that there are only hyperbolic repelling  periodic points in $K$, since otherwise the derivatives of $f^{-j}$ of backward branches at $z_0$ converging
to periodic orbits that are not hyperbolic repelling would not tend to 0 exponentially fast.

Notice that the set of $z$'s for which CE2*$(z)$ holds is dense in $K$ because it is backward invariant and $\bigcup_{n=0}^\infty f^{-n}(z_0)\cap K$ is dense in $K$, see Proposition~\ref{dp}. One should be careful however because the constant $C$ in the definition of CE2*$(z)$ can depend on $z\in \bigcup_{n=0}^\infty f^{-n}(z_0)$.

To prove ExpShrink assume to simplify notation (this does not hurt the generality) that no critical point in $K$ contains another critical point in its forward orbit, see Definition \ref{exceptional}, item 2.

\

Case 1. Consider an arbitrary interval $T$ of length not bigger than $R$, whose end points $z_0, z'_0$ satisfy CE2*
with
common constants $R$, $C$ and $\lambda$.
Consider $S\supset T$ its $1/2$-scaled neighbourhood in $\R$.
Consider consecutive pull-backs $T_j$ of $T$ intersecting $K$ and accompanying pull-backs $S_j$ of $S$ until $S_n$ captures a critical value for the first time, for some $n=n_1$.
By bounded distortion property  Definition~\ref{distortion},
writing $T_n=[z_n,z'_n]$, we get
\begin{equation}\label{ratio1}
|T|/|T_n|\ge  C(1/2)^{-1} \max \{|(f^n)'(z_n)|, |(f^n)'(z'_n)|\}.
\end{equation}
Next, for a constant $C_1$ by Lemma~\ref{critical distortion}

\begin{equation}\label{ratio2}
|T|/|T_{n+1}|\ge  C_1^{-1} C(1/2)^{-1} \max \{|(f^{n+1})'(z_{n+1})|, |(f^{n+1})'(z'_{n+1})|\}.
\end{equation}
Here we also write $T_{n+1}=[z_{n+1},z'_{n+1}]$, but unlike before both $z_{n+1}$ and $z'_{n+1}$ can be $f^{n+1}$ pre-images of the same $z$ or $z'$, i.e. both can be $f$-pre-images of the same $z_n$ or $z'_n$.

Next we consider $S^1$ the $1/2$-scaled neighbourhood of $T^1:=T_{n+1}$ and pull-back as before until for $n=n_2$ the pull-back $S^1_{n}$  of $S^1$ captures a critical point.

We continue for an arbitrarily long time $m$. Notice that each $n_s$ is larger than an arbitrary constant if $T$ is short enough. This follows from Lemma~\ref{shrinking}.

Hence we finish with
  $$
|T|/|T_{m}|\ge (C_1 C(1/2))^{-\beta m} \max \{|(f^{m})'(z_{m})|, |(f^{m})'(z'_{m})|\}.
$$
for $\beta>0$ an arbitrarily small constant.
Hence
 \begin{equation}\label{ratio3}
|T|/|T_{m}|\ge  (C_1 C(1/2))^{-\beta m}  C \lambda^m.
\end{equation}
This proves ExpShrink for $T$ with $\lambda'_{\Exp}$ arbitrarily close to $\lambda$.
Case 1 is done.

\

Let $\{Q^j\}$ be the family of all open intervals in $\R\setminus K$ with boundary points in $K$ or $\pm\infty$ of length at least $R/6$.
Let $\partial Q$ denote the set of all finite boundary points of $Q:=\bigcup Q^j$.
Denote $D_1:=f(\Crit(f))\setminus \partial Q$ and $D_2=f(\partial Q)\setminus \partial Q$.

Let $\rho=\min_{i=1,2}\dist(D_i,  \partial Q)$. By definition, $\rho>0$. Finally let $N\ge 0$ be an integer such that for each $n\ge N$, each pull-back for $f^{n}$ of every interval
of length not larger than $R$, intersecting $K$, has length less than $R':=\min\{\rho/2, R/3\}$.

Fix $r:0<r<\min \{\rho/2, R/12\}$. Fix $A\subset K$ an arbitrary finite set of points for which CE2* holds and which is $r$-dense in $K$. We consider common $C$ in the definition of CE2* for all points in $A$.

\smallskip

Now consider an arbitrary  interval $T$ intersecting $K$ of length at most $R$.
Replace $T$ by $\hat T$ being a pull-back of $T$ for $f^N$. Then $|\hat T|\le R'$ and the same estimate holds for all pull-backs of $\hat T$.

The case where $\hat T$  is contained in an interval $[z,z']$ of length not bigger than $R$,
for $z,z'\in A$,
follows from Case 1.

Below we consider the remaining case where, roughly, there is a gap $Q^j$ on at least one side of $\hat T$.

\

Case 2.  $\hat T$ intersects $[b,b']$ for $b\in\partial Q$ and $b'\in A$ the closest to $b$ point in $A$. We set $z_0=b'$ and consider a new $T$ by adding $[b,b']$ intersecting $\hat T$ to the original $\hat T$. Its length is less than
$R/12 + R/3 <R/2$.

Let us start to proceed as before by taking pull-backs of $S$ being the $1/2$-scaled neighbourhood of the new $T$. Fix an arbitrary $m>0$.
Let $k:0<k\le m$ be the first time when the pull-back $T_k$ does not intersect any $[b,b']$. Then we first estimate the ratio
$|T|/|T_k|$.
If $k$ above does not exist we estimate the final $|T|/|T_m|$.

Notice that for all $j\le k$ the corresponding preimages $z_j\in T_j$ of $z_0$ exist and belong to $\R\setminus Q$.
We apply induction. For all $0\le j < k$ we have $|T_j|<R'$ hence $T_j\subset B(\partial Q, r+R')$. Since
$r<\rho/2$ and $R'<\rho/2$,\; $T_j\subset B(\partial Q,\rho)$. Hence, for $T'_j:=T_j\setminus \overline{Q}$, by $\rho<D_2$, we obtain
$T_j'\cap f(\partial Q)=\emptyset$. Suppose that $z_j\in T'_j$. Hence $T'_{j+1}$, the pull-back of $T'_j$ for $f$ in $T_{j+1}$, intersects $K$ since
$f^{-1}(T_j\setminus T'_j)$ is disjoint from $K$, since $T_j\setminus T'_j$ is disjoint from $K$, by
$f$-invariance of $K$. Hence $T'_{j+1}$ is entirely in $\R\setminus Q$, hence $z_{j+1}\in K$ by the maximality of $K$.

This consideration finding $z_j\in T_j$ is complete as long as
%all $T_j$ are disjoint from $\Crit^T(f)$.
none of the $T_j$ contains a turning point of $f$.
So suppose that for some $n+1\le k$ the pull-back $S_{n+1}$ captures a critical point $c$ for the first time.

By our definitions  $T_n$ intersects an interval $[b,b']$.
As before $T_j\subset B(b,\rho)$.
Hence, by $\rho< D_1$, we get $f(c)=b$.

If $T_{n+1}$ contains a turning  critical point $c$,
the $f$-image of a neighbourhood of $c$ (the "fold") cannot be on the other side of $b$ than $b'$, otherwise $c$ would be isolated in $K$.
Therefore, by $z_n\in T'_n$, the pullback
$T_{n+1}$ contains a preimage (even two preimages) of $z_n$, both in $\R\setminus Q$.

To cope with distortion we deal as in Case 1., pulling back $1/2$-scaled neighbourhoods $S^t$ of corresponding $T^{n_t}$ until consecutive capture of a critical point.

 We conclude with the estimate
(\ref{ratio3}) with $m=k$.%, maybe with  different constants $C$.

(Notice that in fact a capture of a critical point can happen only a finite number of times, since otherwise $\partial Q$, hence $K$, would contain an attracting periodic orbit, containing a critical point.)

\

Still an estimate of $|T_k|/|T_m|$ from below is missing.
We are not able to make an estimate depending on $|T_k|$ and will just rely on an estimate of $1/|T_m|$.

We have  $T_k\subset [z,z']$ for $z,z'\in A$ and $|z-z'|<R$.
Hence, by Case 1, for $T^*_s$ denoting the pull-back of $[z,z']$ for $f^s$ containing $T_{k+s}$ we have
$
|z-z'|/|T^*_s|\ge (C_1C(1/2))^{-\beta s}C\lambda^s$. Hence for $\xi>0$ arbitrarily close to 0 and
respective $\cC(\xi)$ we get $|T^*_s|\le \cC (\xi)\lambda^{-s(1-\xi)}$.
We have also, by Corollary~\ref{bg}, $|T_{k+s}|\le L^s|T_k|^\kappa$.
Summarizing, we have
\begin{equation}
|T_{k+s}|\le \min \{\cC (\xi)\lambda^{-s(1-\xi)}, L^s|T_k|^\kappa \}
\end{equation}

Combined with (\ref{ratio3}) in the form $|T_k|\le \cC (\xi)\lambda^{-k+\xi}$, it can be calculated that
there is $\lambda'>1$ such that $|T|/|T_m|\ge \Const {\lambda'}^m$.
One obtains $\log\lambda'$ arbitrarily close to\footnote{Note that with the proof of case 2 given here, the rate of shrinking is strictly less than $\log\lambda$. In \cite[Main Theorem']{R-L} it is shown that the rate of shrinking is at least $\log\lambda$, provided~$T$ is disjoint from the large gaps of~$\R \setminus K$.}
$$
\frac{\kappa (\log\lambda)^2}{\log L +(1+\kappa)\log \lambda}.
$$

\

%INSERT A FIGURE

\begin{figure}[h!]
\centering
\includegraphics[height=6cm]{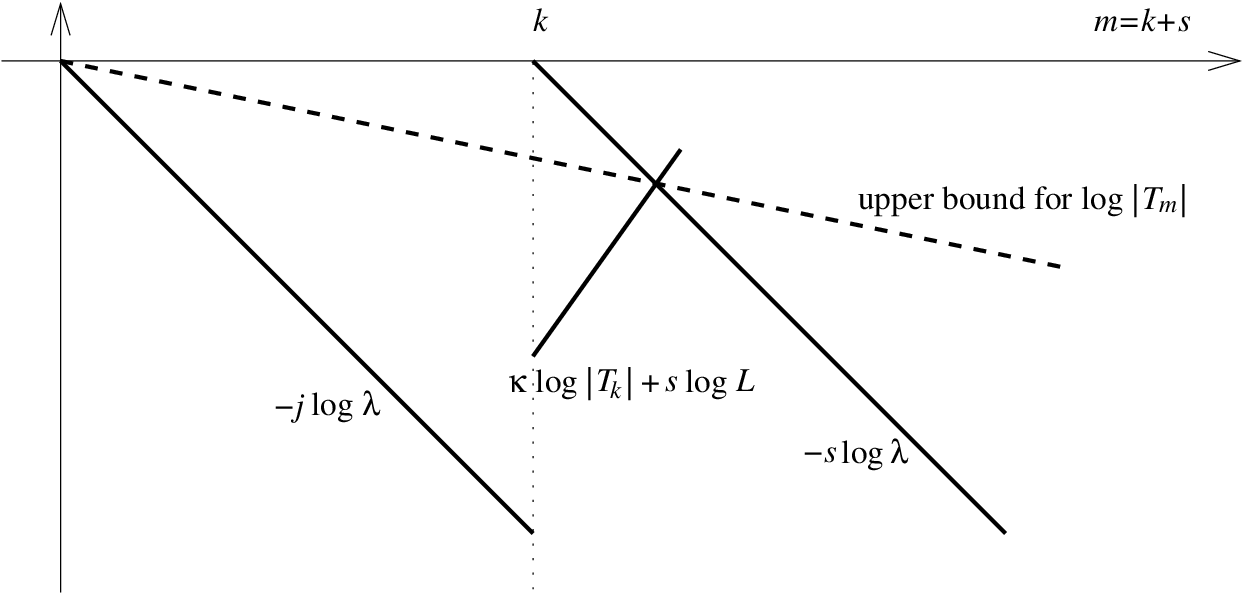}
\caption{Upper bound for $\log |T_m|$}
\end{figure}

\end{proof}

\

\begin{coro}\label{chi0}
For $(f,K)\in \sA_+^3$ or $\sA_+^{\BD}$, satisfying (wi), if $\chi(\mu)=0$ for $\mu$ an $f$ invariant measure on $K$ ($K$ as in Theorem C),
then there exists a sequence of repelling periodic points $p_n\in K$
such that $\chi(p_n)\to 0$.
\end{coro}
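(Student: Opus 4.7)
The plan is to read the corollary as a direct contrapositive of one of the equivalences in Theorem C. First I would observe that the hypothesis $\chi_\mu(f)=0$ for some $\mu\in\mathscr{M}(f,K)$ is precisely the negation of the Lyapunov condition of Theorem C, which demands a uniform lower bound $\log\lambda_{\Lyap}>0$ on $\chi_\nu(f)$ across all invariant probability measures $\nu$ supported on $K$. In particular, the measure $\mu$ in the hypothesis witnesses that Lyapunov fails.

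Next, since the standing hypotheses $(f,K)\in\sA_+^{\BD}$ and (wi) match exactly those of Theorem C, the failure of Lyapunov propagates through the equivalence to force the failure of UHPR as well. Unwinding the definition of UHPR, its failure means that for every $\lambda>1$ there exists a hyperbolic repelling periodic point $p\in K$ of some period $k=k(p)$ with $|(f^k)'(p)|^{1/k}<\lambda$; equivalently, $0<\chi(p)<\log\lambda$, where the positivity is automatic because $p$ is hyperbolic repelling and thus $|(f^k)'(p)|>1$.

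Finally, applying this for a sequence $\lambda_n\searrow 1$ would produce hyperbolic repelling periodic points $p_n\in K$ with $0<\chi(p_n)<\log\lambda_n$, so that $\chi(p_n)\to 0$. In particular $\{p_n\}$ is a sequence of (hyperbolic, hence) repelling periodic points in $K$ with the required property.

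I do not expect a genuine obstacle here beyond the appeal to Theorem C, where all the substantive work has already been done (notably the implication UHPR $\Rightarrow$ Lyapunov, which is the direction actually used). The only small bookkeeping point worth flagging is that hyperbolic repelling is strictly stronger than repelling in the sense of Definition~\ref{periodic}, so the $p_n$ produced above indeed qualify for the conclusion of the corollary without any further perturbation argument.
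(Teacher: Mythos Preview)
Your proposal is correct and follows essentially the same route as the paper: both arguments reduce the corollary to the equivalence of the Lyapunov condition and UHPR in Theorem~C. The paper phrases it as a two-line contrapositive (if no such sequence exists then UHPR holds, hence by Theorem~C $\chi(\mu)>0$, contradiction), whereas you unwind the same implication directly and spell out the extraction of the sequence $p_n$; the underlying logic is identical.
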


\begin{proof} If $p_n$ does not exist then by definition UHPR holds.
Hence by Theorem C we have $\chi(\mu)>0$, a contradiction.
\end{proof}

\begin{rema}\label{onUHP}

Notice that the equivalence of UHP and UHPR has a direct proof omitting the part of Theorem C  that EC2*($z_0)$ implies ExpShrink.
(The implication UHPR implies UHP is a special case of the above Corollary.)

Indeed. Suppose there exists a periodic point $p\in K$ that is not hyperbolic repelling.  Then it is (one-sided) repelling by Remark~\ref{finite indifferent}. Take $z_0\in K$ safe hyperbolic.  It has a backward trajectory  $x_j\in f^{-j}(x_0)$ converging to $O(p)$ (from the repelling side). Fix $n$ and find a periodic point $q\in K$ most of whose trajectory "shadows" $x_j: j=0,...,n$, as in Lemma~\ref{Phyp} (or in Proof of Theorem C, the part UHPR implies EC2$(z_0)$), with $\chi(q)>0$ arbitrarily close to 0,
that contradicts UHPR.
\end{rema}

\

\section{Equivalence of the definitions of Geometric
Pressure}\label{pressure}

Let us formulate the main theorem of Katok-Pesin's theory adapted to the multimodal case, similar to the complex case
\cite[Theorem 11.6.1]{PU}. Compare also \cite{MiSzlenk}.

\begin{theo}\label{Katok}
Consider $f:U\to \R$ for an open set $U\subset \R$, being a $C^{1+\e}$ map (i.e. $C^1$ with first derivative H\"older continuous) with at most finite number of critical points.\footnote{In the arXiv:1405.2443v1 version of this paper we assumed the critical points were non-flat.
As observed first in \cite{Dobbs3}, this is not needed for the unstable manifold theorem.
We note that \cite{Dobbs3} imposes more restrictive hypotheses to deal with backward orbits near the  end points of the domain of the map.}
Consider an arbitrary compact $f$-invariant $X\subset U$.
%$f:\hI\to \R$ be a $C^2$ generalized multimodal map and  $X\subset\hI$ be a compact $f$-invariant set.
Let $\mu$ be an $f$-invariant ergodic  measure on $X$, with positive Lyapunov exponent.

Let
$\phi:U\to \R$ be an arbitrary continuous function.
Then  there exists a sequence $X_k$, $k=1,2, ...$, of compact
$f$-invariant subsets of $U$, (topologically) Cantor sets or individual periodic orbits\footnote{This case was overlooked in the assertion of \cite[Theorem 11.6.1]{PU} (but not in the proof).}, such that for every $k$
the restriction
$f|_{X_k}$ is an expanding repeller,
$$
\liminf_{k\to\infty} P(f|_{X_k},\phi)\ge h_{\mu}(f) + \int\phi\,d\mu ,
$$
and if $\mu_k$ is any  $f$-invariant measure on $X_k$, $k=1,2, ...$, then the
sequence $\mu_k$ converges to $\mu$ in the weak*-topology.
Moreover
\begin{equation}\label{Lyap approx}
\chi_{\mu_k}(f|_{X_k})=\int\log |f'|\,d\mu_k \to
\int\log |f'|\,d\mu=\chi_\mu(f).
\end{equation}
If $X$ is weakly isolated, see Definition~\ref{wisolated}, then one finds $X_k\subset X$.

If the weak isolation is not assumed, but $X$ is maximal, more precisely if $(f,X,\hI_X, \bf{U})\in\sA_+$ (see Notation~\ref{sets-A})
and if $h_\mu(f)>0$, then one can find $X_k\subset X$. Moreover one can find $X_k\subset \interior \hI_X$. In fact the non-flatness at critical points  need not be assumed here and $C^{1+\e}$ is enough, as in the preceding part of the Theorem.

\end{theo}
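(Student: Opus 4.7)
The plan is to follow the classical Katok horseshoe construction, with the main adjustments dictated by critical points of~$f$ and by the requirement that $X_k \subset X$ (respectively $X_k \subset \interior \hI_X$) in the isolated/maximal cases.

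First I would handle the degenerate case: if $\mu$ is supported on a periodic orbit $O(p)$, then $\chi_\mu(f) > 0$ forces $|(f^m)'(p)| > 1$, so $O(p)$ itself is a hyperbolic repelling periodic orbit and serves as $X_k$ for every~$k$. Assume henceforth that $\mu$ is non-atomic. Fix a small $\e > 0$ and a finite measurable partition~$\cP$ of~$X$ refining the decomposition into monotonicity intervals of $f$, with $\mu(\partial \cP) = 0$ and $\diam \cP < \e$; this is a generating partition because lengths of pull-backs shrink uniformly (Lemma~\ref{shrinking}). By the Birkhoff, Shannon--McMillan--Breiman, and Brin--Katok theorems, and by Lemma~\ref{ad}, there is a Pesin block $\Lambda_\ell \subset X$ with $\mu(\Lambda_\ell) > 1 - \e$ on which the Birkhoff averages of $\log|f'|$ and $\phi$ are $\e$-close to $\chi_\mu(f)$ and $\int \phi \, d\mu$, the dynamical balls satisfy $\mu(B_n(x,\e)) \in [e^{-n(h_\mu+\e)},e^{-n(h_\mu-\e)}]$, and the sums $\sum_{j<n} -\log \dist(f^j(x),\Crit(f))$ grow at most linearly in~$n$.

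Next I would pick a density point $x_0 \in \Lambda_\ell$ lying in $\interior \hI_X$ (possible since $\mu$ is non-atomic and $\partial \hI_X$ is finite), and a small interval $B = B(x_0, r) \subset \interior \hI_X$; when weak isolation is used we also require $B \subset U$ for the isolating neighbourhood~$U$. For $n$ large I would select $N \ge \exp(n(h_\mu - 2\e))$ orbit pieces of length~$n$ starting and returning in $B \cap \Lambda_\ell$ whose dynamical balls are pairwise disjoint; by the control on approaches to $\Crit(f)$ together with non-flatness and with the HBD/LBD distortion estimates (Definition~\ref{LBD}, Remark~\ref{LBD2}), each corresponding inverse branch of~$f^n$ extends as a diffeomorphism from a definite scaled neighbourhood of~$B$ onto a sub-interval $B_i \subset B$ with $|(f^n)'| \ge \exp(n(\chi_\mu - \e))$ on $B_i$. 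Setting $X_k \= \bigcap_{j \ge 0} f^{-jn}(B_1 \cup \cdots \cup B_N)$ produces a compact $f^n$-invariant Cantor set on which $f^n$ is an expanding repeller in the sense of Definition~\ref{hyperbolic pressure}; standard estimates yield $P(f|_{X_k},\phi) \ge h_\mu(f) + \int \phi \, d\mu - O(\e)$, and any ergodic $f$-invariant measure $\mu_k$ on $X_k$ has $\chi_{\mu_k}(f)$ within $O(\e)$ of $\chi_\mu(f)$, with $\mu_k \to \mu$ weakly as the parameters $(\e, n) \to (0,\infty)$ along~$k$.

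Finally I would deduce the inclusion $X_k \subset X$. In the weakly isolated case $X_k$ has dense periodic orbits (being an expanding repeller) and all of them lie in $B \subset U$, hence in $X$ by~(wi), so $X_k \subset \overline{X} = X$. In the maximal case $(f,X,\hI_X,{\bf{U}}) \in \sA_+$ with $h_\mu(f) > 0$, the horseshoe~$X_k$ lies in $B \subset \interior \hI_X$ and is forward invariant by construction (the inverse branches map $B_i \to B$ within $\hI_X$), so by maximality of~$X$ in $\hI_X$ we obtain $X_k \subset X \cap \interior \hI_X$. The main obstacle is the presence of critical points: unlike in the hyperbolic setting, the inverse branches of~$f^n$ could fail to extend, and the derivative growth along a $\mu$-typical orbit can be destroyed by close approaches to $\Crit(f)$. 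Both difficulties are overcome by combining Lemma~\ref{ad} (linear growth of $\sum -\log \dist(f^j(x),c)$ along Pesin orbits) with non-flatness and with $\sA^{\BD}$-distortion, which is the one-dimensional real replacement for the complex Koebe-based construction of \cite[Theorem~11.6.1]{PU}.
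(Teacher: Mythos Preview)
Your overall strategy follows the Katok horseshoe template, and the treatment of the periodic-orbit case and of the weakly-isolated case (density of periodic points in an expanding repeller, then (wi)) matches the paper. There are, however, two genuine problems.

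First, you invoke Lemma~\ref{ad}, Lemma~\ref{shrinking}, and the BD/HBD/LBD distortion bounds, none of which are among the hypotheses: the statement assumes only that $f$ is $C^2$ with non-flat critical points on a general compact invariant $X$, and Lemmas~\ref{ad} and~\ref{shrinking} are stated for $(f,K)\in\sA$ respectively $\sA_+$. The paper's proof does not use these; it cites the interval analogue of \cite[Theorem~11.2.3, Corollary~11.2.4]{PU}, a Pesin-theoretic statement that for $\tmu$-a.e.\ backward trajectory in the natural extension there is an interval $B(x_0,\delta)$ on which all backward branches along the trajectory exist with uniformly bounded distortion. That is the correct replacement for your distortion package under the stated hypotheses.

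Second, and more seriously, your argument for the maximal case $(f,X,\hI_X,{\bf U})\in\sA_+$ with $h_\mu(f)>0$ has a gap. The Cantor set you produce is only $f^n$-invariant and sits in $B\subset\interior\hI_X$; to get an $f$-invariant set you must pass to $\bigcup_{j=0}^{n-1}f^j(X_k)$, and for maximality of $X$ in $\hI_X$ to force this into $X$ you need every intermediate interval $f^j(B_i)$, $0<j<n$, to lie in $\hI_X$. Your parenthetical ``the inverse branches map $B_i\to B$ within $\hI_X$'' asserts exactly this without justification: the centre of $f^j(B_i)$ lies in $X$, but the interval around it can still meet $\partial\hI_X$. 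The paper handles this by first choosing the base interval so that $\bigcup_{j\le n_0}f^j(\partial\hI_X)$ misses its closure, and then, as the backward branches are generated, \emph{discarding} at each step the at most $\#\partial\hI_X$ pull-backs whose closures meet $\partial\hI_X$. Each discarded pull-back has $\mu$-measure at most $e^{-m(h_\mu(f)-\theta)}$ by a Jacobian estimate via Rokhlin's formula, so for $n_0$ large the total removed mass is less than half of $\tmu(Y_x)$. This removal step is precisely where the hypothesis $h_\mu(f)>0$ is used, and it is absent from your argument.
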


\begin{proof} It is the same as in \cite[Theorem 11.6.1]{PU} (except the last assertion where,
unlike in \cite{PU}, we do not assume $X$ is a repeller in its neighbourhood in $\R$, but merely that it is a maximal repeller in $\hI_X$). It relies on an analogon, adapted to $C^{1+\e}$ maps of interval, of \cite[Theorem 11.2.3, Corollary 11.2.4]{PU}, saying that for almost every backward trajectory $(x_n, n=0,-1,...)$ in the Rokhlin's natural extension $(\tf,\tmu)$  of $(f,\mu)$  (the topological inverse limit is denoted by $\tf$), there exists a ball (here: interval) $B(x_0,\delta)$ on which all backward branches $f^{-n}, n=0,1,...$ mapping $x_0$ to $x_{-n}$  exist and distortions are uniformly bounded (see \cite[Lemma 6.2.2]{PU}).

\smallskip

Here is a more detailed explanation: The integrability of
$$
\Psi(x):=\log \dist (x,\Crit(f))
$$
with respect to $\mu$ can be  used,
which  follows from the integrability of
$\log |f'|$ and the inequality $|f'(x)|\le  |x-c|^\e$ for every critical point $c$ of $f$.

Koebe distortion theorem referred to in \cite{PU} in
the proof of Theorem 11.2.3 is not needed. Exponential shrinking of diameters
of pull-backs is enough to guarantee bounded distortion for a map of class
$C^{1+\e}$ is.

In Pesin theory, going backward, unstable manifolds shrink exponentially,
so subexponential changes of Lyapunov coordinates do not influence them.
For the latter one could use the property that the distance of a typical trajectory
from $\Crit(f)$ shrinks at fastest subexponentially, what follows from the integrability of $\Psi(x)$.

\smallskip

Notice that (except in the last paragraph of the statement of the Theorem) we allow $\mu$ to be supported on an individual periodic orbit $O(p)$.
Then the proof of the assertion of Theorem \ref{Katok} is immediate: set $X_k:=O(p)$.
The proof in \cite{PU} covers in fact this case. There is then only one branch
of $f^{-m}$, for $m$ being a period of $p$, constituting the iterated
function system, giving a one-point limit set.

\smallskip

%If $\mu_k$ are not assumed to be ergodic use
In view of the Ergodic Decomposition Theorem, \cite[Theorem 2.8.11]{PU}, the measures $\mu_k$ can be assumed ergodic.

\

Let us prove now the last assertion of the theorem

\smallskip

1. If we assume that $X$ is weakly isolated, then all periodic orbits in $X_k$ are contained in $X$ for $k$ large enough. Since they are dense in $X_k$ (see e.g. \cite{Bowen} or \cite[Theorem 4.3.12]{PU}), we get $X_k\subset X$.

\

2. Consider the case where we do not assume $X$ is weakly isolated.  So suppose that $(f,X,\hI_X,{\bf{U}})\in\sA_+$ and $h_\mu(f)>0$ (in particular $\mu$ is infinite). To find $X_k\subset X$ we complement an argument in the proof in \cite[Theorem 11.6.1]{PU}.
Recall that in \cite{PU} one finds a disc $B=B(x,\beta)$ and a family  of inverse branches
$f^{-m}_\nu$
on $B$ mapping it to $B(x,\frac56 \beta)$. Denoting the family of points $y_\nu =f^{-m}_\nu(x)$ by $D_m$ one has there
\begin{equation}\label{half-variational}
\sum_{y\in D_m} S_m\phi (y) \ge \exp (m(h_\mu(f) + \int \phi \, d\mu - \theta)),
\end{equation}
for an arbitrary $\theta>0$ and $m\ge m_0$ large enough, which easily yields the  rest of the proof.

Here we specify better $x$ and $\beta$ and restrict the family $f^{-m}_\nu$ to omit $\partial(\hI_X)$ by our pull-backs.
To this end we repeat briefly the construction, using a method from \cite{FLM}.

We consider a set $Y$ of $\tf$-trajectories in the inverse limit $\tX$
so that $\tmu(Y)>1/2$,
all pull-backs for $f^n$ exist along the trajectory $(x_n)\in Y$ on $B(x_0,\delta)$ for a constant $\delta>0$ and have uniformly bounded distortion, that is $|(f_\nu^{-n})'(z)/f_\nu^{-n})'(z')|\le \Const$ for all $z,z'\in B(x_0,\delta)$ for all branches $f_\nu^{-n}$ corresponding to the trajectories in $Y$. We assume also that
$$
|S_m \phi(y)-m\int \phi\, d\mu|<m \theta
$$
for $y=x_{-m}$ and every $m\ge m_0$, and moreover
\begin{equation}\label{Birkhoff}
|S_m \log \Jac_\mu (f)(y) - m h_\mu(f)|< m \theta;
\end{equation}
note that $\int \log \Jac_\mu(f)\,d\mu=h_\mu(f)$ by Rokhlin's formula, see i.e. \cite[Theorem 2.9.7]{PU}.  $\Jac$ is Jacobian in the weak sense, see \cite{PU}, compare also
Appendix \ref{s:AppendixB}.

By considering sufficiently many intervals of the form $B(x,\delta/2)$ for $x\in X$ in the support of $\mu$, covering $X$ with multiplicity at most 2 for $\delta$ appropriately small,  we find
$x$ such that
given an arbitrary integer $n_0>0$
\begin{equation}\label{disjoint}
\bigcup_{j=0}^{n_0}f^j (\partial(\hI_X))\,\, \hbox{ is disjoint from closure of}\,\, B(x,\delta/2).
\end{equation}
and $\tmu(Y_x)=C>0$  for $Y_x:=Y\cap \Pi^{-1}(B(x,\delta/2)\cap X)$ where $\Pi$ denotes the standard projection $(x_k)\mapsto x_0$ from the inverse limit to $X$.
Notice that by definition $x_0\in B(x,\delta/2)$, hence $B(x_0,\delta)$ being the domain of
$f^{-m}_{x_{-m}}$ contains $B(x,\delta/2)$, which is therefore a common domain for all
$f^{-m}_{x_{-m}}$ for all $(x_n)\in Y_x$.

By (\ref{Birkhoff}) by $h_\mu(f)>0$ for $\theta$ small enough for each
$(x_n)\in Y_x$ we have
\begin{equation}\label{Birkhoff2}
\mu(f^{-m}_{x_{-m}}(B(x,\delta/2)))\le \exp (-m (h_\mu(f)-\theta))
\end{equation}
for $m\ge m_0$, where the subscript $x_{-m}$ means we consider the branch mapping $x$ to $x_{-m}$.
When we consider pull-backs of $B(x,\delta/2)$ along trajectories belonging to $Y_x$ we remove each time the pull-backs whose closures intersect $\partial(\hI_X)$. Thus we remove each time
pull-backs for $f$ of all pull-backs already removed and additionally at most finite $\#\partial(\hI_X)$ number of pull-backs, so sets of measure $\tmu$ shrinking exponentially to 0 by (\ref{Birkhoff2}), provided $\theta<h_\mu(f)$.
If $n_0+1$, the time from which removing can start by  (\ref{disjoint}), is large enough the measure of the removed part of $Y_x$ is less than, say, $\hmu(Y_x)/2$.

Denote the non-removed part by $Y'_x$. Hence $\tmu(Y'_x)>C/2>0$. By (\ref{Birkhoff2}) the number of corresponding branches (pull-backs for $f^m$) is for each $m$ at least $\Const\exp m (h_\mu(f)-\theta)$. Each branch (except a finite number not depending on $m$) can be continued by a finite time (also not depending on $m$) to yield a pull-back of $B(x,\delta/2)\subset B(x_0,\delta)$, to be contained in $B(x, {1\over 3}\delta)$ for some $m'$ (greater from $m$ by a constant).

We conclude with (\ref{half-variational}) (with different $D_m$,
$\theta$ replaced by $2\theta$,
and $m'$ in place of $m$).
By construction closures of all pull-backs of $B(x,\delta)$ for $f^{m'}$  considered above
and their $f^j$-images for $j=0,1,...,m'$ are disjoint from $\partial(\hI_X)$, so since they intersect $X$ they are in $\hI_X$ and even in the interior of $\hI_X$.

Therefore the limit set $X_k$ of the constructed Iterated Function System (IFS) is contained in $\hI_X$. By its forward invariance and
the maximality of $X$ in $\hI_X$, it is contained in $X$. In fact by construction
$X_k\subset \interior \hI_X$.

\end{proof}

Now we shall prove Theorem B, including Proposition \ref{tree}, except the equalities for $P_{\Per}$, in a sequence of lemmas.

\begin{lemm}\label{Phypvar}
Let $(f,K)$ be a generalized multimodal map. Then,
for every  $t  \in  (- \infty, t_+)$  we
have  $P_{\hyp}(K, t) = P_{\varhyp}(K, t)$. For $t\ge t+$ the inequality $\le$ holds and if in addition (wi) is assumed, then the equality holds.

\end{lemm}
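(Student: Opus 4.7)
The plan is to prove the two inequalities separately. For the easy direction $P_{\hyp}(K,t) \le P_{\varhyp}(K,t)$, fix any isolated expanding set $X \subset K$ with expansion constant $\lambda_X > 1$. Every $f$-invariant Borel probability $\nu$ on $X$ satisfies $\chi_\nu(f) \ge \log \lambda_X > 0$, so $\nu$ lies in $\mathscr{M}^+(f,K)$ via $\mathscr{M}(f,X) \hookrightarrow \mathscr{M}(f,K)$. The variational principle on the compact system $(X, f|_X)$ then gives $P(f|_X, -t\log|f'|) \le P_{\varhyp}(K,t)$, and taking the sup over $X$ yields the inequality for every $t \in \R$. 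This already supplies the ``$\le$'' assertion for $t \ge t_+$ and, combined with the reverse below under (wi), the equality for $t \ge t_+$ under (wi).

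For the reverse $P_{\varhyp}(K,t) \le P_{\hyp}(K,t)$, fix an ergodic $\mu \in \mathscr{M}^+(f,K)$ and apply Theorem \ref{Katok} with $\phi = -t\log|f'|$ to produce expanding repellers $X_k$ carrying ergodic $\mu_k$ with $\chi_{\mu_k} \to \chi_\mu$ and $\liminf_k P(f|_{X_k}, \phi) \ge h_\mu(f) - t\chi_\mu(f)$. Under (wi) the first alternative of Theorem \ref{Katok} places $X_k \subset K$, so $P(f|_{X_k}, \phi) \le P_{\hyp}(K,t)$; passing to the limit and taking the sup over $\mu$ gives $P_{\varhyp}(K,t) \le P_{\hyp}(K,t)$ for every $t$, settling the lemma under (wi).

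Without (wi), the second alternative of Theorem \ref{Katok} still places $X_k \subset \interior \hI_K$, and then $X_k \subset K$ by $f$-invariance of $X_k$ and maximality of $K$ in $\hI_K$, but only under the extra hypothesis $h_\mu(f) > 0$. The task for $t < t_+$ is therefore to show that the supremum defining $P_{\varhyp}(K,t)$ is approached by ergodic measures with $h_\mu > 0$. If $\mu$ is an $\epsilon$-near-maximizer of $P_{\var}(K,t)$, then $h_\mu = (h_\mu - t\chi_\mu) + t\chi_\mu \ge P_{\var}(K,t) - \epsilon + t\chi_\mu$. For $t \ge 0$ the bound $\chi_\mu \ge \chiinf$ gives $h_\mu \ge (P(t) + t\chiinf) - \epsilon$, which is bounded below by a positive constant $\delta$ once $\epsilon < \delta$; positivity of $\delta$ is exactly the defining inequality $P(t) + t\chiinf > 0$ equivalent to $t < t_+$. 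The symmetric bound $\chi_\mu \le \chisup$ handles $t \in (t_-, 0)$ via the strict inequality $P(t) + t\chisup > 0$ valid there. Thus on $(t_-, t_+)$ Katok's second alternative applies to near-maximizers of $P_{\var}$, producing $X_k \subset K$ and yielding $P_{\varhyp}(K,t) \le P_{\hyp}(K,t)$.

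The main obstacle, as I see it, is the range $t \le t_-$, where $P(t) + t\chisup = 0$ so the entropy bound above degenerates and zero-entropy ergodic measures with $\chi_\mu \to \chisup$ can approach $P_{\varhyp}(K,t) = -t\chisup$ directly. Here the plan is to realize $-t\chisup$ as a limit of $P(f|_{X_k}, -t\log|f'|)$ for $X_k \subset K$ by combining density of hyperbolic repelling periodic orbits in $K$ (Proposition \ref{density periodic}) with Katok's approximation applied to positive-entropy ergodic measures whose Lyapunov exponents are cofinal in $[\chiinf, \chisup]$; existence of such measures follows from $h_{\ttop}(f|_K) > 0$ via the variational principle and ergodic decomposition, though a careful extraction using the maximality of $K$ in $\hI_K$ will be required.
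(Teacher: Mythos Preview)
Your overall strategy matches the paper's: the easy direction via the variational principle on each expanding repeller, and the hard direction via Theorem~\ref{Katok}, with the $X_k\subset K$ issue resolved through $h_\mu>0$ (or (wi)). Two points deserve comment.

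First, a technical slip: you invoke Theorem~\ref{Katok} with $\phi=-t\log|f'|$, but that theorem requires $\phi:U\to\R$ continuous, and $-t\log|f'|$ is $\pm\infty$ at critical points. The paper avoids this by applying Theorem~\ref{Katok} with $\phi\equiv 0$, obtaining $\liminf_k h_{\ttop}(f|_{X_k})\ge h_\mu$, and then invoking the separate Lyapunov convergence \eqref{Lyap approx} (valid for \emph{any} ergodic $\mu_k$ on $X_k$) to get $\chi_{\mu_k}\to\chi_\mu$; combining yields $P(f|_{X_k},-t\log|f'|)\ge h_{\mu_k}-t\chi_{\mu_k}\to h_\mu-t\chi_\mu$. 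Your argument is easily repaired this way.

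Second, on the range $t\le t_-$ without (wi). Your direct entropy bound $h_\mu\ge P(t)-\epsilon+t\chi_\mu$ together with $P(t)>\max\{-t\chiinf,-t\chisup\}$ correctly forces $h_\mu>0$ for near-maximizers on $(t_-,t_+)$; this is clean and essentially equivalent to the paper's reasoning there. But your sketched plan for $t\le t_-$ --- realizing $-t\chisup$ by repellers in $K$ via periodic points with $\chi(p)\to\chisup$ --- runs into exactly the obstruction that Proposition~\ref{asymptotes}, item~1, flags: the equality ${\chisup}_{\Per}=\chisup$ for periodic points \emph{in} $K$ is established there only under (wi). So your plan, as stated, cannot close the gap. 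The paper takes a different route: it argues that if no near-maximizer of $P_{\varhyp}(K,t)$ has $h_\mu>0$, then $P_{\varhyp}$ is affine on $[t,\infty)$; since $P_{\varhyp}=P_{\var}$ on $(-\infty,t_+)$ and $P_{\var}$ is not affine on $[\tau,\infty)$ for any $\tau<t_+$, this forces $t\ge t_+$. That linearity step is where the paper's argument substantively differs from yours and is what you are missing for $t\le t_-$.
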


\begin{proof}
The inequality $P_{\hyp}(K,t)\le P_{\varhyp}(K,t)$ holds by the variational principle on each hyperbolic isolated subset of  $K$, provided such subsets exist.

The opposite inequality follows from Theorem \ref{Katok} applied to $\mu$ on $X=K$ with $\chi_\mu(f)>0$,  such that
$h_\mu(f|_K)-t\chi_\mu(f)$ is almost equal to $P_{\varhyp}(K,t)$, and to $\phi\equiv 0$.
Then, for all $k$, for $\mu_k$ being
the equilibrium on $X_k$ for the potential 0 (i.e. measure with maximal entropy),
we obtain
$$
\liminf_{k\to\infty} h_{\mu_k} (f|_{X_k})= \liminf_{k\to\infty} h_{\ttop} (f|_{X_k})\ge  h_{\mu} (f|_{K}).
$$

By Theorem \ref{Katok} we have also $\chi_\mu(f)=\lim_{k\to\infty}\chi_{\mu_k}(f)$.
Therefore one finds $X_k$ with $P(f|_{X_k},-t\log|f'|)$ at least $P_{\varhyp}(K,t)$ up to an arbitrarily small positive number.

Notice finally that $X_k\subset K$. This also follows from Theorem \ref{Katok} if
$h_\mu(f|_K)>0$. The only case it is not guaranteed by definition is where the function  $\tau\to P_{\varhyp}(K,\tau)$ is linear for $\tau\ge t$.

Then however $ t_+ \le t$.
Indeed, in this linear case

\noindent $P_{\varhyp}(K,t)=-t \inf_{\mu\in \sM^+(f,K)} \chi_\mu(f)$. Denote the first $\tau$ where $P_{\varhyp}(K,\tau)$ is linear on $[\tau,\infty)$, by $t'_+$.
For $\tau<t_+$ the function $P(\tau)=P_{\var}(\tau)$ is non-linear, in particular strictly decreasing.
Hence $P_{\varhyp}(K,\tau)=P(\tau)$ there, so $P_{\varhyp}(K,\tau)$ is non-linear. Hence
$t_+\le t'_+$ and in the case we consider $t'_+\le t$ has been assumed. Hence $t_+\le t$. This ends the proof.
\end{proof}

\begin{lemm}\label{hyptop}
For $(f,K)\in \sA_+$ there exists a hyperbolic isolated $f$-invariant set $X\subset K\cap \interior \hI_K$ with
$h_{\ttop}(f|_X)>0$.
\end{lemm}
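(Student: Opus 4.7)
The plan is to invoke Katok's Theorem~\ref{Katok} applied to the maximal repeller $K$ itself, starting from an ergodic invariant measure of positive entropy whose existence is forced by the assumption $h_{\ttop}(f|_K) > 0$.

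First, since $h_{\ttop}(f|_K) > 0$, the variational principle produces an $f$-invariant probability measure $\mu$ on $K$ with $h_\mu(f) > 0$, and by ergodic decomposition we may assume $\mu$ is ergodic. Ruelle's inequality then gives $\chi_\mu(f) \ge h_\mu(f) > 0$, so $\mu$ has positive Lyapunov exponent. This puts us squarely in the hypotheses of the last paragraph of Theorem~\ref{Katok}, since $(f,K)\in\sA_+$ means $K$ is maximal in $\hI_K$.

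Applying Theorem~\ref{Katok} with $X = K$, $\phi \equiv 0$ and this measure $\mu$, we obtain a sequence $X_k \subset K \cap \interior \hI_K$ of compact $f$-invariant sets such that each $f|_{X_k}$ is an expanding repeller (so automatically hyperbolic and isolated in a neighbourhood in $\R$), together with ergodic measures $\mu_k$ on $X_k$ converging weakly$^*$ to $\mu$ and satisfying
\[
\liminf_{k\to\infty} P(f|_{X_k}, 0) = \liminf_{k\to\infty} h_{\ttop}(f|_{X_k}) \ge \liminf_{k\to\infty} h_{\mu_k}(f) \ge h_\mu(f) > 0.
\]
Hence for all sufficiently large $k$, the set $X = X_k$ is hyperbolic, isolated, $f$-invariant, contained in $K \cap \interior \hI_K$, and satisfies $h_{\ttop}(f|_X) > 0$, as required.

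There is really no obstacle here: the lemma is essentially a direct corollary of Katok's theorem in the form already stated, the only point worth checking being that the ``maximal'' branch of the conclusion of Theorem~\ref{Katok} (which places $X_k$ in $\interior \hI_K$ rather than merely in some neighbourhood of $K$) applies, which it does because $h_\mu(f)>0$ and $(f,K)\in\sA_+$.
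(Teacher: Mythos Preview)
Your proof is correct and takes essentially the same approach as the paper: obtain an ergodic measure $\mu$ of positive entropy via the variational principle, note $\chi_\mu(f)>0$ by Ruelle's inequality, and apply Theorem~\ref{Katok} with $\phi\equiv 0$ in its ``maximal'' clause to get $X_k\subset K\cap\interior\hI_K$ with $h_{\ttop}(f|_{X_k})>0$. The paper's version is terser (it refers back to the proof of Lemma~\ref{Phypvar} rather than spelling out the Ruelle step), but the argument is the same.
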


\begin{proof}
By Variational Principle \cite{Walters} and $h_{\ttop}(f|_K)>0$ there exists an $f$-invariant  probability measure $\mu$ on $K$ with $h_\mu(f)>0$.  Hence, by Theorem~\ref{Katok} applied to
$\phi\equiv 0$ (as in Proof of Lemma~\ref{Phypvar}) there exists
a hyperbolic isolated $f$-invariant set $X_k\subset K\cap \interior \hI_K$ with
$h_{\ttop}(f|_{X_k})>0$.
\end{proof}

\begin{lemm}\label{Phyp} For $(f,K)\in \sA_+^{\BD}$ there exists $z_0\in K$ which is safe, safe forward  and expanding.
For each such $z_0$ we have $P_{\tree}(K,z_0,t) \le P_{\hyp}(K,t)$ for all $t\in\R$. In fact for $t\le 0$ this estimate holds for all $z_0\in K$.

On the other hand for each $(f,K)\in \sA_+$ (there is no need to assume BD),
$P_{\tree}(K,z_0,t)\ge P_{\hyp}(K,t)$ for all $z_0\in K$ for $t\ge 0$, and for all
$z_0\in K$ for which there exists a backward not periodic trajectory in $K$ omitting critical points (in particular for all $z_0$ safe)  for $t<0$.
\end{lemm}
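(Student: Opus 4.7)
The plan is to establish the three assertions in sequence: existence of $z_0$, the upper bound, and the lower bound.

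\emph{Existence of a safe, safe forward, expanding point.} I would invoke Lemma~\ref{hyptop} to obtain a hyperbolic isolated $f$-invariant set $X\subset K\cap\interior\hI_K$ with $h_{\ttop}(f|_X)>0$. Isolation plus uniform hyperbolicity of $X$ supply a uniform $\Delta>0$ and $\lambda_X>1$ such that for every $z\in X$ the pull-back $\Comp_z(f^{-n}(B(f^n(z),\Delta)))$ is a $K$-diffeomorphic image of $B(f^n(z),\Delta)$ and $|(f^n)'(z)|\ge\Const\lambda_X^n$, so every $z\in X$ is expanding, and forward invariance of $X\subset\interior\hI_K$ gives $\dist(f^j(z),\partial\hI_K)\ge\dist(X,\partial\hI_K)>0$, i.e.\ safe forward. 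For safeness one notes that, since $S(f,K)$ is finite, the set of non-safe points is covered by $\bigcup_{j\le n}B(f^j(S(f,K)),e^{-\delta n})$, whose Lebesgue measure (hence Hausdorff content) is summable in $n$; thus the non-safe set has Hausdorff dimension $0$. Since bounded distortion and $h_{\ttop}(f|_X)>0$ force $\HD(X)>0$, we pick $z_0\in X$ outside the non-safe set.

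\emph{Upper bound $P_{\tree}(z_0,t)\le P_{\hyp}(K,t)$.} Fix $z_0$ safe and expanding, and $r<\Delta$ small. For each $w\in f^{-n}(z_0)\cap K$ whose $f^n$ pull-back of $B(z_0,r)$ containing $w$ is a $K$-diffeomorphism, write $g_w=(f^n)^{-1}$ for the corresponding inverse branch. Safeness of $z_0$ ensures that only a subexponential number of $w$ fail this property, so they do not affect asymptotic growth. By strong transitivity (Proposition~\ref{dp}) combined with Lemma~\ref{shrinking}, after a further $m=m(\varepsilon)$ iterates each surviving branch can be extended to land inside a chosen $B(z_0,r/2)$, at a multiplicative cost bounded by $\sup_{K}|f'|^{|t|m}$ independent of $n$. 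The branches $\{g_w\}$ extended this way form a contracting IFS on $B(z_0,r)$; their limit set $X_n\subset K$ is a hyperbolic isolated invariant set, and its Bowen pressure satisfies $P(f|_{X_n},-t\log|f'|)\ge \tfrac{1}{n}\log\sum_w|(f^n)'(w)|^{-t}-O(1/n)$, by Ruelle's formula and bounded distortion (HBD). Letting $n\to\infty$ along the $\limsup$ in the tree pressure gives the claim. For $t\le 0$ and arbitrary $z_0\in K$, the expansion hypothesis is unnecessary because $|(f^n)'(w)|^{-t}=|(f^n)'(w)|^{|t|}$ is controlled by entropy times the uniform bound $\sup|f'|^{|t|n}$, and one can just bound the tree sum directly by the number of preimages combined with Katok's approximation (Theorem~\ref{Katok}) applied to a measure of maximal entropy concentrated on a hyperbolic subset.

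\emph{Lower bound $P_{\tree}(z_0,t)\ge P_{\hyp}(K,t)$.} For any hyperbolic isolated $X\subset K$ with isolation radius $\rho$, Proposition~\ref{dp} gives $m=m(\rho)$ and a preimage $w_0\in f^{-m}(z_0)\cap K$ with $\dist(w_0,X)<\rho/2$. When $t\ge 0$: for every periodic orbit $p\in X$ of period $n$, expansion of $X$ yields a unique local inverse branch of $f^n$ fixing $p$, and composing with the inverse branch of $f^m$ near $w_0$ produces a preimage $w\in f^{-(n+m)}(z_0)$ that shadows the orbit of $p$; bounded distortion and $t\ge 0$ (which makes $|f'|^{-t}$ bounded away from $\infty$ outside a neighborhood of $\Crit(f)$) give $|(f^{n+m})'(w)|^{-t}\ge C|(f^n)'(p)|^{-t}$ with $C$ independent of $n$. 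Summing over such periodic orbits and invoking density of periodic orbits in $X$ together with the Bowen formula $P(f|_X,-t\log|f'|)=\lim_n\tfrac{1}{n}\log\sum_{p\in\Per_n(X)}|(f^n)'(p)|^{-t}$ yields the desired inequality. For $t<0$ the bound $|(f^m)'|^{-t}=|(f^m)'|^{|t|}$ is only a problem if the short linking branch of $f^m$ is forced near a critical point; here we use the backward non-periodic trajectory $(\xi_j)$ of $z_0$ avoiding $\Crit(f)$ to produce an $(\xi_j)$-adapted linking branch staying in a fixed distance from $\Crit(f)$, so the multiplicative loss is again uniformly bounded.

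\emph{Main obstacles.} The principal technical difficulty is in the upper bound: one must show that the tree sum over all preimages is, up to a subexponential factor, captured by preimages whose pull-backs of $B(z_0,r)$ are $K$-diffeomorphic and land inside a prescribed ball. This relies on combining uniform shrinking (Lemma~\ref{shrinking}), HBD, safeness of $z_0$, and strong transitivity to push branches at bounded multiplicative cost; the counting argument must handle multiplicities arising when many branches are merged by composition with the common linking map. The subtlety for $t<0$ in the lower bound is exactly the selection of a connecting branch of $f^m$ that stays clear of $\Crit(f)$, which is precisely the role of the backward-non-periodic-omitting-critical-points hypothesis.
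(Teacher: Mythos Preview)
Your existence argument and the lower bound are essentially the paper's, and they are fine. The genuine gap is in your upper bound for $t>0$.

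You fix a radius $r$ and claim that ``safeness of $z_0$ ensures that only a subexponential number of $w\in f^{-n}(z_0)\cap K$ fail'' to have a $K$-diffeomorphic pull-back of $B(z_0,r)$. Safeness does not say this. By Definition~\ref{safe}, safeness controls only the \emph{shrinking} balls $B(z_0,e^{-\delta n})$: it guarantees $B(z_0,e^{-\delta n})\cap\bigcup_{j\le n}f^j(S(f,K))=\emptyset$. For a fixed $r$ the forward orbit of $S(f,K)$ may enter $B(z_0,r)$ at many times $j\le n$, producing many non-diffeomorphic pull-backs. Even if you could bound their number by $\exp(\epsilon n)$ (as in Lemma~\ref{nodes}), this is not enough: for $t>0$ a single bad preimage can contribute $|(f^n)'(w)|^{-t}$ of order $\exp(Qtn)$ (cf.\ Lemma~\ref{ad}), so the discarded mass is not subexponential and you cannot simply drop these terms from the tree sum you are trying to bound from above.

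The paper's mechanism is different and avoids this entirely. One works with the shrinking ball $B=B(z_0,e^{-\alpha n})$; safeness then guarantees that \emph{every} pull-back of $B$ of order $\le 2n$ misses $S(f,K)$, so all branches are diffeomorphic and nothing is discarded. The price is that $B$ is tiny; this is repaired by using the \emph{expanding} hypothesis on $z_0$ to push $B$ forward by $f^{n_1}$, $n_1\approx\alpha n/\log\lambda_{z_0}$, to a set of definite size $\Delta$, and only then closing the loop via strong transitivity in bounded time $m$. The distortion cost is $L^{t(n_1+m)}=\exp(O(\alpha n))$, which is absorbed since $\alpha$ is arbitrary. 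This is also where the safe-forward hypothesis is used, to keep the forward images $f^j(B)$, $j\le n_1$, inside $\hI_K$ so that the resulting IFS limit set lies in $K$; your write-up does not address why $X_n\subset K$. For $t\le 0$ the paper does not use your entropy-counting sketch but rather the inequality $P_{\tree}(K,z_0,t)\le P(f|_K,-t\log|f'|)$ (the classical pressure, continuous potential) together with the nontrivial fact $P(f|_K,-t\log|f'|)\le P_{\hyp}(K,t)$, citing \cite{PR-LS2}; your one-line appeal to Theorem~\ref{Katok} does not cover this.
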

Notice that if  every backward trajectory of $z_0$ in $K$ meets a critical point then for $t<0$, \, $P_{\tree}(K,z_0,t)=-\infty$.

\begin{proof}
Due to  Lemma \ref{hyptop} there exists an $f$-invariant hyperbolic isolated set $X\subset K \cap \interior \hI_K$ with $\mu$ supported on $X$ such that
$h_\mu(f|_X)>0$ (take just measure of maximal entropy for $f|_X$, existing due to hyperbolicity).  By hyperbolicity of $f|_X$, \, $\chi_\mu(f|_X)>0$. Hence Hausdorff dimension
$HD(X)\ge h_\mu(f|_X)/\chi_\mu(f|_X)>0$. Choosing $\delta$ arbitrarily small in Definition \ref{safe},
we see that the set of points which are not safe has Hausdorff dimension equal to 0. Hence there exists
a positive Hausdorff dimension set of safe points $z_0$, in $X\subset \interior \hI_K$ hence expanding and safe forward.

The proof that
$P_{\tree}(K,z_0, t)\le P_{\hyp}(K,t)$ for $t > 0$ is the same as in the complex case, see e.g.
\cite[Theorem 12.5.11]{PU} or \cite{PR-LS2}. Briefly: we do not capture neither critical points nor end points of $I$ in the pull-backs for $f^j$
of $B=B(z_0, \exp (-\alpha n))$ for
an arbitrarily small $\alpha>0$, for the time (order of the pull-back) $j\le 2n$.

We "close the loop"
in a finite bounded number of backward steps after backward time $n$, and the forward time $n_1$ of order not exceeding
$\alpha n \log \lambda$, where $\lambda=\lambda_z$, see Definition~\ref{expanding}.

More precisely: $f^{n_1}(B)$ is an interval of length of order $\Delta$ not depending on $n$.
By Remark~\ref{wexact2} there exists $k(\Delta)$ such that for every $z_1\in f^{-n}(z_0)\cap K$ there exists $m\le k(\Delta)$ and $z_2\in {\frac 12} f^{n_1}(B)\cap K$ such that $f^m(z_2)=z_1$. By Lemma~\ref{shrinking} the closure of the pull-back of $B$ for $f^{n+m}$ for $n$ large enough,  containing $z_2$,  is contained in $f^{n_1}(B)$. Using all $z_1$'s we obtain an Iterated Function System for backward branches of $f^{n+n_1+m}$.

We construct an isolated (Cantor) hyperbolic set $X$ as the limit set of the arising IFS,
compare Proof of Theorem~\ref{Katok}. (Notice however that here to know the limit set $X$ is hyperbolic, we need to use bounded distortion, unlike in Theorem~\ref{Katok}.)
Since all pull-backs of $B$ for $f^j, j=0,1,...,n+m$ in the "loops" intersect $K$ and are disjoint from $\partial\hI_K$ since
$z$ is "safe" for the time $2n\ge n+m$ and since $f^j(B)\subset \hI_K$ for $j=0,1,...,n_1$ (for corrected $\Delta$) by
$\dist(f^j(z),\partial\hI_K)$ bounded away from 0, see Definition~\ref{safe forward}, and by backward Lyapunov stability,
see Lemma~\ref{shrinking} and Remark~\ref{back stab}, we conclude that $X\subset\hI_K$.
Hence by its invariance and maximality, $X\subset K$. Notice that we need not assume (wi).

The set $X$ is $F:=f^{n+n_1+m}$-invariant. To get an $f$-invariant set one considers $\bigcup_{j=0}^{n+n_1+m}f^j(X)$ and yet extends it, to conclude with an $f$-invariant isolated set.

To get the asserted inequality between pressures, ones uses for each $z_1$ as above, the estimate,
see \cite[(2.1)]{PR-LS2}:
$$
|(f^n)'(z_1)|^{-t}\le C |(f^{n+n_1+m})'(x)|^{-t} L^{t(n_1+m)},
$$
where $L:=\sup |f'|$ and $x$ is an arbitrary point in the pull-back of $B$ for $f^{n+m}$, containing $z_2$.
To prove this inequality between derivatives we use bounded distortion assumption, the constant $C$ results from it.

\smallskip

Notice however that in this estimate of derivatives we use $t\ge 0$, .

\smallskip

For $t\le 0$ the inequality $P_{\tree}(K,t)\le P_{\hyp}(K,t)$ needs a separate explanation. Two proofs (in the complex case) can be found in \cite{PR-LS2}. An indirect one, in Theorem A.4, and a direct one,  in Section A3. Notice that for $t\le 0$ the function $-t\log |f'|$ is continuous, though
logarithm of it attains $-\infty$ at critical points, and the standard definition of the topological pressure makes sense. The proofs in the interval case are the same. In the indirect proof
the inequality \cite[(A.1)]{PR-LS2} $P_{\tree}(K,z_0,t)\le P(f|_K, -t\log|f'|)$ holds for all $z_0\in K$. For the existence of large subtrees having well separated branches, used in the proof, see \cite[Lemma 4]{P-Perron}. One uses the property that $f$ is $C^1$. The direct proof in \cite[Section A3]{PR-LS2} is a refinement of the proof for $t\ge 0$ (see above) and applicable only for $z_0$ safe and expanding.

\

The opposite inequality for each $z_0\in K$ (with an exception mentioned in the statement of Lemma), follows from the fact that, due to the strong transitivity property of $f$ on $K$, see Proposition~\ref{dp},
for an arbitrary isolated invariant hyperbolic set $X\subset K$,
we can
find  a backward trajectory $z_0,z_{-1},...,z_{m}$ where $m\le 0$ \, ($f(z_k)=z_{k+1}$)
such that
$z_{m}\in K$ is arbitrarily close to $X$, say close to a point $z'\in X$.

Then for each backward trajectory $z'_{n}, n=0,-1,...$ of $z'$ in $X$ there is exactly one backward trajectory $z_{m+n)}$ of $z_{m}$ so that all $|z'_n-z_{m+n}|$ are small and decrease exponentially to 0. Hence, for  $n\ge 0$
\begin{equation}\label{shadow}
\sum_{f^n(x)=z',\, x\in X} |(f^n)'(x)|^{-t}\le \Const \sum_{f^n(x)=z_m, x\in K} |(f^n)'(x)|^{-t},
\end{equation}
hence,
$P_{\tree}(K,z_m,t)\ge P_{\tree}(X,z',t)=P(X,t)$.
Finally $P_{\tree}(K,z_0,t) \ge
\lim_{n\to\infty} {-t\log |(f^{|m|})'(z_m)|
\over ||m|+n|}+P_{\tree}(K,z_m,t)\ge P_{\tree}(K,z_m,t)$
for $t\ge 0$ since then the first summand is larger or equal to 0 (0 or $\infty$) and for $t<0$ since then
the first summand is equal to 0 provided $(f^{|m|})'(z_m)\not= 0$.

\end{proof}

\begin{rema}
We have proved above that for all $z_0\in K$ safe and expanding $\limsup$ can be replaced by $\lim$
in the definition of $P_{\tree} (K,z_0,t)$, i.e. the limit exists, compare \cite[Remark 12.5.18]{PU}.
Indeed (\ref{shadow}) holds for all $n$ hence in the estimates which follow we can consider $\liminf$
in $P_{\tree}$.
\end{rema}

\

\begin{lemm}\label{Pvar} For each $(f,K)\in \sA_+$,\,
$P_{\varhyp}(K,t) = P_{\var}(K,t)$ for all $t<t_+$, and assuming (wi) for all $t\ge t_+$.
\end{lemm}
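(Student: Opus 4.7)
The inequality $P_{\varhyp}(K,t) \le P_{\var}(K,t)$ is immediate from the definitions. For the reverse direction, my plan is first to restrict the supremum defining $P_{\var}(K,t)$ to ergodic measures via the ergodic decomposition theorem (using $h_\mu - t\chi_\mu = \int(h_{\mu_x} - t\chi_{\mu_x})\, d\rho(x) \le \sup_x [h_{\mu_x} - t\chi_{\mu_x}]$), and then to observe that, by Ruelle's inequality combined with $\chi_\mu(f) \ge 0$ (from \cite{P-Lyap}), every ergodic $\mu$ with $\chi_\mu(f)=0$ also satisfies $h_\mu(f)=0$, and so contributes exactly $0$ to the supremum. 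This reduces the claim to
\begin{equation*}
P_{\var}(K,t) = \max\{P_{\varhyp}(K,t),\; c(t)\},
\end{equation*}
where $c(t)=0$ if some ergodic $f$-invariant measure on $K$ has vanishing Lyapunov exponent, and $c(t)=-\infty$ otherwise.

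If $\chiinf(f)>0$ then every ergodic measure is hyperbolic, so $c(t)=-\infty$ and the identity is immediate. Hence I can assume $\chiinf(f)=0$. For $t<\tpos$, the definition of $\tpos$ and the fact that $t\mapsto P(t)$ is convex and non-increasing (each $h_\mu - t\chi_\mu$ has nonpositive slope) give $P(t) = P(t)+t\chiinf(f) > 0$. Thus $P_{\var}(K,t)=P(t)>0\ge c(t)$, and the displayed formula forces $P_{\var}(K,t)=P_{\varhyp}(K,t)$, proving the first assertion.

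For $t\ge\tpos$ under the additional hypothesis (wi): since $\tpos>0$ and $\chiinf(f)=0$, the third bullet listed after the definition of $\tpos$ gives $P(t)=-t\chiinf(f)=0$, so $P_{\var}(K,t)=0$ and it remains to establish $P_{\varhyp}(K,t)\ge 0$. If the value $\chiinf=0$ is not attained, I pick a sequence of ergodic measures $\mu_n$ with $\chi_{\mu_n}\to 0^+$; each is automatically hyperbolic, and $h_{\mu_n}-t\chi_{\mu_n}\ge -t\chi_{\mu_n}\to 0$. If instead $\chiinf=0$ is attained by some ergodic measure with zero Lyapunov exponent, I invoke Corollary~\ref{chi0} to produce repelling periodic points $p_n\in K$ with $\chi(p_n)\to 0$; the atomic ergodic measure $\nu_{p_n}$ supported on $O(p_n)$ is hyperbolic, has zero entropy, and satisfies $h_{\nu_{p_n}}-t\chi_{\nu_{p_n}}=-t\chi(p_n)\to 0$, again yielding $P_{\varhyp}(K,t)\ge 0$.

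The hard part is exactly this last subcase for $t\ge\tpos$: approximating a degenerate ergodic minimizer of $\chi_\mu$ by hyperbolic measures located \emph{inside} $K$. This is precisely where the weak isolation hypothesis is essential — it ensures, via Corollary~\ref{chi0} (equivalently, the equivalence UHP $\Leftrightarrow$ UHPR from Theorem~C), that the repelling periodic orbits detected by the failure of uniform hyperbolicity genuinely lie in $K$, rather than merely in some neighborhood of it; without (wi) these approximating orbits could escape $K$ and the argument would collapse.
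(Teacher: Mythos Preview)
Your proof is correct and follows essentially the same approach as the paper: reduce to ergodic measures, use Ruelle's inequality together with $\chi_\mu\ge 0$ to see that non-hyperbolic ergodic measures contribute exactly $0$, conclude that the only problematic case is $P_{\var}(K,t)=0$ with $\chiinf=0$ (hence $t\ge t_+$), and then invoke Corollary~\ref{chi0} (i.e.\ Theorem~C) under (wi) to produce repelling periodic orbits in $K$ with $\chi(p_n)\to 0$. Your write-up is somewhat more explicit than the paper's---in particular you separate out the subcase where $\chiinf=0$ is not attained, which the paper absorbs into its contrapositive ``suppose there exist $\mu$ with $\chi_\mu=0$ approaching the supremum''---but the argument is the same.
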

\begin{proof}
$P_{\varhyp}(K,t)\le P_{\var}(K,t)$ for all $t$ is obvious.

The opposite inequality is not trivial for $t\ge t_+$ and in the proof we shall apply Theorem C, proved in Section~\ref{TCE}.

Suppose there exist $\mu\in\sM(f,K)$  with $\chi_\mu(f)=0$ for which $h_\mu(f)-t\chi_\mu(f)$ are  arbitrarily close to $P_{\var}(K,t)$. This implies, due to
$h_\mu(f)\le 2\chi_\mu(f)=0$ (Ruelle's inequality), that $P_{\var}(K,t)=0$ and that $t\ge t_+$.  By $\chi_\mu(f)=0$,\, $(f,K)$ is not Lyapunov hyperbolic, hence not UHPR, see Theorem C, in particular Corollary~\ref{chi0}.
So there exist repelling periodic points $p\in K$ with $\chi(p)$ arbitrarily close to 0.
Thus $P_{\varhyp}(K,t)$ is arbitrarily close to 0, hence equal to 0, as well as $P_{\var}(K,t)$.

\end{proof}

\

We end this Section with commenting on various definitions of $\chiinf(f,K)$ and $\chisup(f,K)$ and therefore the {\it condensation} and {\it freezing} phase transition points $t_-$ and $t_+$, see Introduction.

\begin{prop}\label{asymptotes} (compare \cite[Proposition 2.3]{PR-L2}, and \cite[Main Theorem]{R-L}).

 For each $(f,K)\in \sA_+^{\BD}$ %satisfying (wi),
 %satisfying Darboux property, $f|_K$ topologically transitive, $K$ weakly isolated,
 the following holds
\begin{enumerate}
\item[1.]
Given a repelling periodic point~$p$ of~$f$, let~$m$ be its period and put $\chi(p) \= \tfrac{1}{m} \log |((f^m)'(p)|$. Assume (wi), see Definition~\ref{wisolated}.
Then we have
$$ {\chiinf}_{\Per}:=\inf \{ \chi(p): p\in K \text{ is a repelling periodic point of } f \}
=
\chiinf,$$
$$ {\chisup}_{\Per}:=\sup \{ \chi(p): p\in K \text{ is a repelling periodic point of } f \}
=
\chisup.$$
\medskip
\item[2.]
$$ \lim_{n \to + \infty} \tfrac{1}{n} \log \sup \{ |(f^n)'(x)|: x \in K \}
=
\chisup. $$
\item[3.]
For each $z \in K$ safe, safe forward and hyperbolic, we have

\begin{equation}\label{e:individual minimal pressure}
{\chiinf}_{\rm{Back}}(z):=\lim_{n \to + \infty} \tfrac{1}{n} \log
\min \{ |(f^n)'(w)|: w \in f^{-n}(z) \}
=
\chiinf,
\end{equation}
assuming (wi) for the $\le$ inequality in case $\chi_{\inf}=0$, and finally
\begin{equation}\label{e:individual maximal pressure}
{\chisup}_{\rm{Back}}(z):=\lim_{n \to + \infty} \tfrac{1}{n} \log
\max \{ |(f^n)'(w)|: w \in f^{-n}(z) \}
=
\chisup.
\end{equation}
\end{enumerate}
\end{prop}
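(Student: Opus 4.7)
My plan is to handle the three parts in turn, drawing on Katok's approximation theorem~\ref{Katok}, the TCE equivalences of Theorem~C (especially Corollary~\ref{chi0}), bounded distortion, and the safeness/expanding/strong-transitivity properties of $z$. In Part~1, the inequalities $\chiinf \le {\chiinf}_{\Per}$ and $\chisup \ge {\chisup}_{\Per}$ are immediate since every periodic measure lies in $\mathcal{M}(f,K)$. For the reverse, pick an ergodic $\mu$ with $\chi_\mu(f)$ close to the target; when $\chi_\mu(f) > 0$, Theorem~\ref{Katok} together with (wi) yields expanding $f$-invariant subsets $X_k \subset K$ carrying ergodic measures $\mu_k$ with $\chi_{\mu_k}(f) \to \chi_\mu(f)$, and on each $X_k$ the standard Bowen-type approximation by periodic orbits produces $p \in X_k \subset K$ with $\chi(p)$ arbitrarily close to $\chi_{\mu_k}(f)$. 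When $\chi_\mu(f) = 0$ (only relevant for $\chiinf$), Corollary~\ref{chi0} directly supplies repelling periodic orbits with $\chi(p) \to 0$.

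For Part~2, the inequality $\chisup \le \lim \frac{1}{n}\log \sup_K |(f^n)'|$ follows from Birkhoff's theorem applied to $\log|f'| \in L^1(\mu)$. For the reverse, take $x_n \in K$ almost realizing the supremum and form the empirical measures $\nu_n := \frac{1}{n}\sum_{j=0}^{n-1}\delta_{f^j(x_n)}$, passing to a weak*-limit $\nu \in \mathcal{M}(f,K)$. The potential $\log|f'|$ is bounded above and upper semicontinuous on $K$ (with value $-\infty$ at critical points); the truncations $g_M := \max(\log|f'|, -M)$ are continuous and bounded, hence $\int g_M\, d\nu = \lim_k \int g_M\, d\nu_{n_k} \ge \limsup_k \int \log|f'|\, d\nu_{n_k}$, and monotone convergence as $M \to \infty$ yields $\chi_\nu(f) \ge \limsup_k \frac{1}{n_k}\log|(f^{n_k})'(x_{n_k})|$, as required.

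Part~3 combines both mechanisms. The $\le$ of \eqref{e:individual maximal pressure} is immediate from Part~2. For the $\le$ of \eqref{e:individual minimal pressure} and the $\ge$ of \eqref{e:individual maximal pressure}, approximate $\chiinf$ (respectively $\chisup$) by $\chi(p)$ via Part~1, and use the safe, safe-forward and expanding properties of $z$ together with strong transitivity (Proposition~\ref{dp}) to construct, for each large $n$, a preimage $w_n \in f^{-n}(z)$ whose forward orbit shadows $O(p)$---this is the connecting construction from the proof of Lemma~\ref{Phyp}, and bounded distortion makes $\frac{1}{n}\log|(f^n)'(w_n)|$ close to $\chi(p)$.

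The main obstacle is the lower bound ${\chiinf}_{\rm{Back}}(z) \ge \chiinf$: the empirical-measure argument at minimizing preimages breaks down because upper semicontinuity of $\log|f'|$ produces $\chi_\nu(f) \ge \limsup \frac{1}{n}\log|(f^n)'(w_n)|$ in the wrong direction. I would instead argue geometrically. If $w_n$ minimizes the derivative, bounded distortion forces the pull-back $T_n$ of $B(z,\rho)$ containing $w_n$ to have length comparable to $\rho/|(f^n)'(w_n)|$, so subexponential shrinking of $|(f^n)'(w_n)|^{-1}$ would produce pull-backs $T_n$ intersecting $K$ that fail to shrink exponentially; iterating a closing step based at $z$ along such an oversized $T_n$ (as in Lemma~\ref{Phyp}) yields an expanding isolated invariant subset of $K$ whose top Lyapunov exponent does not exceed $\liminf \frac{1}{n}\log|(f^n)'(w_n)|$, forcing that liminf to be at least $\chiinf$. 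Existence of the full limit follows from a Fekete-type near-superadditivity of $n \mapsto \log \min_{w \in f^{-n}(z)} |(f^n)'(w)|$, with the error absorbed by bounded distortion.
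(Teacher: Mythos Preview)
Your approach for Parts~1, 2, and most of Part~3 matches the paper's essentially verbatim: periodic measures for the easy inequalities, Theorem~\ref{Katok} plus (wi) and Corollary~\ref{chi0} for the hard ones in Part~1, Birkhoff plus empirical measures and upper semicontinuity of $\log|f'|$ for Part~2, and shadowing a periodic orbit by a backward branch of~$z$ for the inequalities ${\chisup}_{\rm Back}(z)\ge\chisup$ and ${\chiinf}_{\rm Back}(z)\le\chiinf$.

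For the remaining inequality ${\chiinf}_{\rm Back}(z)\ge\chiinf$ you also land on the right tool---the closing construction of Lemma~\ref{Phyp}---but your route to it has a gap and is unnecessarily indirect. The claim that BD forces $|T_n|\asymp\rho/|(f^n)'(w_n)|$ for the pull-back $T_n$ of $B(z,\rho)$ with \emph{fixed} $\rho$ is not justified: BD in Definition~\ref{distortion} requires $f^n$ to extend diffeomorphically to a scaled neighbourhood, and a pull-back of a ball of fixed radius may capture critical points. The closing argument in Lemma~\ref{Phyp} works precisely because one uses \emph{shrinking} balls $B(z,e^{-\alpha n})$, with safeness of~$z$ guaranteeing univalence for time~$\le 2n$. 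The paper (its item~3c) bypasses the pull-back discussion entirely: for an arbitrary $w\in f^{-n}(z)$ one closes the loop directly, using safe, expanding and safe-forward, to obtain a periodic $p\in K$ with $\chi(p)=\tfrac{1}{n}\log|(f^n)'(w)|+o(1)$, whence $\tfrac{1}{n}\log|(f^n)'(w)|\ge{\chiinf}_{\Per}-o(1)$ and one concludes via Part~1. Drop the pull-back paragraph and the contradiction framing; the shadowing statement already gives what you need.

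Finally, your Fekete-type argument for the existence of the limits is unnecessary (and near-superadditivity of $n\mapsto\log\min_{w\in f^{-n}(z)}|(f^n)'(w)|$ is not obvious): once you have $\liminf\ge\chiinf$ from the closing argument and $\limsup\le\chiinf$ from the shadowing-a-periodic-orbit argument, the limit exists automatically.
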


\begin{proof}

\partn{1} The inequalities ${\chiinf}_{\Per}\ge \chiinf$ and
${\chisup}_{\Per}\le \chisup$ follows immediately   from the
use
%construction of measures giving $\chiinf$ and $\chisup$ as
of the measures equidistributed on periodic orbits involved in ${\chiinf}_{\Per}$ and ${\chisup}_{\Per}$.

The inequality
${\chisup}_{\Per}\ge \chisup$ follows from Katok's construction of periodic orbits.
%, see Proof of Lemma~\ref{Phypvar} and Lemma~\ref{Katok}. These periodic orbits are in $K$ by (wi).
Formally, we can refer to Theorem~\ref{Katok}. We can assume $\chisup>0$ since otherwise the inequality is obvious. Then we choose $\mu$ ergodic with $\chi_\mu>0$ arbitrarily close to $\chisup$.  For any $\phi$  we find sets $X_k$, periodic orbits $O(y_k)\subset X_k$, equidistribute measures $\mu_k$ on $O(y_k)$ and conclude with $\chi(y_k)\to \chi_\mu$ as $k\to\infty$. Finally, $y_k\in K$ by (wi). (We do not know whether we can find $\mu$ of positive entropy above, to avoid using (wi).)

It is more difficult to prove ${\chiinf}_{\Per}\le \chiinf$ in case we do not know {\it a priori}
that the latter number is the limit of a sequence $\chi_{\mu_n}$ for $\mu_n$ hyperbolic measures on $K$, i.e. if there exists a probability invariant measure on $K$ with $\chi_\mu =0$. Then the proof of the inequality immediately follows from
Theorem C, which in particular says that if such $\mu$ exists then UHPR fails, that is there exists a sequence of repelling periodic points $p_n\in K$ such that $\chi(p_n)\to 0$,
Corollary~\ref{chi0}.
%By the finiteness of the number of indifferent periodic points we can assume that $\chi(p_n)>0$ (see the conditions UHPR in Section~\ref{TCE}).
Compare also Proof of Lemma~\ref{Pvar}. The assumed property (wi) is used as well.

\partn{2} The inequality $\ge$ follows from Birkhoff Ergodic Theorem. The opposite inequality
can be proved via constructing $\mu$ as a weak* limit of the measures
$ \mu_n \= \tfrac{1}{n} \sum_{j = 0}^{n - 1} \delta_{f^j(z_n)}$ for $z_n$ involved in the supremum.

\partn{3a} The inequality ${\chisup}_{\text{Back}}(z)\le \chisup$ follows immediately from the similar inequality in the previous item. It holds for every $z\in K$.

\partn{3b} The inequality ${\chisup}_{\text{Back}}(z)\ge {\chisup}_{\Per}$ (or $\chisup$)
can be proved by
choosing a backward trajectory of $z$ converging to a hyperbolic repelling periodic trajectory $O(p)$ in $K$ with $\chi(p)$ arbitrarily close to ${\chisup}_{\Per}$. Similarly we choose a backward trajectory approaching to a Birkhoff-Pesin backward trajectory
 for a measure $\mu$ with $\chi_\mu$ close to $\chisup$.

 By Birkhoff-Pesin backward trajectory we mean $x_n, n=0,-1,...; f(x_{n-1})=x_n$ in $K$, such that $\lim_{n\to\infty}\log |(f^{|n|})'(x_n)|=\chi_\mu$ and the pullbacks of an interval $B(x_0,r)$ along it, do not contain critical points, compare the beginning of Proof of Theorem~\ref{Katok}. The existence of such trajectories uses $\chi_\mu>0$.

  We do not use
neither
$z$ is safe nor hyperbolic; we only use the assumption $z$ is not in a weakly $\Crit$-exceptional set.

\partn{3c} The inequality ${\chiinf}_{\text{Back}}(z)\ge {\chiinf}_{\Per}$ (or $\chi_{\inf}$)
can be proved by finding a periodic trajectory shadowing an arbitrary piece of backward trajectory of $z$, compare Proof of Lemma~\ref{Phyp} %or the proof of UHPR $\Rightarrow$ CE2$(z_0)$ in Section~\ref{TCE}.

 Here we do use `safe and `hyperbolic'. Notice however that if  we assume that $(f,K)$ satisfies  the property (wi), then we do not need the `safe forward' condition on $z$.

\partn{3d} Finally ${\chiinf}_{\text{Back}}(z)\le {\chiinf}_{\Per}$ can be proved similarly to 3b. It works even if a periodic trajectory $O(p)$ is not hyperbolic repelling, by Remark~\ref{finite indifferent}, since it is then (one-sided) repelling, to the side from which it is accumulated by $K$. %It works also if $\chi_\mu>0$.

 The proof of 3d. in the version  ${\chiinf}_{\text{Back}}(z)\le \chiinf$ follows from  ${\chiinf}_{\Per}\le \chiinf$ in the item 1, which however uses (wi) if $\chi_{\inf}=0$.

\end{proof}

\

\section{Pressure on periodic orbits}\label{pressure-per}

This Section complements Section 4. It is not needed for the proof of Theorem A.

We start with the interval version of a technical fact allowing to estimate the number of "bad" backward trajectories, used in the complex case (in various variants) in several papers on geometric pressure, e.g. \cite{PR-LS1}, \cite{PR-LS2}, \cite{PR-L1}, \cite{PR-L2}, \cite{GPR}, \cite{GPR2}.
(The essence of this fact is just calculating vertices of a graph.)

\begin{defi}\label{procedure}
Let $(f,K)\in \sA_+$.
% $f:I\to I$ be a $C^2$ multimodal map of the interval.
%Denote $PC^n:=\bigcup_{i=1}^n f^i(\Crit(f))$.
Fix $n$ and arbitrary $x_0\in K$ %\setminus PC^n$
and $R>0$. For every
backward trajectory of $x_0$ in $K$, namely a
sequence of points $(x_i\in K, i=0,1,...,n)$ such that $f(x_i)=x_{i-1}$ run
the following procedure. Take the smallest $k=k_1\ge 0$ such that
$\Comp_{x_{k_1}} f^{-k_1}B(x_0,R)$ contains a critical point. Next
let $k_2$ be the smallest $k>k_1$ such that
$\Comp_{x_{k_2}} f^{-(k_2-k_1)}B(x_{k_1},R)$ contains a critical point.
Etc. until $k=n$. Let the largest $k_j\le n$ for the sequence $(x_i)$
be denoted by $k((x_i))$ and let the set $\{y: y=x_{k((x_i))}$ for a backward
trajectory $(x_i)\}$ be denoted by $N(x_0)=N(x_0,n,R)$.
\end{defi}

\begin{lemm}\label{nodes} (Compare e.g. complex \cite[Lemma 3.7]{PR-LS2}, \cite[Lemma 3.6]{PR-L1}, or the interval setting: \cite[Proposition 3.19]{GPR2}).
% For open set $U\subset\R$ let $f:U\to \R$ be a $C^2$ map with a finite number of critical points
 %, topologically transitive on $K=J^+(f)$ or weakly topologically exact on an arbitrary weakly isolated forward
 %and $X\subset U$ be a compact $f$-invariant set.

For every $(f,K)\in \sA_+$, for every $\e>0$ and for all $R>0$ small enough and $n$ sufficiently large, for every $x\in K\setminus B(\Indiff(f), R)$ %PC^n$
it holds
$\# N(x,n,R)\le \exp (\e n)$.
\end{lemm}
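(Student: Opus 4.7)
Fix $\varepsilon > 0$. The plan is to encode each backward trajectory by its ``long-capture'' sub-history, exploit the fact that a backward trajectory is forward-determined by any of its later points, and count the resulting combinatorial skeletons.

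First, choose $\eta > 0$ smaller than one-third of $\min\{|c - c'| : c, c' \in \Crit(f),\, c \ne c'\}$, so that any interval of length at most $\eta$ contains at most one critical point. By Lemma~\ref{shrinking} there exist $R_0 > 0$ and an integer $N_0$ such that, for every $R \le R_0$, every $z \in K$ and every $k \ge N_0$, every pull-back of $B(z, R)$ of order $k$ intersecting $K$ has diameter at most $\eta$. For a backward trajectory $(x_i)_{i = 0}^n$ with capture history $(k_j, c_j)_{j = 1}^m$ (setting $k_0 = 0$), call a capture \emph{long} when $k_j - k_{j - 1} \ge N_0$ and \emph{short} otherwise. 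By the choice of $\eta$, at every long capture the critical point $c_j$ is uniquely determined by the pull-back component $W_j := \Comp_{x_{k_j}} f^{-(k_j - k_{j-1})}(B(x_{k_{j-1}}, R))$.

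Second, since $f(x_i) = x_{i - 1}$, one has $x_i = f^{k - i}(x_k)$ for every $i \le k$, so the prefix $(x_0, \ldots, x_k)$ is determined by the single point $x_k$. In particular the entire sequence of intermediate points and short captures between two consecutive long-capture endpoints is determined by those two endpoints, and the whole trajectory up to the last long capture is determined by the last long-capture endpoint. At each long capture, given the previous long-capture endpoint $x_{k_{j-1}}$, the next endpoint $x_{k_j}$ lies in one of at most $\#\Crit(f)$ pull-back components that contain a critical point, and in each such component it is one of at most $d := \max_{c \in \Crit(f)} \deg(c)$ preimages of $x_{k_{j-1}}$. Thus there are at most $D := \#\Crit(f) \cdot d$ choices per long capture.

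Third, let $\ell$ be the number of long captures of the trajectory; then $\ell \le n/N_0$. The long-capture times form an $\ell$-element subset of $\{1, \ldots, n\}$ (at most $\binom{n}{\ell} \le (en/\ell)^\ell$ possibilities), and for each such time pattern there are at most $D^\ell$ possible values of the last long-capture endpoint. The last capture $k_m$ may lie in a tail of length $< N_0$ after the last long capture; given that endpoint, the value $x_{k_m}$ is one of at most $B := D^{N_0}$ possibilities. Combining,
\begin{equation*}
\#N(x_0, n, R) \le B\, (n/N_0 + 1) \,(e N_0 D)^{n/N_0}.
\end{equation*}
Since $\log(e N_0 D)/N_0 \to 0$ as $N_0 \to \infty$, and $N_0 = N_0(\eta) \to \infty$ as $\eta \to 0$ (via Lemma~\ref{shrinking}), choosing $\eta$ small enough (hence $R$ small enough) and $n$ large enough yields $\#N(x_0, n, R) \le \exp(\varepsilon n)$.

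The principal obstacle is the forward-iteration rigidity step: it transfers the task from counting backward trajectories (whose number grows exponentially in $n$ with a large base determined by the number of monotone branches of $f$) to counting only long-capture skeletons, which grow subexponentially thanks to the shrinking lemma. The short captures, which could a priori introduce uncontrolled branching, are absorbed into the constant factor $B$ precisely because they are determined by the endpoints of the long-capture skeleton.
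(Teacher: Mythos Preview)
Your argument has a genuine gap in the treatment of short captures. You distinguish long captures ($k_j-k_{j-1}\ge N_0$) from short ones and then bound only the number $\ell$ of long captures by $n/N_0$. But the branching introduced by short captures is never controlled. Concretely:
\begin{itemize}
\item In your second paragraph you write ``given the previous long-capture endpoint $x_{k_{j-1}}$, the next endpoint $x_{k_j}$ lies in one of at most $\#\Crit(f)$ pull-back components \ldots''. The pull-back $W_j$ in the procedure is a pull-back of $B(x_{k_{j-1}},R)$ where $k_{j-1}$ is the \emph{immediately preceding} capture, not the preceding \emph{long} capture. If there are short captures between two consecutive long captures, the passage from one long-capture endpoint to the next is a composition of several capture steps, each with its own branching; your factor $D$ bounds only the last of these.
\item Your claim that ``the last capture $k_m$ may lie in a tail of length $<N_0$ after the last long capture'' is unjustified. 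Nothing prevents a run of many consecutive short captures (each gap $<N_0$), so $k_m-k_{j^*}$ can be arbitrarily large and the bound $B=D^{N_0}$ does not apply.
\end{itemize}
The sentence ``the short captures \ldots are determined by the endpoints of the long-capture skeleton'' is true but beside the point: forward determinism says the trajectory up to $x_{k_m}$ is determined by $x_{k_m}$, but it does \emph{not} say that $x_{k_m}$ is determined by the long-capture data --- the short captures contribute to the choice of $x_{k_m}$ itself.

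The standard fix (as in the references the paper cites) is to bound the \emph{total} number of captures $m$, not just the long ones. For $R$ small enough one shows that in any run of consecutive short captures the captured critical points $c_j,c_{j+1},\ldots$ must all be distinct: a short gap forces $f^{k_{j+1}-k_j}(c_{j+1})$ to lie within $R+\eta$ of $c_j$, hence (for $R+\eta$ small) to equal $c_j$ exactly, and a repetition in the sequence would make a critical point periodic, contradicting Lemma~\ref{NO}. Thus each maximal run of short captures has length at most $\#\Crit(f)$, giving $m\le(\#\Crit(f)+1)(n/N_0+1)$. With this bound on $m$ (and backward Lyapunov stability to make every pull-back contain at most one critical point, so the branching at \emph{every} capture is at most $2\#\Crit(f)$), the count $\binom{n}{m}(2\#\Crit(f))^m$ gives the result. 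A secondary issue: your invocation of Lemma~\ref{shrinking} also needs $B(z,R)$ disjoint from $\Indiff(f)$, which should be addressed (it holds for the restarted balls since $x_{k_{j-1}}$ is $\eta$-close to a critical point, hence far from $\Indiff(f)$, but the initial ball $B(x_0,R)$ needs separate care).
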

One can replace in this Lemma critical points (in Definition \ref{procedure}) by an arbitrary finite set not containing periodic orbits, in particular by $S'(f,K)$. A variant of this will be used in Section \ref{s:Key}.
\

\begin{proof}[Proof of Theorem B for $P_{\Per}$]
The inequality $P(K,t)\le P_{\Per}(K,t)$ follows immediately from $P(K,t)=P_{\hyp}(K,t)$.
Indeed, for every isolated hyperbolic $X\subset K$ we have Bowen's formula $P(X,t) = P_{\Per}(X,t)$.

For the opposite inequality we can adapt \cite[proof of Theorem C]{BMS} for complex polynomials with
connected Julia set, or
\cite{PR-LS2} for general rational functions. In the latter, a condition H was assumed, saying that for every $\delta>0$ and $n$ large enough, for any set $P=P_n$ of periodic points
of period $n$ such that for all $p,q\in P$ and all $i:0\le i\le n$
$\dist(f^i(p),f^i(q))<\exp(-\delta n)$, we have $\#P<\exp (\delta n)$.
In \cite[Lemma 2]{BMS} this assumption, even a stronger one, has been proved provided
all periodic points are hyperbolic.
%there are no indifferent periodic points, i.e. (in the complex setting) such that $ |(f^n)'(p)|=1$, where $n$ is a period of $p$.

In our  $C^2$ generalized multimodal case the \cite{BMS} condition also holds. Namely

\

(H*) For every
$(f,K)\in \sA$,
for every $\e >0$, there is $\rho>0$ such that for $n$ large enough, for any $p,q\in P\subset K$ as above, satisfying $|f^i(p)-f^i(q)|\le \rho$, we have $\#(P) < \exp (\e  n)$.

\

Indeed, having $P=P_n$ not satisfying (H*), we can shrink $\e $ and assume there is no critical point for $f^n$ in $T_n$ being the convex hull of $P_n$, and $|T_n|\to 0$ as $n\to\infty$.

Indeed, consider $f^i(T_n^0)$ where $T_n^0$ is the convex hull of $P_n$, for $i=0,...$ as long as
$f^{i_0}(T_n^0)$ contains a critical point $c_0$ for $f$ (there is only one such critical point, provided $\rho$ is small enough). $f^{i_0}$ is strictly monotone on $T_n^0$. Then the point $(f|_{T_n^0})^{-i_0}(c_0)$
divides $T_n^0$ into two subintervals and we define  $T_n^{i_0}$ as the convex hull of
the points belonging to $P$ in this one
which contains the larger
number of points belonging to $P$. Next we consider $i_1>i_0$ the first $i>i_0$ such that
$f^{i_1}(T_n^1)$ contains a critical point $c_1$. After a consecutive division we continue. We stop with
$T_n:=T_n^{i_s}$ such that $f^{n-i_s}$ does not contain critical points in $T_n^{i_s}$.
Notice that all $i_{t+1}-i_t$ are larger than an arbitrarily large constant for $\rho$ small enough.

Thus there are $n$ arbitrarily large such that
there are in $T_n$ an abundance (the number exponentially tending to $\infty$ as $n\to\infty$) of fixed points for $f^n$, in particular an abundance of attracting or indifferent periodic orbits for $f$, all in $\hI_K$ provided $\delta$ is smaller than the gaps between the components of $\hI_K$, hence in $K$. This contradicts lack of attracting periodic orbits and the finiteness of the
set of indifferent periodic orbits in $K$, see Remark~\ref{finite indifferent}.

\

As in \cite{PR-LS2} and \cite{BMS} we split $P_{\Per}$ in two parts.

\begin{defi}\label{regular}
Fix small $r>0$.
We say that a periodic orbit $O\subset K$ of  period $n \ge 1$ is
{\it regular} (more precisely: regular with respect to $r$) if
it is hyperbolic repelling and
there exists $p \in O$ such that
$f^n$ is injective on $\Comp_p f^{-n}(B(p, r))$.
If $O$ is not regular, then we say that $O$ is {\it singular}.
\end{defi}

We denote by $\Per_n$ the set of all periodic points of period $n$ in $K$ and denote
by $\Per_n^{\reg}$ the set of all points  in $\Per_n \setminus \Indiff(f)$, whose periodic orbits  are regular. Denote by $\Per_n^{\sing}$ the set of all other periodic points in $K$.

We shall first prove that
$$
P_{\Per}^{\reg}(K,t) := \limsup_{n \to \infty} {1\over n}\log
\sum_{p \in \Per_n^{\reg}} |(f^n)'(p)|^{-t} \le P_{\tree}(K,t).
$$

We choose a finite $r/12$-dense set $A\subset K$ of points, safe (for an arbitrary $\delta$) and hyperbolic  with common constants.

Let $O \subset \Per_n^{\reg}$ be a regular periodic
orbit and let $p_0 \in O$ be such that $f^n$ is injective on
$\Comp_{p_0} f^{-n}(B(p_0,r))$. Let $x\in A\cap B(p_0,r/12)$.
Consider the (unique) backward orbit $(x_j, j=0,...,n)$ such that

\begin{equation}\label{associate}
x_j \in \Comp_{f^{n- j}(p_0)} f^{-j}(B(p_0, r/12)).
\end{equation}

By BD,
if $r$ is small enough, then
%to avoid $A(f)$,
$$
C^{-1}|(f^n)'(x_n)|\le |(f^n)'(p_0)|\le C|(f^n)'(x_n)|
$$
for $C:=C(11/12)$.

Notice also that
$$
\Comp_{x_n} f^{-n}B(x_0,r/2)\subset
\Comp_{p_0} f^{-n}B(p_0,2r/3)
$$
$$
\subset
B(p_0,r/4)\subset
B(x_0,r/3),
$$
the middle inclusion by Lemma~\ref{shrinking} for $n$ large enough.
To apply this lemma, if $p_0$ is too close to $\Indiff(f)$, we replace $p_0$ by its image
under an iterate of $f$ which is far from $\Indiff(f)$. This is possible since $\Indiff(f)$ is finite.
If $p_0$ is close to a indifferent periodic point $p'$, it must be on the repelling side of it, so its forward $f$-trajectory escapes from a neighbourhood of $O(p')$.

So there is exactly one $q\in \Comp_{x_n} f^{-n}B(x_0,r/2)$ such that $f^n(q)=q$.
Hence each $(x_i, i=0,...,n)$ above is associated to at most one point in
$P_{\Per}^{\reg}(K,t)$.

We conclude with
$$
\sum_{ p \in \Per_n^r }
|(f^n)'(p)|^{-t}\le
\max\{C^t,C^{-t}\} \sum_{x_n\in f^{-n}(x_0), x_0\in A} |(f^n)'(z_n)|^{-t},
$$
hence
$$
\sum_{ p \in \Per_n^{\reg} }
|(f^n)'(p)|^{-t}\le \max\{C^t,C^{-t}\} \#A \sup_{x_0\in A}
 \sum_{x_n\in f^{-n}(x_0)}  |(f^n)'(x_n)|^{-t}.
$$
hence
$$
P_{\Per}^{\reg}(K,t) \le  P_{\tree}(K,t).
$$

The next step is to find an upper bound, depending on $n$, for the number of singular periodic orbits, not indifferent. We follow \cite{PR-LS2}, but there is no need to replace $p$ by another point in $O(p)$ except to reach $p$ not too close to $\Indiff(f)$.
Given $\rho>0$, for $r$ small enough all components of $f^{-n}(B(p,r))$ have diameters smaller than
$\rho$ by Lemma~\ref{shrinking}.
For $x_j$ chosen as above and $p_j=f^{n-j}(p), j=0,...,n$ we get $|x_j-p_j|<\rho/2$, hence by (H*) the the number of singular orbits to which the same $(x_0,...,x_n)$ is assigned is bounded by $\exp (\e   n)$.

Now assume $R \gg \rho,r$ and apply Lemma~\ref{nodes} and preceding notation. We attempt to bound the number $S_n$ of backward trajectories $(x_0,...,x_n)$ associated to some points in $\Per^{\sing}_n$.

Here given singular $O(p)$ of period $n$ we associate with it a backward trajectory $(x_0,...,x_n)$ as in the regular case, by taking $x_0\in A$ and $x_j$ satisfying (\ref{associate}). Here however even chosen $x_0$ we do not have uniqueness of $x_j$.

Denote by $X_{k,x_k}$ the set of all backward trajectories $(x_i, i=0,1,...,n)$ with the same fixed $k=k((x_i)_{i=0,...,n})$ and $x_k$, associated with
singular periodic orbits i.e. belonging to $\Per_n^{\sing}$.
Repeating the proof in \cite[Step 4.1]{PR-LS2} we conclude that $\# X_{k, x_k}\le k\#\Crit(f)$.

(The proof in \cite{BMS} is more elegant at this place. The authors proceed in Definition
\ref{procedure} till $k=2n$, thus incorporating our consideration to bound $\# X_{k, x_k}$
in the bound of $N(x_0,2n,R)$.)

Thus,
$$
S_n\le \#(A) \# \{N(x_0,n,R): x_0\in A\} \sup_{k,x_k}\# X_{k, x_k} \le \#A \exp (\e  n) n\#\Crit(f).
$$
and in consequence
$$
\#\Per^{\sing}_n\le \exp (\e  n) \#A \exp (\e  n) n\#\Crit(f)  \le \exp (3\e  n)
$$
for $n$ large enough (this includes also all indifferent periodic orbits in $K$).

Since for each periodic $p\in K$ and $t\in R$ we have for the probability invariant measure $\mu_p$ on $O(p)$,
$$-t\chi(p) = h_{\mu_p}(f) -t\chi_{\mu_p} \le P_{\var}(K,t),
$$
we obtain
$$
P_{\Per}^{\sing}(K, t): = \limsup_{n \to \infty} {1\over n}\log
\sum_{p \in \Per_n^s} |(f^n)'(p)|^{-t} =
\limsup_{n \to \infty} {1\over n}\log \sum_{p \in \Per_n^s} \exp (-tn\chi(p))
$$
$$\le
\lim_{n\to\infty} {1\over n}\log (\exp (3\e  n) \exp (n P_{\var}(K,t))
\le 3\e  + P_{\var}(K,t).
$$
Finally we obtain
$$
P_{\Per}(K, t)\le \max\{P_{\Per}^{\reg}(K, t), P_{\Per}^{\sing}(K, t) \} \le P(K,t)+3\e ,
$$
hence considering $\e >0$ arbitrarily small and respective $r$ we obtain
$P_{\Per}(K, t)\le P(K,t)$.
\end{proof}

\

\

\

\section{Nice inducing schemes}\label{s:nice}

\subsection{Nice sets and couples}

\

Firstly we adapt to the generalized multimodal interval case the definitions, and review some properties, of nice sets and couples. We follow the complex setting, see e.g. \cite[Section 3]{PR-L2} and related results in \cite{CaiLi}. For the interval case
for the notion of nice sets see e.g. \cite{BRSS}; the closely related concept of nice intervals was introduced and used to consider return maps to them, much earlier (see papers by Marco Martens, e.g. \cite{Martens}).

\begin{defi}\label{nice} Let $(f,K,\hI_K, {\bf{U}})\in \sA$.
We call a neighborhood~$V$ of the  \textit{restricted singular set}
$
S'(f,K)=\Crit(f)\cup \NO(f,K)
$
(see Definition~\ref{exceptional}) in $\bf{U}$ a \textit{nice set for}~$f$, if for every $n \ge 1$ we have
\begin{equation}\label{nice1}
f^n(\partial V) \cap V = \emptyset,
\end{equation}
 and if each connected component of~$V$ is an open interval
containing precisely one  point of~$S'(f,K)$.
The component of $V$ containing $c\in S'(f,K)$ will be denoted by $V^c$.

So, let $V = \bigcup_{c \in S'(f,K)} V^c$ be a nice set for~$f$.
 According to previously used names, Definition~\ref{pull-back}, we call any component $W$ of $f^{-n}(V)$ for $n\ge 0$ intersecting $K$,  a pull-back, or pull-back for $f^{n}$, of $V$. We  denote this $n$ by $m_W$.

 All $W$ as intersecting $K$ are far from $\partial{\bf{U}}$, that is deep in ${\bf{U}}$,  if all $V^c$ are small enough, see a remark at the end of Definition~\ref{pull-back} and Lemma~\ref{shrinking}.

 We have either
$$
W \cap V = \emptyset \, \text{ or } \, W \subset V.
$$

Furthermore, if~$W$ and~$W'$ are distinct pull-backs of~$V$, then we have either,
$$
W \cap W' = \emptyset,
\, W \subset W'
\, \text{ or } \,
W' \subset W.
$$
For a pull-back $W$ of $V$ we denote by $c(W)$ the point in $S'(f,K)\cap V^c$,
where $W$ is an $f^{-m_W}$ pull-back of $V^c$.

Moreover we put,
$$
\sK(V)
=
\{ z \in K:\text{ for every $n \ge 0$ we have $f^n(z) \not \in V$} \}.
$$
\end{defi}

\begin{lemm}\label{expanding away} For all $(f,K)\in \sA$ with all periodic orbits in $K$ hyperbolic repelling, the map
$f|_K$ is expanding away from singular points, that is
expanding on $\sK(V)$ for every nice $V$ defined above.

More generally for every open (in $\R$) set $V$
containing just $\Crit(f)$ there exists $\lambda>1$ and $N=N(V)>0$ such that if $x,f(x),...,f^N(x)$ belong to
${\bf{U}}\setminus V$ then $|(f^N)'(x)|>\lambda$ (maybe for a diminished $\bf(U))$).

\end{lemm}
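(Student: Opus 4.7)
The plan is to reduce this to Mañé's classical hyperbolicity theorem for $C^2$ interval maps with non-flat critical points (see \cite[Ch.~III, Theorem~5.1]{dMvS}). The two assertions of the lemma really coincide: a nice set $V$ by definition is a neighbourhood of $S'(f,K)\supset \Crit(f)$, so the first assertion follows from the second applied on $K$. I would therefore concentrate on the more general statement.

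First I would extend the restriction $f|_{\hat I_K}$ to a $C^2$ multimodal map $g\colon I\to I$ of a larger compact interval $I\supset{\bf U}$, keeping all critical points non-flat, and, using the auxiliary extension lemma cited in Remark~\ref{finite indifferent} (Lemma~\ref{good extension}), arranging that every periodic orbit of $g$ outside $\hat I_K$ is hyperbolic repelling. Together with the standing hypothesis on $K$, every periodic orbit of $g$ in a suitably small neighbourhood of $\hat I_K$ is then hyperbolic repelling. After possibly shrinking ${\bf U}$ I may also assume $V\subset{\bf U}$ contains $\Crit(g)\cap\hat I_K=\Crit(f)$, while $\Crit(g)\setminus V$ lies in a positive distance from~${\bf U}$.

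Next I would consider the compact $g$-forward invariant set
\[
\Lambda_\infty := \{y\in I:\ g^n(y)\in I\setminus V\ \text{for every}\ n\ge 0\}.
\]
By construction $\Lambda_\infty$ contains no critical points and no non-hyperbolic periodic orbits of $g$, so Mañé's theorem produces constants $C>0$ and $\lambda_0>1$ such that $|(g^n)'(y)|\ge C\lambda_0^n$ for every $y\in\Lambda_\infty$ and every $n\ge 0$. Fix $N_0$ with $C\lambda_0^{N_0}>2$; continuity of $(g^{N_0})'$ and compactness of $\Lambda_\infty$ then give an open neighbourhood $W\supset \Lambda_\infty$ on which still $|(g^{N_0})'|>2$.

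Finally I would pass from the infinite-time bound to the desired finite-time one by a standard compactness argument. Writing $\Lambda_n:=\{y\in I:\ y,g(y),\dots,g^n(y)\in I\setminus V\}$, one has $\Lambda_n\searrow \Lambda_\infty$, so there exists $n_0$ with $\Lambda_{n_0}\subset W$. Setting $N:=n_0+kN_0$ for a large integer $k$, the hypothesis $x,f(x),\dots,f^N(x)\in{\bf U}\setminus V$ forces $x\in\Lambda_N$, and more generally $g^{jN_0}(x)\in\Lambda_{n_0}\subset W$ for $j=0,\dots,k-1$; combining the bound $|(g^{N_0})'|>2$ on $W$ with the uniform lower bound $m:=\inf_{I\setminus V}|g'|>0$ over the remaining $n_0$ iterates, the chain rule yields
\[
|(f^N)'(x)|=|(g^N)'(x)|\ge 2^k m^{n_0},
\]
which exceeds any preassigned $\lambda>1$ once $k$ is large enough. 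The main obstacle is the bookkeeping of Step~1, namely ensuring that the extension $g$ and the shrunk ${\bf U}$ introduce no spurious indifferent or attracting periodic orbits that would spoil the hypothesis of Mañé's theorem; once that is in place, the rest is a routine compactness and chain-rule argument.
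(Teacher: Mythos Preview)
Your approach is essentially the same as the paper's: both extend $f|_{\hat I_K}$ to a $C^2$ multimodal map $g:I\to I$ with only hyperbolic repelling periodic orbits via Lemma~\ref{good extension}, and then invoke Ma\~n\'e's hyperbolicity theorem. The paper handles points $x\notin K$ near $K$ by a shadowing argument (finding a nearby $y\in K$ whose first $N(V')$ iterates avoid a slightly smaller neighbourhood $V'\subset V$ of $\Crit(f)$, then using continuity of $(f^N)'$), whereas you use the descending chain $\Lambda_n\searrow\Lambda_\infty$ together with continuity on a neighbourhood $W$ of $\Lambda_\infty$; both are routine compactness devices serving the same purpose.

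One point needs patching: your claim ``by construction $\Lambda_\infty$ contains no critical points'' is not literally true as written, since the extension $g$ may well have critical points in $I\setminus{\bf U}$, and such a point could lie in $\Lambda_\infty=\{y\in I: g^n(y)\in I\setminus V\text{ for all }n\ge 0\}$. You already note that $\Crit(g)\setminus V$ sits at positive distance from ${\bf U}$, so the fix is immediate: enlarge $V$ outside ${\bf U}$ to a set $\tilde V\supset\Crit(g)$ with $\tilde V\cap{\bf U}=V\cap{\bf U}$, and run your argument with $\tilde V$ in place of $V$. The sets $\Lambda_n$ relevant to the conclusion (orbits of length $N$ inside ${\bf U}\setminus V={\bf U}\setminus\tilde V$) are unchanged, and now $\Lambda_\infty$ genuinely avoids $\Crit(g)$ so Ma\~n\'e applies.
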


\begin{proof}
This follows from Ma\~n\'e's Hyperbolicity Lemma, see e.g.
\cite[Ch.III, Lemma 5.1]{dMvS}.
Indeed we can extend $f|_{\hI_K}$ to $I\supset\hI_K$,\, $C^2$, as in Lemma~\ref{good extension}, see Remark~\ref{LBD2}, thus getting    a $C^2$ map with all periodic orbits hyperbolic repelling.

Then for $x\in K$ the assertion of the lemma is precisely the assertion of Ma\~n\'e's Lemma.

For  $x\notin K$ close to $K$ we find $y\in K$ close to $x$, with $f^j(y)\in K\setminus V'$ for $V'$ a neighbourhood of $\Crit(f)$ smaller than $V$,  $j=0,1,...,N(V')$, so that $f^j(x)$ is sufficiently close to $f^j(y)$ to yield $|(f^N)'(x)|>1=\frac{\lambda+1}2>1$. This finishes the proof.

(Notice that if we assume additionally bounded distortion,  Lemma~\ref{expanding away} follows easily from Lemma~\ref{shrinking}.)
\end{proof}

\begin{lemm}[Existence of nice sets]\label{existence of nice sets} For all $(f,K,{\bf{U}})\in \sA_+$ with no indifferent periodic orbits in $K$, there exist nice sets
with all components of arbitrarily small diameters.
\end{lemm}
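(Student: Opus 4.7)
The plan is to construct each component $V^c$ of the nice set $V$ as an open interval whose boundary consists of hyperbolic repelling periodic points of $K$. Since the forward orbit of a periodic point is a finite set, this reduces the a priori infinite nice condition $f^n(\partial V)\cap V=\emptyset$ ($n\ge 1$) to the finite combinatorial requirement that a single finite forward $f$-invariant set $F\subset K$ avoid the interior of $V$.

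Fix $\epsilon>0$, taken smaller than half the minimum distance between distinct points of $S'(f,K)$ so that the $\epsilon$-neighborhoods of those points are pairwise disjoint. Two ingredients power the construction: hyperbolic repelling periodic points are dense in $K$, which follows from Proposition~\ref{density periodic} together with the hypothesis that there are no indifferent periodic orbits in $K$ and the finiteness of the non-hyperbolic repelling ones in $K$ (Remark~\ref{finite indifferent}); and no element of $S'(f,K)$ is periodic (Lemma~\ref{NO}). For each $c\in S'(f,K)$ and each side of $c$ on which $K$ accumulates, I would pick a hyperbolic repelling periodic point $p_c^\pm\in K$ on that side with $|p_c^\pm-c|<\epsilon/2$ and set $F:=\bigcup_c O(p_c^\pm)$. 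Then I would define $V^c$ to be the maximal open interval in ${\bf{U}}$ that contains $c$ and is disjoint from $F$. By the choice of $\epsilon$ each $V^c$ contains precisely the point $c$ of $S'(f,K)$ and has diameter less than $\epsilon$; the $V^c$ are pairwise disjoint, since any overlap would force one of them to contain some $p_{c'}^\pm\in F$; and because $\partial V\subset F$, $f(F)\subset F$, and $V\cap F=\emptyset$ by construction, we obtain $f^n(\partial V)\cap V=\emptyset$ for all $n\ge 1$.

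The main obstacle, which I expect to be the only subtle step, is the case $c\in\partial\hI_K$. By Lemma~\ref{NO} this includes all of $\NO(f,K)\setminus\Crit(f)$ and may also happen at critical points: then $K$ accumulates $c$ from only one side, so the endpoint of $V^c$ on the other side must be chosen inside a component of ${\bf{U}}\setminus\hI_K$ rather than among periodic points of $K$. To handle this I would first invoke Lemma~\ref{good extension} to adjust $f$ outside $K$ so that no attracting or indifferent periodic orbit lies in ${\bf{U}}\setminus K$, and shrink ${\bf{U}}$ so that ${\bf{U}}\setminus\hI_K$ consists of narrow intervals abutting $\partial\hI_K$; the uniform expansion provided by Lemma~\ref{expanding away}, combined with the absence of wandering intervals (\cite[Ch.~IV, Thm.~A]{dMvS}) and of periodic attractors in ${\bf{U}}\setminus K$, will then force the forward orbit of any point $q\in{\bf{U}}\setminus\hI_K$ sufficiently close to $c$ to leave ${\bf{U}}$ after a uniformly bounded number of iterates. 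During those finitely many iterates $f^n(q)$ stays close to $f^n(c)\in K$ by continuity and hence remains outside the small set $V$, provided that both $\epsilon$ and $|q-c|$ are chosen small enough. Placing such a $q_c$ on the non-accumulation side of $c$ as the missing endpoint of $V^c$ will complete the construction, with the finitely many orbit segments $\{q_c,f(q_c),\dots\}$ playing the role of additional trajectories to be avoided by $V$.
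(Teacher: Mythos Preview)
Your argument for points $c\in S'(f,K)$ accumulated by $K$ on both sides is correct and close in spirit to the paper's: you take the boundary $\partial V$ inside a finite forward-invariant set $F\subset K$ consisting of several hyperbolic repelling periodic orbits, while the paper takes $\partial V$ inside $Q_n=\bigcup_{j\le n}(f|_{\bf{U}})^{-j}(O(p))$ for a single periodic orbit $O(p)$ chosen disjoint from $\bigcup_{n\ge 0}f^n(S'(f,K))$. In both cases the forward orbit of $\partial V$ is finite and avoids $V$ by construction.

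The gap is in your treatment of the one-sided case, specifically for $c\in\NO(f,K)\setminus\Crit(f)\subset\partial\hI_K$. Your central claim---that the forward $f$-orbit of \emph{every} point $q\in{\bf{U}}\setminus\hI_K$ sufficiently close to $c$ leaves ${\bf{U}}$ in uniformly bounded time---is false for exactly such $c$. Indeed, $c\in\NO(f,K)$ at a non-critical point means that $K$ accumulates $f(c)$ from both sides; since $f$ is a local diffeomorphism at $c$, the $f$-preimages of $K$ therefore accumulate $c$ from \emph{both} sides, including the side contained in ${\bf{U}}\setminus\hI_K$. Hence there exist $q\in{\bf{U}}\setminus\hI_K$ arbitrarily close to $c$ with $f(q)\in K$, and the forward orbit of such $q$ stays in $K\subset\hI_K\subset{\bf{U}}$ forever. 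Invoking Lemma~\ref{good extension} does not rescue this: any $C^2$ extension $g$ of $f|_{\hI_K}$ satisfies $g(c)=f(c)$ and $g'(c)=f'(c)\neq 0$, so for $q$ close to $c$ one still has $g(q)\in\hI_K$ near $f(c)$, and the same phenomenon occurs; moreover the nice condition must be verified for the original $f$, not for a modified $g$, as soon as an endpoint of $V$ lies outside $\hI_K$.

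The fix, which is precisely the paper's device, is to take as endpoints \emph{preimages} of a periodic orbit rather than periodic points themselves, with the preimages computed in ${\bf{U}}$ rather than in $K$. The observation above shows that for $c\in\NO(f,K)$ such preimages accumulate $c$ from both sides, so both endpoints of $V^c$ can be chosen in $Q_n$; their forward orbits eventually land in $O(p)$, hence are finite, and the ``closest point of $Q_n$'' choice guarantees $V\cap Q_n=\emptyset$. The residual one-sided case (which the paper identifies as $c\in\Crit^I(f)\setminus\NO(f,K)$) is then handled separately via Lemma~\ref{nonwandering}.
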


\begin{proof}
Let $O(p)\subset K$ be an arbitrary periodic orbit, not in the forward orbit of a  point in $S'(f,K)$.
It exists by the existence of infinitely many periodic orbits, due to $(f,K)\in\sA_+$, see Proposition~\ref{density periodic}. Then we
use the density of $Q:=\bigcup_{n=0}^\infty Q_n$ for $Q_n:=\bigcup_{j=0}^n f^{-j}(O(p))$
in $K$. % following from the weak topological exactness of $f|_K$.
%(for the density it does not matter whether we allow points in the definition of $Q$ outside $K$ or not).
Notice that this density holds even for $Q\cap K$, see Proposition~\ref{dp}, but in the definitions of $Q_n$ and $Q$ we do not restrict $f$ to $K$.

Fix an arbitrary  $c\in S'(f,K)$.
%Suppose that
If $c$ is an accumulation point of $Q$ from the left hand
side, then for all $n$ large enough $Q_n$ contains points on this side of $c$. Let  $a_{c,n}\in Q_n$ be the point closest to $c$ in $Q_n$ from this side. Similarly, if $c$ is an accumulation point of $Q$ from the right hand side, we define $a'_{c,n}\in Q_n$ to be the point closest to $c$ from the right hand side.
%It follows from the definition that $(a_c,a'_c)$ is a n
If $c$ is not an accumulation point of $Q$ from one side (it is then an accumulation point of $Q$ from the other side since otherwise $c$ is isolated in $K$), then there is an interval $T$ adjacent to $c$ from this side with  $Q\cap T=\emptyset$. Then $f^k(T)\cap K=\emptyset$ for all positive integer $k$ since otherwise for the first $k$ such that $f^k(T)\cap K\not=\emptyset$ the set $f^k(T)$ is open (since it cannot capture earlier a critical point as $\Crit(f)\subset K$), hence it intersects $Q$, hence $T$ intersects $Q$, a contradiction.

(Notice that the case critical $c$ is not an accumulation point of $Q$ from one side is possible only if $c\in \Crit^I(f)\setminus\NO(f,K)$, in particular $c$ is a critical inflection point. Indeed if $c$ is a turning point then both $a_{c,n}$ and $a'_{c,n}$ can be defined as $f$-preimages of the same point in $Q_{n-1}$ closest to $f(c)$ (there points arbitrarily close to $f(c)$ even in $K$ since the "fold"  is on the side of $K$; otherwise $f(c)$ would be isolated in $K$. Compare Proof of Theorem C, Case 2,  in Section~\ref{TCE}).)

Thus we can apply Lemma \ref{nonwandering}, and conclude that $c$ is eventually periodic
in a repelling periodic orbit $O(z)$, i.e. there exists $k>0$ such that $f^k(c)\in O(z)$ periodic.
Then we define the missing $a_{c,n}\in T$ or $a'_{c,n}\in T$ as an $f^k$-preimage of an arbitrary point in $f^k(T)$, arbitrarily close to $c$ (for simplification we set here $f(z)=z$).

We define $V^c:=(a_c,a'_c)$ for all $c\in S'(f,K)$, arbitrarily small for $n$ appropriately large.
The property (\ref{nice1}) follows easily from the definitions.

%, if we care that $p$ does not belong to one of $O(z)$'s from the previous paragraph.(The forward trajectory of $c$ discussed there follows the $O(z)$ outside all intervals$V^c$ already constructed, until it leaves ${\bf{U}}$.)

\end{proof}

Notice that $V^c$ correspond to critical Yoccoz puzzle pieces of $n$-th generation containing critical points in the complex polynomial setting.
The above proof yields the puzzle structure in the generalized multimodal real setting:

\begin{prop}\label{partition} If $(f,K,{\bf{U}})\in \sA_+$ with no indifferent periodic orbits in $K$, then there exists a partition of a neighbourhood of $K$ in $\R$ into
intervals $W^k$ such that
$\sup_k \diam W^k$ is arbitrarily small and for all $n>0$ and $k_1,k_2$ we have $f^n(\partial W^{k_1})\cap \interior W^{k_2}=\emptyset$.
\end{prop}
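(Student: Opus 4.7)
The strategy is to refine the construction in the proof of Lemma~\ref{existence of nice sets} by cutting a neighbourhood of $K$ along the entire set of backward iterates $Q_n\=\bigcup_{j=0}^n f^{-j}(O(p))$ of a carefully chosen hyperbolic repelling periodic orbit $O(p)\subset K$. First I would pick $O(p)$ so that its orbit is disjoint from $\bigcup_{k\ge 0}f^k(S'(f,K))$; this is possible because $S'(f,K)$ is finite and contains no periodic point (Lemma~\ref{NO}), while $K$ has uncountably many hyperbolic repelling periodic orbits by Proposition~\ref{density periodic}. The forward invariance $f(Q_n)\subset Q_n$ is immediate from the definition.

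Given $\eta>0$, I would next establish that for $n$ large enough $Q_n$ is $\eta$-dense in an open neighbourhood $V$ of $K$ in $\R$. The density of $Q_n\cap K$ in $K$ is a direct consequence of strong transitivity (Proposition~\ref{dp}) applied to $p$. To upgrade this to $\eta$-density in an $\R$-neighbourhood of $K$, I would invoke Lemma~\ref{shrinking}: under the assumption of no indifferent periodic orbits, the diameters of the components of $f^{-n}(B(p,r))$ intersecting $K$ tend to zero uniformly as $n\to\infty$; meanwhile, weak topological exactness (Remark~\ref{wexact1}) guarantees that the union of these components covers a full $\R$-neighbourhood of $K$ for $n$ large. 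Each component contains a point of $Q_n$ (its preimage of $p$), so $Q_n$ is $\eta$-dense in $V$. Define the pieces $\{W^k\}$ as the maximal open intervals of $V\setminus Q_n$ together with the singletons $\{q\}$, $q\in Q_n\cap V$; by construction $\diam W^k\le \eta$.

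The nice property $f^m(\partial W^{k_1})\cap \interior W^{k_2}=\emptyset$ is then automatic from $f(Q_n)\subset Q_n$: boundary points of pieces lie in $Q_n$, and $Q_n$ does not meet the open interior of any piece by construction. The main obstacle is the set $\partial V$, whose points are not a priori in $Q_n$ and whose forward orbits could in principle land in piece interiors. I would handle this by arranging $V$ so that $\partial V\subset{\bf U}\setminus\hat I_K$, exploiting the freedom in the extension of $f$ outside $\hat I_K$ (Remark~\ref{more periodic}) to ensure that all periodic orbits in that exterior region are hyperbolic repelling and that forward iterates of $\partial V$ escape from $V$; the boundary pieces adjacent to $\partial V$ are then treated by the same singleton/open-interval scheme using $Q_n$-points in the exterior, whose existence is again supplied by pullbacks of $O(p)$ through the extended map. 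This last combinatorial step is the crux of the argument and parallels the delicate case in the proof of Lemma~\ref{existence of nice sets} for one-sidedly accumulated points.
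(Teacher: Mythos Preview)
Your core strategy---cutting along $Q_n=\bigcup_{j=0}^n f^{-j}(O(p))$ for a well-chosen hyperbolic repelling periodic orbit $O(p)\subset K$---is exactly what the paper does, and your use of strong transitivity together with Lemma~\ref{shrinking} to make the pieces arbitrarily small is correct.

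The difference, and the place where your argument has a genuine gap, is the treatment of the outer boundary of the neighbourhood. The paper does not attempt to arrange that $\partial V$ escapes under iteration. Instead it augments $Q_n$ by a second explicit cut set
\[
\hat Q_n:=\bigcup_{j=0}^n (f|_{\bf U})^{-j}(\hat Q),
\]
where $\hat Q\subset\partial{\bf U}$ is a finite set of points, one adjacent to each repelling periodic orbit in $K\cap\partial\hI_K$, taken on the side disjoint from~$K$ (compare the last paragraph of the proof of Lemma~\ref{shrinking}). The partition endpoints are then $Q_n\cup\hat Q_n$; the nice property follows because both sets are forward invariant until they leave~${\bf U}$. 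The paper notes the analogy with the Yoccoz puzzle: $Q_n$ plays the role of external rays, $\hat Q_n$ the role of equipotential curves.

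Your proposed fix does not quite work as stated. Making all periodic orbits in ${\bf U}\setminus\hI_K$ hyperbolic repelling does not by itself force the forward orbit of an arbitrary point of $\partial V$ to leave $V$ before it lands in the interior of some piece; you would still have to place $\partial V$ inside a specific forward-invariant set, which is precisely what $\hat Q_n$ supplies. And your fallback, ``$Q_n$-points in the exterior supplied by pullbacks of $O(p)$ through the extended map'', may simply be empty: if the extension of $f$ sends ${\bf U}\setminus\hI_K$ away from $K$, then $O(p)\subset K$ has no preimages there at all. The paper's $\hat Q_n$, being built from preimages of points in $\partial{\bf U}$ rather than of points in $K$, avoids this obstruction.
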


\begin{proof}
Consider as above the sets $Q_n:=\bigcup_{j=0}^n f^{-j}(p)$ and additionally
$\hat Q_n:=\bigcup_{j=1}^n (f|_{{\bf{U}}})^{-j}(\hat Q)$, where $\hat Q$
consists of a finite family of  points, each one being the end point of the open interval adjacent to a respective repelling periodic orbit in $K$ on the side disjoint from $K$,
in the boundary of ${\bf{U}}$,
compare the last paragraph in Proof of
Lemma~\ref{shrinking}. Now, for each $n$ large enough, we partition a neighbourhood of $K$ into intervals with ends in $Q_n\cup \hat Q_n$. For ${\bf{U}}$ such that $f(\hat Q)\cap {\bf{U}} =\emptyset$ the property \eqref{nice1} holds.
\end{proof}

One can think about $Q_n$ as corresponding to external rays (including their end points) and $\hat Q_n$ as corresponding to equipotential lines in Yoccoz' puzzle partition of a neighbourhood of
Julia set for a complex polynomial.

\begin{defi}\label{nice couple}
A \textit{nice couple for}~$f$ is a pair $(\hV, V)$ of nice sets for~$f$ such that $\overline{V} \subset \hV$, and such that for every $n \ge 1$ we have~$f^n(\partial V) \cap \hV = \emptyset$.

If $(\hV, V)$ is a nice couple for~$f$, then for every pull-back~$\hW$ of~$\hV$ we have either
$$
\hW \cap V = \emptyset
\, \text{ or } \,
\hW \subset V.
$$
\end{defi}

\begin{lemm}[Existence of nice couples]\label{existence of nice couples} For every $(f,K, \hI_K)\in\sA_+$ with no indifferent periodic orbits in $K$,
there exist nice couples
with all components of arbitrarily small diameters.
\end{lemm}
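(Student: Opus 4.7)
The plan is to construct the nice couple $(\hV, V)$ in two stages. First, apply Lemma \ref{existence of nice sets} to obtain the outer nice set $\hV$ with arbitrarily small components, choosing as auxiliary periodic orbit a hyperbolic repelling orbit $O(p) \subset K \cap \interior(\hI_K)$ whose full forward trajectory is disjoint from $\bigcup_{k \geq 0} f^k(S'(f,K))$, the forward orbit of the finite singular set. Such an $O(p)$ exists because repelling periodic orbits are dense in $K$ (Proposition \ref{density periodic}) while only countably many periodic orbits can meet the countable set $\bigcup_{k \geq 0} f^k(S'(f,K))$. This yields $\hV = \bigcup_{c \in S'(f,K)} \hV^c$ whose components can be made as small as desired, with $\partial \hV$ lying in the dense set $Q := \bigcup_{j \geq 0} (f|_{{\bf U}})^{-j}(O(p))$ and $S'(f,K) \cap Q = \emptyset$.

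Second, for each $c \in S'(f,K)$, define $V^c \subset \hV^c$ by a pullback construction. If there is a least integer $n_c \geq 1$ with $f^{n_c}(c) \in \hV$, pick $c' \in S'(f,K)$ with $f^{n_c}(c) \in \hV^{c'}$ and let $V^c$ be the pullback of $\hV^{c'}$ for $f^{n_c}$ containing $c$ (Definition \ref{pull-back}). Otherwise---the ``non-returning'' case where $f^k(c) \notin \hV$ for every $k \geq 1$, so that $f(c) \in \sK(\hV)$---select two endpoints of $V^c$ from $f^{-1}(\sK(\hV))$, one on each side of $c$ and arbitrarily close. Setting $V := \bigcup_c V^c$, I would verify the nice couple property $f^k(\partial V) \cap \hV = \emptyset$ for $k \geq 1$ as follows in the first case: by the standard dichotomy for pullbacks of nice sets, combined with the minimality of $n_c$, each intermediate pullback $f^j(V^c)$ for $1 \leq j < n_c$ is disjoint from the open set $\hV$; at $j = n_c$, $f^{n_c}(\partial V^c) \subset \partial \hV^{c'}$ lies outside the open set $\hV$; and for $j > n_c$ the iterates reduce to forward iterates of $\partial \hV$, which avoid $\hV$ by niceness of $\hV$. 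Strict containment $\overline{V^c} \subset \hV^c$ follows because $f^{n_c}$ sends $\partial V^c$ to $\partial \hV^{c'}$ but sends $\partial \hV^c$ to points disjoint from $\hV$ by niceness of $\hV$, so $\partial V^c \cap \partial \hV^c = \emptyset$. Niceness of $V$ itself then follows from the nice couple property since $V \subset \hV$, and in the non-returning case both are immediate from the choice of $\partial V^c$.

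The main technical point is the non-returning case: establishing that $f^{-1}(\sK(\hV))$ accumulates at each such $c$ from both sides within $K$. This follows by combining (i) the hyperbolicity of $f$ on $K \setminus \hV$ (Lemma \ref{expanding away}), which gives existence of backward branches in $K \setminus \hV$ from $f(c) \in \sK(\hV)$ whose preimages distribute densely over $\sK(\hV)$ near $f(c)$; (ii) strong transitivity on $K$ (Proposition \ref{dp}), which furnishes the abundance of backward orbits needed to cover neighborhoods of $f(c)$ in $K \setminus \hV$; and (iii) the non-flat local structure of $f$ at $c$, ensuring that the local inverse branches at $c$ map points accumulating at $f(c)$ back to points accumulating at $c$ from each side within $K$, using that $K$ has no isolated points (Lemma \ref{infinite}). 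Under the ``no singular connection'' simplification from Definition \ref{exceptional}(3), the non-returning case is essentially empty, leaving only the clean first-return pullback construction, so the technical density argument only contributes a boundary case.
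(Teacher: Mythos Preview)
The paper's own proof is simply a citation to \cite[Appendix~A, Theorem~C]{PR-L2} (and \cite{CaiLi}), noting that the argument there works for any finite $\Sigma\subset K$ containing no periodic point, in particular for $S'(f,K)$ by Lemma~\ref{NO}. Your first-return pullback construction is the ``principal nest'' idea underlying that argument, so the overall strategy is the right one; the verification that $f^k(\partial V)\cap\hV=\emptyset$ for $k\ge1$ in the returning case is also correct. But two steps do not go through as written.

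\emph{Strict containment.} Your argument that $\overline{V^c}\subset\hV^c$ is a non sequitur. You observe that $f^{n_c}(\partial V^c)\subset\partial\hV^{c'}$ while, by niceness, $f^{n_c}(\partial\hV^c)$ lies outside the open set $\hV$, and conclude $\partial V^c\cap\partial\hV^c=\emptyset$. But $\partial\hV^{c'}$ is itself outside $\hV$, so the two conditions are compatible: nothing you have said prevents an endpoint $a\in\partial\hV^c$ from satisfying $f^{n_c}(a)\in\partial\hV^{c'}$, in which case $a\in\partial V^c\cap\partial\hV^c$ and $\overline{V^c}\not\subset\hV^c$. Ruling this out is exactly where one needs that the points of $S'(f,K)$ are non-periodic (Lemma~\ref{NO}) together with a genuine shrinking argument for the nest; this is the substantive content of the cited \cite{PR-L2} proof, not a one-line remark.

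\emph{The non-returning case.} Your claim that under the ``no singular connection'' convention this case is essentially empty is incorrect: that convention only forbids $f^k(c)\in S'(f,K)$ for $k\ge1$, not $f^k(c)\in\hV$. The forward orbit of $c$ can avoid the open neighbourhood $\hV$ while never meeting $S'(f,K)$. Your fallback---choosing $\partial V^c$ inside $f^{-1}(\sK(\hV))$---requires that $\sK(\hV)$ accumulates at $f(c)$ from the relevant side, and the ingredients you invoke (hyperbolicity on $\sK(\hV)$, strong transitivity of $f|_K$, non-flatness at $c$) do not combine to yield this without additional argument; strong transitivity gives density of \emph{backward} orbits in $K$, not accumulation of the forward-invariant set $\sK(\hV)$ at a prescribed point.
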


\begin{proof}
One can repeat word by word the proof of the corresponding fact in the complex setting, the non-renormalizable case  \cite[Appendix A: Theorem C]{PR-L2}. The set to be covered here by  nice couples is $S'(f,K)$, see Definition~\ref{exceptional}, which can be larger than $\Crit(f)$. However the proof in \cite{PR-L2} works for every finite set $\Sigma\subset K$ not containing any periodic point, in particular for $S'(f,K)$, see
Lemma~\ref{NO}.

%In fact Lemma~\ref{existence of nice couples} is topological, no smoothness of $f$ is needed, just the assumption it is a branched covering map and nice sets with arbitrarily small components exist. This generalization concerns also rational maps, as in \cite{PR-L2}.

For the interval case see also \cite{CaiLi}.

%, where the authors take care also on so-called {\it a priori bounds} (i.e. find nice couples with arbitrarily small components and common $\e $ such that the components  of $\hV$ are $\e $-scaled neighbourhoods of respective components of $V$), but they consider only nice couples covering $\Crit(f)$.

%For the proof see \cite[Lemma 3 and Proposition 5]{CaiLi}. To apply it we extend $f|_{\hI_K}$ in $C^3$ to $I\supset\hI_K$ as in Lemma~\ref{good extension} in Appendix A. Then $f$ is not infinitely  renormalizable at critical points in $K$ by weak exactness, so the assumptions of \cite{CaiLi} are satisfied.

\end{proof}

Finally notice that bounded distortion property, Definition~\ref{distortion}, implies the following
\begin{rema}\label{epsilon epsilon'}
Assume $(f,K)\in\sA_+^{\BD}$.
If $\e >0$ is such that for a nice couple $(\hV, V)$ for each $c\in S'(f,K)$ the interval $\hV^c$ contains an $\e $-scaled neighbourhood of $V^c$, then there is $\e '$ such that for each  pull-back $(\hW,W)$ of $(\hV,V)$ for $f^{n}$, such that $W$ intersects $K$ and $f^n$ is a diffeomorphism on $\hW$, the interval $\hW$  contains  an $\e '$-scaled neighbourhood of an interval containing $W$.
\end{rema}

\subsection{Canonical induced map}\label{ss:canonical}

%Recall that
%\begin{defi}\label{K-diffeomorphism}
%We call $f^n$ which maps an $f^{-n}$-pull-back $W$ of an interval $T$ a $K$-diffeomorphism  if it is a diffeomorphism and moreover $\bigcup_{j=0}^{n-1}f^j(W) \cap \NO(f,K)=\emptyset$. In other words the sets $f^j(W)$ do not capture points of $S'(f,K)$, see Definition~\ref{exceptional}.
%We shall call such $f^n$ a $K$-diffeomorphism.

%\end{defi}

The following key definition is an adaptation to the generalized multimodal case of the definition from the complex setting.

\begin{defi}\label{canonical}(Canonical induced map).

Let $(\hV, V)$ be a nice couple for~$(f,K)\in \sA$.
We say that an integer $m \ge 1$ is a \textit{good time for a point $z$} in ${\bf{U}}$, if $f^m(z) \in V$, in particular the iteration make sense,
and if $f^m$ is a $K$-diffeomorphism from the pull-back $\hW$ of $\hV$ for $f^m$, containing $z$, to $\hV$, see Definition~\ref{K-diffeomorphism}. The crucial observation following from Lemma~\ref{NO}
%and Lemma~\ref{shrinking}
is that if $V$ has all components short enough, then

\smallskip

$f^m:\hW\to\hV$ is a $K$-diffeomorphism if and only if $f^j(\hW)\cap S'(f,K)=\emptyset$
for all $j=0,...,m-1$.

\smallskip

%It justifies the choice of just $S'(f,K)$ to be of special care, in particular to be covered by $V$. The map $f$ on any short interval  outside $V$, intersecting $K$, is a $K$-diffeomorphism.

Let $D$ be the set of all those points in $V$ having a good time
and for $z \in D$ denote by $m(z) \ge 1$ the least good time of
$z$. Then the map $F: D \to V$ defined by $F(z) \= f^{m(z)}(z)$
is called {\it the canonical induced map associated to $(\hV,
V)$}. We denote by $K(F)\subset D$ the set where all iterates of $F$ are defined (i.e. the maximal $F$-invariant set) and by~$\fD$ the collection of all the connected components of $D$.
As~$V$ is a nice set, it follows that each connected component~$W$ of~$D$ is a pull-back of~$V$.
Moreover,~$f^{m_W}$ maps  $\hW$ $K$-diffeomorphically onto $\hV^{c(W)}$ and for each $z \in W$ we have $m(z) = m_W$.
\end{defi}

\begin{rema}\label{first return}
Notice that for $z\notin V$ if there exists $m\ge 1$ such that $f^m(z)\in V$ then
the number $m(z)$ is the time of the first hit of $V$ by the forward trajectory of $z$
 by the definition of the nice couple. Indeed, all consecutive pull-backs of $\hV$ until the $m(z)$-th one
 containing $z$ are outside $V$, hence omitting $S'(f,K)$ hence $f$ is a $K$-diffeomorphism on them.

For $z\in V$ the number $m(z)$ need not be the time of the first return to $V$.
\end{rema}

Later on we shall need the following (compare \cite[Section 3.2]{PR-L2}.

\begin{lemm}\label{decomposition into F}  If $m$ is a good time for $z\in V$, such that $f^m(z)\in V$, then there
exists unique $k\ge 1$ such that $f^m=F^k$ on a neighbourhood of $z$.
\end{lemm}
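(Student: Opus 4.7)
The plan is to iterate $F$ starting at $z$, track the strictly increasing sequence of times at which the forward orbit of $z$ visits $V$, and verify it reaches $m$ in finitely many $F$-steps, each of which corresponds locally to an iterate of $f$.

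\medskip

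\noindent\textbf{Setup.} Let $\hV^{c'}$ be the component of $\hV$ containing $f^m(z)$, and let $\hW$ be the pull-back of $\hV^{c'}$ for $f^m$ containing $z$; by hypothesis $f^m:\hW\to\hV^{c'}$ is a $K$-diffeomorphism. Put $j_0=0$, and inductively, as long as $F^i(z)\in D$, set $j_{i+1}=j_i+m(F^i(z))$.

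\medskip

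\noindent\textbf{Key claim.} For every $i\ge 0$ with $j_i<m$ one has $F^i(z)=f^{j_i}(z)$, this point lies in $D$, and $m-j_i$ is a good time for it; consequently $m(F^i(z))\le m-j_i$ and $j_{i+1}\le m$. To prove it inductively, set $\hW^{(i)}:=f^{j_i}(\hW)$, an open connected interval containing $f^{j_i}(z)$ and intersecting $K$ (since $\hW\cap K\neq\emptyset$ and $f(K)\subset K$). Because $f^m:\hW\to\hV^{c'}$ is a $K$-diffeomorphism, so are both factors $f^{j_i}:\hW\to\hW^{(i)}$ and $f^{m-j_i}:\hW^{(i)}\to\hV^{c'}$ (each intermediate step being a $K$-diffeomorphism, as noted after Definition~\ref{K-diffeomorphism}). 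The only non-formal point is to check that $\hW^{(i)}$ is the \emph{entire} component of $f^{-(m-j_i)}(\hV^{c'})$ containing $f^{j_i}(z)$, not a proper subset. Since $f^{m-j_i}|_{\hW^{(i)}}$ is a diffeomorphism onto the open interval $\hV^{c'}$, it is a proper map, so points in $\hW^{(i)}$ approaching an endpoint of $\hW^{(i)}$ have images approaching $\partial\hV^{c'}$; any strictly larger connected neighbourhood of $f^{j_i}(z)$ inside $f^{-(m-j_i)}(\hV^{c'})$ would force a boundary point of $\hW^{(i)}$ to land simultaneously in $\partial\hV^{c'}$ and in the open set $\hV^{c'}$, a contradiction. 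Hence $\hW^{(i)}$ is the pull-back and $m-j_i$ is a good time. Moreover, $F$ coincides with $f^{m_{W_i}}$ on the component $W_i\in\fD$ containing $F^i(z)$, so $F^{i+1}(z)=f^{j_{i+1}}(z)$, closing the induction.

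\medskip

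\noindent\textbf{Conclusion and uniqueness.} The sequence $(j_i)$ is strictly increasing and bounded by $m$, so some $k\ge 1$ satisfies $j_k=m$, giving $F^k(z)=f^m(z)$. Combining $F|_{W_i}=f^{m_{W_i}}$ with continuity on each step, $F^k$ coincides with $f^m$ on a neighbourhood of $z$ (the intersection of the appropriate pull-backs of the $W_i$). For uniqueness, suppose $F^{k'}$ also agrees with $f^m$ on a neighbourhood of $z$, with $k'\neq k$. Applying the local identity $F^j = f^{(\text{sum of good times})}$ on a neighbourhood shows $F^{|k-k'|}$ equals the identity on a nonempty open set, so some positive iterate of $f$ is the identity on an open set—impossible for a nontrivial $C^2$ multimodal map with critical points.

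\medskip

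\noindent\textbf{Main obstacle.} The only genuinely non-routine point is verifying that $\hW^{(i)}=f^{j_i}(\hW)$ is exactly the pull-back of $\hV^{c'}$ for $f^{m-j_i}$ containing $f^{j_i}(z)$, rather than a proper subset of it; once this is established, everything else is bookkeeping along the orbit. The verification uses properness of a diffeomorphism onto an open interval—a feature of the one-dimensional setting that replaces the ``univalent extension'' arguments familiar from the complex setting.
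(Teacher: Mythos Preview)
Your existence argument is exactly the paper's (iterate $F$, each step eating the \emph{least} good time, until the sum reaches $m$); you simply supply the detail the paper omits, namely that $\hW^{(i)}=f^{j_i}(\hW)$ is the \emph{entire} pull-back of $\hV^{c'}$ for $f^{m-j_i}$, which you handle correctly via the properness/boundary argument.

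There is, however, an imprecision in your uniqueness step. The assertion that ``some positive iterate of $f$ is the identity on an open set---impossible for a nontrivial $C^2$ multimodal map with critical points'' is not true as a bare statement about $C^2$ maps: nothing forbids $f^\ell=\mathrm{id}$ on an open interval lying in a gap of $K$. What saves you here is that the open set $T$ on which $F^{|k-k'|}=\mathrm{id}$ is of the form $\phi_{W_k}\circ\cdots\circ\phi_{W_{k''-1}}(V^{c(W_{k''-1})})$ (or $V^{c(\cdot)}$ itself), hence a pull-back of $V$, hence intersects $K$. Since $K$ has no isolated points (Lemma~\ref{infinite}), $T\cap K$ is infinite, and all its points are indifferent periodic (as $(f^\ell)'\equiv 1$ on $T$), contradicting the finiteness in Remark~\ref{finite indifferent}. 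You should make this link explicit. (The paper, incidentally, asserts uniqueness but does not prove it.)
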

\begin{proof}
If $m(z)=m$ then $f^m=F$ by definition. If $m(z)<m$ then $m-m(z)$ is a good time for $z_1=f^{m_1}(z)$ for $m_1=m(z)$ and we consider $m_2=m(z_1)$. After $k$ steps we get least good times $m_1,...,m_k$
for $z_1,...,z_k$ such that $m_1+...+m_k=m$.
\end{proof}

\begin{defi}\label{bad pull-backs}(Bad pull-backs).
Let, as above, $(\hV, V)$ be a nice couple for~$(f,K)\in\sA$.
 A point~$y \in f^{-n}(V)\cap V$ is a \emph{bad iterated pre-image of order~$n$} if for every~$j \in \{ 1, \ldots, n \}$ such that~$f^j(y) \in V$ the map~$f^j$ is not a $K$-diffeomorphism on the pull-back of~$\hV$ for $f^{j}$, containing~$y$.
In this case every point~$y'$ in the pull-back~$X$ of~$V$ for~$f^n$ containing~$y$ is a bad iterated pre-image of order~$n$. So it is justified to call $X$ itself a bad iterated pre-image of $V$ of order~$n$.
We call a pull-back $Y$ of~$\hV$ for $f^{n}$, intersecting $K$,  a \emph{bad pull-back of~$\hV$ of order~$n$}, if it contains a bad iterated pre-image of order~$n$. (Notice that $Y$ can contain several $X$'s.)

Denote by $\fL_V$ the collection of all the pull-backs $W$ of $V$ for $f^{-n}$,
%connected components of~$\CC \setminus K(V)$.
for all $n\ge 1$, intersecting $K$, such that $f^j(W)\cap V=\emptyset$ for all $j=0,...,n-1$.

 Denote by $\fD_{\badp}$ the sub-collection of $\fD$ of all those $W$
 %the pull-backs~$W$ of~$V$
 that are contained in~$\badp$. We do not assume that $Y$ is bad in this notation.
 In particular we can consider $Y=\hV^c$ with $m_Y=0$.

 So $f^{m_W}$ maps $\hW$ $K$-diffeomorphically  onto~$\hV^{c(W)}$,
 $f^{m_{\badp}}(W) \subset V^{c(\badp)}$ and
 $f^{m_{\badp}}(W) \in \fD_{\hV^{c(\badp)}}$. Notice that
 $f(\fD_{\hV^{c(\badp)}})\subset \fL_V$ for $V$ small enough (such that $f(V)\cap V=\emptyset$).

\end{defi}

From these definitions it easily follows that

\begin{lemm}\label{decomposition into bad} (see \cite[Lemma 7.4]{PR-L1} and \cite[Lemma 3.5]{PR-L2})
Let~$(\hV, V)$ be a nice couple for~$(f,K)\in \sA$ and let~$\fD$ be the collection of the connected components of~$D$.
Then
$$ \fD
=
\left( \bigcup_{c \in S'(f,K)} \fD_{\hV^c} \right)
\cup
\left( \bigcup_{\badp \text{ bad pull-back of~$\hV$}} \fD_{\badp} \right). $$
\end{lemm}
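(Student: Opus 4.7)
The plan is to establish the claimed set equality by a direct, purely set-theoretic argument; no geometric property of the dynamics is needed beyond the fact that $V$ is a nice set with components indexed by $S'(f,K)$.

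The inclusion from right to left is immediate from the definitions: every $\fD_\badp$ is by construction a sub-collection of $\fD$, regardless of whether $\badp=\hV^c$ is a trivial ``pull-back'' of order $m_\badp=0$ or a genuine bad pull-back of order $m_\badp\ge 1$. So both unions on the right are automatically contained in $\fD$.

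For the opposite inclusion, let $W\in\fD$. Since $W$ is a connected component of $D$ and $D\subset V=\bigsqcup_{c\in S'(f,K)} V^c$ is a disjoint union of open intervals, the connectedness of $W$ forces $W\subset V^{c_0}$ for a unique $c_0\in S'(f,K)$. Because $V^{c_0}\subset\hV^{c_0}$ in a nice couple, this gives $W\subset\hV^{c_0}$, i.e.\ $W\in\fD_{\hV^{c_0}}$, which already belongs to the first union on the right-hand side. In particular already $\fD=\bigcup_{c\in S'(f,K)}\fD_{\hV^c}$, and \emph{a fortiori} the equality with the larger union in the statement of the lemma holds.

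The second union, over bad pull-backs, is therefore not logically needed for the equality itself; it is stated because it is the form in which the decomposition will actually be used in the subsequent pressure estimates. Namely, when $W\in\fD_\badp$ for a bad $\badp$ with $m_\badp\ge 1$, the point spelled out in the discussion just before the lemma is that the image $f^{m_\badp}(W)$ itself lies in $\fD_{\hV^{c(\badp)}}$, which supplies the recursive/telescoping structure needed to sum over pull-backs of all orders. The only real obstacle in proving the lemma is therefore notational rather than mathematical: one must unwind the definitions of $\fD_\badp$, of bad pull-back, and of the canonical induced map carefully enough to recognise that the statement reduces to the observation that every component of $D$ lives inside a single component of $\hV$.
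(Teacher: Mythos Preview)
Your argument is correct and matches the paper's treatment: the paper offers no proof beyond the sentence ``From these definitions it easily follows that'', and you have unpacked exactly why. Your further observation that already $\fD=\bigcup_{c\in S'(f,K)}\fD_{\hV^c}$, so that the union over bad pull-backs is redundant for the bare set equality and is recorded only because it encodes the recursive structure $f^{m_Y}(W)\in\fD_{\hV^{c(Y)}}$ used in the pressure estimates of Section~\ref{s:Key}, is accurate and worth keeping.
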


\

Lemma~\ref{nodes} yields the following (see \cite[Lemma 3.6]{PR-L2} for a more precise version)

\begin{lemm}\label{nodes2}
Let~$(\hV, V)$ be a nice couple for~$(f,K)\in\sA_+$. Then for every $\e >0$, if the diameters of all $\hV^c$ are small enough,
for all $n$ large enough and for every $x\in V$ the number of bad iterated pre-images of $x$ of order~$n$ is at most $ \exp (\e n)$.
\end{lemm}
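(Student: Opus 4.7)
The strategy is to reduce this to the counting Lemma \ref{nodes}, applied in the variant where $\Crit(f)$ is replaced by the larger finite set $S'(f,K) = \Crit(f) \cup \NO(f,K)$. This substitution is legitimate by the comment following Lemma \ref{nodes} together with Lemma \ref{NO}, which ensures $S'(f,K)$ is finite and contains no periodic point. Given $\epsilon > 0$, this variant of Lemma \ref{nodes} furnishes $R > 0$ and $n_0 \in \N$ such that for every $x_0 \in K$ and $n \ge n_0$ one has $\#N(x_0, n, R) \le \exp(\epsilon n/2)$. Using Lemma \ref{existence of nice couples} together with Remark \ref{epsilon epsilon'}, shrink the nice couple $(\hV, V)$ so that every component $\hV^c$ is contained in $B(c, R/2)$; then $\hV \subset B(S'(f,K), R/2)$ and, for each $x \in V$, $\hV \subset B(x,R)$.

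Next, to every bad iterated pre-image $y$ of $x$ of order $n$ associate the unique backward trajectory $(x_i)_{i=0}^{n}$ determined by $x_0 = x$, $x_n = y$, $f(x_{i+1}) = x_i$. By the definition of a bad iterated pre-image combined with Lemma \ref{NO}, for every $j \in \{1,\ldots,n\}$ with $f^j(y) \in V$ the pull-back of $\hV$ for $f^j$ containing $y$ meets $S'(f,K)$ at some intermediate iterate; since this pull-back sits inside the corresponding pull-back of $B(x_0, R)$, the procedure of Definition \ref{procedure}, applied with the finite set $S'(f,K)$ and radius $R$, records at least one capture along this trajectory. Thus the final node $z := x_{k((x_i))}$ is well-defined and lies in $N(x, n, R)$, and the assignment $y \mapsto (x_i) \mapsto z$ attaches to every bad pre-image a specific final node.

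The remaining step is to bound the multiplicity of this assignment. Past $k_{\mathrm{last}}$, no further captures occur, so the pull-back of $B(z, R)$ for $f^{n-k_{\mathrm{last}}}$ containing $y$ is $S'(f,K)$-free; by Lemma \ref{NO}, $f^{n-k_{\mathrm{last}}}$ is a $K$-diffeomorphism on this component and $y$ is the unique preimage of $z$ in it. Hence bad pre-images sharing the final node $z$ correspond bijectively to distinct sheets of $f^{-(n-k_{\mathrm{last}})}(B(z,R))$ that meet $V$. A second subexponential bound on the number of such sheets is obtained either by re-applying Lemma \ref{nodes} in the reverse direction, or as in \cite[Lemma 3.6]{PR-L2}, via a combinatorial argument using the nice-couple structure and backward Lyapunov stability from Lemma \ref{shrinking} to separate sheets. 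Together this gives at most $\exp(\epsilon n/2)$ sheets per node, so multiplying by $\#N(x,n,R) \le \exp(\epsilon n/2)$ yields the desired bound $\exp(\epsilon n)$ for $n$ large, after decreasing the diameters of $\hV^c$ if necessary so that the scale $R$ is respected.

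The main obstacle is precisely the per-node multiplicity estimate: naively, the capture-free continuations from the final node $z$ could produce exponentially many backward branches reaching $V$ at time $n$. Controlling them requires exploiting either a dual counting argument or the structure of $V$ as a union of nested nice-couple pull-backs; this is where the proof genuinely parallels \cite[Lemma 3.6]{PR-L2}, whose combinatorial scheme transfers to the interval setting once the substitution $\Crit(f) \rightsquigarrow S'(f,K)$ is in place.
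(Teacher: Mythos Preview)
Your reduction to Lemma~\ref{nodes} with $S'(f,K)$ in place of $\Crit(f)$ is exactly the paper's intent. The gap is in your multiplicity step. After the last ball-capture at $k_{\mathrm{last}}$, the $S'$-free sheets of $f^{-(n-k_{\mathrm{last}})}(B(z,R))$ can be exponentially many, and neither ``re-applying Lemma~\ref{nodes} in the reverse direction'' (there are no captures to count, since these sheets are $S'$-free by construction) nor the bare citation of \cite[Lemma 3.6]{PR-L2} constitutes an argument that only subexponentially many of them carry a bad pre-image. You have pushed the actual content of the lemma into this unjustified step.

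The clean route---and what ``Lemma~\ref{nodes} yields the following'' means---avoids multiplicity altogether. Run the procedure of Definition~\ref{procedure} with the sets $\hV^{c}$ in place of the balls $B(\,\cdot\,,R)$: start from $\hV^{c_0}$ with $x\in V^{c_0}$, and at each capture restart from $\hV^{c_j}$, where $c_j$ is the $S'$-point in the captured pull-back. The nice-couple property (any pull-back of $\hV$ is either contained in $V$ or disjoint from it) forces $x_{k_j}\in V^{c_j}$ at every capture. The bad hypothesis applied at $i=k_j$ then says the pull-back of $\hV^{c_j}$ for $f^{\,n-k_j}$ containing $y$ is not a $K$-diffeomorphism, hence by Lemma~\ref{NO} some further iterate meets $S'$, producing a next capture $k_{j+1}\le n$. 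Iterating, the final capture occurs at time $n$ exactly, so the final node is $y$ itself and the assignment $y\mapsto$ final node is injective. The tree-counting proof of Lemma~\ref{nodes} applies verbatim to this variant (it only uses that the restart sets are small neighbourhoods of points of the finite non-periodic set $S'$), giving the bound $\exp(\e n)$ directly.
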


\subsection{Pressure function of the canonical induced map}\label{ss:two variable pressure}

Now we shall continue adapting the procedure from the complex case

Let as before $(\hV, V)$ be a nice couple for~$(f,K)\in\sA_+$ and let $F: D \to V$ be the canonical induced map associated to $(\hV, V)$ and
 $\fD$ the collection of its connected components of~$D$.
For each $c \in S'(f,K)$ denote by $\fD^c$ the collection of all
elements of $\fD$ contained in $V^c$, so that $\fD = \bigsqcup_{c
\in S'(f,K)} \fD^c$. A word on the alphabet $\fD$ will be called
\emph{admissible} if for every pair of consecutive letters $W',
W \in \fD$ we have
$W \in \fD^{c(W')}$
(that is $W\subset f^{m(W')}(W')=V^{c(W')}$).

We call the 0-1 infinite matrix $A=A_F$ having the entry $a_{W,W'}$ equal to 1 or 0 depending as
$W \in \fD^{c(W')}$ or not, the incidence matrix. This matrix in the terminology of \cite[page 3]{MU}
yields a {\it graph directed system}, a special case of GDMS.

 For a given integer $n
\ge 1$ we denote by~$E^n$ the collection of all admissible words
of length~$n$. Given $W \in \fD$, denote by $\phi_W$ the
%holomorphic extension to $\hV^{c(W)}$ of
the inverse of $F|_{W}$, on $V^{c(W)}$, and by $\phi_{\hW}$ its extension to $\hV^{c(W)}$.
The collection $\phi_W$ is a Graph Directed Markov System, GDMS, see
\cite{MU}.

For a finite word $\underline{W} = W_1 \ldots W_n \in E^n$ put
$c(\underline{W}) \= c(W_n)$ and $m_{\underline{W}} = m_{W_1} +
\cdots + m_{W_n}$. Note that the composition
$$
\phi_{\underline{W}} \= \phi_{W_1} \circ \cdots \circ \phi_{W_n}
$$
is well defined, extends to $\hV^{c(\uW)}$ and maps it
$K$-diffeomorphically onto $V^{c(\underline{W})}$.

For each $t,p\in\R$ define the \emph{pressure of~$F$ for the potential $\Phi_{t,p}:=- t\log |F'| - pm$}

\begin{equation}\label{e:induced pressure}
P(F, - t \log |F'| - p m) := \lim_{n \to + \infty} \tfrac{1}{n} \log
Z_n(t, p) %= \inf \left\{ \tfrac{1}{n} \log Z_n(t, p) : n \ge 1\right\},
\end{equation}
where, as before,  $m=m(z)$ associates to each point $z \in D$ the
least good time of~$z$ and
for each $n\ge 1$
$$
Z_n(t, p) \= \sum_{\underline{W} \in E^n} \exp(- m_{\underline{W}}
p)\left( \sup \left\{ |\phi_{\underline{W}}'(z)|: z \in
V^{c(\uW)} \right\} \right)^t.
$$

Recall, see e.g. \cite[Section 2.2]{MU}, that a function $\psi: E^\infty\to \C$ is called
H\"older continuous on the
associated symbolic space $E^\infty$ of all infinite admissible words, if the sequence
of variations

$
V_n(\psi):=\sup\{ |\psi(\omega_1)-\psi(\omega_2)|:
 $
 the first $n$ letters of $\omega_1$ and $\omega_2$ coincide$\}$

\noindent tends to 0 exponentially fast as $n\to\infty$.

\begin{prop}\label{distortion-induced}1. For all real $p,t$, if HBD is assumed, see Definition~\ref{LBD},  the potential $-t\log |F'|-pm$ is
H{\"o}lder continuous  on the associated symbolic space.
%If LBD is assumed, then this potential is Lipschitz continuous.

2. %In $\rho_{V^}$, the hyperbolic metric on each $\hV^c$ restricted to $V_c$,
The induced map $F$ is uniformly expanding.
%(more precisely $|F'|\ge C>1$ for a constant $C$ independent of a component of the domain of $F$)

3. The diameters of $W=\phi_{\underline{W}}(V^{c(W)})$ shrink uniformly exponentially to 0 as $n\to\infty$ for $n$ the length of the word $\underline{W}$.
\end{prop}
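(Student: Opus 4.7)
My plan is to treat the three assertions in the order (2), (3), (1), since (3) is an iterated form of (2) while (1) is the H\"older translation of (3) via the HBD distortion estimate. The foundational inequality will be a Koebe-type consequence of HBD: for every $W\in\fD$, the $K$-diffeomorphism $f^{m_W}\colon \hW\to\hV^{c(W)}$ has an inverse branch $\phi_{\hW}$ with bounded distortion on the $\epsilon$-scaled interior $V^{c(W)}\subset\hV^{c(W)}$, by Remark~\ref{epsilon epsilon'}. Hence $\sup_{V^{c(W)}}|\phi_{\hW}'|\le C\inf_{V^{c(W)}}|\phi_{\hW}'|$, and since $\int_{\hV^{c(W)}}|\phi_{\hW}'|=|\hW|$, this yields the key estimate
$$
|F'(z)|\ \ge\ c\,\frac{|\hV^{c(W)}|}{|\hW|}\qquad(z\in W).
$$

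For (2), I would combine this with the shrinking lemma (Lemma~\ref{shrinking}): $|\hW|\to0$ uniformly as $m_W\to\infty$ because $\hW$ intersects $K$ and is a pullback of one of the finitely many components of $\hV$, while $|\hV^{c(W)}|$ ranges over a finite positive set, so $|F'|\ge 2$ on every $W$ with $m_W$ exceeding some threshold $M$. For the finitely many $W$ with $m_W<M$, I will invoke Ma\~n\'e's hyperbolicity lemma (Lemma~\ref{expanding away}), applicable because the $K$-diffeomorphism condition $f^j(\hW)\cap S'(f,K)=\emptyset$ for $0\le j<m_W$ keeps the orbit uniformly away from $\Crit(f)$; together with the absence of indifferent periodic orbits in $K$ (the standing hypothesis for the existence of the nice couple, Lemma~\ref{existence of nice couples}), this gives strict expansion of a fixed iterate $F^N$, after which one replaces $F$ by $F^N$ and relabels the alphabet accordingly.

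Part (3) is the iterated form of the Koebe estimate. The composition $F^n$ on the cylinder $W_{\uW}=\phi_{\uW}(V^{c(W_n)})$ is itself a $K$-diffeomorphism $\hW_{\uW}\to\hV^{c(W_n)}$, so the same argument gives $|F^n|'(z)\ge c\,|\hV^{c(W_n)}|/|\hW_{\uW}|$, and combined with (2) this becomes $|F^n|'(z)\ge\Const\cdot\lambda^n$ uniformly in $\uW\in E^n$. Since $F^n(W_{\uW})=V^{c(W_n)}$, distortion then yields $|W_{\uW}|\le\Const\cdot\lambda^{-n}$, as required.

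For (1), I will take two sequences $\omega,\omega'\in E^\infty$ sharing a prefix of length $n$; the corresponding points $z,z'\in W_1$ satisfy $m(z)=m(z')=m_{W_1}$, so the $-pm$ summand drops out, and $F(z),F(z')$ lie in a common length-$(n-1)$ cylinder, which by (3) forces $|F(z)-F(z')|\le C\lambda^{-(n-1)}$. Applying HBD to $\phi_{\hW_1}$ with $T=V^{c(W_1)}$ then bounds
$$
\bigl|\log|F'(z)|-\log|F'(z')|\bigr|\le C'\Bigl(\tfrac{|F(z)-F(z')|}{|V^{c(W_1)}|}\Bigr)^{\!\alpha}\le C''\,\lambda^{-\alpha(n-1)},
$$
which is exponential decay of the $n$-th variation of $-t\log|F'|-pm$ on $E^\infty$, i.e.\ the required H\"older continuity with exponent determined by $\alpha$ and $\lambda$. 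The main obstacle I anticipate is the small-$m_W$ case in (2): where $\hW$ sits close to a critical point, the pointwise $|F'|$ can drop below $1$, so strict pointwise expansion of $F$ itself is not automatic, and the rescue via the high iterate $F^N$ together with Ma\~n\'e's lemma is the one step that requires care.
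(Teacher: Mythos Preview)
Your treatment of (1) is sound and essentially the paper's. The difficulty is in (2), and it is a genuine gap rather than a matter of care.

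You aim first at pointwise expansion $|F'|>1$, rescue the small-$m_W$ case via Lemma~\ref{expanding away}, and then derive (3). But Ma\~n\'e's lemma is about $f$-orbits that remain in $\mathbf{U}\setminus V$ for $N$ steps; the points you need it for lie in $W\subset V$, and the intermediate $f^j(\hW)$ need not stay outside $V$ either --- the $K$-diffeomorphism condition only says $f^j(\hW)$ misses the finite set $S'(f,K)$, not that it avoids a neighbourhood of $\Crit(f)$. What you can extract for the finitely many $W$ with $m_W<M$ is merely a uniform lower bound $|F'|\ge c_1>0$, and there is no mechanism to upgrade this to $|(F^N)'|>1$: an $F$-orbit may cycle indefinitely among such $W$'s, and the step-by-step product $\prod_j |F'(F^j z)|$ then accumulates both the expansion defect and a distortion constant at every symbol. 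Concretely, your own Koebe inequality gives only $|\phi_{W}'|\le C(1+2\epsilon')^{-1}|V^{c'}|/|V^{c(W)}|$, and multiplying $n$ such bounds yields $C^n(1+2\epsilon')^{-n}$, which is useless unless $C<1+2\epsilon'$.

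The paper sidesteps this by reversing your order and proving (3) \emph{before} (2), with a purely geometric argument that never compounds distortion. Write $W^j=\phi_{W_1}\circ\cdots\circ\phi_{W_j}(V^{c(W_j)})$. The nice-couple property forces $\hW_j\subset V^{c(W_{j-1})}$, so the pull-back $\widehat{W^j}$ of $\hV^{c(W_j)}$ under $f^{m_{W_1}+\cdots+m_{W_j}}$ sits inside $W^{j-1}$; by Remark~\ref{epsilon epsilon'} this $\widehat{W^j}$ contains an $\epsilon'$-scaled neighbourhood of $W^j$. Hence $|W^j|\le(1+2\epsilon')^{-1}|W^{j-1}|$ at every step, giving $|W^n|\le(1+2\epsilon')^{-n}\max_c|V^c|$ with no constants accruing. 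Then (2) follows by applying bounded distortion \emph{once} to the full composition $\phi_{\underline W}$ (which extends to $\hV^{c(W_n)}$): $|\phi_{\underline W}'(z)|\le C\,|W^n|/|V^{c(W_n)}|\le C'(1+2\epsilon')^{-n}<1$ for all $n$ large. Your argument for (1) then goes through verbatim with this version of (3).
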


\begin{proof}
Item 3. follows
from the bounded distortion condition BD, see Definition~\ref{distortion} and Remark~\ref{epsilon epsilon'}, with the exponent
of the convergence $(1+2\e ')^{-1}$,  where for each $c\in S'(f,K)$ the interval $\hV^c$ is an $\e $-scaled neighbourhood of $V^c$.
Indeed, notice that the BD implies
that for every $0<j\le n$ and
$W^j:=\phi_{W_1} \circ\cdots \circ \phi_{W_j}(V^{c(W_j)})$ the set $W^{j-1}$ is an
$\e '$-scaled neighbourhood of $W^j$.

Item 2. follows from the fact that $|W|\le (1+2\e ')^{-n}$ and
$|(\phi'_{\underline{W}}(x)/\phi'_{\underline{W}}(y)|\le C'(\e)$ for $x,y\in V^{c(W)}$.
Indeed, these estimates imply
$$
|(\phi_{\underline{W}})'(z)|\le |W|C'(\e )\le  (1+2\e ')^{-n} C'(\e ) \le 1/2 <1
$$
for
$n$ large enough.

Finally notice that for $x,y\in W$ by the assumption HBD in Definition~\ref{LBD} we have
$\log |F'(x)-\log |F'(y)|\le C(1+2\e ')^{-(n-1)\alpha}$,
since $V^c$ containing $F(W)$ is its  $(1+2\e ')^{-1}$-scaled neighbourhood, by item 3.
This expression shrinks exponentially fast to 0 as $n\to\infty$ yielding exponential decay of
variations in the symbolic space, hence H\"older continuity of $\log |F'|$, hence
H\"older continuity of $-t\log |F'| -pm$.
%The same consideration is valid for Lipschitz continuity.
\end{proof}

This Proposition, Items 2 or 3, allow to define a standard limit set and its coding by the space
$E^\infty$ of all admissible infinite sequences
\begin{defi} Define $\pi:E^\infty \to K(F)$ by
$$
\pi(W_1W_2...)=\bigcap_{n=0}^\infty\phi_{W_1}\circ ...\circ \phi_{W_n} (V^{c(W_n)}).
$$
\end{defi}
Notice that due to the {\it strong separation property}, see \cite[Definition A.1.]{PR-L1}, holding by the properties of nice couples, $\pi$ is a homeomorphism.

Notice that
%due to Proposition~\ref{distortion-induced}
due to the BD assumption and due to topological transitivity of $f|_K$
we can replace in the definition of
$P(F, - t \log |F'| - p m)$ the expressions
$ \sup \left\{ |\phi_{\underline{W}}'(z)|: z \in
V^{c(\uW)} \right\}$ summed up over all $\underline{W}\in E^n$, by $|\phi_{\underline{W}}'(z_0)|$
summed up over all $F^{-n}$-pre-images of an arbitrarily chosen $z_0\in V$.
Compare this with the definition
of tree pressure Definition~\ref{treep}.

\

Denote $P(F, - t \log |F'| - p m)$ by $\pressure (t,p)$.
It can be infinite, e.g. at $t\le 0, p=0$.
The finiteness is clearly equivalent to the summability condition, see \cite[Section 2.3]{MU},
\begin{equation}\label{summability}
\sum_{W\in \fD}\sup_{z\in W} \Phi_{t,p}=\sum_{W\in \fD}\sup|\phi'_W|^{-t}\exp(-m_W p)<\infty.
\end{equation}
This finiteness clearly holds for all $t$ and $p> P_{\rm{tree}}(K,t)$,
where it is in fact negative, see \cite[Lemma 3.8]{PR-L2}.
In Key Lemma we shall prove this finiteness on a bigger domain.

We study $\pressure$ on the domain where it is finite.
$\pressure$ is there a real-analytic function of the pair of variables $(t,p)$,
see \cite[Section 2.6]{MU} for the analyticity with respect to one real variable.
The analyticity follows from
the analyticity of the respective transfer (Perron-Frobenius, Ruelle) operator, hence its
isolated eigenvalue being exponent of $\pressure$. The analyticity with respect to $(t,p)$
follows from the complex analyticity with respect to complex $t$ and $p$ and Hartogs' Theorem.

\

\section{Analytic dependence of Geometric Pressure on temperature. Equilibria.}\label{s:analytic}

To prove Theorems A and C, in particular to study $P(K,t)$, we shall use the following Key Lemma, whose proof will be postponed to the next section.

\begin{lemm}\label{key}(Key Lemma)

1. For $(f,K)\in \sA_+^{\BD}$, if moreover HBD is assumed, see Definition~\ref{LBD}, for $F$ and
$\pressure(t,p)$ defined with respect to a nice couple, the domain of finiteness of $\pressure(t,p)$ contains a neighbourhood of the set $\{(t,p)\in\R^2: t_-<t<t_+, p=P(K,t)\}$.

2. For every $t: t_-<t<t_+$,\; $\pressure(t,P(t))=0$.

\end{lemm}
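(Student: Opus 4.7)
The two assertions are tightly coupled: Part 1 supplies an open two-dimensional domain on which $\pressure(\cdot,\cdot)$ is finite (hence, by the theory of \cite{MU}, real-analytic in both variables), while Part 2 pins down the curve $p=P(K,t)$ inside this domain as the zero level set. I would follow the strategy of \cite{PR-L2} from the complex setting, with the modifications forced by working with the restricted singular set $S'(f,K)$ in place of $\Crit(f)$ and by substituting the interval distortion estimates (BD/HBD plus Remark~\ref{epsilon epsilon'}) for the complex Koebe lemma. Throughout, HBD lets me replace $\sup_W|\phi'_W|^{-t}$ by $|(f^{m_W})'(z)|^{-t}$ at any $z\in W$ up to a multiplicative constant, so all bounds reduce to weighted sums along backward orbits.

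\textbf{Part 1 (finiteness).} Write
$$
\Sigma(t,p)=\sum_{W\in\fD}\sup_W|\phi'_W|^{-t}\exp(-m_W p),
$$
and decompose via Lemma~\ref{decomposition into bad}. The primary contribution $\sum_c\sum_{W\in\fD_{\hV^c}}\,\cdots$ corresponds to pull-backs of $V$ along trajectories that avoid $V$ until the terminal step. Since $f|_{\sK(V)}$ is uniformly expanding (Lemma~\ref{expanding away}) and, for $t\in(\tneg,\tpos)$, every invariant measure supported off $V$ has $h-t\chi$ strictly below $P(K,t)$ once the components of $V$ are small enough, the transfer operator associated with this `escape-before-$V$' subsystem has spectral radius strictly less than $\exp P(K,t)$, giving convergence of the primary sum for $p$ in an open neighbourhood of $P(K,t)$. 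For the bad contribution, bounded distortion yields the factorisation
$$
\sum_{W\in\fD_{\badp}}\sup_W|\phi'_W|^{-t}\exp(-m_W p)\le C\exp(-m_{\badp}p)\sup_{\badp}|(f^{m_{\badp}})'|^{-t}\sum_{W'\in\fD_{\hV^{c(\badp)}}}\sup_{W'}|\phi'_{W'}|^{-t}\exp(-m_{W'}p),
$$
so it suffices to sum $\exp(-m_{\badp}p)\sup_{\badp}|(f^{m_{\badp}})'|^{-t}$ over bad pull-backs. Grouping by order $k=m_{\badp}$ and applying Lemma~\ref{nodes2} (together with the identity $P_{\tree}(K,t)=P(K,t)$ from Theorem~B) bounds the resulting series by $\sum_k\exp(-k(p-P(K,t)-\varepsilon))$ for arbitrarily small $\varepsilon>0$, which converges for $p>P(K,t)-\varepsilon/2$ and hence on an open neighbourhood of the claimed curve.

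\textbf{Part 2 (pressure identity) and main obstacle.} For $\pressure(t,P(K,t))\le 0$ I would use Abramov's formula: given any $F$-invariant ergodic probability measure $\nu$ on $K(F)$ with $\int m\,d\nu<\infty$, the induced $f$-invariant probability $\mu$ on $K$ satisfies $h_\mu(f)=h_\nu(F)/\int m\,d\nu$ and $\chi_\mu(f)=\chi_\nu(F)/\int m\,d\nu$, whence
$$
h_\nu(F)-t\chi_\nu(F)-P(K,t)\int m\,d\nu=\bigl(h_\mu(f)-t\chi_\mu(f)-P(K,t)\bigr)\int m\,d\nu\le 0.
$$
Since $-t\log|F'|-P(K,t)m$ is H\"older on the symbolic space (Proposition~\ref{distortion-induced}), the variational principle for the induced Markov system yields $\pressure(t,P(K,t))\le 0$. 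For the reverse inequality, for arbitrary $\eta>0$ I would use Theorem~B to produce an ergodic hyperbolic $\mu\in\sM(f,K)$ with $h_\mu(f)-t\chi_\mu(f)>P(K,t)-\eta$; because $f|_{\sK(V)}$ is uniformly expanding and $t\in(\tneg,\tpos)$, such a $\mu$ must satisfy $\mu(V)>0$ once the components of $V$ are small enough, hence induces to an $F$-invariant $\nu$ with $h_\nu(F)-t\chi_\nu(F)-P(K,t)\int m\,d\nu\ge-\eta\int m\,d\nu$, so $\pressure(t,P(K,t))\ge-\eta$ and finally $\ge 0$. The hardest step will be the gap in Part 1: the full tree sum grows precisely as $\exp(nP(K,t))$, so convergence at $p=P(K,t)$ is marginal and hinges on simultaneously exploiting the sub-exponential control of Lemma~\ref{nodes2} on bad branches \emph{and} the strict spectral gap of the escape-before-$V$ subsystem, a balance whose real-variable execution (with inflection critical points and boundary points of $\hI_K$ contributing to $S'(f,K)$, absent in the complex case) is the novel piece.
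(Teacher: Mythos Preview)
Your Part~1 argument has a genuine gap in the treatment of the bad pull-back contribution. The factorisation you write,
\[
\sum_{W\in\fD_{\badp}}\sup_W|\phi'_W|^{-t}e^{-m_W p}
\;\le\; C\,e^{-m_{\badp}p}\,\sup_{\badp}|(f^{m_{\badp}})'|^{-t}\!\!\!\sum_{W'\in\fD_{\hV^{c(\badp)}}}\!\!\!\sup_{W'}|\phi'_{W'}|^{-t}e^{-m_{W'}p},
\]
breaks down because, by definition, a bad pull-back~$\badp$ of order~$m_{\badp}$ is one on which $f^{m_{\badp}}$ fails to be a $K$-diffeomorphism; its orbit therefore meets $S'(f,K)$, and for $t>0$ the factor $\sup_{\badp}|(f^{m_{\badp}})'|^{-t}$ is infinite (or at best uncontrolled) once there is a critical point in some $f^j(\badp)$. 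Even if one replaces the supremum by a point evaluation at a bad iterated pre-image~$y_{\badp}$, the resulting sum $\sum_{m_{\badp}=k}|(f^k)'(y_{\badp})|^{-t}$ is bounded only by the full tree sum $\asymp e^{kP(K,t)}$, so combining with Lemma~\ref{nodes2} yields at best $\sum_k e^{-k(p-P(K,t))}$, which diverges at $p=P(K,t)$ and gives no open neighbourhood. The subexponential count of bad branches does not by itself compensate for the fact that individual bad branches can carry most of the tree weight.

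The paper's mechanism for this step is quite different: it introduces a Whitney-type decomposition of the complement of $S'_n=\bigcup_{j\le n+1}f^j(S'(f,K))$ into dyadic pieces $\sW_{n,k}$, and for each piece~$L$ pulls back to~$\badp$ and evaluates the derivative not at an arbitrary point of~$\badp$ but at a preselected \emph{safe hyperbolic} reference point $z_{L_{\badp}}$ lying in the (large) image $f^{m(L)+1}(L)$. At such points the derivative is governed by $\chiinf$ or $\chisup$ (Proposition~\ref{asymptotes}), and this is where the hypothesis $t\in(\tneg,\tpos)$ enters a second time, through the strict gap $P(t)>\max\{-t\chiinf,-t\chisup\}$, yielding a factor $\gamma^{m_{\badp}+m(L)}$ with $\gamma<1$ that beats both the count of bad $\badp$'s and the sum over Whitney scales~$k$. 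Your outline uses $t\in(\tneg,\tpos)$ only for the spectral gap of the escape-before-$V$ subsystem, but the same gap is indispensable for the bad contribution.

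Your Part~2 takes a variational/Abramov route rather than the paper's direct comparison of the truncated series $T_F(w)$ with the full tree series. This is a legitimate alternative strategy, but as written it is incomplete: for $\pressure\le0$ you must either show every ergodic $F$-invariant $\nu$ has $\int m\,d\nu<\infty$ or argue separately that measures with $\int m\,d\nu=\infty$ do not raise the supremum; for $\pressure\ge0$ you must justify that a near-equilibrium hyperbolic $\mu$ actually sees $K(F)$ with positive measure (not merely $V$), so that the induced $\nu$ exists with finite $\int m\,d\nu$. The paper sidesteps both issues by working directly with backward-orbit sums.
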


 In this Section we shall prove Theorems A and C using Key Lemma.
 The proof
 %Below is a more complete version of Theorem A, which roughly
 follows the complex version in \cite[Theorems A and B]{PR-L2}, but it is simpler (the simplification concerns also the complex case) by omitting the considerations of subconformal measures.

\subsection{The analyticity} Notice that $\partial\pressure(t,p)/\partial p < 0$.
Indeed,  using the the bound $m\ge 1$ for the time of return function $m=m(x)$ giving $F=f^m$, considering arbitrary $p_1<p_0$ so that $(t,p_1)$ and $(t,p_1)$ belong to the domain of $\pressure$,  we get
$$
P(F, - t \log |F'| - p_1 m) - P(F, - t \log |F'| - p_2 m)
$$
$$=
\lim_{n\to\infty}{1\over n}\log Z_n(t, p_1) - \lim_{n\to\infty}{1\over n}\log Z_n(t, p_0)
 $$
 $$ = \lim_{n\to\infty}{1\over n}  \log  {
 \sum_{\underline{W} \in E^n} \exp(- m_{\underline{W}}
p_1)\left( \sup \left\{ |\phi_{\underline{W}}'(z)|: z \in
V^{c(\uW)} \right\} \right)^t \over
\sum_{\underline{W} \in E^n} \exp(- m_{\underline{W}}
p_0)\left( \sup \left\{ |\phi_{\underline{W}}'(z)|: z \in
V^{c(\uW)} \right\} \right)^t  }
$$
$$
\ge \lim_{n\to\infty}{1\over n} n|p_1-p_0| =|p_1-p_0|>0,
$$
since the ratio of each corresponding summands in the ratio of the sums above is bounded from below by
$\exp m_{\underline{W}}|p_1-p_0|\ge \exp n|p_1-p_0|$.

Thus the analyticity of $t\mapsto P(t)$ for $t_-<t<t_+$ follows from the Implicit Function Theorem and
the Key Lemma.
\begin{rema} In \cite[Subsection 4.4]{PR-L2} an indirect argument was provided, expressing the derivative $\partial{\pressure}/\partial p$ by a Lyapunov exponent for the equilibrium existing by the last item of Theorem A. The above elementary calculation is general, working also in the complex case.
\end{rema}

%: from the analyticity of $(t,p)\mapsto \pressure (t,p)$ stated at the end of Section 6.%~\ref{s:nice}.

\subsection {From the induced map to the original map. Conformal measure. Proof of Theorem A.2.} \label{From the induced map to the original map. Conformal measure}

We shall call here any $\phi$-conformal measure  for $f$ on $K$ for $\phi:=(\exp p) |f'|^t$
a $(t,p)$-{\it conformal measure} for $f$, see Definition \ref{defi:Jacobian}. We call $\mu$ on $K(F)$ a  $(t,p)$-{\it conformal measure} for an induced map $F$, as constructed in Subsection 6.2, if it is $\phi$-conformal for $F$ and for $\phi(x):=(\exp p m(x)) |F'(x)|^t $.

A $(t,P(t))$-conformal measure $\mu_{F,t}$ for $F$, supported on $K(F)$ in the sense
$\mu_{F,t}(K(F))=1$, exists by Key Lemma and by
\cite[Theorems 3.2.3]{MU}. It is non-atomic and supported $K_{\con}(f)$ since
the latter set contains $K(F)$ by definitions.

Similarly as in \cite[Proposition B.2]{PR-L1} $\mu_{F,t}$ can be extended to the union of $\fL_V$, see Definition~\ref{bad pull-backs}, by
$$
\tilde{\mu_t}(A):=\int_{f^{m(W)}(A)}(\exp -P(t)m(W)) |(f^{-m(W)})'(x)|^{t}\;d\mu_{F,t} (x)
$$
for each $W\in \fL_V$
and every Borel set $A\subset W$ on which $f^{m(W)}$ is injective. The conformality of this measure for $f$, i.e. \ref{Jacobian}, holds for all
$A\subset W\in \fL_V$ and $A\subset K(F)$ by the same proof as in \cite{PR-L1}.
Finally normalize $\tilde{\mu_t}$ by setting $\mu_t:=\tilde{\mu_t}/\tilde{\mu_t}(K(F)$.
(Sometimes we omit tilde over not normalized $\mu$ to simplify notation.)

Notice that $\mu=\mu_t$
is finite and non-atomic, as a countable union of non-atomic pull-backs of $\mu_{F,t}$ to
$W\in \fL_V$. The summability $\sum_{W\in \fL_V} \mu(W) <\infty$ follows from BD (bounded distortion) and (\ref{e:pressure outside}).

Denote $\hat K(F):= \bigcup_{W\in \fL_V}f^{-m(W)}(K(F))$.
Let $x\in V\setminus (K(F) \cup S'(f,K))$. We prove that

\begin{equation}\label{image}
y=f(x)\notin \hat K(F).
\end{equation}

Suppose this is not the case. For $V$ small enough, $y\notin V$. Denote $z=f^{m(y)}(y)$ where
$m(y)$ is the time of the first return of $y$ to $V$, see Remark~\ref{first return}.
Then there exists an infinite word $\underline{W}=W_1W_2...\in E^\infty$ such that
$z=f^{m(y)}(y)=\pi(\underline{W})$. Let $n_0\ge 1$ be the least $n$ such that $Y_n$ being the pull-back of $\hW_n=\phi_{W_1}\circ ... \circ \phi_{W_n}(V^{c(W_n)}$ of order $m(y)+1$ containing $x$ is disjoint from $S'(f,K)$. It exists since $|\hW_n|\to 0$ as $n\to\infty$. Hence $|Y_n|\to 0$, hence $Y_n\cap S'(f,K)=\emptyset$ as $x\notin S'(f,K)$. In consequence
$$
m=m(y)+1+m_{W_1}+...+m_{W_{n_0}}
$$
is a good time for $x$. Hence, by Lemma~\ref{decomposition into F}, there exists $k\ge 1$ such that $f^m(x)=F^k(x)$.
In consequence $x\in K(F)$, a contradiction.

(More precisely $x=\pi(\underline{W}')$, where either $\underline{W}'=W_0W_1W_2...$ for $W_0$ being a pull-back of $V$ for $f^{m(y)+1}$, containing $x$ in the case $m(y)+1$ is a good time for $x$, or
 $\underline{W}'=W_0'W_{n_0+1}...$ otherwise, where $W_0'\ni x$ and $m_{W_0'}=m$. )
 %is the least good time for $x$ and $i\le n_0$ be such that $\pi(\underline{W}')=x$.)

Suppose now that $A\subset V\setminus K(F)$.
Then by definition $\mu(A)=0$. We shall prove that also
$\mu(f(A))=0$. Indeed, we have just proved that $f(A\setminus S'(f,K))\cap \hK(F)=\emptyset$,
hence $f(A\setminus S'(f,K))$ is disjoint from the set $\hK(F)$ of full measure $\mu$.
Notice also that $\mu(f(S'(f,K)))=0$ since $\mu$ does not have atoms.

Thus the proof of existence of a $(\exp P(t))|f'|^t$-conformal measure $\mu$ for $f$ on $K$ as asserted in Theorem A, is finished. This measure is non-atomic and supported on $K_{\con}(f)$ by construction.
It is zero on (weakly) exceptional sets since such sets are finite and $\mu$ is non-atomic. It is positive on open sets in $K$ by Lemma~\ref{positive open}.

If there is another $(\exp p)|f'|^t$-conformal measure $\nu$, with $p\le P(t)$, then
it is
positive on open sets by Lemma~\ref{positive open},
$p=P(t)$ and $\mu\ll\nu$. In consequence both measures are proportional. The proof is the same as in \cite[Subsection 4.1]{PR-L2}. Also the proof of ergodicity is the same.

Caution: By the definition of conformal measures, Definition~\ref{defi:Jacobian}, in particular by the forward quasi-invariance of $\nu$, we have $\nu(f(\Crit(f))=0$ (for $t\not=0$). Hence,
if $\NO(f,K)\subset \Crit(f)$ then $K_{\con}(f)$ is forward invariant up to $\nu$-measure 0. If
$\NO(f,K)\not\subset \Crit(f)$ this a priori need not be the case.
Fortunately we only need to pull back:
$f(K_{\con}(f))\subset K_{\con}(f)$ implies  $f^{-1}(K\setminus K_{\con}(f))\subset K\setminus K_{\con}(f)$. Hence, if $\nu':=\nu|_{K\setminus K_{\con}}(f)$ is nonzero, by pulling back by iterates of $f$ we prove that $\nu'$ is positive on open sets, hence by pulling back $\nu'$ and $\mu$ simultaneously we prove that $\nu'$ is positive on $K_{\con}$,  contradiction.

%The above conclusion that $p\ge P(t)$, finishes Proof of Theorem A, up to prove  .

\

\subsection{Equilibrium states}

The proof of this part of Theorem A is the same as in the complex case,
so we only sketch it.

\

1. {\it Existence}

As in \cite[Lemma 4.4]{PR-L2} one deduces from Key Lemma \ref{key} that the return time function $m(z)$ is $\mu_t$ integrable, where $\mu_t$ is the unique probability $(t,P(t))$-conformal measure, for every $t_-<t<t_+$.
Moreover the exponential tail inequality holds
\begin{equation}\label{tail}
\sum_{W\in \fD, m_W\ge n} \mu_t(W)\le \exp (-\e  n),
\end{equation}
for a constant $\e>0$ (depending only on $t$) and all $n$ large enough.

Due to the summability (finiteness of $\pressure(t,P(t))$ \ref{summability} and H\"older continuity of the potential function $\Phi_{t,P(t)}$ on $E^\infty$, see Proposition \ref{distortion-induced}, there exists an $F$ invariant probability measure $\rho_{F,t}$ absolutely continuous with respect to the conformal measure  $\mu_{F,t}$ on $K(F)$ being the restriction of $\mu_t$ to this set (so $\mu_{F,t}(K(F))<1$ usually).

Indeed, consider the Gibbs state on $E^\infty$ given by \cite[Theorem 2.2.4]{MU}.  Project it by $\pi_*$ to $\rho_{F,t}$ on $K(F)$. Its equivalence to
$\mu_{F,t}$ on $K(F)$ follows from Gibbs property \cite[(2.3)]{MU} holding for both measures.
%The uniqueness also follows, since the projection$\pi$ is a homeomorphism,
%Acceptability assumed in \cite{MU} follows immediately from H\"older continuity of $\Phi_{t,P(t)}$ holding by Proposition \ref{distortion-induced}. %, see Definition~\ref{LBD.

 Moreover there exists $C_t >0$ such that $C_t\le d\rho_{F,t}/d\mu_{F,t} \le C_t^{-1}$ i.e the density is bounded and bounded away from 0.

\

One has used here also, while applying \cite{MU}, in particular concluding the ergodicity of $\rho_{F,t}$, the fact that the incidence matrix $A_F$ is finitely irreducible, which follows from the topological transitivity of $f$. The argument for this irreducibility is as follows, compare
\cite[Proof of Lemma 4.1]{PR-L1}.

\

1a. {\it Proof of the irreducibility}

Consider an arbitrary repelling periodic point $p\in K\setminus \bigcup_{n\ge 0}f^n(S'(f,K))$.
%with its periodic orbit $O(p)$, being non-exceptional.
For every $c\in S'(f,K)$ choose a
backward trajectory $\gamma_1(c)$
%=(c=z(c)_0, z(c)_{-1},...)$
converging to the periodic orbit $O(p)$, existing by Proposition~\ref{density periodic}.
 %by Remark~\ref{wexact2}.
It omits $S'(f,K)$ by our assumption $f^n(S'(f,k))\cap S'(f,K)=\emptyset$ for all $n>0$.
Therefore for $V$ small enough all these trajectories, hence the pull-backs of $\hV$ along them, are disjoint from $V$.

Next notice that for each $c\in S'(f,K)$ one can choose a
finite backward trajectory $\gamma_2(c)=(p=z(c)_0, z(c)_{-1},...,z(c)_{-N(c)})$ such that
$z(c)_{-N(c)}\in V^c$,  by the strong transitivity property, see Proposition~\ref{dp}.

Consider now any two $c,c'\in S'(f,K)$. Consider a pull-back of $\hV$ going along $\gamma_1(c)$
for the time $M(c)$ so long that

(a) the diameter of the associated pull-back $\hW(c)$ of $\hV^c$ is small enough that its consecutive pull-backs for $f^{j}$ along $\gamma(c')$ have diameters small enough to be disjoint from $S'(f,K)$,
for all $j=0,...,-N(c')$

(b) the distance of $\hW(c)$ from $O(p)$ is small enough that $\hW(c,c')$ being the pull-back of $\hW(c)$ is in $V^{c'}$.

Then, for $x=x(c,c')$ being the $f^{-(M(c)+N(c'))}$-pre-image of $c$ in $\hW(c,c')$, the time
$m=m(c,c'):=M(c)+N(c')$ is a good time, see Definition~\ref{canonical}. Hence by Lemma~\ref{decomposition into F}  there exists $k=k(c,c')$ such that $f^m=F^k$ on $W(c,c')$.

We conclude that for every two $W',W\in\fD$, if $W\subset V^c$ and $f^{m_{W'}}=V^{c'}$,
then there is an admissible word of length $k(c,c')+2$ on the alphabet $\fD$ starting with $W'$ and ending with $W$.

\

%In fact, provided the topological exactness of $f$ on $K$, the incidence matrix $A_F$ is finitely primitive since $k(c,c')$ can be replaced by any integer large enough. To this end one uses two periodic orbits $O(p), O(p')$ with periods mutually prime
%and "bridges" between them (a backward trajectory of $p$ converging to $O(p')$ and {\it vice versa}). Such $p,p'$ exist since there exists a periodic trajectory in $K$ of an arbitrary period large enough,see the proof of Lemma~\ref{Phyp}. Compare \cite{GPR}. Hence $F$ is topologically mixing on $K(F)$.?????

{\it Existence. Continuation}

Now, using the $F$-invariant measure $\rho_{F,t}$, one defines an $f$-invariant measure in the standard way:
\begin{equation}\label{rho}
\rho_t \= \sum_{W \in \fD} \sum_{n = 0}^{m_W - 1} f^n_* \rho_{F,t}|_W.
\end{equation}

Finiteness of $\rho_t$ follows from the integrability of $m(x)$ with respect to $\rho_{F,t}$
which follows from the integrability with respect to $\mu_{F,t}$ and the boundness of
${d\mu_{F,t} \over d\rho_{F,t}}$.

%The measure $\rho_t$ can be defined also as the projection $P_*$ of the "lift" $\tilde\rho_{F,t}$ of $\rho_{F,t}$ to the Young's tower, \cite{Young}.
% The tower is the disjoint union of the sets $(W,n)$ where $W\in \fD$ and $n: 0\le n< m_W$, such that on each $(W,n)$ the measure $\tilde\rho_{F,t}$ is the image of $\rho_{F,t}$ on $W=(W,0)$ in $(W,n)$ under the mapping  $(x,0)\mapsto (x,n)$. The projection is $P(x,n):=f^n(x)$.

The absolute continuity of $\rho_t$
with respect to the conformal measure $\mu_t$ follows easily from this definition, see e.g. \cite[p.165-166]{PR-L1}. Indeed.
For each  $W\in \fD$ and $n: 0\le n< m_W$ write
$J_{W,n}:=|(f^n)'|^{-t}$ on $f^n(W)$, and~$0$ on the rest of ${\bf{U}}$.
Replace $\rho_t$ in \ref{rho} by the auxiliary $\mu'_t=\sum_{W \in \fD} \sum_{n = 0}^{m_W - 1} f^n_* \mu_{F,t}|_W$.
Then, using \ref{rho} and the boundness of $d\mu'_t/d\rho_t$, we get
$$
+\infty > \mu'_t(K)= \sum_{W,n} \int J_{W,n}\,d\mu_t=\int \sum_{W,n} J_{W,n}\,d\mu_t,
$$
by Lebesgue Monotone Convergence Theorem.
Moreover for every continuous function $u:K\to\R$
$$
\int u d\mu'_t= \sum_{W,n} \int u J_{W,n}\, d\mu_t=
\int u (\sum_{W,n}  J_{W,n})\, d\mu_t.
$$
Thus, it follows that $\mu'_t$, hence $\rho_t$ is absolutely continuous with respect to $\mu_t$.

\

1b. {\it The density function}

The density $d\rho_t/d\mu_t$ is bounded away from 0. Indeed, let $N(V)$ be such that
$\bigcup_{n=0}^N(V\cap K)=K$. Then for $\mu_t$ almost every $z\in K$ there exists $y\in K(F)$ and
$N\le N(V)$ such that $f^N(y)=z$. We get
$$
{d\rho_t \over d\mu_t}(z)\ge {d\rho_{F,t} \over d\mu_{F,t}}(y) |(f^N)'|^{-t} e^{-NP(t)}
\ge C_t
(\max_{n=0,1,...,N(V)}  \sup_K |(f^n)'|^{t} e^{nP(t)})^{-1},
$$
compare again \cite[page 166]{PR-L1}.

\

1c. {\it Equilibrium}

The proof is the same as in \cite[Lemma 4.4]{PR-L2}.
Denote $\rho:=\rho_{F,t}$. Consider its  normalized extension given by (\ref{rho}), by  $\rho':=\rho_t/\rho_t(K)$.
One uses
%Abramov's formula for the entropies $h_{\rho'}(f) \rho_t(K)=h_{\rho}(F)$ and
the fact that $\rho$ is the equilibrium state for the shift map on $E^{\infty}$ (identified with $K(F)$) and the potential $\Phi_{t,P(t)}$, i.e.
%\begin{equation}\label{equil}
$P(F, -t \log |F'| - P(t) m)
=
h_{\rho}(F) - \int (t \log |F'| +  P(t) m) \, d\rho$,
%\end{equation}
which is equal to~$0$ by hypothesis.
Thus, using generalized Abramov's formula, see \cite[Theorem 5.1]{Zwe}, we get
\begin{equation}\label{tower-calculation}
%\begin{multline*}\label{tower-calculation}
h_{\rho'}(f)
=
(\rho_t(K))^{-1} h_{\rho}(F)
=
(\rho_t(K))^{-1} \left(\int (t \log |F'| +  P(t) m) \, d\rho\right)
%\\
\end{equation}
$$
=(\rho_t(K))^{-1} t \int \log |f'| d\rho_t + P(t)
=
t \int \log |f'|\, d\rho' + P(t).
%\end{multline*}
$$
This shows that~$\rho'$ is an equilibrium state of~$f$ for the potential $-t \log |f'|$.

\

2. {\it Uniqueness}

Let $t_-<t<t_+$ and $\nu$ be an ergodic equilibrium measure for $f|_K$ and the potential $-t\log|f'|$.  Since $P(t)>-t \chi_\nu$, it follows that $h_\nu(f)>0$. Hence by Ruelle's inequality $\chi_\nu(f)>0$. Now one can refer, as in the complex case, to \cite{Dobbs2} adapted to the interval case. In our case, where nice couples exist, there is however a simpler proof using inducing, omitting Dobbs (and Ledrappier) method of using canonical systems of conditional measures on the measurable partition into local unstable manifolds in Rokhlin natural extension. See Appendix B.

\

3. {\it Mixing and statistical properties}

To prove these properties in theorem A we refer, as in \cite{PR-L1} and \cite[Subsection 8.2]{PR-L2}, to Lai-Sang Young's results, \cite{Young}.

In the case when there is only one critical point in $K$ one can apply these results directly, and in the general case one
fixes  an appropriate $c_0\in S'(f,K)$ and considers $\hF:V^{c_0}\cap K(F)\to V^{c_0}\cap K(F)$, the first return map for iteration of $F$. This makes sense since $\rho_{F,t}(V^{c_0}\cap K(F))>0$ as
$\mu_t$ is positive on open sets in $K(F)$.
%, where $\widetilde{c}$ is the critical point given by the conclusion of~Lemma~\ref{l:mixingness}, as it was done in~\cite[\S8.2]{PR-L1}.

$\hF$ is an infinite one-sided Bernoulli map, with $\fD$ replaced by $\hat{\fD}$ being the joining of $\fD$ and its appropriate $F^j$-preimages, see the next paragraph.
Now we refer to \cite{Young} considering Young's tower for $\hF$ and the integer-valued function
$\hat m$ on $V^{c_0}\cap K(F)$ defined $\rho_{F,t}$-a.e. by $\hF (x)=f^{\hat m}(x)$, more precisely
${\hat m}(x)=\sum_{j=0,...,m_F-1} m(F^j)(x)$, where $F^{m_F}={\hF}$.

The tower $T_{\hF, \hat m}$ is the disjoint union of pairs $(U,n)=
(\phi_{\underline{W}}(V^{c(\underline{W})}),n)$, with $W_1\subset V^{c_0}$, $c(W_{m_F}=c_0$ and $c(W_i)\not=c_0$ for $i=1,...,m_F-1$
%i.e. $U\in\hat{\fD}$, where for the notation of the set see Proposition~\ref{distortion-induced}.
(the length of $\underline{W}$ being  $m_F$) and $n=0,1,...,m_U -1$.

On each $(U,n)$ define the measure $\hat\rho_{F,t}$ as the image of $\rho_{F,t}$ on $U=(U,0)$  under the mapping  $(x,0)\mapsto (x,n)$. It is easy to see that $(x,n)\mapsto f^n(x)$ projects this measure to $\rho_t$ on $K$.

%$(W,n)$ where $W\in \fD$ and $n: 0\le n< m_W$, such that on each $(W,n)$ the measure $\tilde\rho_{F,t}$ is the image of $\rho_{F,t}$ on $W=(W,0)$ in $(W,n)$ under the mapping  $(x,0)\mapsto (x,n)$. The projection is $P(x,n):=f^n(x)$.

Then by \cite{Young}, see also \cite[Subsection 8.2]{PR-L1}, to prove mixing, exponential mixing and CLT it is sufficient to check the following.

1) The greatest common divisor of the values of $m_U$ is equal to 1.

2) The tail estimate \ref{tail} $\sum_{U, m_U\ge n}\rho_t(U)\le \exp (-\e n)$ for a constant $\e>0$,
for $U\in\hat{\fD}$.

\

To prove 1) we proceed as in 1a. (the proof of irreducibility) with some refinements, compare
\cite[Lemma 4.1]{PR-L1}. First notice that provided topological exactness of $f|_K$ there are in $K$ periodic orbits of all periods large enough. To prove this it is sufficient to
be more careful in Proof of Lemma
2.11 while "closing the loop". Given a pull-back $T$ for $f^{n}$, of

\noindent $B=B(z_0,\exp(-\alpha n))$ and large  $S=f^m(B)$ such that $f^m:B\to S$ is a diffeomorphism and $m<<n$, then we find a diffeomorphic pull-back  from $T$ into $S$ of an arbitrary length not exceeding $n$.

%So we can choose periodic points $p,q\in K$ not in the forward trajectory of $S'(f,K)$, with mutually prime periods. Fix arbitrary $c,c'\in S'(f,K)$. Choose a backward trajectory $\gamma_1$ of $c$ converging to $p$, backward trajectories $\gamma_2$ and $\gamma_3$ of $p,q$ converging to the periodic orbits $O(q)$ and $O(p)$ respectively and finally a finite backward trajectory of $q$, $q,q_{-1},...,q_{n}$ with the latter point very close to $f(c')$. Choose $V$ small enough that

Now choose an arbitrary $O(p)$ as in 1a. For every $c\in S'(f,K)$ choose a backward trajectory $\gamma_1(c)$ converging to $O(p)$. Choose $\gamma_2=(p,z_{-1},z_{-2},...z_{-N})$, a backward trajectory of $p$,  such that $z_{-N}\in V$. Let $n\le N$ be the least $n\le N$ such that $z_{-n}\in V$. Let $c_0 \in S'(f,K)$ be such that $z_{-n}\in V^{c_0}$. Thus for each $c$, in particular for $c_0$, we find, as in 1a., a backward trajectory $c_0,x_{-1},...,x_{-m}$ of $c_0$ such that $x_{-m}\in V^{c_0}$ no
$f^{-j}(c_0)$ is in $V$ for $j=1,2,...,m-1$ (on its way going several times along $O(p)$).  Next find a point $y\in V^{c_0}$ shadowing this trajectory, hence $F(y)=f^m(y)\in V^{c_0}$. Hence $\hF =F=f^m$ at $y$, with the return time $m$ being an arbitrary integer of the form $a+k m_p$, where $m_p$ is a period of $p$.

Considering now $\#S'(K,f)+1$ number of periodic points in

\noindent $K\setminus\bigcup_{n\ge 0} f^n(S'(f,K))$ with pairwise mutually prime periods, we find two,  having the same $c_0$. Then the condition on the greatest common divisor of return times being 1 is satisfied for appropriate choices of $k$. For details see \cite[Lemma 4.1]{PR-L1}.

\

To prove 2) one uses \ref{tail} for $F$, i.e. for $W\in \fD$.
One repeats roughly the estimates in \cite[the top of p. 167]{PR-L1}.
The key point is the finiteness of the sum of the middle factors in the decomposition
${\hF}'(x)=F'(x)\cdot (F^{m-2})'(F(x))\cdot F'(F^{m-1}(x))$, for $F^m=\hF$, namely,
writing $m_j(y):=m(y)+m(F(y))+...+m(F^{j-1}(y))$,

%$$
%\begin{multline*}
 % \sum_{W \text{ c.c. of } D' \setminus V^{\tc}} \diam(W)^{\talpha}
  %\le C_0^{\talpha} \left(\sum_{W \text{ c.c. of } D' \setminus V^{\tc}, m_W' = 1} \diam(W)^{\talpha} %\right)
  %\cdot \\ \cdot\left(
\begin{equation}\label{sumeps}
\sum_{j \ge 1} \; {\sum_y}^*
|(F^j)'(y)|^{-t} \exp{-(m_j(y)) (P(t)-\e)} < \infty
\end{equation}
for a positive $\e$. Here the star * means that we consider only $y$ one for each word $\underline{W}$ (of length $j$), such that no $F^i(y)$ belongs to
$V^{c_0}$ for $i=0,1,...,j$. This summability holds because the pressure of the subsystem of the system in  \ref{e:induced pressure} where we omit symbols in $V^{c_0}$ is strictly less than the full
$P(F, -t\log |F'|-P(t)m =0$ by Key Lemma~\ref{key}, hence negative. So we have a room to subtract $\e$ in \ref{sumeps} and preserve convergence.

%\end{multline*}

\

\section{Proof of Key Lemma. Induced pressure}\label{s:Key}

Again the proof repeats the proof in \cite{PR-L2}, so we only sketch it (making some simplification
due to real dimension 1).

\

%{\bf  Proof of Lemma~\ref{key}}

\

{\bf Part 1.}

\partn{1.1} First notice that for every $t\in\R$ and every $p$ close enough to $P(t)$
\begin{equation}\label{e:pressure outside}
\sum_{W \in \fL_V} \exp( - p m_W) \diam(W)^{t} < + \infty.
\end{equation}
The proof is the same as in \cite[Lemma 6.2]{PR-L2} and uses the fact that $f$ is expanding on
${\sK}(V)$, i.e. outside $V$, see notation in Definition \ref{nice}. Briefly: ${\sK}(V)$
can be extended to an isolated hyperbolic subset ${\sK}'(V)$ of $K$ (even better than in \cite{PR-L2} where we can guarantee only the existence of Markov partition, compare   \cite[Remark 4.5.3]{PU}). This set can be extended to an even larger isolated hyperbolic set ${\sK}''(V)\subset K\setminus S'(f,K)$. Then
$P(f|_{{\sK}'(V)}, -t\log |f'|)    < P(f|_{{\sK}''(V)}, -t\log |f'|) \le P(K,t)$.

\

\partn{1.2} Let ${\sW}_{n,k}$ be the family of all components of the set
$$
A_{n,k}:=\{x\in {\bf{U}}: 2^{-(k+1)}\le \dist(x, S'_n)\le 2^{-k}\}
$$
for
$S'_n:= \bigcup_{j=1,...,n+1}f^j(S'(f,K)))$,
with the exception that if such a component is shorter than $2^{-(k+1)}$ we add it to an adjacent
component of $A_{n,k+1}$.
Clearly for each integer $k$
\begin{equation}\label{Whitney}
\# \sW_{n,k} \le 2n\#S'(f,K),
\end{equation}
in particular the bound is independent of $k$. The coefficient 2 appears because given $k$ the intervals of $\sW_{n,k}$ can lie on both sides of each point in $S'_n$.

Denote $\sW_n:=\bigcup_{k=0}^\infty \sW_{n,k}$.

(For each $n$ the family $\sW_n$ is a Whitney decomposition of the complement of $S_n'$, similarly to  the Riemann sphere case in \cite{PR-L2}.)

\

\partn{1.3} %For each $c\in S'(f,K)$ choose an arbitrary safe expanding point $z_c\in V^c\cap K$.For every pullback $W$ of $V^c$ denote by $z_W\in W$ the respective preimage of $z_c$.
   Clearly for every $a>1$ there exists $b>1$ such that for every $L\in \sW_n$
  and an interval $T$ intersecting $L$, if $aT$ is disjoint from $S'_n$, then $T\subset bL$. We denote by $aT$ the interval $T$ rescaled by $a$ with the same center, similarly $bL$.

We apply this for $T=W\in\fL_V$ such that $W$ is disjoint from $S'_n$. Hence $a$ as above exists by
Remark~\ref{epsilon epsilon'}.

  For each $L\in \sW_n$ let $m(L)$ be the least integer $m$ such that $f^j(b L)$ is disjoint from $S'(f,K)$ for all $j=0,1,...,m$. Hence $f^{m(L)+1}$ is a $k$-diffeomorphism on $bL$.
  Notice also that $f^{m(L)+1}(bL)$ is large since it joins a point $c\in S'(f,K)$ to $\partial V^c$,
  since $L$ intersects a set $W\in {\fL}$ (the family $\fL_V$ is dense in $K$ by the topological transitivity)  hence $W\subset bL$. Therefore $f^{m(L)+1}(L)$ is also large, since otherwise $L$ would be small in
  $bL$ see Definition~\ref{distortion}.

For every $Y$ being a bad pull-back of~${\hV}^c$ or $Y={\hV}^c$ in the decomposition in Lemma~\ref{decomposition into bad}, denote by $\sW (\badp)$ the set of all components $L_Y$ of
$f^{-(m_Y+1)}(L)$ intersecting $Y$ for all $L\in {\sW}_{m_Y}$, such that there exists $W\in {\fD}$ intersecting $L_Y$.
Notice that for each $L$ as above and its pull-back $L_Y\in \sW (\badp)$ the mapping $f^{m_Y+1}:L_Y\to L$ is a $K$-diffeomorphism.

%Observe that if $t$ satisfies $t_-<t<t_+$ then there exists $\e>0$ independent of $(\hV,V)$,  such that for every $p$ close enough to $P(t)$ and every $n$ large enough
% denoting the union of $f^{-(m_Y+1)}$-pull-backs of $L$ intersecting  $Y$ by $L_Y$ .......??????
%for every integer $k$ and $L\in A_{n,k}$, similarly to \cite[(6.2)]{PR-L2}

For each $L_Y\in \sW (\badp)$, using distortion bounds, we get %denoting $m_Y+1+m(L)$ by $m(L_Y)$, we have
\begin{multline}\label{one branch}
\sum_{W\in {\fD}_Y, \, W\cap L_Y\not=\emptyset }  \exp (-pm_W) |W|^t \\
\le \Const
\exp ( -p(m_Y+1+m(L))\Big({|L_Y| \over |f^{m(L)}(L)|}\Big)^{t}
\sum_{W\in\fL_V} \exp(-pm_W) |W|^t.
\end{multline}
%To see this, first notice that for $T$ of the form $W\in \fL_V$ the common number $a$ as above exists and is equal to $1+2\e '$, see Remark~\ref{epsilon epsilon'} and Definition~\ref{distortion}. Therefore we find $b$ and $L'$ for $L$, hence $m(L)$ makes sense. Next use distortion bounds.

%\exp (-\e (m_Y+1+m(L)) \sum_{W'\in sL, z_{W'}\in L} \exp (-pm_{W'}) |W'|^t.

The latter sum is finite by \ref{e:pressure outside}.
Since $f^{m(L)}$ is large it contains a safe hyperbolic point $z$ from a finite {\it a priori} chosen set $Z\subset K$. Let $z_{L_Y}$ be its $f^{m_Y+1+m(L)}$-preimage in $L_Y$.
Hence,  in \ref{one branch}, the term
$\Big({|L_Y| \over |f^{m(L)}(L_Y)|}\Big)^{t}$ can be replaced up to a constant related to distortion, by   $ |(f^{m_Y+1+m(L)})'(z_{L_Y})|^{-t}$ which can be upper bounded by $\Const\gamma^{m_Y+1+m(L)}$ for
any $\gamma$ satisfying
$$
\exp \left( - \tfrac{1}{2} (P(t) - \max \{ -t \chiinf, - t \chisup \}) \right) <\gamma<1.
$$
For $t\le 0$ this holds due to
$ \lim_{n \to + \infty} \tfrac{1}{n} \log \sup \{ |(f^n)'(z)|: z \in K \}
=
\chisup $, compare \cite[Proposition 2.3, item 2]{PR-L2}.
For $t>0$ we use the same with $\chisup$ replaced by $\chiinf$, but we use the assumption that $z\in Z$ is safe and expanding, compare \cite[Proposition 2.3, item 2]{PR-L2}.

In conclusion
\begin{equation}\label{one branch2}
\sum_{W\in {\fD}_Y, \, W\cap L_Y\not=\emptyset }  \exp (-pm_W) |W|^t \le \Const
\gamma^{m_Y+1+m(L)}.
\end{equation}

%\begin{lemm}(Whitney decomposition, \cite{Sec.5}[PR-L2]).

\partn{1.4}
Now we collect above estimates and use Lemma~\ref{nodes2} to prove the summability \ref{summability}.
which yields the finiteness of $\pressure (t,p)$.
\begin{multline}
\sum_{W\in \fD}    \exp (-pm_W) |W|^t \\
\le \Const \sum_Y\sum_{L_Y}\sum_{W\in {\fD}_Y, \, W\cap L_Y\not=\emptyset}
  \exp (-pm_W) |W|^t  \\
 \le \Const\sum_{m,k} \sum_{Y: m_Y=m} \sum_{L\in \sW_{m,k}}
\sum_{L_Y} \gamma^{m+1+m(L)} \\
 \le \Const \sum_{m,k} (\exp m\e ) 2m\#S'(f,K)
\gamma^m\gamma^{\alpha k} < \infty,
 \end{multline}
 provided $\gamma\exp\e  <1$, where $\alpha:={\inf_{n,k} \inf_{L\in{\sW}_{n,k}} m(L) \over k}$
is positive since $f$ is Lipschitz continuous. The factor $\exp m\e $ bounds $\#\{Y\}$, compare Lemma~\ref{nodes}.

\

{\bf Part 2.}

The main point is to prove that $p(t)$, the zero of $\pressure(t,p)$, is not less than $P(t)$.
For this end we prove that for all $p>p(t)$ we have $P(f|_K, -t\log|f'|) \le p$ which is equivalent to
$P(f|_K, -t\log|f'|-p) \le 0$.
For this it is sufficient to prove that the summability of the "truncated series"

\begin{equation*}
T_F(w) \= \sum_{k = 1}^{+ \infty} \sum_{y \in F^{-k}(w)} \exp( - p m_k(y) ) |(F^k)'(y)|^{-t},
\end{equation*}
where $m_k(y):=\sum_{j=0,...,k-1} m(F^j(y))$, for an arbitrary $w\in K(F)$
implies the summability of the "full series"
$$
\sum_{n = 1}^{+ \infty} \exp( - p n) \sum_{y \in f^{-n}(z_0)} |(f^n)'(y)|^{-t},
$$
for an arbitrary safe expanding $w\in K$. The proof repeats word by word the proof in \cite[subsection 7.2, page 694]{PR-L2} and will be omitted.

The idea is that fixed a safe expanding point $w\in V$, for every $y\in f^{-n}(w)\cap K$ we consider
$s:0\le s\le n$ the time of the first hit of $V$ by the forward trajectory of $y$. Next let $m:s\le m\le n$ be the largest positive time so that $y':=f^s(y)$ is a good pre-image of $f^m(y)$. If such $m$ does not exist (i.e. $y'$ is a bad pre-image of $w$) then we set $m=s$. By Lemma~\ref{decomposition into F}
$f^{m-s}(y')=F^k(y')$. The sum over each of three kind of blocks, to the first hit of $V$, of $F^k$-type, and over bad blocks, is finite.

\

\

\

\appendix
\section{More on generalized multimodal maps}\label{s:AppendixA}

\smallskip

\subsection{Darboux property approach and other observations}

\

\

Instead of starting from a multimodal triple (or quadruple) defined in Introduction with the use of the notion of the maximality one can act
from another end. Let $K\subset\R$ be a compact subset of the real line, $U$ be an open neighbourhood of $K$
being the union of a finite number of open pairwise disjoint intervals $U^j, j=1,...,m(U)$,
and let $f:U\to\R$ be a $C^2$ mapping having a finite number of critical points, all non-flat and all in $K$, and such that $f(K)\subset K$.

\begin{defi}\label{Darboux}
We say that the triple $(f,K,U)$ satisfies \emph{Darboux property}
%for an $f$-invariant compact set $K\subset \R$
if
for every interval $T\subset U$  there exists an interval $T'\subset \R$ (open, with one end,  or with both ends) such that
$f(T\cap K)=T'\cap K$, compare \cite[page 49]{MiSzlenk}.

Call each set $L\subset K$ equal to $T\cap K$ for an interval $T$ a $K$-{\it interval}.
Then Darboux property means that the $f$-image of each $K$-interval in $U$ is a $K$-interval.

Finally notice that Darboux property for $f$ implies Darboux property for all its forward iterates on their domains.
\end{defi}

\begin{prop}\label{reduce}
 1. Let $(f,K,U)$ be a triple as above. Assume that $f|_K$ is topologically transitive.
 %or assume that the triple $(f,U,K)$  satisfies bounded distortion condition.
 Assume also that the triple satisfies Darboux property.
 % on each $U^j\cap K$ (see Definition~\ref{Darboux} below)
 Then we can replace $U$ by a smaller open neighbourhood ${\bf{U}}$ of $K$ such that $f$ restricted to the union $\hI_K$ of convex hulls $\hI_K^j$ of
${\bf{U}}^j\cap K$,
where as in the notation above ${\bf{U}}$ is the union of a finite number of open pairwise disjoint intervals ${\bf{U}}^j, j=1,...,m({\bf{U}})$,
gives a reduced generalized multimodal quadruple $(f,K, \hI_K,{\bf{U}})$ in the sense of Definition~\ref{extension-1}, in particular $K$ is the maximal repeller in $\hI_K$.

2. The converse also holds. The maximal repeller $K=K(f)$ for
$(f,K,\hI)\in \sA$
satisfies Darboux property on each $\hI^j\cap K$.
%and $f$ can be extended to an open neighbourhood of $\bigcup_{j=1,...,m}I^j$ without adding new critical points.
\end{prop}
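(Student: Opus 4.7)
The plan is to prove the two directions of the proposition separately; Part~2 (the converse) is the cleaner direction and I would begin with it. Given $(f,K,\hI)\in\sA$ and an interval $T\subset\hI^j$, I would verify that whenever $y_1,y_2\in f(T\cap K)$ and $y\in K$ lies strictly between them, one has $y\in f(T\cap K)$. Choose preimages $x_i\in T\cap K$ with $f(x_i)=y_i$; the intermediate value theorem applied to $f$ on $[x_1,x_2]\subset T$ yields $x\in[x_1,x_2]$ with $f(x)=y$. The only obstruction is $x$ lying in a gap $(a,b)$ of $\hI^j\setminus K$ with $a,b\in K$. Because $\Crit(f)\subset K$, the map $f$ is monotone on $\overline{(a,b)}$, so $f((a,b))$ equals the open interval between $f(a)$ and $f(b)$. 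A hypothetical $y_0\in K\cap f((a,b))$ would have a unique preimage $x_0\in(a,b)$ whose forward orbit lies entirely in $K\subset\hI$ (starting from $f(x_0)=y_0\in K$), forcing $x_0\in K$ by the maximality of $K$ in $\hI$ and contradicting $x_0\in(a,b)$. Hence $x\in K$, as required.

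For Part~1 I would first shrink $U$ to a small open neighbourhood ${\bf U}$ of $K$ such that ${\bf U}\setminus K$ avoids both the finite set $\Crit(f)\setminus K$ and every attracting or indifferent periodic orbit of $f$ not contained in $K$. Such orbits are finite in number after extending $f$ to a full $C^2$ non-flat multimodal map via Lemma~\ref{good extension}, and any such orbit sitting arbitrarily close to $K$ would have its basin meet $K$, contradicting topological transitivity of $f|_K$. Then, for any $x\in\hI_K\setminus K$ whose full forward orbit stays in $\hI_K$, the point lies in some gap $T$ of $\hI_K\setminus K$ with endpoints $a,b$ in $K$ (or in $\partial\hI_K$). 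Applying the Darboux property to $\overline{T}=[a,b]$ shows $f(K)\cap f(\overline{T})=\{f(a),f(b)\}$, and the monotonicity of $f$ on $\overline{T}$ (from the shrinking) forces $f(T)$ to be the open interval strictly between $f(a)$ and $f(b)$, disjoint from $K$. So $f(T)$ either exits $\hI_K$ (an immediate contradiction with $f(x)\in\hI_K$) or is itself a gap of $\hI_K\setminus K$, and I may iterate to produce a sequence of gaps $T,f(T),f^2(T),\ldots$ contained in $\hI_K$.

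The no-wandering-interval theorem \cite[Ch.~IV, Th.~A]{dMvS}, applied to the multimodal extension of $f$, then forces this sequence either to be eventually periodic or to be attracted to a periodic orbit. A non-$K$ attracting or indifferent orbit has been excluded by the initial shrinking. The main obstacle, which I expect to be the technical heart of the proof, is the remaining case of a $K$-periodic point $p$ that is one-sided indifferent and attracts the gap $f^n(T)$ from its gap side. Since indifferent $K$-orbits are finite in number (Remark~\ref{finite indifferent}), I would resolve this by a final shrinking: split the component ${\bf U}^j$ containing each such $p$ by excising a small interval lying inside the problematic gap between $p$ and the nearest $K$-point $q$ on its attracting side, producing two new components, one containing $p$ and one containing $q$. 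After this refinement the convex-hull component $\hI_K^{j'}$ containing $p$ no longer contains the attracting-side gap, and the attracted points lie outside $\hI_K$ altogether. With this final choice every $x\in\hI_K\setminus K$ eventually escapes $\hI_K$, establishing the maximality of $K$ in $\hI_K$ and completing the construction of the reduced multimodal quadruple.
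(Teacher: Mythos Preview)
Your Part~2 is correct and coincides with the paper's argument: any $\hI$-preimage of a point of $K$ has its forward orbit in $\hI$, hence lies in $K$ by maximality.

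For Part~1 there is a genuine gap: your finiteness claims are circular. Both Lemma~\ref{good extension} and Remark~\ref{finite indifferent} are stated for $(f,K)\in\sA$, i.e.\ they already assume $K$ is the maximal repeller in some $\hI_K$, which is exactly what you are trying to prove. You therefore cannot invoke them to bound the number of splittings needed. The paper avoids this by proving finiteness of the obstructing gaps from scratch inside the proof: first, \cite[Ch.~IV, Theorem~B]{dMvS} bounds the minimal periods of periodic small components; then, for a fixed period~$m$, infinitely many such components would accumulate on a periodic $p\in K$ of period~$m$, and the convex hulls of consecutive boundary points give disjoint $f^m$-periodic $K$-intervals, contradicting topological transitivity.

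Your case analysis after the no-wandering-interval dichotomy is also incomplete. You state the dichotomy (eventually periodic versus attracted to a periodic orbit) but then treat only the second branch. The first branch---a periodic gap $T$ with $f^m(T)=T$---is in fact the principal case and is exactly where the ``periodic small components'' of the paper arise; you are tacitly relying on such a gap containing, or being bounded by, a non-repelling periodic point so that one of your shrinkings has already split it, but this needs to be argued. The paper handles the branches the other way around: it shows that ``attracted to an indifferent $p\in K$ but not eventually periodic'' is impossible (a $K$-neighbourhood of the gap's boundary point would be attracted to $O(p)$, contradicting transitivity), so the only surviving case is the periodic small components, which are then simply excised from $\hK$. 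Your splitting of ${\bf U}$ achieves the same excision, so once the finiteness is proved correctly the two approaches coincide.
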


%We can call the triple $(f, K, {\bf{U}})$ in the assertion of Proposition~\ref{reduce}, i.e. such that $K$ is a maximal repeller in $\hK_{{\bf{U}}}$, a \emph{reduced multimodal triple}. As mentioned already in Definition~\ref{multimodal} it  defines already $\hI_K$.

\begin{proof}%[Proof of Proposition \ref{reduce}]
For each $j:1\le j \le m(U)$ denote by $\hK^j$ the convex hull of $U^j\cap K$ and
$\hK=\hK_{U}:=\bigcup_{j=1,...,m(U)} \hK^j$.

Let $V$ be a bounded connected component of $\hK^{j_0}\setminus K$ for an integer $j_0$.
Since all the
critical points of  $f$  are contained in  $K$,   the map  $f$  maps  $V$  diffeomorphically to  $f(V)$.  Then Darboux property with  $T$  equal to the closure of  $V$  implies that
%for every connected component  V  of  \hK_f \ K,
either  $f(V)$  is disjoint from  $\hK$  or  $f(V)$  is a bounded
connected component of  $\hK^{j_1} \setminus K$ for an integer $j_1$.
We say a bounded connected component of  $\hK^j \setminus K$  is \emph{small} if for every integer  $n \ge 1$  the map
$f^n$  is defined on  $V$  and if  $f^n(V)$  is contained in  $\hK$. A small component is \emph{periodic} if
there is an integer  $k \ge 1$  such that  $f^k(V) = V$.

Extend $f|_{\hK}$ to a $C^2$ multimodal map $g:I\to I$ of a closed interval containing $\hK$ in its interior, with
all critical points non-flat. Then by \cite[Ch. IV, Theorem A]{dMvS} there is no wandering interval for $g$, in particular an interval $V$ as above that is small is either  eventually periodic or is uniformly attracted to a periodic orbit $O(p)$.
Notice that in the latter case all $f^n(V), n=0,1,...$ are pairwise disjoint since otherwise a boundary point of one of them, hence a point in $K$ would belong to another $f^n(V)$ that contradicts the disjointness of all $f^n(V)$ from $K$.

The latter case however cannot happen. Indeed, $O(p)$ is in $K$ by compactness of $K$. However attracting periodic orbits cannot be in $K$, as $f|_K$ is topologically transitive and $K$ has no isolated points
%is infinite,
compare arguments after Corollary~\ref{no periodic}%in Definition~\ref{periodic}.
If $O(p)$ is indifferent and $V$ is not eventually periodic then for $x\in K$ being a boundary point of $V$ we have $f^n(x)\to O(p)$ and
moreover $f^n(W)\to O(p)$ for $W$ a neighbourhood of $x$.
%Indeed $f^n(x)\in (f^{n-m}(x),f^{n+m}(x)$ for $n$ large enough and
Indeed denoting by $m$ a period of $p$ we can assume that $f^{m}$ is strictly monotone in a neighbourhood $D$ of $p$ since
$\Crit(f)$ is finite, and all the points $f^n(x)$ belong to $D$ for $n$ large enough. Then by the monotonicity the intervals between them are also attracted to $O(p)$. Hence $f^n(W)$ converge to $O(p)$. (In particular $f^n(W)$ is contained in  $B_0(O(p))$ and $O(p)$ is attracting or attracting from one side.)
This again contradicts topological transitivity, as there is no return of $W$ to a neighbourhood of $x$.

%, again contradicting topological transitivity.

 Suppose now that $V$ is a periodic small component.  By \cite[Ch. IV,Theorem B]{dMvS} all such components have minimal periods bounded from above by a constant. Notice that in each period $m$ there can be only a finite number of them.  Otherwise
in a limit of such components we have a periodic point
$p$ of minimal period $m$ in a limit of such components $V_n$, so for $p_n\in\partial V_n$ we have $p=\lim p_n$. All $p_n$ for $n$ large enough belong to an interval where $f^m$ is defined and has no critical points.
 Consider all $V_n$ on the same side of $p$. Then the orbits of the interiors $H_n$ of the convex hulls of $p_{2n}, p_{2(n+1)}$ are pairwise disjoint and each $H_n$ intersects $K$, at $p_{2n+1}$.
 This contradicts topological transitivity of $f|_K$.

Now we end the proof
by removing from $\hK$ the family of all small periodic components. Due to the just proved finiteness of this family we obtain the decomposition of this new set, denoted by $\hI_K$, into a finite family of closed
%intervals as follows. Let $F$  be a finite set that intersects each periodic bounded small connected component $V$  of  $\hK^j  \setminus K)$ for each $j=1,...,m(U)$.
%Then  the restriction of  $f$  to  ${\bf{U}}:=U \setminus F$  has no small component.
% of  \hK_f \ K that is periodic.
%Thus $(f, {\bf{U}}, K)$  is a reduced multimodal triple and the set $K$ is maximal in $\hK_{{\bf{U}}}$ by definition.
intervals $\hI^1,...,\hI^m$ (where $m$ can be larger than the original $m(U)$). Finally we take ${\bf{U}}$, a small neighbourhood of $\hI_K$ as in Definition~\ref{extension-1}, contained in $U$ and
consider the original $f$ on it.

%If we do not assume the topological transitivity of $f|_K$ then

This completes the proof of the part 1 of the Proposition.

The part 2 says that the maximal repeller $K(f)$ in $\hI$ satisfies Darboux property. To see this notice that
if $x\in K(f)$ i.e. its forward trajectory stays in $\hI$, then obviously the forward trajectory of its every preimage $y$ in $\hI$ stays in $\hI$, hence $y\in K(f)$, which yields Darboux property for $K(f)$.

\end{proof}

\begin{coro}\label{no periodic}
For each
triple  $(f,K,U)$ as above, $f$ being $C^2$, Darboux, topologically transitive, there exists a reduced quadruple $(f,K, \hI_K, {\bf{U}})\in \sA$ with no attracting periodic orbits in $\hI_K$ and with at most finite number
of periodic orbits that are not hyperbolic repelling,  in $K$, hence in $\hI_K$.
\end{coro}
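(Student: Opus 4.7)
My plan is to derive the corollary from Proposition~\ref{reduce} together with the dichotomy for the dynamics on the complementary intervals of $K$ established inside its proof, quoting \cite[Ch.~IV, Theorem~B]{dMvS} only to obtain finiteness. First I would apply Proposition~\ref{reduce} to produce the reduced quadruple $(f,K,\hI_K,{\bf U})$. The crucial feature retained from its proof is that every component $V$ of $\hI_K\setminus K$ is also a component of $\hK\setminus K$ (since the reduction only excises entire components from $\hK\setminus K$), and that because $\Crit(f)\subset K$, the map $f$ is a diffeomorphism on such a $V$, with $f(V)$ either disjoint from $\hK$ or another component of $\hK\setminus K$.

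Next I would rule out attracting periodic orbits in $\hI_K$. Assume $O(p)\subset\hI_K$ is attracting of period $n$, and fix $q\in O(p)$. If $q\in K$, then a small neighborhood $N$ of $q$ in $K$ is forward contracted onto $O(p)$ by iterates of $f^n$; since $K$ is infinite without isolated points (Lemma~\ref{infinite}), there are nonempty open subsets of $K$ disjoint from $O(p)$ that $N$ can never meet, contradicting topological transitivity of $f|_K$. If instead $q\in\hI_K\setminus K$, let $V$ be the component of $\hI_K\setminus K$ containing $q$. Then $q=f^n(q)\in f^n(V)\cap\hK$, and the dichotomy above forces $f^n(V)$ to be a component of $\hK\setminus K$ containing $q$; by uniqueness $f^n(V)=V$. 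Pulling back through the cycle shows $f^k(V)\subset\hK$ for every $k\ge 0$, so $V$ is small periodic. But every small periodic component was removed in the passage from $\hK$ to $\hI_K$, contradicting $V\subset\hI_K$.

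The identical argument establishes that any indifferent periodic orbit in $\hI_K$ is automatically contained in $K$: a periodic point $q$ in a component $V$ of $\hI_K\setminus K$ forces $V$ to be small periodic, hence removed. To obtain finiteness of the indifferent orbits, I would extend $f$ to a $C^2$ multimodal map $g:I\to I$ on a compact interval $I\supset{\bf U}$, preserving non-flatness of the critical points, and invoke \cite[Ch.~IV, Theorem~B]{dMvS}, which bounds the total number of non-hyperbolic periodic orbits of $g$ in terms of $\#\Crit(g)$. Since attracting orbits have just been excluded and each indifferent periodic orbit of $f$ in $\hI_K\subset I$ is a non-hyperbolic periodic orbit of $g$, their finiteness follows.

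The principal obstacle is precisely the case $q\in\hI_K\setminus K$: one must verify that the containing component is itself $f^n$-periodic, not merely preperiodic, so that it was in fact removed by the reduction. The connectedness of $f^n(V)$, together with uniqueness of the component of $\hK\setminus K$ through $q$, gives this cleanly and is the pivot of the proof; everything else is bookkeeping against Proposition~\ref{reduce} and the cited theorem of de Melo--van Strien.
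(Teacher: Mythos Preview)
Your proof is correct and follows essentially the same route as the paper. The paper presents Corollary~\ref{no periodic} as an immediate consequence of the proof of Proposition~\ref{reduce} (``In particular we have proved''), and you have spelled out precisely the implicit argument: topological transitivity rules out attracting orbits in~$K$, the removal of small periodic components in the reduction rules out any periodic orbit in $\hI_K\setminus K$, and \cite[Ch.~IV, Theorem~B]{dMvS} gives the finiteness of indifferent orbits. One minor expository point: to conclude $f^n(V)=V$ you already need, by iterating the dichotomy, that each $f^j(V)$ for $0\le j<n$ is a component of $\hK\setminus K$ (using $f^j(q)\in\hI_K\subset\hK$), so the subsequent phrase ``pulling back through the cycle shows $f^k(V)\subset\hK$'' is redundant rather than an additional step.
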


Notice that for this finiteness in $\hI_K$ (or $K$) we do not need to assume BD (bounded distortion).

\begin{proof} The existence of $(f,K, \hI_K, {\bf{U}})\in \sA$ with no attracting periodic orbits in $\hI_K$ was proved in Proposition~\ref{reduce}. To prove the finiteness of the set of periodic orbits in $K$  that are not hyperbolic repelling suppose that there exists a strictly monotone sequence of points $p_n\in K$
 such that $ p_n\to p$ and all $p_n$, all have the same minimal period $m$ and  all lie
in an open interval $T$ with one end at $p$ and $f^m$ is strictly monotone on $T$. Then for
$n$ large
%$T$ short
enough
the set $[p_{n},p_{n+2})\cap K$ is non-empty as containing $p_{n+1}$
and  its forward trajectory by $f$  does not contain $p$. This contradicts topological transitivity of $f|_K$.

Finally use the fact  that the set of possible periods $m$ of non-hyperbolic  periodic orbits is finite, see
\cite[Ch. IV, Theorem B]{dMvS}.
Compare Proof of Proposition A2.

\end{proof}

This Proof explains a part of Remark~\ref{finite indifferent} stated in Introduction. Let us explain
the remaining part of Remark~\ref{finite indifferent} concerning one-sided repelling or attracting points (it partially repeats considerations in the proof od Proposition~\ref{reduce} and uses only the topological transitivity):

\smallskip

If $p$ is a one-sided attracting periodic point of period $m$ or a limit of periodic points $p_n$ of period $m$ %the same as $p$ or double period
lying on one side of $p$, say in $T=(p,p+\e)$,  then this side is disjoint from $K$. Indeed. In the one-sided attracting case, attracting from $T$, if $x\in T$ consider
$r>0$ small enough that for $B=B(x,r)$ all $f^n(B)$ are pairwise disjoint (attracted to $O(p)$). So $x\notin K$ by the topological transitivity of $f|_K$. In the case where $p=\lim p_n$ for $p_n\in T$, if there exist $x_k\in T\cap K$ such that
$x_k\to p$, then there exist $n(k_j)$ for a sequence $k_j\to\infty$ such that
$x_{k_j}\in S_j:=(p_{n(k_j)}, p_{n(k_j)+2})$ and all $S_j$ are pairwise disjoint. This contradicts the topological transitivity of $f|_K$ by arguments similar to ones used in the proof of Corollary~\ref{no periodic} .

The point $p$ cannot be isolated  in $K$, see Lemma~\ref{infinite}. Therefore $p$ is repelling on one side and simultaneously  accumulated by $K$ from there.

%If one assumes from the beginning the maximality of $K$ then by the topological transitivity of $f$ on $K$ there are no attracting periodic orbits in $K$, hence in $\hI_K$.

%If $O(p)$ is a non-hyperbolic periodic orbit in $\hI_K$, then similarly, by the maximality of $K$,  this orbit is in $K$.

%One proves there is at most finite number of such orbits by repeating arguments of Proof of Corollary \ref{no periodic} above, using \cite[Ch. IV, Theorem B]{dMvS}. See also Proposition~\ref{finiteness} below.

%Finally, among periodic orbits of a given period, those that are repelling are isolated. So, \cite[Ch. IV, Theorem B]{dMvS} implies that there are at most a finite number of repelling periodic orbits in $K$ that are not hyperbolic repelling.

\

Finally we prove the following important

\begin{lemm}\label{good extension}
1. Given $(f,K,\hI_K)\in\sA^r$ for $r\ge 2$ there exists an extension of $f|_{\hI_K}$ to a $C^r$-mapping $g:I\to I$ for a closed interval $I$ containing
a neighbourhood of $\hI_K$,
such that all periodic orbits for $g$ in $I\setminus K$ are hyperbolic repelling.

2. If no periodic point in $\partial \hI_K$  is  accumulated (from outside $\hI_K$) by a sequence of  periodic  points of the same period that are not hyperbolic repelling, then there exists $g$ as above such that $g=f$ on a neighbourhood of $\hI_K$.
\end{lemm}

\begin{proof}
We already know that there are no periodic orbits in $\hI_K\setminus K$.
%are in $K$ are hyperbolic repelling.
Extend $f$ from a neighbourhood of $\hI_K$ to $g:I\to I$ of class  $C^r$, so that the extension has only non-flat critical points and $g(\partial I)\subset \partial I$.
Using \cite[Ch. IV, Theorem B]{dMvS} we can correct $g$ outside of $\hI_K$, so that there is only finite number of  periodic orbits that are not hyperbolic repelling, compare Proof of
Proposition~\ref{reduce} above. If the condition 2. is satisfied then it is sufficient to correct $g$ only outside a neighbourhood of $\hI_K$.

By Kupka-Smale Theorem we can now smoothly perturb $g$ outside a  neighbourhood of $\hI_K$, while keeping
$g(\partial I)\subset \partial I$ and making $\partial I$ repelling, so that outside $K$ all periodic orbits are hyperbolic. % and there are no indifferent periodic orbits.
%We can achieve also the situation that no attracting periodic orbit contains a critical point.

Finally, let $p$ be a hyperbolic attracting periodic point of $g$
that is not in $K$, and denote by
$m$ its period. Its orbit
$O(p)$ is not contained in $\hat{I}_K$, so we can assume
$p$ is not in $\hat{I}_K$.

%Finally let $B_0(O(p))$ be the immediate basin of attraction to the periodic orbit $O(p)$ for an attracting periodic point $p\in I\setminus \hI_K$ %%%%%%%%%!!!!!!!! for $g$ of period $m$.

For each $j=0,1,...,m-1$ denote
by $B(j)$ the component of $B_0(O(p))$ the immediate basin of attraction of $O(p)$ for $g$,
containing $g^j(p)$.
 Notice that no $B(j)$ intersects $K$. Indeed, if $x$ is in the intersection then
 $f^n(x)\in K$ for $n=0,1,...$ .
 %$x\in \partial \hI_K\subset K$.
 Hence $f^n(x)=g^n(x)$ accumulates at $p\notin \hI_K$ which contradicts $f^n(x)\in K\subset \hI_K$.
 %Moreover $B(j)\cap K=\emptyset$ by the same reason.
 We conclude that
  for $j$ such that $g^j(p)\in \hI_K$, the set $B(j)$  cannot contain a critical point, as we assumed that all $f$-critical points in $\hI_K$ are in $K$. Hence the map $g=f$ on $B(j)\subset \hI_K$, maps $B(j)$ diffeomorphically onto $B(j+1)$ (where we identify $m$ with 0).

Notice that for each $j=0,1....m-1$ we have $g(B(j))\subset B(j+1)$  and  $g(\partial B(j))\subset \partial B(j+1)$.  Hence
$g^m(\partial B(j))\subset \partial B(j)$. The boundary $\partial B(j)$ is repelling for $g^m|_{B(j)}$ because iterates of this map pointwise converge to $g^j(p)$.
So for all $j=0,1,...,m-1$ we can choose  closed intervals $V_j\subset B(j)$, such that
\begin{equation}\label{telescope-j}
g(V_j)\subset \interior V_{j+1}.
\end{equation}

\smallskip

Now we can smoothly modify $g$ on a neighbourhood of each $V_i$ in $B(i)$, for all $i$ such that
$B(i)\subset I\setminus \hI_K$, so that the new $g$ is a diffeomorphism (even affine non-constant) on each $V_i$ onto its image,
\begin{equation}\label{telescope}
g(V_i)\subset \interior V_{i+1},
\end{equation}  and
still for each $x\in B(0)$ there is $n$ such that $g^{nm}(x)\in V_0$.

%By a slight adequate  decreasing of  all $V_j$ we can guarantee \eqref{telescope} for all $j=0,1,...,m-1$.

Finally we can smoothly modify $g|_{V_0}:V_0\to V_1$, using the fact $g^{m-1}$ is already a diffeomorphism on
$V_1$ to its image, so that the new $g^m$ is a smooth bimodal map from $V_1$ into $V_1$, so that
all $g^m$-periodic points in $V_1$ are hyperbolic repelling. Hence all $g$-periodic points in $B_0(O(p))$) are hyperbolic repelling.

\smallskip

%let  $h_1= \xi \cdot G^{-1}$ on $G(V_1)$, where $\xi=\frac{G(V_1)|}{|V_1|}$ glued to identity outside $V_1$. Multiplication by $\xi$ means here the rescaling $x\mapsto \xi(x-x_0)+x_0$, where $x_0$ is the middle point of $V_1$. Then $h_1\circ G$ is affine. Let $h_2$

For example, let $\widehat g:=h \circ (g^{m-1}|_{V_1})^{-1}$, where
$h:V_1\to V_1$ is, after an adequate rescaling (and translation) of coordinates,
mapping $[-1,1]$ onto itself defined by $x\mapsto ax(x^2-1)$ where $a=\frac1{3^{-1/2}-3^{-3/2}}$ is such that the critical values are $-1$ and 1, next mapped to the repelling fixed point 0.
%To achieve \refeq{telescope} we slightly increase $V_1$
We get then $\widehat g \circ g^{m-1}= h$ on $V_1$.

We can easily extend $\widehat g$ from $V'_0:=g^{m-1}(V_1)$ to $B(0)$ first by putting the old $g$ outside $V_0$ and next smoothly extending it to $V_0\setminus V'_0$
so that for the perturbed map
$\widehat{g}$ we have $\widehat{g}(V_0)
\subset V_1$.
We can maintain the range of this extension, considered as a new $g$, to be in $V_1$ due to
$h(\partial V_1)\in\interior V_1$. (Notice that in this construction we do not care about the strict inclusion in \eqref{telescope} for $i=0$, because we do not need it, due to our choice of the bimodal map $h$.)  %Therefore still for each $x\in B_0(O(p))$

%%%%%%%%%%%%%%%

\smallskip

 We do a similar modification in the immediate basin $B_0(O(p))$ for every attracting periodic orbit $O(p)$ and
 conclude with $g$ with all periodic orbits outside $K$ hyperbolic repelling.

 \smallskip

 %whose all (two or three) monotonicity branches end at $\partial B(i+1)$.  Then for an appropriate choice of $g$ all periodic orbits in $B_0(O(p))$ are hyperbolic repelling (the map $g^m$ becomes Chebyshev on $B(i)$ in appropriate coordinates).

%If a $g$-periodic orbit $O(p)$ is one-sided attracting (and is in $K$, as we already got rid of all other indifferent periodic orbits) its basin contains no periodic orbit, so there is nothing to consider.

%we make it hyperbolic attracting by a saddle-node bifurcation thus reducing the construction to the previous case.

\end{proof}

Here is the proof of another fact (a standard one, put here for completeness) mentioned in Introduction, Remark~\ref{more periodic}:
\begin{prop}\label{finiteness}
For every $(f,K,\bf{U})\in \sA^{\BD}$, the set of periodic points in $\bf{U}$ that are not hyperbolic repelling
%$A(f)\cup \Indiff(f)$
is finite,
 if we count an interval of periodic points as one point. Such a non-degenerate  interval cannot intersect $K$.
\end{prop}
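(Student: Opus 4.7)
The plan is to split the proposition into the non-intersection statement (which I settle by a topological transitivity argument) and the finiteness statement (which I reduce to the classical basin-capture principle for one-dimensional dynamics with bounded distortion).

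First I will show that a non-degenerate interval $J \subset {\bf{U}}$ of fixed points of some iterate $f^m$ is disjoint from $K$. Suppose otherwise; since $K$ has no isolated points (Lemma~\ref{infinite}) and $J$ is non-degenerate, $J \cap K$ contains a relatively open arc of $K$. Put $\mathcal{J} := \bigcup_{i=0}^{m-1} f^i(J)$: this is a finite union of closed intervals on each of which $f^m$ acts as the identity, so $\mathcal{J}$ is closed, forward $f$-invariant, and every point of $\mathcal{J} \cap K$ is periodic of period dividing $m$. If some non-empty open $V \subset K$ were disjoint from $\mathcal{J}$, topological transitivity of $f|_K$ applied to $U := \interior_K(J \cap K)$ and $V$ would yield an $n$ with $f^n(U) \cap V \neq \emptyset$, contradicting $f^n(U) \subset \mathcal{J}$. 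Hence $K \subset \mathcal{J}$, so $f^m \equiv \id$ on the infinite set $K$; then every forward orbit in $K$ has at most $m$ points, contradicting the existence of a dense orbit provided by transitivity.

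For the finiteness claim I reduce to orbits inside $\hI_K$, and in fact inside $K$. By Lemma~\ref{good extension}, modify $f$ on ${\bf{U}} \setminus \hI_K$ to a map whose periodic orbits there are all hyperbolic repelling; this leaves unchanged every periodic orbit of $f$ lying in $\hI_K$. A non-degenerate interval $J$ of periodic points of $f$ must have some forward iterate $f^i(J)$ meeting ${\bf{U}} \setminus \hI_K$: otherwise $f^i(J) \subset \hI_K$ for all $i$, so $J \subset K$ by the maximality of $K$ in $\hI_K$, contradicting the first step. After the modification no such intervals survive in ${\bf{U}} \setminus \hI_K$, and similarly no non-hyperbolic-repelling isolated periodic orbits survive outside $\hI_K$. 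The task is thus reduced to bounding the number of non-hyperbolic-repelling periodic orbits inside $\hI_K$, which by maximality all lie in $K$; under $C^2$ with BD, Remark~\ref{more periodic} further restricts these to be attracting or indifferent (one-side attracting, in view of the topological transitivity of $f|_K$ which forbids two-sided attracting orbits in $K$).

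For each such $O(p)$, the classical basin-capture principle (cf.\ \cite[Ch.~IV, Thm~B]{dMvS}) guarantees that the immediate basin $B_0(O(p))$---or the one-sided variant $B_0(p)$ in the one-sided attracting case---contains either a point of $\Crit(f)$ or a point of $\partial {\bf{U}}$: BD lets us iterate inverse branches of $f^m$ based at $p$ on a one-sided neighborhood, and if no critical value and no boundary point were ever captured, the ensuing distortion bound would preclude the asymptotic shrinking towards $O(p)$ that defines attraction. Distinct orbits have disjoint immediate basin components (one-sided at $p$), so the count is bounded by $\#\Crit(f) + \#\partial{\bf{U}}$, which is finite. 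The main obstacle is the precise invocation of basin-capture in our non-invariant ${\bf{U}}$ setting; this is handled by working with the invariant-interval multimodal extension $g : I \to I$ produced by Lemma~\ref{good extension}, for which the classical statement applies directly, and noting that the immediate basin of any orbit $O(p) \subset K$ is the same for $g$ and for $f$ since the modification was carried out away from $K$.
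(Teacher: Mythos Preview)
Your argument has two genuine gaps.

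\textbf{Gap in the finiteness step.} The reduction via Lemma~\ref{good extension} does not do what you need. Lemma~\ref{good extension} modifies $f$ on ${\bf U}\setminus\hI_K$, so periodic orbits of the \emph{original} $f$ that pass through ${\bf U}\setminus\hI_K$ are in general destroyed (or replaced by others) under the modification. You correctly observe that orbits entirely inside $\hI_K$---hence inside $K$ by maximality---survive unchanged, and you bound those via basin-capture for the extension $g$. But the proposition is about periodic points of $f$ in all of ${\bf U}$, and your argument says nothing about those $f$-orbits meeting ${\bf U}\setminus\hI_K$. The sentence ``after the modification no such intervals survive in ${\bf U}\setminus\hI_K$'' is about the modified map, not about $f$; it does not transfer back. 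The paper's proof avoids this by working directly in ${\bf U}$ with BD: for any attracting or one-side attracting $p\in{\bf U}$ one shows that iterating the local inverse branch $g=f^{-m(p)}$ on a one-sided neighbourhood of $p$ would produce unbounded distortion unless $B_0(O(p))$ captures a critical point or touches $\partial{\bf U}$; this needs no modification and no reduction to $K$. The paper then handles the remaining non-hyperbolic cases separately (repelling with multiplier $1$ via a direct BD contradiction; indifferent but neither side attracting via accumulation by lower-period non-repellers, contradicting the already-proved finiteness).

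\textbf{Gap in the non-intersection step.} Your transitivity argument assumes that $J\cap K$ has non-empty interior in $K$. This fails precisely when $K$ meets $J$ only at $\partial J$: then $J\cap K$ is one or two points, $\interior_K(J\cap K)=\emptyset$, and there is no open $U\subset K$ to launch the transitivity argument from. The paper handles this boundary case separately, \emph{after} having proved finiteness: if $x\in\partial J\cap K$ with $(\interior J)\cap K=\emptyset$, then on the side $T'$ of $x$ away from $J$ the map $f^m$ either attracts (forcing $x$ isolated in $K$, impossible), repels (contradicting BD as in the non-hyperbolic repelling case), or $x$ is accumulated by non-repelling periodic points from that side---which contradicts the finiteness just established. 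Note the order is reversed from yours: the paper proves finiteness first and uses it to close the non-intersection argument, whereas you try to prove non-intersection first and use it in the finiteness reduction.
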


\begin{proof}
By BD, for each periodic point $p\in \bf{U}$ attracting or one-sided attracting,  the immediate basin of attraction $B_0(O(p))$
contains a critical point or its boundary contains a point belonging to $\partial \bf{U}$. Otherwise for $g$ being the branch of $f^{-m}$ such that $g(p)=p$ and $m$   a period of $p$, the ratio $|(g^n)'(x)/(g^n)'(p)|$ would be arbitrarily large for $n$ large and appropriate $x$ such that the branch $g^n$ exists on $B(p,2|x-p|)$. So, by the finiteness of the sets $\Crit(f)$ and $\partial{\bf{U}}$, there is only finite number of (one-sided) attracting periodic points $p\in{\bf {U}}$.

If $O(p)$ is repelling with $|(f^{m})'(p)|=1$, then BD fails for backward branches of iterates of $g$ by
$|(g^n)'(x)/(g^n)'(p)|\to 0$. So repelling but not hyperbolic repelling cannot happen at all.

The case $O(p)$ is one-sided repelling can happen. Then $O(p)$ is attracting from the other side. Otherwise
$p$  would be accumulated by $f^m$-fixed points $p_j$ on that side hence $g^n$ would be well-defined on $B(p,|p_j-p|)$ and again $|(g^n)'(x)/(g^n)'(p)|\to 0$ for $x=2p-p_j$  symmetric to $p_j$ with respect to $p$.
So, as we proved already,  there is at most finite number of such orbits.

\smallskip

Suppose now that $p$ is an indifferent periodic point accumulated by periodic points of the same or doubled period.
If there existed attracting or one-sided
attracting periodic points, all on the same side of $p$ (with the same or doubled period) arbitrarily close to $p$, this would
contradict the fact already proven that there is only a finite number of them. So this "accumulation" case cannot happen.

\smallskip

The only remaining case is a periodic point $p$ which belongs to a non-degenerate closed interval $T$ of periodic points. %Clearly they have the same period.

 Suppose the interior of $T$  contains $x\in K$.
%Then $x\in \partial T$ since otherwise
%Then $x$ it has a neighbourhood $W\subset K$ on which $f^m$ is identity for a period $m$.
Then by topological transitivity $x$ is isolated in $K$, which is not possible by Lemma~\ref{infinite}. Compare Corollary~\ref{no periodic}.

 So suppose $x\in \partial T\cap K$. The previous argument shows that $(\interior T)\cap K =\emptyset$. Then   $x$ must be a limit of %non-repelling
 periodic points of period $m$ from the side $T'$ different from $T$ since otherwise it either attracts from the side of $T'$ so it is isolated in $K$ which contradicts topological transitivity, or it repels on that side but then it attracts from the side $T$ (by bounded distortion), a contradiction. However the  accumulation by $f^m$-fixed points from $T'$ is not  possible by arguments as in Proof of Corollary~\ref{no periodic}.
 %that contradicts BD (similarly to the exclusion of the non-hyperbolic repelling case above). But this contradicts the finiteness of the set of (one-sided) attracting periodic points which we have already proved.

We conclude that $T\cap K=\emptyset$. The number of non-degenerate intervals $T$ of periodic points must be finite, since their ends must be (one-sided) attracting or in $\partial {\bf{U}}$ and we have already proven that their number is finite.

%In fact critical points cannot belong to $B_0(O(p)))$ for $p\in \partial T$ since they by assumption belong to $K$. So $\partial B_0(p)\setminus\{p\}$ consists of points belonging to $\bigcup_{j=0,...,m-1}f^{-j}(\partial {\bf{U}})$.

\end{proof}

Due to this Proposition  we do not need to care about intervals of periodic points since we can just get rid of them by shrinking $\bf{U}$, compare Remark~\ref{more periodic}.

\

\begin{rema}\label{factor}
Notice that Darboux property for any triple $(f,K,U)$ as in Definition~\ref{Darboux} allows to consider a canonical factor of $f|_{\hK}$, where $\hK=\bigcup_{j=1,...,m} \hK^j$ is the union of the convex hulls $\hK^j$ of $U^j\cap K$ in the notation from the beginning of Proof of Proposition~\ref{reduce}.

  Namely, one contracts to points all the closures $Q^j, j=1,...,m-1$ of the bounded connected components of $\R\setminus \hK$
 and components of their $(f|_{\hK})^n$-preimages. The resulting map $g$ is a piecewise strictly monotone, piecewise continuous  map of interval $g:I\to I$. More precisely, for the points that arise from contracted intervals $Q^j$, the turning critical points, and the most left and most right end points of $\hI_K$ denoted according the order in $\R$ by $a_0<a_1<...<a_{m'}$, where $m'\ge m$, the mapping $g$ is strictly monotone and continuous on each
$(a_{i-1},a_i)$ for all $i=1,...,m'$. It can have discontinuities at the points that arise from the contracted intervals $Q^j$.

%As $(f,K,\hK)$ is already reduced, then
There is no interval whose each $g^k$-image belongs entirely to some $(a_{i_k-1},a_{i_k })$;  %({\rm{mod}}\, m)})$.
such interval maps are called regular, see e.g. \cite{HU}. The lack of wandering intervals is inherited from $f$.

Of course we loose the smoothness of the original $f$ so
this factor is useful only to study topological dynamics of $f$.

All this applies in particular to $(f,K)\in\sA$, where $f|_K$ is topologically transitive.
\end{rema}

%Of course if we assume that $K$ is weakly isolated, we can find ${\bf{U}}$ such that there are no periodic orbits in ${\bf{U}}\setminus K$.

%Let us now formulate and prove the fact promised in Introduction that allows in our theorems to assume only \emph{topological transitivity} and to use in proofs formally stronger \emph{weak exactness}.

%\begin{lemm}\label{ ( exact}

%For every $(f,K)\in \sA_+$ the mapping $f|_K$ is weakly topologically exact.

%\end{lemm}

\smallskip

\subsection{On topological transitivity and related notions}

\

\

Now we prove the fact promised in Introduction, Remark~\ref{wexact1}, that allows in our theorems to assume only \emph{topological transitivity} and to use in proofs formally stronger \emph{weak exactness}.

\begin{lemm}\label{trans implies exact}

For every $(f,K)\in \sA_+$ the mapping $f|_K$ is weakly topologically exact.

\end{lemm}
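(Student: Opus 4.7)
The plan is to follow the strategy sketched by Misiurewicz and Raith: reduce the statement to a classical result for piecewise monotone continuous interval maps, where topological transitivity together with positive entropy is known to imply weak topological exactness with a bounded $N$. The reduction will be carried out by constructing a continuous factor $\pi\colon\hI_K\to J$ onto a finite disjoint union of closed intervals, conjugating $f$ with a piecewise monotone map $g\colon J\to J$, applying the classical result to $g$, and then lifting back.

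For the factor construction, the crucial point is that the Darboux property (Proposition~\ref{reduce}, part 2) together with the maximality of $K$ forces $f$ to permute the closed gaps of $K$ in $\hI_K$. Indeed, on each closed gap $[a,b]\subset\hI_K$ of $K$ (with $a,b\in K$ and $(a,b)\cap K=\emptyset$) the restriction $f|_{[a,b]}$ is strictly monotone, because $\Crit(f)\subset K$ leaves $(a,b)$ critical-point free; and the image $[\min\{f(a),f(b)\},\max\{f(a),f(b)\}]$ cannot meet $K$ in its interior, since an interior point in $K$ would have a preimage in $(a,b)$ with forward orbit in $\hI_K$, contradicting maximality of $K$. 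Collapsing each closed gap to a single point therefore yields a continuous quotient $\pi\colon\hI_K\to J$ onto a finite disjoint union of closed intervals, and $f$ descends to a continuous piecewise monotone map $g\colon J\to J$ with $g\circ\pi=\pi\circ f$. The map $g$ is topologically transitive as a continuous factor of the transitive $f|_K$ (using $\pi(K)=J$), and $h_{\ttop}(g)=h_{\ttop}(f|_K)>0$ because $\pi|_K$ has fibres of size at most $2$.

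Step 2 is to apply the classical result to $g$. For a piecewise monotone continuous map of a finite disjoint union of closed intervals which is topologically transitive and has positive topological entropy, the Blokh/Hofbauer spectral decomposition (see \cite{Hof} and \cite[Appendix B]{Buzzi}) gives a splitting $J=J_0\cup J_1$ into closed subsets intersecting in at most one point, cyclically permuted by $g$ (with $J_1$ possibly empty), such that the relevant iterate of $g$ is topologically exact on each $J_i$. In particular $g$ is weakly topologically exact with $N=1$: for every nonempty open $U\subset J$ there is $n(U)$ with $g^{n(U)}(U)\cup g^{n(U)+1}(U)=J$. Given a nonempty open $V\subset K$, the absence of isolated points (Lemma~\ref{infinite}) allows me to pick $(a,b)\subset\hI_K$ with $a,b\in K$ and $(a,b)\cap K\subset V$ nonempty; then $\pi(V)$ contains the open sub-interval $\pi((a,b))$, so the weak exactness of $g$ yields $n$ with $\pi\bigl(f^n(V)\cup f^{n+1}(V)\bigr)=J$.

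The main obstacle will be the final passage from this $\pi$-surjectivity back to full surjectivity onto $K$: the map $\pi|_K$ is $2$-to-$1$ exactly on the countable set of gap-endpoint pairs, and the lifted union may a priori miss one point of each such pair. To handle this I would combine two ingredients. First, the analysis in the factor construction shows that $f$ sends gap-endpoint pairs bijectively to gap-endpoint pairs, so whenever $V$ contains both endpoints $v_1,v_2$ of a gap, the images $\{f^k(v_1),f^k(v_2)\}$ automatically sweep out both members of the target gap pair. Second, when $V$ contains only one endpoint of some relevant gap, I would exploit strong transitivity (Proposition~\ref{dp}) together with the Markov extension machinery of \cite{Hof}, which gives a countable-state topological Markov chain over $f|_K$ in which each gap-endpoint pair corresponds to two distinct states, both of which are visited by the transitive orbit; this allows one to absorb the gap-endpoint defect into $N$ at the cost of adding a bounded constant coming from the cyclic period of the spectral decomposition. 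This last lifting step is by far the most delicate point of the proof and is the place where positive topological entropy is essentially invoked, to guarantee that the Markov extension is rich enough to separate the two sides of every gap pair.
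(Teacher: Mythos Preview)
Your factor construction (collapsing the gaps of $K$ inside $\hI_K$) is correct and yields a continuous piecewise monotone $g\colon J\to J$; invoking a black-box weak-exactness result for $g$ is also reasonable. The genuine gap is precisely where you locate it, in the lift back to $f|_K$, and your proposed fix does not close it. In the Hofbauer tower the states are monotonicity intervals of iterates, not individual points, so there is no evident sense in which ``each gap-endpoint pair corresponds to two distinct states''; even granting some such correspondence, you supply no mechanism turning it into a \emph{uniform} bound on $N$. Your first ingredient (that $f$ sends gap pairs to gap pairs) addresses the wrong direction: the issue is not what happens to gap endpoints contained in $V$, but whether \emph{every} gap endpoint of $K$ is hit by $\bigcup_{j\le N} f^{n+j}(V)$. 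As stated you allow the defect to be one point per gap pair, hence countable, and your argument gives no bound. (In fact, if you thicken $V$ to an open interval $\tilde V\subset\hI_K$, each $f^{n+j}(\tilde V)$ is a single interval, so $\hI_K\setminus\bigcup_j f^{n+j}(\tilde V)$ has boundedly many components, each contained in a single gap closure; this shows the defect is \emph{finite} of bounded cardinality. You do not make this observation, and even with it one still has to cover a $V$-dependent finite set with a $V$-independent number of further iterates, which is not automatic.)

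The paper's route is related but organised so that this last difficulty becomes tractable. It first proves strong transitivity directly (for any open interval $T$ meeting $K$, the forward $\hI_K$-orbit of $T$ has finitely many complementary components, hence misses only a finite subset $\breve{\sE}$ of $K$). Then, rather than lifting a black-box result, it uses a Milnor--Thurston constant-slope model (this is where $h_{\ttop}>0$ enters) to show any interval grows to definite size in bounded time, thereby reducing to a \emph{fixed finite family} of reference intervals $T^{j,\nu}$, each with its own fixed finite exceptional set. Finally a ``doubling'' argument---splitting each two-sided accumulation point of $K$ in $\breve{\sE}$ into its left and right copies and exploiting that the lifted map is open at these doubled points---shows the finitely many exceptional points are covered in uniformly bounded further time. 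Your spectral-decomposition shortcut would in principle replace the growth/reference-interval portion, but you still need an argument of the strength of this doubling step; the Markov-extension sketch does not provide it.
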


\begin{proof} (mostly due to M. Misiurewicz)

\

\underline{Step 1. Density of preimages} (strong transitivity).% (To explain the scenario. Not to be formally used in the further steps).

For $(f,K)\in \sA$ (i.e. not assuming positive entropy) we prove that for all $x\in K$
%except a finite number of $x$'s,
the set $A_\infty(x):=\bigcup_{n\ge 0} (f|_K)^{-n}(x)$ is dense in $K$.
This proves Proposition~\ref{dp}
%Compare Remark~\ref{wexact2}.

Indeed, let $T$ be an open interval intersecting $K$. Denote $\hf:=f|_{\hI_K}$. Let
$$
W:=\bigcup_{n\ge 0} \hf^n(T),
$$
while iterating we each time act with $\hf$ on the previous image intersected with $\hI_K$, the domain of $\hf$. The set $W\cap K$ is dense in $K$.
Indeed, this set coincides with $\bigcup_{n\ge 0} (f|_K)^n(T)$ since if a trajectory $x_0,...,x_n=x$ is in $\hI_K$, then by the maximality of $K$ this trajectory is in $K$,
so $\hf$ is equal to $f|_K$ on it. Hence the density of $W\cap K$ follows from the topological transitivity of $f|_K$.

The components of
$f(W)\cap  \hI_K$ are contained in the components of $W \cap\hI_K$.
There is a finite number (at most the number of the boundary points of $\hI_K$) of components of $W$ touching $\partial\hI_K$. We call them "boundary components". This number is positive and the $\hf$-forward orbit of each of them touches
or intersects  $\partial\hI_K$ (i.e. a "cut" happens) in finite time, i.e. a "boundary component" returns to a "boundary component". Otherwise there would be a wandering component, or a periodic component $W'$ with $\bigcup_{n} \hf^n(W')$ not having points of  $\partial\hI$ in its closure, which is not possible by the
 topological transitivity of $f|_K$. So there is only a finite number of components of $W$ (bounded by the sum of the times of the returns). This together with the density of $W$ proves that $W$ covers $K$ except at most a finite set, hence the density of each $A(x)$ in $K$ for $x$ not belonging to a finite set $\sE:=\hI_K\setminus W$.

 In fact $A(x)$ is dense in $K$ for all $x\in K$. %, an argument in Step 5 will apply.
Indeed. If $\hf(y)=x\in \sE$ then $y\in \sE$, since $W$ is forward invariant. Hence by the finiteness of $\sE$ the point $x$ is periodic. The orbit $O(x)$ is repelling or one-sided repelling, since on the side it is not repelling it cannot be an accumulation point of $K$, and it cannot be isolated in $K$. If $x$ has an $f$-preimage $z\in\hI\setminus O(x)$
then
$z$, as non-periodic, belongs to $W$, hence $x\in W$, a contradiction. So there exits an open
neighbourhood $U$ of $O(x)$ such that the compact set $\hf(\hI_K\setminus U)$ is disjoint from $O(x)$. Denote $U':=\hI_K\setminus \hf(\hI_K\setminus U)$.

Thus, there exists $S\subset U'$, an open interval intersecting $K$, its forward orbit leaves  $U$ after some time $m$, with the sets $f^j(S)$ disjoint from $S$ for $j<m$, and never again returns to $U'$, hence never intersect $S$,  contrary to the topological transitivity.

\

Now we prove weak exactness, provided $(f,K)\in \sA_+$, that is $h_{\ttop}(f|_K)>0$.
The proof above does not yield this, since we do not have $N$ (for interval exchange maps such $N$ does not exist).  We did not even proved that
in $K=\bigcup_{n=0}^\infty f^n(W\cap K)$ one can replace $\infty$ by a finite number.

\

\underline{Step 2. Semi-conjugacy to maps of slope $\beta$.} First replace $f|_{\hI_K}$ by  its factor $g$ as in Remark~\ref{factor}. The map $g$ is piecewise strictly monotone, piecewise continuous, with $m'$ pieces $I_j=(a_j,a_{j+1}), j=1,...,m'$.
%Denote $\bigcup I_j$ by $I$.
Denote $\bigcup_j \{a_j\}$ by $a^h$.

  If $g$ is continuous, then by Milnor-Thurston Theorem, see \cite{MT} or
  \cite[Theorem 4.6.8]{ALM}, $g$ is semi-conjugate to some $h$, piecewise continuous, with constant slope, $\log |h'|=\beta=h_{\ttop}(f|_K)>0$, via a non-decreasing continuous map. In fact this semi-conjugacy is a conjugacy via a homeomorphism since $g$ is topologically transitive, since $f|_K$ is. Milnor-Thurston Theorem holds also  for $g$ piecewise continuous, piecewise monotone, which is our case, with the proof as in \cite{ALM}, \cite{Misiurewicz}.

\

\underline{Step 3. Growth to a large size.}  As before (for $f$) we consider an open interval $T$ and act by $h$. More precisely for $T=T_0$ in one of the intervals $(a_j,a_{j+1})$ define inductively $T_{n+1}$ as a longest component of
$h(T_{n})\cap I_j$,  $j=1,...,m'$. As long as $T_n$ are short, $h(T_n)$ capture $a_j$ (i.e the "cuts" happen)
%, the end points of $I_j$,
rarely. So, due to expansion by the factor $\beta>1$ the interval $T_n$ is  larger than a constant $d$ not depending on $T$, for  $n$ large enough.
Fix $n$ such that $T_n$ contains a point in $a^h$ in its boundary. It exists since otherwise $|T_n|$ would grow to $\infty$. Denote this $n$ by $n(T)$. %This is the integer in the assertion of the Lemma.

\

\underline{Step 4. Cover except a finite set.} We go back to $f$ and denote the semiconjugacy from $f|_K$ to $h$ (via $g$) by $\pi$. We consider now $f$ piecewise strictly monotone, that is $\hI_K$ is cut additionally at turning critical points.
We keep the same notation $f$ and $\hI_K=\bigcup \hI^j$ except that now $j=1,...,m'$ rather than $m$, compare Remark~\ref{factor}. We write $\hI^j=[a^f_{j,0},a^f_{j,1}]$. It is useful now to
consider
$\breve{f}=f|_{\bigcup_j (a^f_{j,0},a^f_{j,1})} $,
i.e. restriction to the union of the open monotonicity intervals. Denote $\bigcup_{j,\nu}\{a^f_{j,\nu}\}$ by $a^f$. Finally let
$d'>0$ be such a constant that if $|x-y|<d'$ then $|\pi(x)-\pi(y)|<d$.

Consider  open intervals $T^{j,\nu}\subset\hI^j$ of length $d'$ adjacent to $a_{j,\nu}$.
%For each $T'$ of this form
Set, compare Step 1,
\begin{equation}\label{union}
 \breve{W}^{j,\nu}:=\bigcup_{n\ge 0} \breve{f}^n(T^{j,\nu})\setminus {a}^f,
\end{equation}
subtracting each time ${a}^f$ while iterating $\breve{f}$.

By Step 1, $\breve{W}^{j,\nu}$ covers $K$ except at most a finite set $\breve{\sE}\subset K$.
(It does not matter in Step 1 whether we deal with $\hf$ or $\breve{f}$, but now we prefer $\breve{f}$ to deal with open sets.)

\

\underline{Step 5. Doubling.} Now we shall prove that in the union in (\ref{union}) one can consider  in fact a finite set of $n$'s and if we do not subtract
 ${a}^f$ and act with $f$ it covers $K$.

Notice that $\breve{\sE}\supset a^f$, by the definitions. Double each point of $\breve{\sE}$ which is the both sides limit (accumulation) of $K$.
More precisely, consider the disjoint union $\hK$ of the compact sets of the form $S\cap K$,
covering $K$, where $S$'s are closed intervals with ends in $\breve{\sE}$  and pairwise disjoint interiors. Denote the projection from $\hK$ to $K$ gluing together the doubled points by $\hat{\pi}$.
Denote $\hat{\pi}^{-1}(\breve{\sE})$ by $\hat{\sE}$ and more generally $\hat{\pi}^{-1}(X)$ by $\hat{X}$ for any $X\subset K$.
Denote the lift of $f|_K$ to $\hK$ by $F$. Notice that maybe it is not uniquely defined at
points not doubled, whose $f$-images are doubled. We treat $F$ as 2-valued there.

$F$ maps $\hK$ onto $\hK$ by topological transitivity of $f|_K$, similarly to $f|_K$ mapping $K$  onto $K$ see Lemma~\ref{infinite}.
Suppose $x\in \hat{\sE}$ and $F(y)=x$. If $y\notin  \hat{\sE}$, then $y\in\widehat{W\cap K}$, where
$W:=\breve{W}^{j,\nu}$.
Then $x\in F(\widehat{W\cap K})$, moreover $x$ is in the interior of this set in $\hK$ since $f|_K$ is open in a neighbourhood of $\hat{\pi}(y)$. %due to doubling.

If $y\in \hat{\sE}$, then consider $z\in \hK$ such that $F(z)=y$. If $z\notin  \hat{\sE}$ then as above $y$ is in the interior of $F(\widehat{W\cap K})$. Then $x$ is in the interior of $F^2(\widehat{W\cap K})$, but for this we use the fact that $F$ is an open map in a neighbourhood of $y$ due to doubling at $\hat{\pi}(x)$. We continue and if we have an $F$-backward trajectory of $x$ in $\hat{\sE}$, then it is periodic and we arrive at a contradiction with topological transitivity of $f|_K$ as in Step 1.

%Step 3. Consider  open intervals $T^{j,\nu}\subset\hI_K$ of length $d$ adjacent to $a_j$. We put $\nu=0$ or 1 depending as $a_j$ is the left or right end of $T^{j,\nu}$. In order that it makes sense we assume $d\le min_j \{|I_j|\}$.
%There are $m'\le 2m$ such intervals

%Because of the ambiguity of $h(a_j)$ we shall use $\hh$ which denotes the restriction of $h$ to $I\setminus a^h$. Proceed as before with $f$ and consider for an arbitrary $T'$ of the form $T^{j,\nu}$, the set
 %$$ W^h:=\bigcup_{n= 0}^\infty \hh^n(T')\setminus a^h.$$
 %Let $W_i$ denote the components of $W^h\setminus a^h$
 %containing a point in $a^h$ in its boundary. There is a finite number (at most $2m$) of them. Denote the minimum of the lengths of $W_i$'s by $d'$. Each component $Y$ of $W^h$ is mapped by $h$ into a component of $W^h$ and is extended by factor $\beta$, except the case $h(Y)$ contains points belonging to $a^h$. In this case however each component of $h(Y)\setminus a^h$ is in a component of the form $W_i$ hence of length at least $d'$. Thus all the components of
 %$W^h$ have length at least $\min \{d,d'\}$. So the set of these components is finite,hence $W^h$ covers all $I$ except at most a finite set $\sE$.
% Also we have not proven that the time needed for this in the definition of $W^h$ is be finite.

\

Finally by the compactness of $\hK$ we can choose finite subcovers from open covers,
hence, after projecting $\hK$ back to $K$ we can write
$$
 K=\bigcup_{n=0}^{N(j,\nu)} (f|_K)^n(T^{j,\nu}\cap K).
 $$
We end the proof by setting $N=\max_{j,\nu} N(j,\nu)$.

Since $f^{n(T)}(T)$ contains an interval of the form $T^{j,\nu}$ by the definitions of $d'$ and $d$, then
$$
K=\bigcup_{i=0}^{N} (f|_K)^{n(T)+i}(T\cap K).
$$

\end{proof}

\

Therefore the following holds

\begin{prop}\label{exact implies htop}
If $(f,K)\in \sA$ then the properties: $h_{\ttop}(f|_K)>0$ (i.e. $(f,K)\in \sA_+$) is equivalent to
 weak topological transitivity of $f|_K$.
\end{prop}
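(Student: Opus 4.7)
The statement asserts the equivalence of positive topological entropy with weak topological transitivity of $f|_K$; since ``weak topological transitivity'' is not given a separate definition in the paper and the preceding Remark~\ref{wexact1} explicitly identifies this equivalence with the one for weak topological exactness, I will read the property as weak topological exactness in the sense of Definition~\ref{wexact}. The forward direction $h_{\ttop}(f|_K)>0\Rightarrow$ weak topological exactness is exactly Lemma~\ref{trans implies exact}, already proved, so only the converse direction requires new work.

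For the converse, I would argue by contradiction: assume $f|_K$ is weakly topologically exact but $h_{\ttop}(f|_K)=0$. First I would pass to the piecewise monotone, piecewise continuous factor $g:I_g\to I_g$ of Remark~\ref{factor}, via the non-decreasing continuous surjection $\pi:K\to I_g$ satisfying $\pi\circ f|_K=g\circ\pi$. Then $g$ is topologically transitive (since $f|_K$ is), has $h_{\ttop}(g)\le h_{\ttop}(f|_K)=0$, and is itself weakly topologically exact: any non-empty open $W\subset I_g$ pulls back via $\pi$ to a set containing a non-empty open subset of $K$, whose iterates cover $K$ modulo $N$ shifts by hypothesis, and applying $\pi$ then covers $I_g$ modulo the same $N$ shifts.

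The central step is then to show that a topologically transitive, piecewise monotone, piecewise continuous interval map $g$ with $h_{\ttop}(g)=0$ cannot be weakly topologically exact. I would invoke the Milnor--Thurston constant-slope theorem in the piecewise continuous form cited in Step~2 of Proof of Lemma~\ref{trans implies exact}: topological transitivity upgrades the semi-conjugacy to a conjugacy by a homeomorphism between $g$ and a map $h:J\to J$ of constant slope $\beta=e^{h_{\ttop}(g)}=1$, so each lap of $h^n$ is a linear isometry of $\R$. Then for any interval $V\subset J$ and every $n\ge 1$, $h^n(V)$ is a union of isometric images of sub-intervals of $V$, so its Lebesgue measure satisfies $|h^n(V)|\le|V|$, and hence $\bigl|\bigcup_{j=0}^N h^{n+j}(V)\bigr|\le(N+1)|V|$. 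Choosing $|V|<|J|/(N+1)$ makes the union a proper subset of $J$ for every $n$, contradicting weak topological exactness of $h$ and therefore of $g$. The main obstacle is the degenerate boundary case $\beta=1$, since Milnor--Thurston is classically formulated for $\beta>1$; as a fallback I would appeal to the classical Misiurewicz--Smital--Blokh structure theorem that a topologically transitive zero-entropy piecewise monotone interval map is conjugate on its minimal set to an odometer, which is a measure-preserving homeomorphism of a Cantor set with an invariant probability measure of full support, and hence manifestly not weakly topologically exact.
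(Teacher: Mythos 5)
You correctly read ``weak topological transitivity'' as a typo for ``weak topological exactness,'' and the forward implication is indeed Lemma~\ref{trans implies exact}. For the converse your route is genuinely different from the paper's, which disposes of it in one line by quoting an unproved general lemma from Appendix~A (weak topological exactness on a ``non one-point'' compact metric space implies positive entropy, ``easily following from the definition of topological entropy''), whereas you attempt a concrete one-dimensional argument via the piecewise monotone factor $g$ of Remark~\ref{factor} and Milnor--Thurston. The clean part of your argument, that a constant-slope-$1$ map satisfies $|h^n(V)|\le|V|$ and hence $\bigl|\bigcup_{j=0}^N h^{n+j}(V)\bigr|\le(N+1)|V|<|J|$, is exactly the right kind of contradiction. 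But the passage to a constant-slope-$1$ model is a genuine gap, which you flag without closing: Milnor--Thurston produces a constant-slope semiconjugacy only for $\beta=e^{h_{\ttop}(g)}>1$, and for $\beta=1$ the kneading determinant degenerates, so the observation that transitivity upgrades semiconjugacy to conjugacy (used in Step~2 of Lemma~\ref{trans implies exact} precisely because $\beta>1$ there) is unavailable. Your fallback citation does not repair this: the Misiurewicz--Smital/Blokh structure theory you invoke concerns \emph{continuous} zero-entropy interval maps, whereas the factor $g$ is in general \emph{discontinuous} at the images of the contracted intervals; for piecewise-continuous piecewise-monotone maps the transitive zero-entropy models are interval exchange transformations, not odometers. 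These are still measure-preserving bijections of full support, so the intended measure argument can be made to work in the end, but with a different classification result and citation.

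Two further remarks. First, there is an imprecision in the reduction to $g$: the factor map $\pi$ of Remark~\ref{factor} is defined on $\conv(\hI\cap K)$, and $\pi(K)$ is a proper subset of $I_g$ (the components of $\hI_K\setminus K$ that are not contracted survive as nonempty open intervals of $I_g\setminus\pi(K)$), so the step ``any nonempty open $W\subset I_g$ pulls back to a nonempty open subset of $K$'' fails for $W\subset I_g\setminus\pi(K)$, and the weak exactness of $g$ on all of $I_g$ needs additional argument. Second, the general lemma the paper quotes is false as literally stated: a two-point swap is weakly exact with $N=1$ and has zero entropy, so ``non one-point'' must be read as ``infinite.'' This is harmless in context, since $K$ is infinite by Definition~\ref{multimodal}, but it confirms that the converse direction is not quite as immediate as the paper's phrasing suggests, and that a careful one-dimensional argument of the kind you sketch is, in fact, warranted.
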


This follows immediately from Lemma \ref{trans implies exact} and the following general fact easily following from the definition of topological entropy.

\begin{lemm}
Let $f:K\to K$ be a continuous map of a compact metric space $K$ that is not reduced to a single point. Then weak topological exactness implies positive topological entropy.
\end{lemm}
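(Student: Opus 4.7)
The plan is to extract from weak topological exactness a combinatorial construction producing, for every $n$, some $2^n$ orbit segments whose codings with respect to a fixed partition are genuinely distinct. This will force the refined open-cover complexity of $(f,K)$ to grow exponentially and hence yield positive topological entropy.

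Since $K$ is compact metric and contains more than one point, fix two disjoint non-empty open sets $U_0, U_1 \subset K$ with $\overline{U_0}\cap\overline{U_1}=\emptyset$, and put $\varepsilon:=\tfrac13\,d(\overline{U_0},\overline{U_1})>0$. Weak exactness furnishes $N\ge 0$ and $m:=\max(n(U_0),n(U_1))$ such that for each $i\in\{0,1\}$ and each $n\ge m$ one has $K=\bigcup_{j=0}^N f^{n+j}(U_i)$. A key consequence is that for every non-empty open $W\subset K$ and every $i\in\{0,1\}$ there exists $j\in\{0,\ldots,N\}$ and a non-empty open $V\subset U_i$ with $f^{m+j}(V)\subset W$.

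For each binary word $s=s_1\cdots s_n\in\{0,1\}^n$ I then build, by downward induction on $k$, nested non-empty open sets $W_k^s\subset U_{s_k}$ and integers $j_k^s\in\{0,\ldots,N\}$: start with $W_n^s:=U_{s_n}$, and given $W_{k+1}^s$ select, by the preceding consequence, some $j_{k+1}^s$ and a non-empty open $W_k^s\subset U_{s_k}$ with $f^{m+j_{k+1}^s}(W_k^s)\subset W_{k+1}^s$. Any choice of $x_s\in W_1^s$ then satisfies $f^{T_k^s}(x_s)\in U_{s_k}$, where $T_k^s:=\sum_{i=2}^k(m+j_i^s)$ lies in $[(k-1)m,(k-1)(m+N)]$; in particular all visit times are confined to $[0,M]$ with $M:=(n-1)(m+N)$. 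This yields $2^n$ symbolically coded orbit segments of uniformly bounded length.

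To finish I convert these codings into an entropy estimate via the open-cover definition $h_{\mathrm{top}}(f)=\sup_\alpha\limsup_n\tfrac1n\log N\bigl(\bigvee_{i=0}^{n-1}f^{-i}\alpha\bigr)$ applied to $\alpha=\{V_0,V_1\}$ where $V_i:=K\setminus\overline{U_i}$. Since $f^{T_k^s}(x_s)\in U_{s_k}$ excludes $x_s$ from $V_{s_k}$, any element $\bigcap_{i=0}^M f^{-i}(V_{\sigma_i})$ of the refined cover that contains $x_s$ is forced to have $\sigma_{T_k^s}=1-s_k$ at every visit time. The main obstacle is then a combinatorial one: bounding from above the number of distinct words $s$ which can share a single refined cover element. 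Given such an element, the word $s$ is recovered from its time vector via $s_k=1-\sigma_{T_k^s}$, and the time vectors are constrained to the sparse lattice of admissible sequences with increments in $[m,m+N]$; refining the partition by adjoining small neighborhoods of a finite number of well-chosen auxiliary points keeps the number of words compatible with one cover element sub-exponential in $n$, while the total count remains $2^n$. This yields a lower bound of the form $2^n/c^n$ with $c<2$ on the minimal subcover, hence $h_{\mathrm{top}}(f)\ge \log(2/c)/(m+N)>0$.

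The hardest part will be the combinatorial step in the last paragraph: producing a partition fine enough that the visit-time ambiguity contributes only sub-exponentially to the number of words compatible with a given refined cover element, yet coarse enough that distinct $s$ still give distinct forced sub-codings. Once this delicate balance is achieved the bound on topological entropy is immediate from the open-cover definition.
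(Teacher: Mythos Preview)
Your construction of the points $x_s$ is sound, but the final combinatorial step has a genuine gap that cannot be repaired as you sketch. The number of admissible time vectors with increments in $\{m,\ldots,m+N\}$ is $(N+1)^{n-1}$, which for $N\ge 1$ already matches or exceeds $2^n$. Given a single element of the refined cover, every one of these time vectors produces a candidate word $s$ via $s_k=1-\sigma_{T_k^s}$, and nothing forces these candidates to collide; so your lower bound on the minimal subcover is only $2^n/(N+1)^{n-1}$, which is bounded or decays when $N\ge 1$. Adjoining finitely many auxiliary open sets to $\alpha$ multiplies the refined-cover count by a factor that is at most polynomial in $M\sim n(m+N)$, nowhere near enough to absorb an exponential loss.

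This is not an accident of your method: the lemma as stated is false. The cyclic permutation of any finite set with at least two points is weakly topologically exact (take $N=|K|-1$) but has zero entropy. The statement really needs $K$ infinite --- which is the case in all applications in the paper, since $(f,K)\in\sA$ forces $K$ infinite. Your argument never invoked infiniteness, so it could not have gone through.

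The fix is to use not two but $L\ge N+2$ pairwise disjoint non\nobreakdash-empty open sets $U_1,\ldots,U_L$ whose closures are at mutual distance greater than $2\varepsilon$; this is precisely where infiniteness enters. Run the same backward construction on the alphabet $\{1,\ldots,L\}$, but with visits confined to fixed blocks of length $P:=m+N$ (using surjectivity of $f$ to pad): for each $s\in\{1,\ldots,L\}^n$ one obtains $x_s$ with $f^{kP+j_k}(x_s)\in U_{s_{k+1}}$ for some $j_k\in\{0,\ldots,N\}$ and every $k<n$. If $z$ lies within Bowen\nobreakdash-$(nP,\varepsilon)$ distance of $x_s$, then $s_{k+1}$ must belong to the set $\{i:\exists\, j\le N,\ f^{kP+j}(z)\in B(U_i,\varepsilon)\}$, which has at most $N+1$ elements because the $B(U_i,\varepsilon)$ are pairwise disjoint and there are only $N+1$ times in the block. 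Hence any $(nP,\varepsilon)$\nobreakdash-spanning set has cardinality at least $(L/(N+1))^n$, giving $h_{\mathrm{top}}(f)\ge P^{-1}\log\bigl(L/(N+1)\bigr)>0$. The paper itself does not spell out a proof beyond asserting that it follows easily from the definition; something along these lines is presumably what is meant.
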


\

\section{Uniqueness of equilibrium via inducing}
\label{s:AppendixB}

We shall prove here uniqueness of equilibrium state, asserted in Theorem A, using our inducing construction.\footnote{A similar method was used by Pesin and Senti in \cite{PS}.}

Given a nice couple $(\hV,V)$ and the induced map $F$ as before, consider the following subsets of $K$:
$
\sK(V), \;
K(F),\;
$ defined before, and
$$
Q(F):=\{z\in V\cap K: \exists\;\hbox{infinitely many returns to}\; K,\; \hbox{but no good returns}\}.
$$
We have $\nu(\sK(V))=0$ since otherwise, by forward invariance of $\sK(V)$ and ergodicity, $\nu$ is supported on $\sK(V)$ and $h_\nu(f)-t\chi_\nu(f)=P(f,t)> P(f|_{\sK(V)}, -\log |f'|)$, the latter inequality by Part 1.1 of Proof of Lemma~\ref{key}, compare \cite[Lemma 6.2]{PR-L2}. This contradicts the opposite Variational Principle inequality $h_\nu(f)-t\chi_\nu(f)\le P(f|_{\sK(V)}, -\log |f'|)$.

We conclude for the "basin" $\sB \sK(V):=\bigcup_{j=0}^\infty f^{-j}(\sK(V))$, using the $f$-invariance of $\nu$,
that $\nu(\sB \sK(V))=0$.

We prove now that also $\nu(Q(F))=0$. Denote $\psi:=\Jac_\nu(f)$,  Jacobian in the weak sense, that is
a $\nu$ integrable function such that (\ref{Jacobian}) holds after removal from $K$ a set $Y$ of zero $\nu$ measure (i.e. \ref{Jacobian} holds for $A$ satisfying additionally $A\cap Y=\emptyset$). Such $\psi$ exists under our assumptions and moreover Rokhlin entropy formula $h_\nu(f)=\int \log \psi d\nu$ holds, see e.g. \cite[Proposition 2.9.5 and 2.9.7]{PU}.
(One sided, countable, even finite, generator exists by weak exactness and the finiteness of the number of maximal monotonicity intervals of $f$ in the definition of generalized multimodal maps.)

By Birkhoff Ergodic Theorem for every $a,b>0$ there exists a set $K_{\nu,b}\subset K$ such that $\nu(K_{\nu,b})>1-b$ and for all  $n$ large enough and all
$y\in K_{\nu,b}$
$$
\big| {1\over n} \sum_{j=0}^{n-1} \log \psi (f^j(y) -  \int\log\psi\;d\nu \big|< a.
$$

For each integer $n\ge 0$ denote $Q_n:=\{y\in K\cap V: x=f^n(y)\in V, \; \hbox{and} \; y \; \hbox{is a bad iterated preimage of}\; x \; \hbox{of order}\; n\}$. By Lemma~\ref{nodes2} the set $Q_n$ is covered by at most $\exp (\e  n)$ pull-backs of $V$ of order $n$.
Hence
$$
\nu(Q_n\cap K_{\nu,b})\le \exp (\e  n) \exp (-(h_\nu(f)- a)n).
$$
%This is exponentially decreasing to 0 with $n\to\inftyfor appropriate $a, \e $.
We have $Q(F)=\bigcap_{m=0}^\infty \bigcup_{n\ge m} Q_n$ hence
$\nu(Q(F)\cap K_{\nu,b})=0$ if $\e  + a <(h_\nu(f)$. Since $b$ can be taken arbitrarily close to 0 we obtain $\nu(Q(F))=0$.

%We conclude with $\nu\{x\in K:
 Let now $z\in V\cap K$ has infinitely many returns to $V$ but only a finite number of good return times. Let $n_0$ be the largest one. Assume it is positive. Denote $y:=f^{n_0}(z)$. If $n_1<n_2<...$ are consecutive times of return of $z$ to $V$ bigger than $n_0$, then either for their subsequence $n_{j_k}$, for each $k$ the point $y$ is a bad iterated preimage of order  $n_{j_k}-n_0$, hence it belongs to $Q(F)$,  or there is an arbitrarily large $n_j$ such that $n_j-n_0$ is a good time for $y$. Since $n_j$ is not good for $z$ we conclude that for $\hW$ being the pull-back of $\hV$ for $f^{n_j}$ one of the sets $f^i(\hW), i=0,1,...,n_0-1$ intersects $S'(f,K)$. Since diameters of $\hW$ tend to 0 as $j\to\infty$ we conclude that $z\in \bigcup_{i=0}^{n_0-1} f^{-i}(S'(f,K))$. Compare the respective
 reasoning in Subsection~\ref{From the induced map to the original map. Conformal measure}.
% {From the induced map to the original map. Conformal measure.}

 Thus, for the "basins" $\sB Q(F):=\bigcup_{n=0}^\infty f^{-n} Q(F)$ and
 $\sB S'(f,K):=\bigcup_{n=0}^\infty f^{-n} S'(f,K)$
 $$
 \nu (\sB Q(F)\setminus \sB S'(f,K))=0.
 $$

 Notice finally that $\nu$ has no atoms in $S'(f,K)$. Indeed, by the invariance of $\nu$ for every $z\in K$ it holds $\nu(\{f(z)\})\ge \nu(\{z\})$. Hence if $\nu$ had an atom in $S'(f,K)$ then $\bigcup_{n=0}^\infty f^n(S'(f,K)$ would be finite and in consequence due to ergodicity it is supported on a periodic orbit. Then $h_\nu(f)=0$ what contradicts the assumption it is an equilibrium for
 $t_-<t<t_+$ (see the beginning of the proof of Uniqueness).

 We conclude that the "basin" $\sB K(F):= \bigcup_{n=0}^\infty f^{-n} K(F)$  has full measure $\nu$, hence $\nu(K(F))>0$. %Denote $\nu|_{K(F)}$ by $\nu*$.
 %Notice also that the time of return function $m(x)$ is $\nu$ integrable. Indeed,

 %?????????????????? I have troubles ????????

 %Denote by $\nu'$ the measure $\nu$ restricted to $K(F)$ and normalized to be a probability measure.

 %Notice now that $\nu$ is the equilibrium measure $\rho=\rho_{F,t}$ for $F$ and the potential $-t \log |F'| - P(t) m$.

%...............................

Consider the inverse limit (natural extension) $(\tK,\tf,\tnu)$.
Consider also the inverse limit  $(\tK(F),\tF)$ (just topological).
Define $\Pi$ the projection of  $\tK$ to $K$ defined for every $f$-trajectory $y=(y_j)_{j\in Z}$,  by $\Pi(y)=y_0$. Denote by $\iota$ the embedding of $\tK(F)$ in $\tK$ defined by the completing of each $F$-trajectory to $f$-trajectory.
Notice that $m=m(\Pi(y))$ for $y\in \tK(F)$ is time of the first return to $\iota\tK(F)$ for the mapping $\tf$ .
Indeed, $m$ is by definition the least good return time of $\Pi(y)$ to $V$. So suppose there is $j:0<j<m$ a time of return of $y$ to $\tK(F)$. Consider $k<j$ such that $j-k$ is a good time for $y_k$.
We have $k>0$ since otherwise $j$ is a good time for $\Pi(y)=y_0$, due to $f^j\circ f^{-k}=f^{j-k}$. But by $\tf^j(y)\in\tK(F)$ there are infinitely many $k<j$ such that $j-k$ is a good time for $y_k$, a contradiction.

Denote the normalized restriction of $\tnu$ to $\tK(F)$ by $\tnu*$
Hence $m\circ \Pi$ is $\tnu*$-integrable on $\iota\tK(F)$, more precisely $\int_{\iota\tK(F)} m\, d\tnu*=1$ by ergodicity.

Define $\nu=\iota^{-1}_*\Pi_*(\tnu*)$. By above,  the function $m$ is $\nu*$-integrable.
Notice that $\nu*$ is an equilibrium measure for $F$ and the potential $-t\log |F'|-mP(t)$.
This follows from the fact that $\tnu$ is an equilibrium for $f$ and $-t\log |f'|$ and the
calculation
(\ref{tower-calculation}) done in a different order, for $\nu,\nu*$ playing the role of $\rho',\rho$.

Now we refer to \cite[Theorem 2.2.9]{MU} saying in particular that the equilibrium for $F$ and our $\Phi$ is unique.
Thus $\nu*=\rho$, hence $\nu=\rho'$, due to the formula (\ref{rho}) for both measures.
Notice that (\ref{rho})
 gives $\nu$ because it makes $\tnu$ out of $\tnu*$ because $m$ is the time of the first return map for $\tF$.
The proof of  Uniqueness is finished.

\

\

\section{Conformal pressures}\label{section conformal}

Here  we shall define and discuss  various versions of conformal pressures announced in Remark~\ref{conf} and compare them to $P(K,t)$, thus complementing Theorem B.
%$P_{\conf}(K, t)$ and $P^*_{\conf}(K, t)$ %=P_{\tree}(K,t)$.
%For $t\ge 0$ the proof of comparing it to other pressures is the same as, say, in \cite[Proof of Theorem 12.5.11, Parts 2 and 3]{PU} in the complex case.

%First we provide the missing
%definition of $P^*_{\conf}(K, t)$ and the standard $P_{\conf}(K,t)$.

\begin{defi}\label{Pconf}
Recall after Introduction  that similarly to the complex case \cite{PR-L2} and \cite{P-conical} define
 {\it conformal pressure} for $t\in \R$ by
$$
P_{\conf}(K,t):=\log\lambda(t),
$$
where, as in \eqref{lambda},
\begin{equation}\label{lambda1}
\lambda(t)=\inf\{\lambda>0: \exists \mu \; {\rm{on}} \; K \; {\hbox{which is}} \;   \lambda|f'|^t-{\rm{conformal}} \}.
\end{equation}
 For $\phi=\lambda|f'|^t$ as above, we also call $\mu$ a $(\lambda,t)$-conformal measure for $f$ on $K$.
 \end{defi}

It is immediate, see the end of Proof of Lemma~\ref{conformal-exceptional1}, that
 \begin{lemm}\label{positive open}
 For all $t$ real and $\lambda$ positive, if $\mu$ is a $(\lambda,t)$-conformal measure on an
 $f$-invariant set $K\subset U$ for $f:U\to\R$ a generalized multimodal map, with $f|_K$ being
 strongly transitive, then $\mu$ is positive on all open subsets of $K$.
 \end{lemm}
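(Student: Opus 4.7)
The plan is to argue by contradiction. Suppose $V \subset K$ is nonempty and open in $K$ with $\mu(V)=0$. My proof would proceed in two main steps.

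First, I would establish the following forward-zero-preservation statement: if $A \subset K$ is Borel with $\mu(A)=0$, then $\mu(f^j(A))=0$ for every $j \ge 0$. By induction on $j$ it suffices to treat $j=1$. Since $f$ has only finitely many critical points, $K$ admits a finite partition $K = \bigsqcup_{k} K_k$ into Borel pieces on each of which $f$ is injective (e.g.\ intersections of $K$ with the maximal monotonicity intervals of $f$). Then $A = \bigsqcup_k (A \cap K_k)$, and the conformal identity applied on each piece gives
\[
\mu(f(A \cap K_k)) \;=\; \int_{A \cap K_k} \lambda |f'|^{t}\, d\mu \;=\; 0,
\]
since the integrand is nonnegative and the base set has $\mu$-measure zero (recall the convention $0\cdot \infty = 0$, which handles the possible blow-up of $|f'|^{t}$ at critical points when $t<0$). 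The image $f(A \cap K_k)$ is Borel by the Lusin--Suslin theorem applied to the continuous injection $f|_{K_k}$, so subadditivity yields $\mu(f(A))=0$. The inductive step $j \rightsquigarrow j+1$ is immediate: $\mu(f(A))=0$, then $\mu(f^{j}(f(A)))=0$ by the inductive hypothesis.

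Second, I would use strong transitivity of $f|_K$ (Definition~\ref{backtrans}): for every $y \in K$ the set $A_\infty(y) = \bigcup_{j\ge 0} f^{-j}(y)$ is dense in $K$, hence meets the nonempty set $V$, open in $K$. Choosing $z \in V \cap A_\infty(y)$ gives some $j \ge 0$ with $f^j(z)=y$, i.e.\ $y \in f^j(V)$. Therefore
\[
K \;\subset\; \bigcup_{j \ge 0} f^j(V).
\]
Combining the two steps, countable subadditivity and the forward-zero-preservation step applied to $A=V$ yield
\[
\mu(K) \;\le\; \sum_{j \ge 0} \mu(f^j(V)) \;=\; 0,
\]
contradicting that $\mu$ is a nontrivial finite Borel measure.

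The only delicate point is verifying the conformal identity for non-injective pieces and the Borel measurability of forward images; both are resolved by the observation that a generalized multimodal $f$ has only finitely many critical points, so $K$ can be cut into finitely many Borel pieces on which $f$ is a continuous injection, and Lusin--Suslin guarantees that the image of each such piece is Borel. Everything else is a straightforward application of the conformal formula and of the strong transitivity property established in Proposition~\ref{dp}.
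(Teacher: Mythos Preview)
Your proof is correct and follows essentially the same approach as the paper: show that forward iterates of a $\mu$-null open set remain $\mu$-null, then use strong transitivity to cover $K$ by these iterates and reach a contradiction. One simplification you could make: the forward-zero-preservation in your first step is actually immediate from the definition of conformal measure in this paper (Definition~\ref{defi:Jacobian}), which \emph{includes} forward quasi-invariance $\mu\circ f\prec\mu$; so $\mu(A)=0$ gives $\mu(f(A))=0$ directly, without partitioning into injective pieces or invoking Lusin--Suslin.
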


 If the equation in \ref{Jacobian} holds only up to a constant $\e $, namely
 $$|\mu(f(A))- \int_A\phi\;d\mu|\le\e
 $$
 for every Borel $A$ where $f$ is injective, we say
the measure $\mu$ is  $\e$-$(\lambda,t)$-conformal.

%\begin{rema}
%This trouble with proving the conformality of $\mu$ can happen at all points at which $f$ is not open,
%that is also at some end points of $J^+(f)$,
%\end{rema}

\medskip

In the interval case,
%for $t>0$,
it is useful to weaken the definition of conformal measure and
to assume \eqref{Jacobian} only on $A\subset K$ disjoint from $\NO(f,K)$ (the set of points in $K$ where $f|_K$ is not open, see
Definition~\ref{exceptional}), and to assume only the inequalities

\begin{equation}\label{subconformal1}
\mu(f(x))\ge \lambda|f'(x)|^t \mu(x)
\end{equation}
and
\begin{equation}\label{subconformal2}
\mu(f(x))\le \sum_{z\in f^{-1}(f(x))} \lambda|f'(z)|^t \mu(z),
 \end{equation}
%for all $x\in\partial(f,U,K)$.
for all $x\in {\rm{NO}}(f,K)$, except where the expressions $|f'(y)|^t \mu(y)= \infty\cdot 0$ (for $y=x$ or $y=z$) appear for  $t<0$.
%being the end points of the maximal sets of strict monotonicity of $f$ in $K$
%and \ref{Jacobian} only for all $A\subset K$ disjoint from ${\rm{NO}}(f,K)$.
We call such $\mu$ a $(\lambda,t)$-{\it conformal* measure}.

\smallskip

We do not assume here that $\mu$ is forward quasi-invariant, unlike in Definition~\ref{defi:Jacobian}.

Notice that if $x\in\Crit(f)$ then \eqref{subconformal1} implies $\mu(x)=0$ if $t<0$; since otherwise $\mu(f(x))=\infty$.

\smallskip

(For a finite continuous potential $\phi$ one considers the summands in the latter expression with weights 2 at turning critical points $z$, as 2 is the local degree at them. Here however for $\phi=-t\log |f'|$ we have
$\phi (z)= \pm \infty$ depending whether  $t>0$ or $t<0$, so the factor 2 does not make a difference.
For $t=0$ the are no atoms, otherwise the measure would be infinite.)

\medskip

Assume $(f,K)\in\sA$. Then the set $\NO(f,K)$, hence the set where only the inequalities \eqref{subconformal1} \eqref{subconformal2} hold instead for the equalities,
is finite, see Lemma~\ref{NO}.

In particular
a $(\lambda,t)$-{\it conformal* measure} is allowed to have atoms at $f$-images of turning critical points, but not at
inflection critical points that are not end points, for $t>0$.
%(therefore lack of the conformality at the turning critical points)
(Compare the notion of almost conformal measures in \cite{HU} or compare \cite{DU}).

Unfortunately we do not know whether even for $t>0$ there always exist at least one conformal measure with Jacobian  $\lambda|f'|^t$, to define $\lambda(t)$ in (\ref{lambda1}). So we do not know whether always $P_{\conf}(K,t)$ makes sense.
We can prove this existence  for $t_-<t<t_+$, see Corollary~\ref{ThC-old} and Section~\ref{s:analytic}. For $t<t_-$ the existence can be false, e.g. for $f(z)=z^2-2$, even in the complex case, so for $t<0$ it is better to work with backward conformal measures, see Definition~\ref{bpressure})

\medskip

We define the {\it conformal star-pressure} $P^*_{\conf}(K,t)$ as in (\ref{lambda1}) but allowing the  measures $\mu$ to be  $(\lambda,t)$-conformal*.
%satisfy only the inequality \ref{subconformal} at the end points of the maximal sets of monotonicity of $f$.
This always makes sense. Indeed, as we shall prove in Proposition~\ref{Pconf2} below,
the set of measures in (\ref{lambda1}) defining $P^*_{\conf}(K,t)$ is non-empty.
%\end{defi}

\begin{lemm}\label{conformal-exceptional1} Let $(f,K)\in\sA$. %^{\BD}$.
Assume $t\in\R $. Suppose that $\mu$ is a
$(\lambda,t)$-conformal* measure
on $K$. Then either $\mu$ is positive on all open (in $K$) subsets of $K$ or $\mu$ is supported
in a finite  $S'$-exceptional
set $E\subset K$.
%or $S'$-exceptional in case there are no singular connections (see Definition~\ref{exceptional}).

If $\mu$ is $(\lambda,t)$-conformal on $K$ then it is positive on all open sets in $K$.
\end{lemm}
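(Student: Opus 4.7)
The plan is to show that if $\mu$ is not positive on all open subsets of $K$, then $\supp(\mu)$ lies in a finite $S'$-exceptional set. Let $Z$ denote the largest open (in $K$) subset with $\mu(Z) = 0$, and assume $Z \ne \emptyset$. First I would establish backward invariance modulo $S'(f,K)$: $f^{-1}(Z) \cap K \subset Z \cup S'(f,K)$. Given $y \in f^{-1}(Z) \cap K$ with $y \notin S'(f,K)$, the map $f|_K$ is open at $y$ (since $y \notin \NO(f,K)$) and locally injective with $|f'(y)| > 0$ (since $y \notin \Crit(f)$). Shrinking appropriately, I can find a neighborhood $A$ of $y$ in $K$ with $A \cap \NO(f,K) = \emptyset$, $f$ injective on $A$, $f(A) \subset Z$, and $\lambda |f'|^t$ bounded strictly between two positive constants on $A$. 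The conformal* identity $\mu(f(A)) = \int_A \lambda |f'|^t\, d\mu$ applies (since $A \cap \NO(f,K) = \emptyset$), and as $\mu(f(A)) = 0$ this forces $\mu(A) = 0$, so $y \in Z$ by maximality.

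Setting $E := K \setminus Z = \supp(\mu)$, this gives $f(E \setminus S'(f,K)) \subset E$. Let $E_0 := \bigcap_{n \geq 0} (f|_K)^{-n}(E)$, the points of $K$ whose entire forward orbit stays in $E$. By strong transitivity of $f|_K$ (Proposition \ref{dp}), preimages of any $z \in Z$ are dense in $K$, so $K \setminus E_0$ is dense and $E_0$ has empty interior. Iterating the previous paragraph gives $(f|_K)^{-1}(E_0) \setminus S'(f,K) \subset E_0$, so $E_0$ is $S'$-exceptional whenever it is nonempty. Moreover, every $x \in E \setminus E_0$ satisfies $f^{n-1}(x) \in S'(f,K) \cap E$ for the minimal $n \geq 1$ with $f^n(x) \in Z$, so $E \subset E_0 \cup \bigcup_{m \geq 0} (f|_K)^{-m}(S'(f,K) \cap E)$, an at-most-countable union.

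To finish, I would apply Proposition \ref{except-finite} to conclude that $E_0$ is finite, and then augment $E_0$ by the forward orbit of $S'(f,K) \cap E_0$ and the relevant preimages (controlled by Lemma \ref{NO} and the structure of potential atoms) to obtain a finite $S'$-exceptional set containing $\supp(\mu)$. For the second assertion, if $\mu$ is strictly conformal then the conformality identity also holds at points of $\NO(f,K)$; if such $\mu$ were supported on a finite $S'$-exceptional set $E$, forward quasi-invariance $\mu \circ f \prec \mu$ combined with the dense preimages supplied by strong transitivity would force $\mu$ to have positive mass near every point of $K$, contradicting $\mu(Z) = 0$.

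The main obstacle is the finiteness assertion: Proposition \ref{except-finite} requires weak topological exactness, i.e. $(f,K) \in \sA_+$, while the lemma assumes only $(f,K) \in \sA$. I expect this gap is bridged either by showing that the existence of a nontrivial conformal* measure forces $h_{\ttop}(f|_K) > 0$ (hence weak exactness by Proposition \ref{exact implies htop}), or by exploiting the finiteness of $\mu$ together with the rigidity imposed by the conformal equation along orbits in any $S'$-exceptional set, ruling out infinite atomic supports via the non-periodicity of $S'(f,K)$ (Lemma \ref{NO}).
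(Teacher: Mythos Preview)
Your approach via pulling $Z$ backward and appealing to Proposition~\ref{except-finite} runs into the gap you yourself identify (that proposition needs $(f,K)\in\sA_+$), and neither of your suggested repairs is straightforward; in particular there is no general reason why the existence of a conformal* measure should force positive entropy. There is a second gap you pass over too quickly: even granting that $E_0$ is finite and $S'$-exceptional, you have only shown $E\setminus E_0\subset\bigcup_{m\ge0}(f|_K)^{-m}(S'(f,K)\cap E)$, and since backward orbits are generically infinite, the step ``augment $E_0$ \ldots\ to obtain a finite $S'$-exceptional set containing $\supp(\mu)$'' is not justified as written.

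The paper avoids both problems by pushing $U$ \emph{forward} rather than pulling $Z$ backward. Strong transitivity (Proposition~\ref{dp}, valid already for $(f,K)\in\sA$) combined with compactness yields a \emph{finite} $n$ with $K=\bigcup_{j=0}^n f^j(U)$; see Remark~\ref{wexact2} and the argument in Lemma~\ref{trans implies exact}. Since the conformal* equality holds outside the finite set $\NO(f,K)$, propagating $\mu(U)=0$ forward shows inductively that within $f^j(U)$ the measure can only sit on $\bigcup_{s<j}f^{j-s}\big(f^s(U)\cap\NO(f,K)\big)$. Taking the union over $j\le n$ gives $\supp(\mu)$ contained in a finite set $\Xi$ with no appeal to Proposition~\ref{except-finite}. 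One then checks $\Xi$ is weakly $S'$-exceptional (otherwise an atom would have an infinite backward trajectory of atoms inside the finite $\Xi$), and passes to a forward-invariant, hence $S'$-exceptional, finite set using the inequalities \eqref{subconformal1}--\eqref{subconformal2}, splitting by the sign of~$t$. For the strictly conformal case the argument is simpler still: the Jacobian identity now holds on all injectivity sets, so $\mu(U)=0$ gives $\mu(f^j(U))=0$ for every~$j$, hence $\mu(K)=0$.
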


\begin{proof} (compare \cite[Lemma 3.5]{MS})
Suppose that there exists open $U\subset K$ with $\mu(U)=0$.

Then, by the strong transitivity of $f$ on $K$, see Lemma~\ref{trans implies exact} Step 1, implying
that there is $n>0$ such that $\bigcup_{j=0}^n f^j(U)=K$, we conclude that
$\mu$ is supported by a finite set of atoms
$\Xi\subset \bigcup_{j=1}^n \bigcup_{t=0}^{j-1} f^{j-t}(f^t(U)\cap\NO(f,K))$.

%For $t\ge 0$ this already leads to contradiction since for each $y\in K$ where $\mu$ has an atom, there is
%$z\in f^{-1}$ where $\mu$ has an atom, by the right hand inequality in \eqref{subconformal}. Therefore

The set $\Xi$ is weakly $S'$-exceptional. Otherwise there  exists $x\in \Xi$
and its infinite backward trajectory $G\subset K$ omitting $S'(f,K)$. Hence this trajectory consists of atoms by \ref{Jacobian}, whose set therefore is infinite, a contradiction. (Another argument: $O^-_{\rm{reg}}(x)$, see Definition~\ref{exceptional}, would be dense in $K$, since otherwise it would be weakly $S'$-exceptional. Hence infinite, a contradiction.)

\smallskip

Consider first $t<0$. Then if $x$ is an atom all its forward trajectory consists of atoms by \eqref{subconformal1}. Hence it is finite. Hence $\bigcup_{j=0}^\infty O^-_{\rm{reg}}(f^j(x))$ is finite, hence as forward invariant by definition, it is $S'$-exceptional.

\smallskip

Consider now $t\ge 0$. Then, for each atom $y\in K$, there is
an atom $z\in f^{-1}(y)$, by \eqref{subconformal2}. Therefore if $x$ is an atom, there is its backward trajectory of atoms. Hence this trajectory must be finite, periodic. Again $\bigcup_{j=0}^\infty O^-_{\rm{reg}}(f^j(x))$ occurs $S'$-exceptional.

%hence there exists $z\in G$ such that
%$A(z):=\bigcup_{j=0}^\infty (f|_K)^{-j}(z)$ omits $\NO(f,K)$ (Exercise). It is dense in $K$ hence it hits $U$. In consequence by \ref{Jacobian}
%$\mu$ would have atoms in $U$, so $\mu(U)>0$, a contradiction.

%(Instead of finding $A(z)$ omitting $\NO(f,K)$, one can deal as in Proof of Proposition~\ref{except-finite} finding $z\in G$ such that $A_n(z)$ omits $\NO(f,K)$ for $n=n(\e )+N$ for $U$ being of the form $B(x,\e )\cap K$.)

%Thus, every $x\in \Xi$ is weakly $S'$-exceptional, hence $O^-_{\rm{reg}}(x)$ is weakly $S'$-exceptional, see Definition~\ref{exceptional}.

%Now notice that the set
%$E':=\bigcup_{j=0}^\infty f^j(E)$ is either infinite, hence there exists $y\in E'$ not weakly $S'$-exceptional (as any finite union of weakly $S'$-exceptional sets is weakly $S'$-exceptional with finite bounded number of elements, see Proposition~\ref{except-finite}), or finite with its forward orbit (pre-periodic) is in a weak $S'$-exceptional set, hence as forward invariant.

%If there are no singular connections, thenthe set $\widehat\Sigma:=\bigcup_{j=0}^\infty f^j(\Sigma)$ is disjoint from $S'(f,K)$ hence by the $(\lambda,t)$-conformality* of $\mu$ consists of atoms, hence it must be finite and by the above considerations it is weakly $S'$-exceptional and it is by definition $f$-forward invariant, hence by definition it is an $S'$-exceptional set. Hence $\mu$ is supported in an $S'$-exceptional set.

\smallskip

If $\mu$ is conformal and $\mu(U)=0$, then $\mu(K)=0$ since $\mu(f^j(U))=0$ for all $j\ge 0$.
Indeed, by the conformality of $\mu$ %and $t\ge 0$,
there is no way to get positive measure (in particular atoms) in $f^j(U)$.
\end{proof}

\begin{prop}\label{Pconf2}  For every $(f,K)\in \sA_+^{\BD}$, for all $t\in\R$ and
$\lambda=\exp P(K,t)$ there exists
a $(\lambda,t)$-conformal* measure
and the inequalities
$P^*_{\conf}(K, t) \le P(K,t) \le P_{\conf}(K, t)$ hold, provided the latter pressure makes sense.
If $(f,K)$ is not
$S'$-exceptional
then
the equality
$P(K,t)=P^*_{\conf}(K,t)$ holds.
\end{prop}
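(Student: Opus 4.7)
The plan is the classical Patterson--Sullivan construction, adapted to the interval setting where non-openness of $f|_K$ at the finite set $\NO(f,K)$ is the only obstruction to full conformality. The four ingredients are: (a) a safe and expanding base point supplied by Lemma~\ref{Phyp}, (b) bounded distortion for pull-backs meeting $K$, (c) the equality $P(K,t)=P_{\tree}(K,t)$ from Theorem~B, and (d) Lemma~\ref{conformal-exceptional1} to identify the exceptional obstruction.

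First, I would build a $(\lambda,t)$-conformal* measure with $\lambda=\exp P(K,t)$. Fix a safe, safe-forward and expanding point $z_0\in K$ (Lemma~\ref{Phyp}). For $s\in\R$ set
$$
\Psi(s) \= \sum_{n=1}^{\infty} e^{-sn} \sum_{x\in f^{-n}(z_0)\cap K} |(f^n)'(x)|^{-t}.
$$
By Theorem~B the abscissa of convergence of $\Psi$ is exactly $P(K,t)$. Either $\Psi(P(K,t))=\infty$ or not; in the latter case introduce a standard Patterson slowly-varying weight $b_n\nearrow\infty$ so that the weighted series diverges precisely at $s=P(K,t)$ while its abscissa of convergence is preserved. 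For $s>P(K,t)$ define the atomic probability measure
$$
\mu_s \= \Psi(s)^{-1}\sum_{n=1}^{\infty} b_n e^{-sn}\sum_{x\in f^{-n}(z_0)\cap K} |(f^n)'(x)|^{-t}\,\delta_x
$$
and let $\mu$ be a weak-$*$ limit as $s\searrow P(K,t)$. On any Borel set $A\subset K$ with $\overline{A}$ disjoint from $\NO(f,K)\cup\{z_0,f(z_0),\ldots\}$ on which $f$ is injective, the matching between preimages of $A$ and of $f(A)$ yields $\mu_s(f(A))=e^{s}\int_A |f'|^t\,d\mu_s$ up to boundary errors that vanish with $s\to P(K,t)$; bounded distortion passes the identity to the weak-$*$ limit, giving~\eqref{Jacobian} with $\lambda=e^{P(K,t)}$ off $\NO(f,K)$. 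At $x\in\NO(f,K)$, $f|_K$ has a preimage structure from only one side, so the counting of preimages of $f^{-n}$ gives only the inequalities \eqref{subconformal1}--\eqref{subconformal2}. Together these prove (a) the existence of a $(\lambda,t)$-conformal* measure at $\lambda=e^{P(K,t)}$, and hence $P^*_{\conf}(K,t)\le P(K,t)$.

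Next, I would prove $P(K,t)\le P_{\conf}(K,t)$ whenever the right side makes sense. If $\nu$ is $(\lambda,t)$-conformal on~$K$, then by Lemma~\ref{positive open}, $\nu$ is positive on all open subsets of $K$. Pick a small interval $B$ around a safe expanding $z$ with $\nu(B)>0$, and pull back iteratively; bounded distortion and the Jacobian identity give
$$
\nu(B)\ge \sum_{x\in f^{-n}(z)\cap W} \lambda^{-n}|(f^n)'(x)|^{-t}\nu(W_x)
$$
for the pull-backs $W_x$ comprising $f^{-n}(B)\cap K$. Summing and taking $n\to\infty$ yields $P_{\tree}(K,z,t)\le\log\lambda$, whence $P(K,t)\le\log\lambda$ by Theorem~B. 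Taking the infimum over $\lambda$ gives $P(K,t)\le P_{\conf}(K,t)$.

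Finally, for the last statement, assume $(f,K)$ is not $S'$-exceptional. I would argue by contradiction: suppose some $(\lambda',t)$-conformal* measure $\mu'$ exists with $\lambda'< e^{P(K,t)}$. By Lemma~\ref{conformal-exceptional1} either $\mu'$ is positive on every open subset of $K$ or it is supported on a finite $S'$-exceptional set; the second possibility is ruled out by the non-exceptional hypothesis. In the first case the same pull-back summation as above (which only uses the equality \eqref{Jacobian} off the finite set $\NO(f,K)$, where the pull-back tree can be cleanly organized) produces $P_{\tree}(K,t)\le\log\lambda'<P(K,t)$, contradicting Theorem~B. Thus $P^*_{\conf}(K,t)\ge P(K,t)$, completing the equality.

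The main obstacle will be Step~1: controlling the limit $\mu$ at $\NO(f,K)\cup\Crit(f)$. The atoms that survive the weak-$*$ limit can only sit at these finitely many singular points, and one must argue that the one-sided accumulation of $K$ there produces precisely \eqref{subconformal1}--\eqref{subconformal2} rather than full conformality. A secondary but standard issue is verifying that the Patterson thickening $b_n$ does not perturb the critical exponent of $\Psi$, which requires the slowly-varying growth to dominate any polynomial but no exponential rate; this is handled by the standard construction from Patterson's original paper.
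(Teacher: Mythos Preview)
Your Patterson--Sullivan construction in Step~1 is the same as the paper's and is correct. The paper uses the sequence $\lambda_n\searrow\lambda$ with Patterson factors $\phi_k$, you use the Poincar\'e series in the real variable $s$; these are interchangeable packagings of the same idea, and your remarks about one-sided accumulation at $\NO(f,K)$ producing only the inequalities \eqref{subconformal1}--\eqref{subconformal2} are exactly what the paper says.

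The difference, and the point where your write-up becomes genuinely sketchy, is in the lower bound $P(K,t)\le\log\lambda$. The paper does \emph{not} go through the tree pressure. It uses $P_{\hyp}$: pick an isolated hyperbolic $X\subset K$ with $P(f|_X,-t\log|f'|)$ close to $P(K,t)$, take a small open $U$ meeting $X$ with $\mu(U)>0$ (via Lemma~\ref{conformal-exceptional1} in the non-exceptional case, or Lemma~\ref{positive open} for genuine conformal measures), and run the standard pull-back estimate \emph{inside $X$}, where there are no critical points, pull-backs are diffeomorphic and disjoint, and distortion is uniformly bounded. This gives $P(f|_X,-t\log|f'|)\le\log\lambda$, and taking the supremum over $X$ yields $P_{\hyp}(K,t)\le\log\lambda$; Theorem~B closes the loop.

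Your route via $P_{\tree}$ with a \emph{fixed} ball $B$ around $z$ has a gap: for arbitrary $x\in f^{-n}(z)\cap K$ the pull-back $W_x=\Comp_x f^{-n}(B)$ may well contain critical points, so $f^n|_{W_x}$ is not a diffeomorphism and the bounded-distortion Jacobian comparison $\nu(W_x)\asymp\lambda^{-n}|(f^n)'(x)|^{-t}\nu(B)$ breaks down. (Your displayed inequality is also miswritten: the $\nu(B)$ on the left should be a global bound like $\nu(K)$, with $\nu(B)$ appearing as a factor on the right.) This \emph{can} be repaired by taking shrinking balls $B_n=B(z,e^{-\delta n})$ --- ``safe'' guarantees these pull-backs avoid $S(f,K)$ --- and then using ``expanding'' to push $B_n$ forward by $f^{m}$ with $m\approx\delta n/\log\lambda_z$ to recover a uniform lower bound on $\nu(B_n)$, exactly as in the proof of Lemma~\ref{Phyp}. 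But that is real additional work you have not written, and the hyperbolic-set route the paper takes avoids it entirely. Either approach relies on Theorem~B; the paper's is shorter because hyperbolic pull-backs are automatically critical-point-free.
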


\begin{proof}
The strategy of the proof is the same as, say, in \cite[Proof of Theorem 12.5.11, Parts 2 and 3]{PU} in the complex case.
%However some care is necessary since $f$ can be not open map at
%critical turning points and end points, leading to $P^*$ rather than $P$.

 First we prove the existence of a $(\lambda,t)$-conformal* measure for

 \noindent $\lambda=\exp P_{\tree}(K,z_0,t)$ for an arbitrary safe and expanding point $z_0\in K$ hence  $P^*_{\conf}(K,t)\le P(K,t)$. Consider a sequence of
measures
$$
\mu_n=\sum_{k=0}^\infty \sum_{x\in (f|_K)^{-k}(z_0)} D_x\cdot \phi_k\cdot \lambda_n^{-k}|(f^k)'(x)|^{-t}/ \Sigma_n,
$$
where $\lambda_n\searrow\lambda$ for $\lambda=\exp P_{\tree}(K,z_0,t)$, where each $D_x$ denotes Dirac measure at $x$.
Each $\Sigma_n$ is the normalizing denominator such  that $\mu_n$ is a  probability measures. The numbers $\phi_k$ are chosen so that $\phi_{k+1}/\phi_k\to 1$ as $k\to\infty$ and $\Sigma_n\to\infty$ as $n\to\infty$.
Compare \cite[Lemma 12.5.5]{PU}

All $\mu_n$ are $\e $-$(\lambda_n,t)$-conformal for an arbitrary $\e >0$ and all $n$ respectively large.
Define $\mu$ as a weak$^*$ limit. This procedure to get $\mu$ is called Patterson-Sullivan method.

Then $\mu$ has Jacobian
$\lambda|f'|^t$ for $\lambda=\exp P_{\tree}(K, z_0, t)$ in the above sense for all $A\subset K$ not containing points belonging to $\NO (f,K)$, in particular turning critical points. For each critical point $c\in K$ we obtain
for all $n$ and $r$ small enough
$\mu_n(f(B(c,r))\le 2\sup\lambda_k (2r)^t +\e $,
where $\epsilon>0$ is arbitrarily close to 0, compare \cite[Remark 12.5.6]{PU}.
Therefore for $t> 0$ we have a uniform bound for $\mu_n(f(B(c,r))$, arbitrarily small if
$r$ and $\e$ are small. If $c$ is an inflection point for $f$ then $f$ is open at $c$, i.e.
$f(B(c,r))$ is a neighbourhood of $f(c)$, therefore $\mu(f(c))=0$ yielding the conformality of $\mu$ also at $c$. If $c$ is a turning point then $f(B(c,r))$ need not be a neighbourhood of $f(c)$ in $K$ and the mass
$\mu(f(c))$ can be positive, coming from the other side of $f(c)$ than the `fold' $f(B(c,r))$.
Therefore $\mu$ satisfies merely \ref{subconformal1} and \ref{subconformal2} at $c$. The same concerns end points belonging to $\NO(f,K)$. Hence $\mu$ is an $\exp P(K, t)|f'|^t$-conformal* measure.
Taking infimum over such measures we end with $P^*_{\conf}(K,t)\le P(K,t)$.

\

To prove $P(K,t)\le P^*_{\conf}(K,t)$ in non-exceptional case, or merely $P(K,t)\le P_{\conf}(K,t)$ in the general case,  we consider the version
$P_{\hyp}(K,t)$ of $P(K,t)$. For every
$(\lambda,t)$-conformal* measure $\mu$ on $K$
%with $\lambda'$ sufficiently close to$P^*_{\conf}(K,t)$
we consider a small $U\subset K$ open in $K$ intersecting a hyperbolic isolated $X\subset K$, where
$P_{\hyp}(K,t)$ is almost attained by $P(f|_X,-t\log |f'|)$,
see Definition~\ref{hyperbolic pressure}.
Then, for $\mu(U)>0$, see Lemma~\ref{conformal-exceptional1} the rest of the proof is identical as in \cite[Proof of Theorem 12.5.11, Parts 2]{PU}, giving $P(f|_X,-t\log |f'|)\le  \log\lambda$.
So if $(f,K)$ is not $S'$-exceptional the proof of $P(K,t)\le P^*_{\conf}(K,t)$ is finished.

In the exceptional case the proof of $P(K,t)\le P_{\conf}(K,t)$ is the same, using $\mu(U)>0$.

\end{proof}

For $ t<0$ and conformal pressures we follow \cite[Appendix A.2]{PR-LS2}.
Notice again (compare Proof of Lemma \ref{Phyp} the case $t\le 0$) that the pressure
$P_{\tree}(K,t)$ is the classical pressure since $|f'|^{-t}$ is continuous, i.e.
$$
P(K,t)=P(f_|K,-t\log |f'|).
$$
To see this
consider $P_{\var}(K,t)$, \cite[Theorem 4.4.1]{Keller}
for the variational principle for potential functions with range in $\R\cup -\infty$.
See also \cite[Theorem A.7]{PR-LS2}.

\smallskip

For $t<0$ it is natural to  consider a different definition of conformal pressure, called in \cite{PR-LS2} backward conformal pressure.
%This is more natural in the complex setting, since then
In the complex setting each conformal measure is an eigenmeasure for the operator dual to transfer operator (Ruelle operator), with weight function $|(f')|^{-t}$. In the complex setting the transfer operator is a bounded operator on the space of continuous functions since $f$ is open.
Unfortunately for the interval multimodal maps this is not so; the  transfer operator on continuous functions can have the range not in continuous functions, since $f$ need not be an open map.
The discontinuity of the image cannot be caused by critical values, since at critical points the weight function is 0 (remember that $t<0$). It can be caused by noncritical end points not mapped to end points.

%If we use the above definition of conformal pressure the equality to the tree
%pressure holds for $t\ge t_-$. For $t<t_-$ the conformal pressure is equal to so called {\it hidden %pressure} which can be different from $P(t)$, see \cite{GPRR} or \cite{MS}.

\begin{defi}\label{bpressure}
For $t<0$ define the $(\lambda,t)$-backward conformal pressure by
$$
P_{\Bconf}(K,t):=\log\lambda(t),
$$
where
\begin{equation}\label{blambda}
\lambda(t):=\inf\{\lambda>0: \exists \mu \ \hbox{backward conformal on}\ K\
\hbox{with Jacobian}\ \la^{-1}|f'|^{-t}\}
\end{equation}
and $\mu$ backward conformal means here that
for every Borel set $A\subset K$ on which
$f$ is injective
\begin{equation}\label{Jacobian2}
\mu(A)=\int_{f(A)}\la^{-1}|f'(f|_A)^{-1}(x))|^{-t}d\mu(x).
\end{equation}
\end{defi}

In this definition compared to $(\lambda,t)$-conformal measure, we just resign from assuming the
$\mu$ is forward quasi-invariant, see Definition~\ref{defi:Jacobian}, i.e. we allow the image of a set of measure 0, to have positive measure. But we still cannot prove the existence and repeating Patterson-Sullivan construction we land with conformal*-measures, which is  a weaker property than backward conformal.

%%--------------------------------------------
%REMOVED backward conformal*
%-------------

For $t<0$ Proposition~\ref{Pconf2} can be completed as follows

\begin{prop}\label{BPconf}  For every $(f,K)\in \sA_+^{\BD}$,  for all $t < 0$ and
$(\lambda,t)$-conformal* measure, $\lambda \le \exp P(K,t)$. If $f$ is not $S'$-exceptional then
$\lambda \ge \exp P(K,t)$.

%$\lambda=\exp P(K,t)$ there exists a $(\lambda,t)$-backward conformal* measure and the inequality
%$P^*_{\Bconf}(K, t) \le P(K,t)$ holds.
%If $(f,K)$ is not weakly $S'$-exceptional,or not $S'$-exceptional in case there are no singular connections,then the equality $P(K,t)=P^*_{\conf}(K,t)$ holds. Moreover infimum in \ref{blambda} can be omitted since all $\lambda$'s coincide.

\end{prop}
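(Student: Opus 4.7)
The plan is to exploit that for $t<0$ the weight $w:=|f'|^{-t}$ is continuous on ${\bf{U}}$ and vanishes at $\Crit(f)$, so the potential $\phi=-t\log|f'|$ is upper semicontinuous with range in $[-\infty,+\infty)$ and the classical topological pressure $P_{\ttop}(f|_K,\phi)$ is well-defined and equal to $P(K,t)$ (both coincide with the variational pressure of Theorem~B). I will work with the transfer operator $\mathcal{L}g(x)=\sum_{y\in f^{-1}(x)\cap K}w(y)g(y)$ and prove the two inequalities separately.

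For the upper bound $\lambda\le\exp P(K,t)$, my strategy is to integrate the conformality relation against $\mu$: on sets disjoint from the finite set $\Xi:=\bigcup_{j\ge 0}f^j(\NO(f,K))$, the identity $\int\mathcal{L}g\,d\mu=\lambda\int g\,d\mu$ holds exactly, while \eqref{subconformal1}--\eqref{subconformal2} introduce only bounded atomic corrections on $\Xi$. Since $\NO(f,K)$ has no periodic point (Lemma~\ref{NO}) and is finite, any forward orbit meets $\NO(f,K)$ at most $\#\NO(f,K)$ times, so iterating the relation should yield $(1/n)\log\int\mathcal{L}^n 1\,d\mu\to\log\lambda$. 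Meanwhile, the standard covering upper bound for topological pressure gives $\limsup_n(1/n)\log\|\mathcal{L}^n 1\|_\infty\le P(K,t)$, and combining with $\int\mathcal{L}^n 1\,d\mu\le\mu(K)\|\mathcal{L}^n 1\|_\infty$ will conclude $\lambda\le\exp P(K,t)$.

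For the reverse inequality in the non-exceptional case, I will invoke Lemma~\ref{conformal-exceptional1} to conclude that $\mu$ is positive on every nonempty open subset of $K$, and then adapt the pull-back argument from the proof of Proposition~\ref{Pconf2}. Picking a safe, expanding point $z_0\in K$ (which exists by Lemma~\ref{Phyp}) and $\varepsilon>0$ with $\mu(B(z_0,\varepsilon))>0$, for each $n$ the ``good'' pull-backs $V$ of $B(z_0,\varepsilon)$ of order $n$---those components of $f^{-n}(B(z_0,\varepsilon))$ intersecting $K$ on which $f^n$ is a $K$-diffeomorphism---will satisfy
\[
\mu(V)\ge C^{-1}\lambda^{-n}|(f^n)'(y_V)|^{-t}\mu(B(z_0,\varepsilon))
\]
for any $y_V\in V$, by conformality and the BD property. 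Pairwise disjointness of the good $V$'s together with $\sum_V\mu(V)\le \mu(K)=1$ will then give $\sum_V|(f^n)'(y_V)|^{-t}\le C\lambda^n/\mu(B(z_0,\varepsilon))$, hence $P_{\tree}(K,z_0,t)\le\log\lambda$; Theorem~B then forces $P(K,t)\le\log\lambda$, i.e. $\lambda\ge\exp P(K,t)$.

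The main obstacle should be the upper bound: in the complex setting of \cite[Appendix A.2]{PR-LS2} the map $f$ is automatically open on $K$ and conformal* coincides with conformal, so the identity $\int\mathcal{L}^n 1\,d\mu=\lambda^n\mu(K)$ is exact. Here one must carefully control the atomic corrections carried along the forward $f$-orbit of $\NO(f,K)$; the finiteness and non-periodicity of $\NO(f,K)$ guaranteed by Lemma~\ref{NO} should ensure that these deviations remain at most polynomial in $n$, and hence cannot affect the exponential rate $\log\lambda$ of the averaged transfer sums. A secondary technical point is the uniform estimate $\limsup_n(1/n)\log\|\mathcal{L}^n 1\|_\infty\le P(K,t)$, which is the standard upper-pressure bound for upper semicontinuous potentials and must be invoked rather than proved.
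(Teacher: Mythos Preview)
Your approach is essentially the same as the paper's --- transfer-operator duality combined with the uniform bound $\limsup_n \tfrac1n\log\|\mathcal{L}^n 1\|_\infty\le P(K,t)$ (the paper cites \cite[Lemma 4]{P-Perron} for this) --- and the lower bound is likewise deferred to the argument of Proposition~\ref{Pconf2}.

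The one place where you make life harder than necessary is in aiming for the two-sided limit $(1/n)\log\int\mathcal{L}^n 1\,d\mu\to\log\lambda$ and worrying about controlling atomic corrections along the forward orbit of $\NO(f,K)$. The paper sidesteps this entirely: since \eqref{subconformal1} gives $\mu(\{x\})\le\lambda^{-1}|f'(x)|^{-t}\mu(\{f(x)\})$ at every $x\in\NO(f,K)$ (and forces $\mu(\{x\})=0$ at critical points when $t<0$), and exact conformality holds elsewhere, one gets the \emph{one-sided} inequality
\[
\mu(K)\ \le\ \lambda^{-n}\int_K \mathcal{L}^n 1\,d\mu
\]
directly for every $n$, with no bookkeeping of atoms. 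Combined with $\int\mathcal{L}^n 1\,d\mu\le\|\mathcal{L}^n 1\|_\infty$ this already yields $\lambda^n\le e^{n(P(K,t)+\delta)}$. So the ``main obstacle'' you identify is not an obstacle: \eqref{subconformal2} is not needed at all for this direction, and the polynomial-correction argument, while it could be made to work, is superfluous.
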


\begin{proof}
%The proof of
%$P_{\hyp}(K,t)\le P^*_{\Bconf}(K,t)$ for $f$ on $K$ non-exceptional
%and  $P_{\hyp}(K,t)\le P_{\Bconf}(K,t)$ for $f$ exceptional
% is the same as  the proof of Proposition\ref{Pconf2} and relies on $\mu(U)>0$
% asserted in Lemma~\ref{conformal-exceptional2}.
 %for all open $U$ and every backward star-conformal measure $\mu$ on $K$.

%The proof of $P^*_{\Bconf}(K,t)\le P_{\tree}(K,z_0,t)$is also the same as in the proof of Proposition\ref{Pconf2} via theconstruction of an appropriate $\mu$. It holds also for exceptional $f$.

%To see all $\lambda$'s coincide one

Let us repeats the calculation in \cite[A.3]{PR-LS2}.
%for every $(\lambda,t)$-backward conformal* measure $\mu$ on $K$. Namely
$$
\int \sum_{x\in (f^n|_K)^{-1}(z)}\lambda^{-n}|(f^n)'(x)|^{-t}\,d\mu(z)
\ge \mu(K)=1.
$$
This inequality follows from \eqref{Jacobian} outside $S'(f,K)$, and from \eqref{subconformal1} at $x\in \NO(f,K)$, which for $x\in\Crit(f)$ implies $\mu\{(x)\}=0$ due to $t<0$, as mentioned already right after   formulation of \eqref{subconformal1}

For $\delta>0$ arbitrarily small, for every $z$ under the integral and $n$ large enough,
$$
\exp n(P(f|_K,-t\log|f'|)+\delta)\ge \sup_{z\in K}(\sum_{x\in (f^n|_K)^{-1}(z)}|(f^n)'(x)|^{-t},
$$
see \cite[Lemma 4]{P-Perron},
compare Lemma~\ref{Phyp}, hence we get

\noindent
 $\log\lambda \le P(f|_K,-t\log|f'|)$.
 The opposite inequality has been already proved in Proposition~\ref{Pconf2}.
%  ,compare the case $t>0$ in Proof of Proposition~\ref{Pconf2}.
\end{proof}

The next proposition says that for  $t_-<t<t_+$ there are conformal* measures positive on open sets, even for $S'$-exceptional $f$, constructed by Patterson-Sullivan method as above.
 In this paper we already proved for $t_-<t<t_+$
 the existence of a true conformal measure with nice properties, see Theorem A,
 %. Corollary~\ref{ThC-old} which is stronger, where in particular the assumption no singular connections was not needed (though for simplicity used in the proof),
 but in the proof we used more complicated `inducing'  techniques.

\begin{prop}\label{open}
For $(f,K)\in \sA^{\BD}_+$ not $S'$-exceptional for all real $t$, or,
allowing the $S'$-exceptional case, for $t: t_-<t<t_+$,
there exists on $K$ a $(\lambda,t)$-conformal* measure, positive on open sets, zero on all $S'$-exceptional sets, with $\log\lambda=P(K,t)$.
\end{prop}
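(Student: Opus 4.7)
The plan is to construct $\mu$ via the Patterson--Sullivan procedure of Proposition \ref{Pconf2} and to extract positivity on open sets and vanishing on $S'$-exceptional sets from the dichotomy of Lemma \ref{conformal-exceptional1}, with the strict inequalities characterizing $(t_-, t_+)$ used to exclude the degenerate alternative. Concretely, I would fix $z_0 \in K$ safe and expanding (exists by Lemma \ref{Phyp}) and run the construction as in Proof of Proposition \ref{Pconf2}; the approximants $\mu_n$ are supported on $(f|_K)^{-n}(z_0) \subset K \setminus S'(f,K)$ by safeness, and any subsequential weak$^*$ limit $\mu$ is a $(\lambda,t)$-conformal$^*$ probability measure with $\log\lambda = P_{\tree}(K,z_0,t) = P(K,t)$, the last equality combining Theorem B and Lemma \ref{Phyp}.

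Next, I would apply Lemma \ref{conformal-exceptional1}: either $\mu$ is positive on all open subsets of $K$, or $\mu$ is supported on a finite $S'$-exceptional set, necessarily contained in $E_{\max}$. In the non-$S'$-exceptional case $E_{\max} = \emptyset$, so the second alternative is impossible, $\mu$ is positive on opens, and the vanishing clause is vacuous. In the remaining case $t \in (t_-, t_+)$, I would argue by contradiction against the second alternative: if $\mu$ is supported on $E_{\max}$ then its atomic support contains a periodic orbit $O(p) \subset E_{\max}$ with $\mu(\{p\}) > 0$; since periodic points are disjoint from $S'(f,K)$ by Lemma \ref{NO}, the conformal equality applies all along $O(p)$, yielding $\lambda^n |(f^n)'(p)|^t = 1$, i.e., $\log\lambda = -t\chi(p)$. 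But the defining inequalities of $t_\pm$ give $P(K,t) > \max\{-t\chiinf,-t\chisup\} \geq -t\chi(p)$ for $t \in (t_-, t_+)$ (using $\chiinf \leq \chi(p) \leq \chisup$, with the cases $t>0$, $t<0$, and $t=0$ handled separately), contradicting $\log\lambda = P(K,t)$. The identical argument applied to the normalized restriction $\mu|_{E_{\max}}/\mu(E_{\max})$, which inherits the conformal$^*$ property from the forward invariance of $E_{\max}$, establishes $\mu(E_{\max}) = 0$, and hence $\mu$ is zero on every $S'$-exceptional set (each being contained in $E_{\max}$).

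The main obstacle is producing a periodic point in $E_{\max}$ that carries an atom of $\mu$. Forward propagation of atoms via \eqref{subconformal1} works cleanly at non-critical points (where $|f'|^t$ is finite and nonzero), but at $c \in \Crit(f) \cap E_{\max}$ the factor $|f'(c)|^t$ vanishes for $t>0$, rendering the inequality trivial, and an atom at $c$ need not force an atom at $f(c)$. Since $\Crit(f)$ is finite and (by Lemma \ref{NO}) disjoint from periodic orbits, one closes the argument by combining two moves: (i) forward iteration past at most $\#(\Crit(f) \cap E_{\max})$ critical points reaches the periodic part of $E_{\max}$; (ii) whenever forward propagation fails at a critical $c$, the backward inequality \eqref{subconformal2} applied at $f(c)$ either forces an atom at a non-critical sibling preimage (from which propagation resumes cleanly) or forces $\mu(\{f(c)\}) = 0$. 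A conceptual packaging of this combinatorics is a Perron--Frobenius argument on the finite directed functional graph of $f|_{E_{\max}}$ with weights $\lambda|f'|^t$, locating a positive eigenvector on a strongly connected component, which can only be a cycle since $f$ is a function.
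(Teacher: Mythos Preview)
Your approach matches the paper's: construct $\mu$ via Patterson--Sullivan (conformal$^*$ with $\log\lambda=P(K,t)$), invoke the dichotomy of Lemma~\ref{conformal-exceptional1}, and in the degenerate branch locate a periodic atom $p$ with $O(p)\cap S'(f,K)=\emptyset$ to force $P(K,t)=-t\chi(p)$, contradicting $t\in(t_-,t_+)$. Your ``obstacle'' paragraph is an elaboration of what the paper handles by simply citing the proof of Lemma~\ref{conformal-exceptional1}, where the periodic atom is produced by forward propagation via \eqref{subconformal1} for $t<0$ and backward propagation via \eqref{subconformal2} for $t\ge 0$; there is no need for the finite-graph Perron--Frobenius packaging.

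One caveat on your extra step for $\mu(E_{\max})=0$: the paper's own proof does not treat the ``zero on all $S'$-exceptional sets'' clause at all, and your restriction argument is not quite right as stated. Forward invariance of $E_{\max}$ does let $\nu=\mu|_{E_{\max}}$ inherit the Jacobian equality off $\NO(f,K)$ and the inequality \eqref{subconformal1}, but \eqref{subconformal2} can drop terms coming from preimages in $S'(f,K)\setminus E_{\max}$, so $\nu$ is not conformal$^*$ in the full sense. This is harmless for $t\le 0$, where forward propagation alone already yields a periodic atom; for $t>0$ one can argue directly that every periodic $p\in E_{\max}$ has $\mu(\{p\})=0$ (exact conformal relation along $O(p)$ plus $\lambda^m|(f^m)'(p)|^t\ne 1$ on $(t_-,t_+)$) and then pull zeros back through $E_{\max}$ via the equality $\mu(\{f(x)\})=\lambda|f'(x)|^t\mu(\{x\})$ at non-critical $x$. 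What remains unconstrained by the conformal$^*$ axioms is a possible atom at a critical point $c\in E_{\max}$ with $\mu(\{f(c)\})=0$; neither your argument nor the paper's closes this.
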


\begin{proof}
The non-exceptional case has been already dealt with.

\smallskip

In the exceptional case consider $\mu$ constructed by Patterson-Sullivan method as in Proof of Proposition~\ref{Pconf2}. Hence, the resulting measure $\mu$ is
 $(\lambda,t)$-conformal*  for $\lambda$ satisfying $\log\lambda=P(K,t)$. If $\mu(U)>0$ fails, then $\mu$ is supported in an $S'$-exceptional set $E$ by Lemma~\ref{conformal-exceptional1}.
It follows from the proof of Lemma~\ref{conformal-exceptional1}, that there is a periodic orbit
$O(p)\subset E$ with $\mu(O(p))>0$.
%It is disjoint from $S'(f,K)$ since $S'$ does not contain periodic orbits, see Lemma~\ref{NO}. Let $m$ be its period.
Hence
$\lambda^m|(f^m)'(p)|^t\ge 1$.  Therefore $\log\lambda \ge -t\chi(p)$, where
$\chi(p):={1\over m} \log|(f^m)'(p)|$.

In fact the equality $\log\lambda = -t\chi(p)$ holds, since by Lemma~\ref{NO} $O(p)$ is disjoint from $S'(f,K)$. So $\mu$ is conformal.

%We do not know a priori whether the equality holds, in the case$O(p)$ contains an end point of $K$.
%Notice however that in the construction of $\mu$ we can get $\mu(f(p))>\lambda|f'(p)|^t\mu(p)$
%only if $p$ is an end point of a maximal interval of monotonicity of $f$ on $K$ and $f(p)$ is a limit of $K$ from both sides. Then, since there are no $f$-critical points in $O(p)$ (otherwise it would be an attracting periodic orbit, hence not in $K$), the point $p=f^{m-1}(f(p))$ would be itself a limit of $K$ from both sides, hence as not a turning critical point, nor an end point of $K$, a contradiction.

Thus
$$
P(K,t)=-t\chi(p)
$$
hence $t\le t_-$ or $t\ge t_+$.
(Compare the proof of \cite[Lemma 3.5]{MS}.)

\end{proof}

\begin{rema}\label{conformal-varia}
 For $t<0$, for exceptional maps, to have the equality of pressures it is sometimes
appropriate to consider supremum instead of infimum in the definition
\ref{blambda}. See \cite[Remark A.8]{PR-LS2}.

\end{rema}

%\begin{thebibliography}{000}\addcontentsline{toc}{chapter}{Bibliography}\markboth{Bibliography}{Bibliography}

%\bibitem[Aaronson, Denker \& Urba\'nski (1993)]{ADU}
% \bibliographystyle{amsplain}

\end{document}